\newcommand{\indentalign}{\hspace{0.3in}&\hspace{-0.3in}}
\newcommand{\la}{\langle}
\newcommand{\ra}{\rangle}
\newcommand{\ds}{\displaystyle}
\newcommand{\sech}{\operatorname{sech}}
\newcommand{\defeq}{\stackrel{\rm{def}}{=}}
\newcommand{\supp}{\operatorname{supp}}
\newcommand{\spn}{\operatorname{span}}
\newtheorem{theorem}{Theorem}
\newtheorem{definition}[theorem]{Definition}
\newtheorem{proposition}[theorem]{Proposition}
\newtheorem{lemma}[theorem]{Lemma}
\theoremstyle{remark}
\newtheorem{remark}[theorem]{Remark}
\numberwithin{equation}{section}
\numberwithin{theorem}{section}
\numberwithin{table}{section}
\title[3D ZK asymptotic stability]{Asymptotic stability of solitary waves of the 3D quadratic Zakharov-Kuznetsov equation}
\author[L. G. Farah]{Luiz Gustavo Farah}
\address{Department of Mathematics\\UFMG\\Belo Horizonte, Brazil}
\curraddr{}
\email{lgfarah@gmail.com}
\thanks{}
\author[J. Holmer]{Justin Holmer}
\address{Department of Mathematics\\Brown University\\Providence, RI, USA}
\email{holmer@math.brown.edu}
\thanks{}
\author[S. Roudenko]{Svetlana Roudenko}
\address{Department of Mathematics\\Florida International University\\Miami, FL, USA}
\curraddr{}
\email{sroudenko@fiu.edu}
\thanks{} 
\author[K. Yang]{Kai Yang}
\address{Department of Mathematics\\Florida International University\\Miami, FL, USA}
\curraddr{}
\email{yangk@fiu.edu}
\thanks{}
\date{}
\keywords{solitons, solitary waves, 
Zakharov-Kuznetsov equation, asymptotic stability, orbital stability, Liouville theorem}
\begin{document}

\begin{abstract}
We consider the quadratic Zakharov-Kuznetsov equation
$$
\partial_t u + \partial_x \Delta u + \partial_x u^2 =0
$$
on $\mathbb{R}^3$.  A solitary wave solution is given by $Q(x-t,y,z)$, where $Q$ is the ground state solution to  $-Q + \Delta Q + Q^2 =0$.
We prove the asymptotic stability of these solitary wave solutions.   Specifically, we show that initial data close to $Q$ in the energy space, evolves to a solution that, as $t\to\infty$, converges to a rescaling and shift of $Q(x-t,y,z)$ in $L^2$ in a rightward shifting region $x> \delta t -\tan \theta \sqrt{y^2+z^2} $  for  $0 \leq \theta \leq \frac{\pi}{3}-\delta$.  
\end{abstract}

\maketitle

\section{Introduction}\label{S:introduction}
We consider the 3D quadratic Zakharov-Kuznetsov equation 
$$
\text{(3D ZK)} \qquad  \qquad  \partial_t u + \partial_x \Delta u + \partial_x u^2 =0, \qquad \qquad 
$$
where $u=u(\mathbf{x},t)$, for $\mathbf{x} = (x,y,z) \in \mathbb{R}^3$, $t \in \mathbb{R}$.
This equation is a natural multidimensional generalization of the well-known Korteweg-de Vries (KdV) equation, which models weakly nonlinear waves in shallow water. The 3D ZK equation was originally proposed by Zakharov and Kuznetsov to describe weakly magnetized ion-acoustic waves in a low-pressure magnetized plasma and the typical reference for that is \cite{ZK1974}. Actually the original announcement and formal derivation from hydrodynamics appeared in 1972 in a preprint of the Soviet Academy of Sciences \cite{ZK1972}, see Figure \ref{fig:ZK-1972}, where the authors write ``until now in hydrodynamics and plasma physics the attention was paid only to the one-dimensional solitons".  In that paper (and its JETP 1974 analog) the discussion of stability of the 3D solitons appeared by giving an argument that a Lyapunov type functional ($E+\lambda M$) is minimized on the soliton.  

\begin{figure}[ht]
\includegraphics[height=4in]{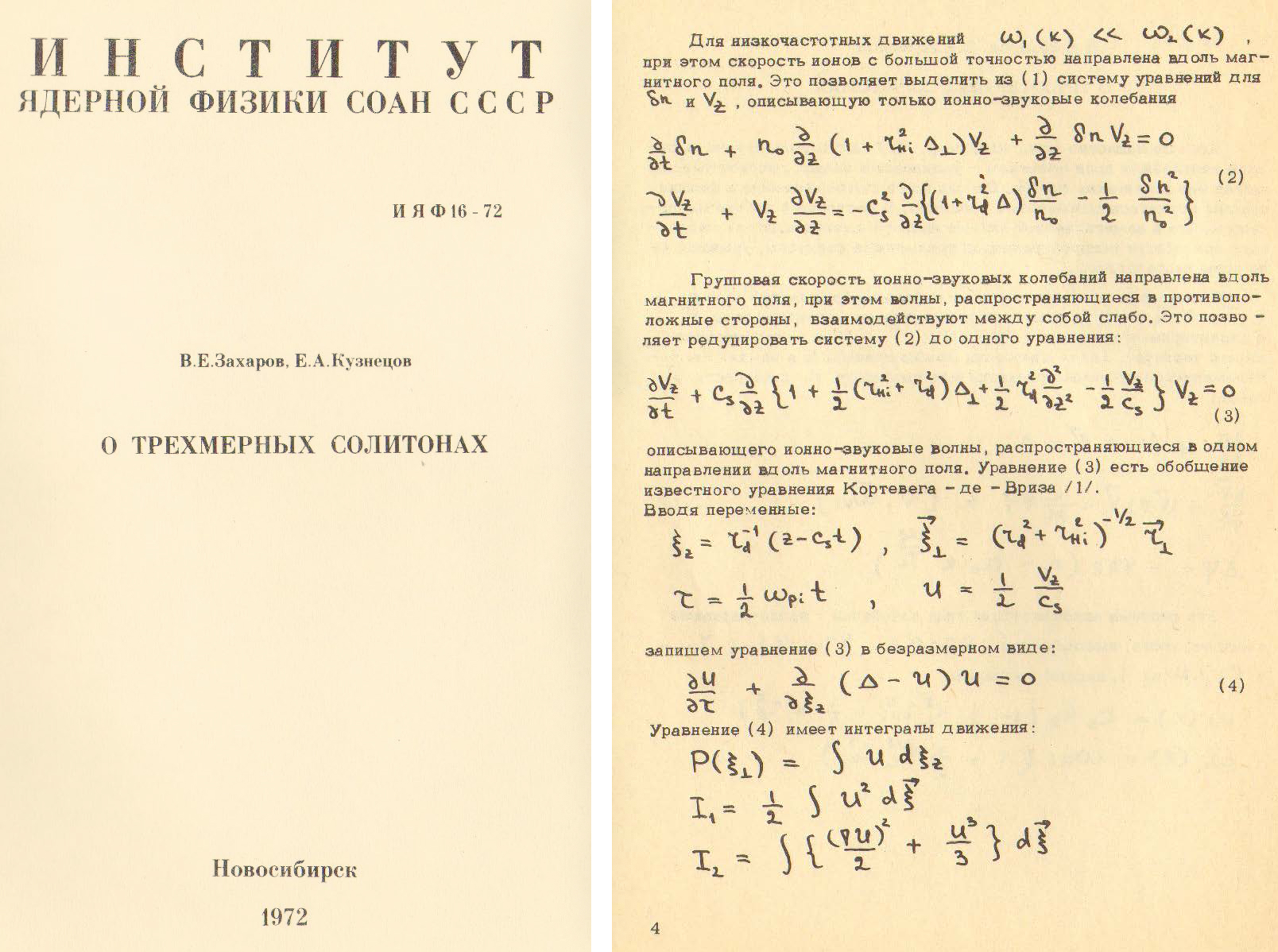}
\caption{\small{V.E.Zakharov and E.A.Kuznetsov, ``On three-dimensional solitons", Siberian branch of USSR Academy of Sciences, Novosibirsk 1972; the title page and p.4 with the derivation of the equation and conserved quantities.} }
\label{fig:ZK-1972}
\end{figure}

The formal and then rigorous derivation of the 3D Zakharov-Kuznetsov equation as a long-wave small-amplitude limit of the Euler-Poisson system in the cold-plasma approximation  was done in \cite{LS} and \cite{LLS}, respectively. Other derivations exist as well -- see, for example, references in \cite{LLS, FHRY, FHR3}. 

Unlike KdV and other generalizations such as Kadomtsev-Petiashvili or Benjamin-Ono equations, the Zakharov-Kuznetsov equation is not completely integrable. However, it has a Hamiltonian structure with three conserved quantities: during their lifespan, solutions $u(t)$ (with sufficient decay) conserve energy (Hamiltonian), $L^2$-norm (often called mass) and the integral:
\begin{equation}\label{MC}
M(u(t))=\int_{\mathbb{R}^3} u^2(t)\, d {\bf{x}} = M(u(0)),
\end{equation}
\begin{equation}\label{EC}
E(u(t))=\dfrac{1}{2}\int_{\mathbb{R}^3}|\nabla u(t)|^2\;d{\bf{x}} - \dfrac{1}{3}\int_{\mathbb{R}^3} u^{3}(t)\;d{\bf{x}} = E(u(0)),
\end{equation}
\begin{equation}\label{L1-inv}
\int_{\mathbb{R}} u(t,{\bf{x}}) \, dx = \int_{\mathbb{R}} u(0, {\bf{x}}) \, dx,
\end{equation}
where the last conservation is obtained by integrating the original equation on $\mathbb{R}$ in the first coordinate $x$.

The equation has a family of traveling waves called solitary waves (sometimes called solitons, although the model is not integrable), moving only in the positive $x$-direction:
$$
u(t, {\bf{x}}) = Q_\lambda(x - \lambda^2 t, y, z)
$$
with $Q_\lambda({\bf{x}}) \to 0$ as $|{\bf x}| \to \infty$, and $Q_\lambda$ is the dilation of the ground state $Q$:
$$
Q_\lambda({\bf{x}}) = \lambda^2 \, Q(\lambda \, {\bf{x}})
$$
with $Q$ being the unique radial positive solution in $H^1(\mathbb{R}^3)$ of the well-known nonlinear elliptic equation $-\Delta Q+Q  - Q^p = 0$.  It is well-known that $Q \in C^{\infty}(\mathbb{R}^3)$, $\partial_r Q(r) <0$ for any $r = |{\bf{x}}|>0$ and for any multi-index $\alpha$
\begin{equation}\label{prop-Q}
|\partial^\alpha Q(x, y, z)| \leq c(\alpha) \, e^{-r} \quad \mbox{for any}\quad {\bf{x}} \in \mathbb{R}^3.
\end{equation}

The orbital stability of these traveling waves was proved by de Bouard \cite{deB}, where she followed the KdV argument of Grillakis, Shatah \& Strauss \cite{GSS}, while considering solutions in weighted spaces.  The more delicate question of asymptotic stability for ZK in dimension $d\geq 2$ was first considered by C\^{o}te, Mu\~{n}oz, Pilod \& Simpson \cite{CPMS}.  They covered the case of the 2D ZK, but their result does not apply to the 3D ZK, since their Liouville theorem fails (e.g., due to their choice of orthogonality conditions and manner of addressing the local virial estimate).   The present paper fills this gap by establishing asymptotic stability for the physical case of the 3D ZK.

The Cauchy problem for the 3D ZK equation has been studied by several authors.  First,  local well-posedness can be established via the classical Kato method in $H^s$ for $s>\frac52$.  This was improved by Linares \&  Saut  \cite{LS}, who obtained the local well-posedness in $H^s$ with $s>\frac98$ following the method of Kenig \cite{K2004}, which was then further improved by Ribaud \& Vento \cite{RV} down to $H^s$ with $s>1$.  The global well-posedness in  $H^s$, $s>1$ was established by Molinet \& Pilod \cite{MP}.   At the time we started writing the present paper, this was the best result, and therefore, we arranged our argument to establish the statement of asymptotic stability as formulated below in Theorem \ref{T:main}  for certain weak solutions that we termed Class B (as defined in Definition \ref{D:ClassB}) that were \emph{assumed} to be orbitally stable.  The best known well-posedness results at the time (Ribaud \& Vento \cite{RV}, Molinet \& Pilod \cite{MP}), combined with the orbital stability argument of de Bouard \cite{deB}, gave a corollary that solutions in $H^s$, $s>1$, with initial data close to a $Q$ with respect to the $H^1$ norm, were $H^1$ orbitally stable, thus, meeting the hypotheses of our Theorem \ref{T:main}, and allowing for the conclusion of $H^1$ asymptotic stability for such solutions.   Recently and after we had nearly completed the present paper, Herr \& Kinoshita \cite{HK} announced a proof of local well-posedness for the 3D ZK in $H^s$ for $s>-\frac12$.  This, when combined with the orbital stability argument of de Bouard \cite{deB} establishes that $H^1$ solutions, initially close to $Q$ in $H^1$, are orbitally stable, thus, meeting the hypotheses of our Theorem \ref{T:main}.  Therefore, we can now state an unconditional version of asymptotic stability as our main result:

\begin{theorem}[main theorem]
For $\alpha \ll 1$, the following statement holds:  if the initial condition $u_0\in H_{\mathbf{x}}^1$ and 
\begin{equation}
\label{E:initial}
\|u_0 - Q \|_{H_{\mathbf{x}}^1} \leq \alpha,
\end{equation}
then the corresponding solution $u(\mathbf{x},t)$ to the 3D ZK  is \emph{asymptotically stable} in the following sense
\begin{enumerate}
\item (orbital stability) there exist trajectories $c(t)>0$ and $\mathbf{a}(t)\in \mathbb{R}^3$ such that 
$$
\| c(t)^2 u(c(t)\mathbf{x} + \mathbf{a}(t),t) - Q(\mathbf{x}) \|_{H_{\mathbf{x}}^1} \lesssim \alpha,
$$
\item (convergence of trajectories) there exists $c_*$ such that $|c_*-1| \lesssim \alpha$ such that 
$$
c(t) \to c_* \,,  \quad \text{and} \quad \mathbf{a}'(t) \to c_*^{-2} \mathbf{i} \quad \text{as} \quad t\to +\infty, 
$$
\item (weak convergence as $t\nearrow \infty$) there holds
\begin{equation}
\label{E:main-weak}
c(t)^2 u(c(t) \mathbf{x} + \mathbf{a}(t),t) \rightharpoonup Q(\mathbf{x}) \quad \text{ (weakly) in } H_{\mathbf{x}}^1 \text{ as }t\to +\infty,
\end{equation}
\item ($L^2$ strong convergence in  conic right-half space) for any $\delta\gtrsim \alpha$, we have strong convergence in $L^2_{\mathbf{x}}$ on the conic right-half space (see Figure \ref{F:cut-cone})
\begin{equation}
\label{E:window-conv-b}
\| c(t)^2 u(c(t) \mathbf{x} + \mathbf{a}(t),t) - Q(\mathbf{x}) \|_{L^2_{\mathbf{x}}(x> (-1+\delta) t -\sqrt{y^2+z^2}\tan \theta )} \to 0 \text{ as } t\to +\infty
\end{equation}
for all $\theta$ such that 
$$
0\leq  \theta \leq \frac{\pi}{3}-\delta.
$$
\end{enumerate}
\end{theorem}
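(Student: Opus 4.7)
The proof follows the Martel--Merle framework for asymptotic stability of solitons, adapted to the three-dimensional anisotropic setting of ZK. First, I would set up modulation theory. Orbital stability, which follows from \eqref{E:initial} via de Bouard's result, allows one to define $C^1$ parameters $c(t)>0$ and $\mathbf{a}(t)\in\mathbb{R}^3$ together with a remainder
$$
\epsilon(\mathbf{x},t) = c(t)^2 u(c(t)\mathbf{x}+\mathbf{a}(t),t) - Q(\mathbf{x})
$$
satisfying four orthogonality conditions against $Q$, $\Lambda Q$, and $\nabla Q$ (one for each modulation parameter). These are chosen so that (a) the linearized Hamiltonian $\mathcal{L} = -\Delta+1-2Q$ is coercive modulo its kernel on the resulting constraint subspace, and (b) the ODE system for $(c'(t),\mathbf{a}'(t))$ decouples cleanly from $\epsilon$ at leading order. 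Orbital stability provides $\|\epsilon(t)\|_{H^1}\lesssim\alpha$ uniformly in $t$, which establishes item (1).

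The central analytic input is a \emph{monotonicity} property for the $L^2$ mass contained in a rightward-moving conic region. I would introduce a smooth weight $\varphi(\mathbf{x},t)$ whose level sets are half-cones of aperture $\theta$ opening leftward, translating with speed essentially $c(t)^{-2}$. Computing $\frac{d}{dt}\int u^2 \varphi\,d\mathbf{x}$ from the equation and integrating by parts produces a quadratic form in $\nabla u$ whose sign is governed by the gradient geometry of $\varphi$. A direct algebraic computation shows that for $\theta<\pi/3$ this quadratic form is nonpositive up to terms that are exponentially small in the distance to the soliton; $\pi/3$ is precisely the angle at which the symbol of the linearization changes character, i.e.\ the 3D analogue of the Mach cone for KdV. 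Thus mass to the right of the cone can only decrease modulo $O(e^{-\gamma t})$, which is the higher-dimensional counterpart of Kato's smoothing for KdV.

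Next I would prove a \emph{local virial estimate} of Morawetz type,
$$
\int_0^\infty\int_{\mathbb{R}^3}\bigl(|\nabla\epsilon|^2+\epsilon^2\bigr)\,e^{-|\mathbf{x}|/A}\,d\mathbf{x}\,dt\lesssim\alpha^2,
$$
obtained by testing the $\epsilon$-equation against an antisymmetric weight adapted to $Q$. Combined with the modulation equations and $L^2$ conservation, this forces $c'(t)\to 0$ and $\mathbf{a}'(t)\to c_*^{-2}\mathbf{i}$, giving (2). For (3) I would take an arbitrary sequence $t_n\to\infty$ and use orbital stability to extract a weak $H^1$ limit of $\epsilon(\cdot,t_n)+Q$; call $\tilde u$ the corresponding global 3D ZK solution (well-defined in the Class B sense). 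The conic monotonicity, transferred to $\tilde u$, together with asymptotic compactness in $\{x>-A\}$, exhibits $\tilde u$ as an $H^1$ solution staying $O(\alpha)$-close to $Q$ for all $t\in\mathbb{R}$ with $L^2$ mass localized in a rightward cone. A rigidity (Liouville) theorem then forces $\tilde u\equiv Q$, whence $\epsilon(\cdot,t_n)\wkto 0$ in $H^1$. Finally, for (4) I would combine weak convergence (giving a limsup of mass on the conic region bounded by $M(Q)$) with the monotonicity (giving a matching liminf) to upgrade weak convergence to strong $L^2$ convergence on any subcone of half-angle $<\pi/3-\delta$.

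The principal obstacle will be the \emph{Liouville theorem} for $\tilde u$. C\^ote--Mu\~noz--Pilod--Simpson's 2D argument does not close in 3D under the natural choice of orthogonality conditions, because the relevant virial quadratic form fails to be coercive: the extra transverse dimension adds a direction along which the linearized dynamics is too weakly damped. The remedy I would pursue is to design a \emph{new} localized virial functional, using a weight that is anisotropic in $(y,z)$ as well as $x$, together with a judicious four-parameter family of orthogonality conditions that break the spurious transverse symmetries. Verifying positivity of the resulting quadratic form on the constrained subspace reduces to a spectral inequality for a weighted Schr\"odinger operator built from $\mathcal{L}$; establishing this inequality, which combines explicit analysis using the radial symmetry of $Q$ with a rigorous computer-assisted check of a finite-dimensional residual spectral condition, is expected to be the hardest and most delicate part of the proof.
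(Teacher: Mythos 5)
Your overall strategy coincides with the paper's at every structural joint: modulation with four orthogonality conditions, conic monotonicity with the critical aperture $\pi/3$, extraction of a weak limit along $t_n\to\infty$, a rigidity/Liouville step whose coercive ingredient is a virial quadratic form checked by a computer-assisted spectral computation, and the final upgrade from weak to strong $L^2$ convergence on subcones by combining the local convergence with the monotonicity. But there is a genuine gap you do not address: regularity of the limit object. The weak limit $\tilde u$ produced by compactness is only a Class~B solution at the $H^1_{\mathbf{x}}$ level, while every virial-type argument in your sketch (and in the paper) requires weighted higher Sobolev control -- the virial estimate of Lemma~\ref{L:virial} bounds $\|w\|_{L_t^2H_{\mathbf{x}}^3}$ by $\|\la x\ra^{1/2}w\|_{L_t^\infty H_{\mathbf{x}}^2}$, and the rigidity statement (Proposition~\ref{P:rigidity}) is formulated for smooth solutions with all-order spatial decay. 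Weak limits give none of this. The paper devotes Lemmas~\ref{L:regularity-boost} and~\ref{L:ep-comparability} (a frequency-projected virial identity, the Ribaud--Vento maximal-function estimates, and a discrete Gronwall bootstrap, all exploiting the uniform exponential spatial decay supplied by the monotonicity) precisely to boost the Class~B limit to a smooth solution with norms controlled by $\|\tilde\epsilon_n\|_{L^\infty_tL^2_{\mathbf{x}}}$; without some substitute for this regularity/comparability step your Liouville argument cannot even be set up.

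A second problem is your route to item (2). The proposed global-in-time estimate $\int_0^\infty\!\!\int(|\nabla\epsilon|^2+\epsilon^2)e^{-|\mathbf{x}|/A}\,d\mathbf{x}\,dt\lesssim\alpha^2$ for the nonlinear remainder of the \emph{original} solution presupposes coercivity of the localized virial quadratic form under your orthogonality conditions -- exactly the spectral obstruction you yourself cite as the reason the 2D argument of C\^ote--Mu\~noz--Pilod--Simpson fails in 3D. The paper never proves such an estimate for $\epsilon$; coercivity is established only for the \emph{linear} linearized flow, after a second renormalized limit $w_n=\tilde\epsilon_n/B_n$ and passage to the dual variable $v=\mathcal{L}w$, with the spectral condition verified numerically. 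Moreover, even granting your estimate, finiteness of a time integral only yields decay along a sequence, and $c'(t)\to0$ does not by itself give convergence of $c(t)$. The paper instead obtains (2) by identifying $c_*$ through the limit of the localized mass furnished by the $I_\pm$ monotonicity (Lemma~\ref{L:Ipm-estimates}) and then invoking the rigidity of the limit; you would need to reroute your argument for (2) along such lines, since as stated it is circular with respect to the main spectral difficulty.
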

The $L^2$ convergence is stated in \eqref{E:window-conv-b} in the reference frame of the soliton (being at the origin). In the reference frame of the solution, the rightward shifting external conic region is  $x> \delta t -\sqrt{y^2+z^2} \, \tan \theta$. 

As mentioned, this theorem follows from the orbital stability result of de Bouard \cite{deB}, the recent well-posedness result of Herr \& Kinoshita \cite{HK}, and our key theorem 
(Theorem \ref{T:main} below).    We note that any $u_0(\mathbf{x})$ for which there exists $c_0>0$ and $\mathbf{a}_0 \in \mathbb{R}^3$ so that
$$\| c_0^2 u_0(c_0 \mathbf{x} + \mathbf{a}_0) - Q(\mathbf{x}) \|_{H_{\mathbf{x}}^1} \lesssim \alpha$$
can be rescaled and translated to meet the hypothesis \eqref{E:initial}.  

In \S\ref{S:outline} below, we provide an outline of the paper with definitions, the statement of the main theorem (Theorem \ref{T:main}), supporting propositions and lemmas.  These supporting propositions and lemmas are each proved in the sections of the paper (\S \ref{S:RV-estimates}-\ref{S:numerics}) indicated after their statement.  The broad outline of the argument is as follows: monotonicity estimates based on calculating 
$$
\partial_t \int [u(\mathbf{x}+\mathbf{a}(t), t)]^2 \phi(\mathbf{x} ,t) 
\, d\mathbf{x}
$$
for a suitable monotonic-in-$x$ weight $\phi(\mathbf{x},t)$, 
provide the strong $L^2$ convergence in \eqref{E:window-conv-b} away from the soliton center.  Once the weak convergence in \eqref{E:main-weak} is established, the strong $L^2$ convergence on a compact region around the soliton center in \eqref{E:window-conv-b} will follow.  Thus, the main remaining task is to establish \eqref{E:main-weak}, which is proved in several steps.  Taking a limit of solutions along a time sequence $t_n\nearrow +\infty$ yields a radiation-free solution $\tilde u(\mathbf{x},t)$.  The monotonicity estimates give exponential spatial decay of this solution, but the functional analytic methods that produce this limiting solution $\tilde u$ only  yield that it has $H_{\mathbf{x}}^1$ regularity and is a weak type solution (that we call a Class B solution).  One key new element of the paper is showing that the uniform in time strong spatial decay of $\tilde u$ is enough to boost its regularity -- we are, in fact, able to show it is \emph{smooth}, and thus, a strong solution to the 3D ZK. Then we show that $\tilde u$, once renormalized, is $Q$, the soliton, by a rigidity argument based on a virial estimate for the linearized equation.  This is achieved by contradiction -- if the rigidity statement failed, then there would be a sequence of solutions $\tilde u$, from which we could extract (after renormalization) a solution to the linearized equation \emph{without nonlinear terms} (we call this the linear linearized equation).  In the passage of this limit, we again use our regularity boost techniques.  Finally, we can prove a virial estimate for the linear linearized equation by a positive commutator argument after passing to a dual problem and checking a spectral condition with robust numerical analysis.  The regularity boost arguments mentioned above are new to this type of problem, and involve Littlewood-Paley analysis, a discrete Gronwall argument, and the local theory estimates of Ribaud \& Vento, even though these estimates lie at regularity slightly above $H_{\mathbf{x}}^1$.  

\subsection{Acknowledgements}
L.G.F. was partially supported by CNPq, CAPES and  FAPEMIG (Brazil). From September 2017 till August 2019, J.H. served as Program Director in the Division of Mathematical Sciences at the National Science Foundation (NSF), USA, and as a component of this job, J.H. received support from NSF for research, which included work on this paper.  Any opinion, findings, and conclusions or recommendations expressed in this material are those of the authors and do not necessarily reflect the views of the National Science Foundation.  S.R. was partially supported by the NSF CAREER grant DMS-1151618/1929029 and NSF grant DMS-1815873/1927258.  K.Y. research support to work on this project came from grants DMS-1929029 and 1927258 (PI: Roudenko).

\section{Outline of the paper}
\label{S:outline}

\begin{definition}[Class B solutions]
\label{D:ClassB}
We call $u(\mathbf{x},t)$ a \emph{Class B} global solution of the 3D ZK if 
\begin{enumerate}
\item 
for each $T>0$ and for each $s<1$, 
$$
u\in C([-T,T]; H_{\mathbf{x}}^s) \,, \qquad \partial_t u \in  C([-T,T]; H_{\mathbf{x}}^{s-3}),
$$
\item  
for each $t\in \mathbb{R}$, $u(t) \in H_{\mathbf{x}}^1$ and $\partial_t u(t) \in H_{\mathbf{x}}^{-2}$ and there exists $C>0$ such
$$
\sup_{t\in \mathbb{R}} \| u(t) \|_{H_{\mathbf{x}}^1} + \sup_{t\in\mathbb{R}} \| \partial_t u(t) \|_{H_{\mathbf{x}}^{-2}} \leq C,
$$
\item 
for each $t\in \mathbb{R}$, the equation
$$
\partial_t u(t) + \partial_x \Delta u(t) + \partial_x u(t)^2=0
$$
holds as an equality of the sum of three functions each belonging to $H^{-2}_{\mathbf{x}}$.
\end{enumerate}
\end{definition}

\begin{lemma}[Class B solutions satisfy mass conservation]
\label{L:mass-conservation}
Suppose that $u$ is a Class B solution to the 3D ZK.  Then $u$ satisfies mass conservation, i.e., $\| u(t) \|_{L_{\mathbf{x}}^2}^2$ is constant in time, and is denoted $M(u)$.  
\end{lemma}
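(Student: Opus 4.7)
The plan is to mollify in space, run the $L_{\mathbf x}^2$ energy identity on the smoothed equation, and pass to the $\epsilon\to 0$ limit using the uniform $H_{\mathbf x}^1$ bound from Definition \ref{D:ClassB}(2). The obstacle to navigate is the low time-slice regularity: with $u(t)\in H_{\mathbf x}^1$ and $\partial_t u(t)\in H_{\mathbf x}^{-2}$, the pairing $\langle\partial_t u,u\rangle$ is illegal and a direct chain-rule calculation of $\tfrac{d}{dt}\|u\|_{L^2}^2$ is not available.

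Fix an even nonnegative $\rho\in\mathcal S(\mathbb R^3)$ with $\int\rho=1$, set $\rho_\epsilon(\mathbf x)=\epsilon^{-3}\rho(\mathbf x/\epsilon)$, and let $J_\epsilon f:=\rho_\epsilon\ast_{\mathbf x} f$. Since $\rho_\epsilon\in\mathcal S$, $J_\epsilon$ sends every $H_{\mathbf x}^s$ (including negative $s$) continuously into $H_{\mathbf x}^\infty:=\bigcap_s H_{\mathbf x}^s$, is self-adjoint on $L_{\mathbf x}^2$, and commutes with every constant-coefficient derivative. Using Definition \ref{D:ClassB}(1) to commute $J_\epsilon$ past the distributional $\partial_t$ and Definition \ref{D:ClassB}(3) to identify $\partial_t u$, I find $J_\epsilon u\in C^1_t H_{\mathbf x}^\infty$ with
$$
\partial_t J_\epsilon u \;=\; -\partial_x\Delta J_\epsilon u-\partial_x J_\epsilon(u^2),
$$
where $u(t)^2\in L_{\mathbf x}^2$ is ensured by the embedding $H_{\mathbf x}^1\hookrightarrow L_{\mathbf x}^4$. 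Pairing in $L_{\mathbf x}^2$ against $J_\epsilon u$, the dispersive term vanishes by skew-adjointness of $\partial_x\Delta$, and self-adjointness of $J_\epsilon$ together with $[\partial_x,J_\epsilon]=0$ produces
$$
\tfrac12\frac{d}{dt}\|J_\epsilon u(t)\|_{L_{\mathbf x}^2}^2 \;=\; \bigl\langle u(t)^2,\,J_\epsilon^2\partial_x u(t)\bigr\rangle_{L_{\mathbf x}^2}.
$$

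I then integrate from $t_1$ to $t_2$ and send $\epsilon\to 0$: the left side tends to $\tfrac12(\|u(t_2)\|_{L^2}^2-\|u(t_1)\|_{L^2}^2)$ since $J_\epsilon u(t)\to u(t)$ in $L_{\mathbf x}^2$, while on the right, $J_\epsilon^2\partial_x u(t)\to\partial_x u(t)$ in $L_{\mathbf x}^2$ with $u(t)^2\in L_{\mathbf x}^2$, so the integrand converges pointwise in $t$ to $\langle u(t)^2,\partial_x u(t)\rangle$; Definition \ref{D:ClassB}(2) together with Cauchy--Schwarz supply the uniform domination $|\langle u^2,J_\epsilon^2\partial_x u\rangle|\le\|u\|_{L^4}^2\|\partial_x u\|_{L^2}\lesssim\|u\|_{H^1}^3\le C^3$ needed for dominated convergence. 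It remains to verify $\langle u(t)^2,\partial_x u(t)\rangle=\tfrac13\int\partial_x(u^3)\,d\mathbf x=0$ for an arbitrary $u(t)\in H_{\mathbf x}^1(\mathbb R^3)$. Both $u^3\in L_{\mathbf x}^1$ (via $H^1\hookrightarrow L^3$ in dimension three) and $\partial_x(u^3)=3u^2\partial_x u\in L_{\mathbf x}^1$ (Cauchy--Schwarz using $u\in L^4$) hold, so I cut off with $\chi_R(\mathbf x):=\chi(\mathbf x/R)$ for some $\chi\in C_c^\infty$ equal to $1$ near the origin and integrate by parts to obtain
$$
\int\chi_R\,\partial_x(u^3)\,d\mathbf x \;=\; -\int(\partial_x\chi_R)\,u^3\,d\mathbf x \;=\; O\bigl(R^{-1}\|u\|_{L^3}^3\bigr)\to 0,
$$
while the left side converges to $\int\partial_x(u^3)\,d\mathbf x$ by dominated convergence. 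This yields mass conservation.
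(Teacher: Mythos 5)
Your argument is correct, and it reaches the conclusion by a genuinely softer route than the paper's. The paper regularizes with the Littlewood--Paley projection $P_{<N}$ rather than a mollifier and proves a \emph{quantitative} near-conservation law: after inserting $I=P_{<N}+P_{>N}$ into the flux term, the all-low-frequency trilinear piece is annihilated by the exact identity $\int P_{<N}u\,P_{<N}u\,\partial_xP_{<N}u\,d\mathbf{x}=0$ up to the commutator $(P_{<N}^2-P_{<N})u$, and every remaining term then carries a factor at frequency $\gtrsim N$, so H\"older, Sobolev embedding and $\|P_{>N}u\|_{\dot H_{\mathbf{x}}^{1/2}}\lesssim N^{-1/2}\|u\|_{\dot H_{\mathbf{x}}^1}$ give $\big|\partial_t\|P_{<N}u\|_{L^2_{\mathbf{x}}}^2\big|\lesssim N^{-1/2}\|u\|_{\dot H^1_{\mathbf{x}}}^3$, after which one integrates in time and sends $N\to\infty$. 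You instead keep the nonlinear flux exact at the regularized level and dispose of it only in the limit, via strong $L^2$ convergence of $J_\epsilon^2\partial_x u(t)$, dominated convergence in $t$ (legitimate by the uniform $H^1$ bound in Definition~\ref{D:ClassB}), and the exact cancellation $\la u^2,\partial_x u\ra=\tfrac13\int\partial_x(u^3)\,d\mathbf{x}=0$ at $H^1(\mathbb{R}^3)$ regularity; the one step you treat as known, the chain rule $\partial_x(u^3)=3u^2\partial_xu$ for $u\in H^1$, is standard, and can be bypassed entirely by noting that $v\mapsto\int v^2\partial_xv\,d\mathbf{x}$ is continuous on $H^1$ and vanishes on $C_c^\infty$. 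What each approach buys: yours is shorter and more elementary, requiring no paraproduct splitting, no Bernstein gain and no commutator observation, and it isolates the only real input, namely that the trilinear form vanishes at $H^1$ regularity; the paper's frequency-truncated version yields a rate and a frequency-localized near-conservation statement that the authors reuse later (see the remark after Lemma~\ref{L:Ipm-estimates}, where the same error management is invoked to justify the weighted monotonicity computations for Class~B solutions, a setting in which your soft $\epsilon\to0$ limit would have to be reworked to accommodate the weights).
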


This is proved in \S\ref{S:ClassB-mass} by computing $\partial_t \| P_{\leq N}u\|_{L_{\mathbf{x}}^2}^2$, deducing a near conservation law with error bounded by $N^{-1/2}$, and then sending $N\to \infty$.  We note that a similar method does not work to prove energy conservation.

\begin{definition}[orbital stability]
\label{D:orb-stab}
Let $\alpha>0$.  We say that $u$ is an \emph{$\alpha$-orbitally stable} solution to the 3D ZK if $u$ is a Class B solution such that 
$$
\sup_{t\in \mathbb{R}}  \inf_{\substack{\mathbf{a}(t) \in \mathbb{R}^3 \\ c(t) \in (0,+\infty)}} \| c(t)^2 u(c(t)\mathbf{x}+\mathbf{a}(t), t) -Q( \mathbf{x}) \|_{H_{\mathbf{x}}^1} \leq \alpha.
$$
\end{definition}

\begin{lemma}[unique parameters]
\label{L:geom-decomp}
There exists $\alpha>0$ sufficiently small so that, if $u$ is a Class B $\alpha$-orbitally stable solution to the 3D ZK, then there exist \emph{unique} translation $\mathbf{a}(t)$ and scale parameters $c(t)>0$ so that $\epsilon$ defined by
$$\epsilon(\mathbf{x},t) = c(t)^2u(c(t)\mathbf{x}+\mathbf{a}(t),t) - Q(\mathbf{x})$$
satisfies, for all $t$, the orthogonality conditions
$$
\la \epsilon(t), \nabla Q \ra =0 \quad \mbox{and} \quad \la \epsilon(t), Q^2 \ra =0,
$$
and
$$
\| \epsilon \|_{L_t^\infty H_{\mathbf{x}}^1} \lesssim \alpha.
$$
Let $\mathcal L = I-\Delta -2Q$ and define
$$
f\defeq \frac{\mathcal{L}\partial_x (Q^2)}{\la \Lambda Q, Q\ra} \,, \qquad \mathbf{g} \defeq \left(  \frac{ \mathcal{L}Q_{xx} }{ \|Q_x\|_{L_{\mathbf{x}}^2}}, \frac{\mathcal{L}Q_{xy}}{ \|Q_y\|_{L_{\mathbf{x}}^2}}, \frac{\mathcal{L}Q_{xz} }{ \|Q_z\|_{L_{\mathbf{x}}^2}}\right).
$$
Denote $b(t) = \|\epsilon(t)\|_{L_{\mathbf{x}}^2}$.  
Then the parameters  $c(t)$ and $\mathbf{a}(t)$ are $C^{1,\frac23}$ and satisfy
$$
|\,c^2c' - \la \epsilon, f\ra | + |\, c(\mathbf{a}' - c^{-2} \mathbf{i}) - \la \epsilon, \mathbf{g}\ra | \lesssim b(t)^2.
$$
\end{lemma}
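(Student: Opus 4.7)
The plan is to construct the modulation parameters by an implicit function theorem (IFT), derive the modulation ODEs by projecting the 3D ZK equation onto the orthogonality directions, and bootstrap from continuity to $C^{1,2/3}$ regularity.

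\emph{Existence and uniqueness.} Consider the map
$$\Phi(\mathbf{a}, c, v) = \bigl( \la c^2 v(c\cdot+\mathbf{a})-Q, \nabla Q\ra, \la c^2 v(c\cdot+\mathbf{a})-Q, Q^2\ra \bigr) \in \mathbb{R}^4.$$
At the base point $(\mathbf{a}, c, v) = (0, 1, Q)$, $\Phi = 0$ and the differential in $(\mathbf{a}, c)$ sends $(\delta\mathbf{a}, \delta c)$ to $\delta\mathbf{a}\cdot\nabla Q + \delta c\,\Lambda Q$, where $\Lambda Q = 2Q + \mathbf{x}\cdot\nabla Q$. Radial symmetry of $Q$ makes $\partial_i Q$ odd in $x_i$ and even in the others, so $\la\partial_i Q, \partial_j Q\ra = \delta_{ij}\|\partial_i Q\|_{L^2}^2$, while $\la\partial_i Q, Q^2\ra$ and $\la \Lambda Q, \partial_i Q\ra$ vanish by parity, and $\la \Lambda Q, Q^2\ra = \int Q^3 \neq 0$ (integration by parts). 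Thus the Jacobian matrix is diagonal with nonzero entries. Orbital stability provides approximate parameters $(\tilde{\mathbf{a}}(t), \tilde c(t))$ at each $t$ with the modulated function $\alpha$-close to $Q$ in $H^1$; applying the IFT to $\Phi(\cdot,\cdot,u(t))$ near $(\tilde{\mathbf{a}}, \tilde c)$ produces unique $(\mathbf{a}(t), c(t))$ satisfying the four orthogonality conditions, with $\|\epsilon(t)\|_{H^1}\lesssim \alpha$. Uniqueness inside the full orbital-stability neighborhood follows because the Jacobian remains invertible on a slightly larger set, combined with a standard continuity argument.

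\emph{Modulation ODEs.} Writing $\tilde u(\mathbf{y},t) = c^2 u(c\mathbf{y}+\mathbf{a},t) = Q(\mathbf{y}) + \epsilon(\mathbf{y},t)$ and substituting into 3D ZK, using $\Delta Q + Q^2 = Q$ to cancel zeroth-order terms and combining the linearization term $-\partial_{y_1}\epsilon$ with $c^2\mathbf{a}'\cdot\nabla\epsilon$ into a single doubly-small term, the chain rule yields
$$c^3\partial_t\epsilon = c^2 c'\Lambda Q + c^2(\mathbf{a}' - c^{-2}\mathbf{i})\cdot\nabla Q + \partial_{y_1}\mathcal{L}\epsilon - \partial_{y_1}\epsilon^2 + c^2 c'\Lambda\epsilon + c^2(\mathbf{a}'-c^{-2}\mathbf{i})\cdot\nabla\epsilon.$$
Testing against $Q^2$, the left-hand side vanishes by preservation of orthogonality, $\la\nabla Q, Q^2\ra = 0$ by parity, and $\la \partial_{y_1}\mathcal{L}\epsilon, Q^2\ra = -\la\epsilon, \mathcal{L}\partial_{y_1}Q^2\ra$ by integration by parts and self-adjointness of $\mathcal{L}$. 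The doubly-small and quadratic pieces are $O(b^2)$ once $|c'|+|\mathbf{a}'-c^{-2}\mathbf{i}|\lesssim b$ (closed via bootstrap). After dividing by the normalization $\la\Lambda Q, Q^2\ra$ (expressed in terms of $\la\Lambda Q, Q\ra$ by the Pohozaev identities in $\mathbb{R}^3$), one obtains $c^2 c' = \la\epsilon, f\ra + O(b^2)$. Testing against $\partial_j Q$ and using $\la\Lambda Q, \partial_j Q\ra = 0$ (parity) produces $c(\mathbf{a}' - c^{-2}\mathbf{i}) = \la\epsilon, \mathbf{g}\ra + O(b^2)$ component by component.

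\emph{Regularity.} Interpolating the Class B bounds $u \in L^\infty_t H^1$ and $\partial_t u \in L^\infty_t H^{-2}$ gives $u \in C^{2/3}_t H^{-1}_{\mathbf{x}}$. Since the test vectors $\nabla Q, Q^2, \mathcal{L}\partial_{y_1}Q^2, \mathcal{L}Q_{xj}$ are Schwartz, the pairings that define $\la\epsilon, f\ra$ and $\la\epsilon, \mathbf{g}\ra$ are $C^{2/3}$ in $t$. The IFT first delivers continuity of $(\mathbf{a}, c)$; this improves to $C^{2/3}$ by tracing the Hölder regularity through the IFT, and then the modulation ODEs promote to $C^{1,2/3}$.

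\emph{Main obstacle.} The main delicacy is that Class B solutions only satisfy the equation in the $H^{-2}_{\mathbf{x}}$ dual pairing, so the passage ``differentiate $\la\epsilon, Q^2\ra$ in time and substitute the equation'' must be justified via duality against $H^2_{\mathbf{x}}$ test functions (with $Q$ Schwartz supplying them). A secondary bootstrap issue is that $c'$ and $\mathbf{a}'$ appear on both sides of the ODEs; closing the $O(b^2)$ error requires first bounding $|c'|+|\mathbf{a}'-c^{-2}\mathbf{i}|$ by $O(b)$ from the ODEs themselves, then iterating to recover the quadratic bound.
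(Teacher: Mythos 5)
Your existence--uniqueness step and your derivation of the parameter ODEs follow essentially the paper's route: an implicit function theorem for the map $u\mapsto(c,\mathbf{a})$ built from pairings against Schwartz test functions, then testing the $\epsilon$-equation against $Q^2$ and $\nabla Q$, using parity and self-adjointness of $\mathcal{L}$, and inverting the resulting nearly-diagonal system (your two-step bootstrap is the same mechanism as the paper's Neumann-series inversion of $A-B(\epsilon)$ in Lemma \ref{L:ODE-bounds}). The genuine gap is in the regularity claim. To invoke ``the modulation ODEs'' at all you must already know that $c(t)$ and $\mathbf{a}(t)$ are differentiable in $t$, and your plan never produces this: from the IFT applied with $u\in C^{2/3}_tH^{-1}_{\mathbf{x}}$ you only get that $t\mapsto(c,\mathbf{a})$ inherits a $C^{2/3}$ modulus of continuity, and H\"older continuity does not yield a derivative (note $u$ is \emph{not} $C^1$ into $H^{-1}_{\mathbf{x}}$, since a Class B solution only has $\partial_t u\in C_tH^{s-3}$ for $s<1$). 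Moreover, the statements $|c^2c'-\la\epsilon,f\ra|\lesssim b^2$ are estimates with an $O(b^2)$ error; they cannot by themselves ``promote'' $C^{2/3}$ to $C^{1,2/3}$ --- for that one needs an exact representation of $c'$, $\mathbf{a}'$ whose time-regularity can be read off.

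The missing idea is the paper's device of running the implicit function theorem at very low regularity: Lemma \ref{L:implicit1} is stated for arbitrary $H^{s,k}_{\mathbf{x}}$ precisely so that one can take $s=-4$, where $t\mapsto u(t)$ is genuinely $C^1$; then $c(t)=c(u(t))$ is differentiable by the chain rule, $c'(t)=\la c'(u(t)),u'(t)\ra$, and the $\frac23$-H\"older continuity of $c'$ follows from the Lipschitz continuity of the Fr\'echet derivative $u\mapsto c'(u)$ as a map $H^{-4}_{\mathbf{x}}\to H^{4}_{\mathbf{x}}$ together with the bound $\|\partial_t u(t)-\partial_t u(t')\|_{H^{-4}_{\mathbf{x}}}\lesssim|t-t'|^{2/3}$ of \eqref{E:wk-107b}. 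That last estimate does not follow from interpolating the Class B bounds (which is all your $u\in C^{2/3}_tH^{-1}_{\mathbf{x}}$ uses); it requires writing the equation for the difference $\partial_t(u_2-u_1)$ and the product Sobolev estimate \eqref{E:prod-Sob}. Once differentiability is secured in this way, your alternative ending (the exactly inverted ODE system plus H\"older continuity of the pairings of $\epsilon$ against Schwartz functions) can also be made to close, but as written the differentiability step --- and hence the $C^{1,\frac23}$ conclusion --- is not justified.
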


This is proved in \S\ref{S:geom-decomp} by an implicit function theorem argument.  The equations for the parameters follow by differentiating the orthogonality conditions in time.
We mention that parameter estimates can be found in \cite{FHRY, FHR1,FHR2,FHR3, FHR4}.

Our main theorem is the following

\begin{theorem}[main theorem for Class B]
\label{T:main}
For $\alpha \ll 1$, any $\alpha$-orbitally stable Class B solution $u$ to the 3D ZK with $M(u)=M(Q)$ is \emph{asymptotically stable} in the following sense:  there exists $c_*$ such that $|c_*-1| \lesssim \alpha$ such that as $t\to +\infty$, 
$$
c(t) \to c_* \,,  \qquad \mathbf{a}'(t) \to c_*^{-2} \mathbf{i}
$$
and 
$$
c(t)^2 u(c(t) \mathbf{x} + \mathbf{a}(t),t) \rightharpoonup Q(\mathbf{x}) \quad \text{ (weakly) in } H_{\mathbf{x}}^1.
$$  
Moreover, for any $\delta\gtrsim \alpha$, we have strong convergence in $L^2_{\mathbf{x}}$ on the conic right-half space
\begin{equation}
\label{E:window-conv}
\| c(t)^2 u(c(t) \mathbf{x} + \mathbf{a}(t),t) - Q(\mathbf{x}) \|_{L^2_{\mathbf{x}}(x> (-1+\delta) t -\sqrt{y^2+z^2}\tan \theta )} \to 0
\end{equation}
for all $\theta$ such that 
$$0\leq  \theta \leq \frac{\pi}{3}-\delta.$$
\end{theorem}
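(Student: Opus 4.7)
The plan is to combine weighted monotonicity estimates with a rigidity argument for limiting profiles along time sequences. Let $\epsilon$, $c(t)$, $\mathbf{a}(t)$ be the geometric decomposition from Lemma~\ref{L:geom-decomp}. For a smooth weight $\phi(\mathbf{y}, t)$ that is monotonically increasing in the tilted direction $x - \sqrt{y^2 + z^2}\tan\theta$, I would differentiate $\int u(\mathbf{y} + \mathbf{a}(t), t)^2 \phi\, d\mathbf{y}$ in time. Integrating the dispersive term $\partial_x \Delta u$ by parts against $\phi$ produces a quadratic form in $\nabla u$ whose coefficient matrix is positive definite exactly in the cone $\theta < \pi/3$ --- the sharp aperture in which the linear 3D~ZK group velocity points rightward --- while the modulation contribution from $\mathbf{a}'(t) \approx c^{-2}\mathbf{i}$ cancels the pure $x$-translation and the cubic $\partial_x u^2$ term is a small error by orbital stability. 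Integrating in time against a family of such weights yields both the strong $L^2_{\mathbf{x}}$ convergence on the conic right-half region in~\eqref{E:window-conv} and, for any eventual subsequential limit profile, uniform-in-time exponential spatial decay to the right.

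Given any sequence $t_n \nearrow \infty$, set $u_n(\mathbf{x}, s) \defeq c(t_n)^2 u(c(t_n)\mathbf{x} + \mathbf{a}(t_n + s), t_n + s)$. Orbital stability bounds $u_n$ uniformly in $L^\infty_t H^1_{\mathbf{x}}$ and Class~B regularity bounds $\partial_t u_n$ in $L^\infty_t H^{-2}_{\mathbf{x}}$, so weak compactness combined with Aubin--Lions extracts a subsequential Class~B limit $\tilde u$ inheriting the spatial decay from the monotonicity. This decay is the key input for a \emph{regularity boost}: Littlewood--Paley decomposing $\tilde u = \sum_N P_N \tilde u$, running the Ribaud--Vento local-in-time Strichartz estimates on each dyadic piece, and using the exponential right-decay to absorb the derivative loss from the quadratic nonlinearity through a discrete Gronwall iteration produces $\tilde u \in L^\infty_t H^s_{\mathbf{x}}$ for every $s < \infty$. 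Hence $\tilde u$ is a smooth strong solution of the 3D~ZK, with a smooth, spatially decaying decomposition $\tilde u = Q + \tilde\epsilon$.

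The rigidity claim $\tilde u \equiv Q$ is proved by contradiction and is the main obstacle. If it failed, there would exist a sequence of orbitally stable Class~B solutions $u^{(n)}$ with $M(u^{(n)}) = M(Q)$ whose limit profiles satisfy $b_n \defeq \|\tilde\epsilon^{(n)}(\cdot,0)\|_{L^2_{\mathbf{x}}} > 0$. After reducing to $b_n \to 0$ (the regime $b_n \gtrsim 1$ is handled directly on the nonlinear equation by the same scheme), set $\eta_n \defeq \tilde\epsilon^{(n)}/b_n$. The regularity boost lets us pass to a subsequential limit $\eta \not\equiv 0$ solving the \emph{linear linearized equation}
\[
\partial_t \eta + \partial_x \Delta \eta - 2\partial_x(Q\eta) = 0,
\]
with exponential right-decay, orthogonality $\la \eta, \nabla Q\ra = \la \eta, Q^2\ra = 0$ inherited from Lemma~\ref{L:geom-decomp}, and a mass-compatibility condition from $M(u^{(n)}) = M(Q)$. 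Introducing the dual variable $w \defeq \mathcal{L}^{-1}\eta$ and running a positive-commutator virial identity with a tilted weight yields a coercive bound of the form $\int_0^T \la \mathcal{H}\eta,\eta\ra\, dt \lesssim \|\eta\|_{L^\infty_t L^2_{\mathbf{x}}}^2$, \emph{provided} a finite-dimensional spectral positivity condition for $\mathcal{H}$ holds modulo the imposed orthogonalities --- this is precisely the step that fails under the orthogonality choices of~\cite{CPMS} when ported to three dimensions, and I would verify it here by rigorous numerical computation. The estimate then forces $\eta \equiv 0$, a contradiction; hence $\tilde u \equiv Q$ on every subsequence and uniqueness of the weak limit upgrades this to the full limit~\eqref{E:main-weak}. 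Mass conservation together with the strong $L^2$ convergence on the conic right-half space forces $c_* = 1$, the convergence $c(t) \to c_*$ follows from uniqueness in Lemma~\ref{L:geom-decomp}, and $\mathbf{a}'(t) \to c_*^{-2}\mathbf{i}$ drops out of the modulation ODE once $\|\epsilon(t)\|_{L^2_{\mathbf{x}}}$ tends to $0$ on bounded regions. Besides the spectral verification, the most delicate technical point is making the regularity boost quantitative enough to survive renormalization by $b_n \to 0$, since the Ribaud--Vento estimates live at regularity only slightly above $H^1$ and the Gronwall iteration must be arranged so that the exponential spatial decay compensates the derivative loss.
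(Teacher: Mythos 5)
Your overall scheme is the paper's: conic monotonicity estimates, weak limits along time sequences, a Littlewood--Paley/Ribaud--Vento/discrete-Gronwall regularity boost, and a rigidity argument by renormalizing the remainder to a solution of the linear linearized equation killed by a dual-variable virial estimate with a numerically verified spectral condition. But there is a genuine gap in how you close the argument. Your claim that ``mass conservation together with the strong $L^2$ convergence on the conic right-half space forces $c_*=1$'' is false: the strong convergence only holds in the rightward conic region, while mass is in general radiated leftward (outside that region), so you cannot equate $\|c_*^{-2}Q(c_*^{-1}\cdot)\|_{L^2}^2$ with $M(u)=M(Q)$. Relatedly, the rigidity step only yields that each subsequential limit is \emph{some} rescaled, translated soliton $c_+^{-2}Q(c_+^{-1}(\mathbf{x}-\mathbf{a}_+ - tc_+^{-2}))$, with $c_+$ a priori depending on the sequence $t_n$; asserting $\tilde u\equiv Q$ and invoking ``uniqueness of the weak limit'' does not give $c(t)\to c_*$ or a sequence-independent limit. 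What is missing is a quantity defined directly from $u$, independent of the sequence, that pins down the limiting scale. The paper does this by defining $c_*^{-1}$ as the limiting rightward-localized mass \eqref{E:pf-main-3}, proving that the $\limsup$ is a limit and obtaining the ``decay on the left'' estimate \eqref{E:pf-main-5} by applying the $I_+$ monotonicity with \emph{two different transition speeds} (e.g.\ $x\approx -\tfrac{1}{10}t$ versus $x\approx -\tfrac{19}{20}t$) and showing the mass trapped between the two lines vanishes; then the exponential decay of $\tilde u$ gives $M(\tilde u)=c_*^{-1}M(Q)$, which identifies $c_+=c_*$ for every subsequence.

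The same two-speed comparison is also what you need for the conclusion \eqref{E:window-conv} itself: the region $x>(-1+\delta)t-\sqrt{y^2+z^2}\tan\theta$ extends far to the left of the soliton, and a single rightward-monotone weight (your ``family of such weights'') only controls the region to the right of a transition line near the soliton; without the left-decay estimate your argument does not cover the strip between $x\approx -(1-\delta)t$ and the soliton. Two smaller points: your limiting linearized equation has the wrong sign on the $Q$-term and omits the modulation source terms $\alpha\Lambda Q+\boldsymbol{\beta}\cdot\nabla Q$, which necessarily survive the limit and are exactly what the orthogonality conditions are designed to handle in the Liouville/virial step; and in the paper the dual problem is $v=\mathcal{L}w$ (not $w=\mathcal{L}^{-1}\eta$, which would require inverting $\mathcal{L}$ on its kernel's complement and is avoidable).
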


The proof of Theorem \ref{T:main} follows from Propositions \ref{P:wk-lim} and \ref{P:rigidity} below, as detailed in \S \ref{S:proof-main-theorem}.  It is deduced from these main results plus the monotonicity estimate in \S\ref{S:monotonicity}, in particular, Lemma \ref{L:Ipm-estimates}, which gives an estimate on the mass of the solution in a conic right-half space region $(\cos \theta, \sin\theta) \cdot (x+(1-\delta) t, \sqrt{1+y^2+z^2})>0$, in the reference frame, where the soliton is at the origin.  Specifically, it estimates this cut-off mass in the future in terms of its value in the past.  In \S \ref{S:proof-main-theorem}, this is applied to give a ``decay on the right'' estimate in the conic region depicted in Figure \ref{F:decay}.  But it can be applied for two different slopes (for example $x>-\frac{1}{10}t$  and $x>-\frac{19}{20}t$) to show that both regions asymptotically trap the same mass, and thus, the region \emph{between} these lines has asymptotically vanishing mass.  This results in a ``decay on the left'' estimate also depicted in Figure \ref{F:decay}.  By the decay on the right and decay on the left estimates, it suffices to prove that the solution in the soliton region $|\mathbf{x}| \lesssim r$ converges weakly to a rescaling of $Q(\mathbf{x})$.  This is accomplished in Propositions \ref{P:wk-lim} and \ref{P:rigidity}.

\begin{proposition}[construction of a smooth spatially decaying asymptotic solution]
\label{P:wk-lim}
There exists $\alpha_0>0$ such that for all $0< \alpha \leq \alpha_0$, the following holds.  Let $u$ be an $\alpha$-orbitally stable Class B solution to the 3D ZK with $M(u)=M(Q)$, and let $c(t)>0$ and $\mathbf{a}(t) \in \mathbb{R}^3$ be the associated modulation parameters of scale and position given by Lemma \ref{L:geom-decomp}.  For each sequence of times $t_m \nearrow +\infty$, there exists a subsequence $t_{m'} \nearrow +\infty$ such that for each $t\in \mathbb{R}$, 
$$
u(\mathbf{x} + \mathbf{a}(t_{m'}), t+ t_{m'}) \rightharpoonup \tilde u(\mathbf{x}, t) \quad \text{(weakly) in }H_{\mathbf{x}}^1,
$$
where $\tilde u$ is a \emph{smooth} $\alpha$-orbitally stable solution to the 3D ZK.  Moreover, letting  $\tilde c(t)>0$ and $\tilde{\mathbf{a}}(t)\in \mathbb{R}^3$ be the modulation parameters associated to $\tilde u$ given by Lemma \ref{L:geom-decomp}, we have the uniform-in-time spatial decay property:  for each  $r>0$, 
$$
\| \tilde u( \mathbf{x} + \tilde{\mathbf{a}}(t), t) \|_{L_{t\in \mathbb{R}}^\infty L^2_{\mathbf{x}}(|\mathbf{x}|>R)} \lesssim e^{- \frac{R}{32}}.
$$
\end{proposition}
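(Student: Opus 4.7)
The overall plan is a compactness extraction to produce $\tilde u$, followed by two nontrivial upgrades: a uniform-in-time spatial decay estimate and a regularity boost from Class B to $C^\infty$.

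First, I would extract $\tilde u$ by Banach--Alaoglu plus a diagonal argument. Orbital stability gives $\sup_t \|u(t)\|_{H_{\mathbf{x}}^1} \lesssim 1$, while Class B membership gives $\sup_t \|\partial_t u(t)\|_{H_{\mathbf{x}}^{-2}} \lesssim 1$; hence the translated sequence $u_m(\cdot,t) := u(\cdot + \mathbf{a}(t_m), t+t_m)$ is uniformly bounded in $H^1_{\mathbf{x}}$ and equicontinuous in $t$ as a map into $H^{-2}_{\mathbf{x}}$. Together with the compact local embedding $H^1 \hookrightarrow H^s_{loc}$ for $s<1$, a diagonal extraction across a countable dense set of times yields a subsequence with $u_{m'}(\cdot,t) \wkto \tilde u(\cdot,t)$ in $H_{\mathbf{x}}^1$ for every $t\in \mathbb{R}$. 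The linear term $\partial_x \Delta u_{m'}$ passes to the limit against test functions by duality, and the quadratic term $\partial_x u_{m'}^2$ passes by strong $L^2_{loc}$ convergence from Rellich. Thus $\tilde u$ is a Class B solution, and the $\alpha$-orbital stability of $\tilde u$ follows from weak lower semicontinuity of the $H^1$ norm applied to the orbital-stability inequality for the translates of $u$.

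Next, I would transfer exponential spatial decay to $\tilde u$, uniformly in time. The monotonicity estimate (Lemma \ref{L:Ipm-estimates}) controls, in the soliton-centered frame, the mass of $u$ to the right of any half-space whose normal lies in a cone of aperture up to $\pi/3$. Combining this with $M(u) = M(Q)$ (which forces all but $O(\alpha^2)$ of the mass to be trapped near $\mathbf{a}(t)$), and applying the estimate for multiple rotated directions, I obtain, for each $R>0$,
$$
\| u(\cdot + \mathbf{a}(t),t) \|_{L^2(|\mathbf{x}|>R)}^2 \lesssim e^{-R/16}
$$
for $t \geq T_R$ large enough. Since $t_m \to \infty$ exceeds any $T_R$, weak lower semicontinuity of the $L^2(\{|\mathbf{x}|>R\})$ norm transfers the bound to $\tilde u$ at any fixed $t\in \mathbb{R}$ (shifting the sequence by $t$ if necessary), yielding the claimed uniform-in-time $e^{-R/32}$ decay after absorbing constants.

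The main obstacle is the regularity boost from Class B to smooth. My plan is to exploit the uniform spatial decay of $\tilde u$ together with the Ribaud--Vento local theory at regularity $s$ slightly above $1$. Using a Littlewood--Paley decomposition $\tilde u = \sum_{N} P_N \tilde u$, the uniform exponential decay converts weighted $L^2$ control into Bernstein-type gains, and in the equation for $P_N \tilde u$ the quadratic term picks up only frequency pairs with at least one factor $\gtrsim N$. On a short time interval whose length is dictated by the Ribaud--Vento local theory at $H^s$, one obtains a trilinear-type recursion
$$
\|P_N \tilde u\|_{X^s} \lesssim \|P_N \tilde u(t_0)\|_{H^s} + \sum_{N_1 \vee N_2 \gtrsim N} c(N,N_1,N_2)\, \|P_{N_1} \tilde u\|_{X^s}\, \|P_{N_2} \tilde u\|_{X^s},
$$
in which the coefficients are summable precisely because the spatial decay tames the low-frequency contributions. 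A discrete Gronwall / bootstrap argument across the dyadic scales $N = 2^k$ then propagates $H^s$ regularity, and iterating in $s$ yields $\tilde u \in C^\infty$. I expect the subtle points to be reconciling the Ribaud--Vento local time of existence (which is essentially fixed by the $H^s$-norm at the initial time) with the need to propagate regularity globally in $t$, and to ensure the weighted/decay hypothesis is preserved through each step of the bootstrap. Once smoothness is in hand, $\tilde u$ is a strong solution to 3D ZK and all conclusions of the proposition are established.
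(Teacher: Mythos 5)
Your extraction step (Banach--Alaoglu, diagonal argument over a countable dense set of times, equicontinuity in a negative Sobolev norm, Rellich for the nonlinearity, weak lower semicontinuity for orbital stability) matches the paper's Lemma \ref{L:soft-step}. The two later steps, however, contain genuine gaps. First, the intermediate claim $\| u(\cdot+\mathbf{a}(t),t)\|_{L^2(|\mathbf{x}|>R)}^2 \lesssim e^{-R/16}$ for $t\ge T_R$ is false in general: orbital stability only confines all but $O(\alpha^2)$ of the mass near the soliton, and that residual radiation does not vanish at finite times --- it sits at distance $\sim t$ to the left of $\mathbf{a}(t)$, hence inside $\{|\mathbf{x}|>R\}$ for every fixed $R$ (indeed \eqref{E:pf-main-11} plus $M(u)=M(Q)$ shows the exterior mass tends to $(1-c_*^{-1})M(Q)$, which is $O(\alpha)$ but not $o(1)$ in $R$). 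Moreover, ``applying the estimate for multiple rotated directions'' cannot control the left half-space, since the monotonicity only works for rightward-moving reference hyperplanes within the $\pi/3$ cone about the positive $x$-axis. The correct route (the paper's \S\ref{S:proof-main-theorem} and \S\ref{S:app-mon}) is to transfer two \emph{one-sided} statements separately: the conic decay-on-the-right estimate \eqref{E:pf-main-1}, and a decay-on-the-left estimate \eqref{E:pf-main-5} obtained not from a single monotonicity bound but from comparing two different speed lines (e.g.\ $x>-\tfrac1{10}t$ and $x>-\tfrac{19}{20}t$) and showing both trap asymptotically the same mass, so the strip between them is asymptotically empty; only after passing to the weak limit, where the left transition line $x=-\tfrac{19}{20}(t+t_{m'})$ recedes to $-\infty$, does $\tilde u$ acquire genuine two-sided exponential decay.

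Second, your regularity boost has no engine. The recursion you write bounds $\|P_N\tilde u\|_{X^s}$ by $\|P_N\tilde u(t_0)\|_{H^s}$ plus quadratic terms in the same $X^s$ norms; such a scheme can only \emph{propagate} $H^s$ regularity from a time where it is already known, but $\tilde u$ is merely a Class~B ($H^1_{\mathbf x}$) solution at every time, so there is nothing to propagate, and Bernstein by itself converts Lebesgue exponents at fixed frequency without producing any derivative gain. The gain has to be \emph{generated} from the spatial decay via a smoothing mechanism: in the paper this is the frequency-projected virial identity \eqref{E:HR10}, where the weight $x$ paired with $\partial_x\Delta$ produces $\tfrac32|\partial_xP_Nu|^2+\tfrac12|\partial_yP_Nu|^2+\tfrac12|\partial_zP_Nu|^2$, so that integrating in time and using the exponential decay yields the averaged-in-time gain $\|u\|_{L_I^2H^{5/4-\theta}}\lesssim 1$ (Lemma \ref{L:L2boost}); only then do the Ribaud--Vento maximal-function estimates and the discrete Gronwall in frequency (Lemmas \ref{L:maximal-compare}, \ref{L:reg-boost-last}) upgrade this to an $L_I^\infty H^{9/8}$ bound, which is iterated to smoothness. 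Without an analogue of this weighted virial/local-smoothing input, your dyadic bootstrap cannot get off the ground. (A smaller omission: the uniform decay in the proposition is stated around $\tilde{\mathbf a}(t)$, so you also need the parameter convergence $\mathbf{a}(t+t_{m'})-\mathbf{a}(t_{m'})\to\tilde{\mathbf a}(t)$, $c(t+t_{m'})\to\tilde c(t)$ as in \eqref{E:param-conv}.)
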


\begin{proposition}[rigidity of orbitally stable smooth solutions with spatial decay]
\label{P:rigidity}
There exists $\alpha_0>0$ such that for all $0< \alpha \leq \alpha_0$, the following holds.  Let $\tilde u$ be a smooth $\alpha$-orbitally stable solution to the 3D ZK with associated modulation parameters $\tilde c(t)>0$ and $\tilde{\mathbf{a}}(t)\in \mathbb{R}^3$ given by Lemma \ref{L:geom-decomp}.  Suppose that $\tilde u$ satisfies the uniform-in-time spatial decay property:  for each $k\geq 0$, 
\begin{equation}
\label{E:intro-101}
\| \la  \mathbf{x}  \ra^k \tilde u( \mathbf{x} + \tilde{\mathbf{a}}(t), t) \|_{L_{t\in \mathbb{R}}^\infty L^2_{\mathbf{x}}} < \infty.
\end{equation}
Then there exists $c_+>0$ and $\mathbf{a}_+\in \mathbb{R}^3$ such that
$$
\tilde u( \mathbf{x}, t) = c_+^{-2} Q\left( c_+^{-1}(\mathbf{x} - \mathbf{a}_+- t\,c_+^{-2})\right).
$$
\end{proposition}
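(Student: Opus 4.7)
The plan is to argue by contradiction, reducing the rigidity statement to a vanishing statement for the \emph{linear linearized} equation, which is then killed by the virial/spectral apparatus.  Suppose the proposition fails: there exist $\alpha_n \to 0$ and smooth $\alpha_n$-orbitally stable solutions $\tilde u_n$ obeying \eqref{E:intro-101} uniformly in $n$, with associated remainders $\epsilon_n$ (from Lemma \ref{L:geom-decomp}) not identically zero.  Pass to the rescaled soliton frame $y = \tilde c_n^{-1}(\mathbf{x} - \tilde{\mathbf{a}}_n)$, $ds/dt = \tilde c_n^{-3}$; by Lemma \ref{L:geom-decomp} the modulation remainder $\mathcal{M}_n$ driven by $\tilde c_n' $ and $\tilde{\mathbf{a}}_n'$ is $O(b_n^2)$ with $b_n(s) := \|\epsilon_n(s)\|_{L^2_y}$, and $\epsilon_n$ satisfies
\[
\partial_s \epsilon_n - \partial_{y_1}\mathcal{L}\, \epsilon_n \;=\; -\partial_{y_1}(\epsilon_n^2) + \mathcal{M}_n.
\]

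After a time translation so that $b_n^\star := b_n(0)$ is within a factor $2$ of $\sup_{|s|\le T_n} b_n(s)$ on some interval with $T_n \to \infty$, set $\eta_n := \epsilon_n / b_n^\star$.  Then $\|\eta_n(0)\|_{L^2} = 1$, the $\eta_n$ inherit the orthogonality conditions $\la \eta_n, \nabla Q\ra = \la \eta_n, Q^2\ra = 0$ and the uniform weighted $L^2$ decay from \eqref{E:intro-101}, and they satisfy
\[
\partial_s \eta_n - \partial_{y_1}\mathcal{L}\, \eta_n \;=\; -\,b_n^\star\, \partial_{y_1}(\eta_n^2) + \mathcal{M}_n / b_n^\star,
\]
with $\mathcal{M}_n / b_n^\star = O(b_n^\star)$ in weighted norms (reading the quadratic parameter estimate of Lemma \ref{L:geom-decomp} divided by $b_n^\star$).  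Now apply the regularity-boost machinery developed for Proposition \ref{P:wk-lim} (Littlewood--Paley decomposition, Ribaud--Vento local-smoothing estimates, discrete Gronwall) to the $\eta_n$ equation to obtain uniform-in-$n$ bounds in $H^k_y$ on compact time intervals for each $k \geq 0$, with uniform weighted spatial decay.  Rellich--Kondrachov then furnishes a subsequence $\eta_n \rightharpoonup \eta$, weakly in $H^1$ and strongly in $L^2_{\loc}$; the uniform spatial decay prevents mass from escaping to infinity, so $\|\eta(0)\|_{L^2} = 1$.  Since $b_n^\star \to 0$, the nonlinear and modulation terms vanish in the limit and $\eta$ is a nontrivial solution of the \emph{linear linearized} equation
\[
\partial_s \eta \;=\; \partial_{y_1}\mathcal{L}\, \eta, \qquad \la \eta, \nabla Q\ra = \la \eta, Q^2\ra = 0,
\]
still satisfying the spatial-decay bounds of \eqref{E:intro-101}.

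To finish, invoke the virial estimate for the linear linearized equation, proved separately by a positive commutator argument on a dual formulation together with the numerical spectral check in \S \ref{S:numerics}.  For a positive localizing weight $\psi$ and any $T > 0$ this gives
\[
\int_{-T}^{T}\!\!\int_{\mathbb{R}^3} \psi(y)\,|\eta|^2 \, dy\, ds \;\lesssim\; \sup_s \|\eta(s)\|_{L^2}^2,
\]
and letting $T \to \infty$ forces $\eta \equiv 0$, contradicting $\|\eta(0)\|_{L^2} = 1$.  Hence no such sequence $\tilde u_n$ exists, so the original $\tilde u$ has $\epsilon \equiv 0$; the parameter ODEs of Lemma \ref{L:geom-decomp} then reduce to $\tilde c' = 0$ and $\tilde{\mathbf{a}}' = \tilde c^{-2}\mathbf{i}$, yielding $\tilde c \equiv c_+$ and $\tilde{\mathbf{a}}(t) = \mathbf{a}_+ + t\, c_+^{-2}\mathbf{i}$, whence the claimed closed form for $\tilde u$.

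The principal obstacle is the middle step (passage to the limit): one must both ensure non-triviality of $\eta$ and obtain the clean linear limit equation.  Non-triviality rests on upgrading weak $H^1$ convergence to strong $L^2_{\loc}$ convergence so that mass from $\|\eta_n(0)\|_{L^2}=1$ is not lost to spatial infinity, which in turn relies on the regularity-boost machinery and the weighted decay hypothesis \eqref{E:intro-101}; the clean limit equation requires the ratio $\mathcal{M}_n / b_n^\star$ to stay uniformly bounded, which follows from the quadratic parameter estimate in Lemma \ref{L:geom-decomp}.  The virial step itself is delegated to the spectral analysis verified numerically.
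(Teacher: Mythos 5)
Your overall architecture (contradiction, normalization of the remainder by its sup-in-time $L^2$ size, compactness to a limiting linearized flow, then a virial/spectral rigidity step) matches the paper's, but there is a genuine error at the heart of the limiting step. Lemma \ref{L:geom-decomp} does \emph{not} say the modulation forcing is $O(b_n^2)$: it says $|\,c^2c' - \la \epsilon, f\ra| + |\,c(\mathbf{a}'-c^{-2}\mathbf{i}) - \la \epsilon, \mathbf{g}\ra| \lesssim b^2$, so $c'$ and $\mathbf{a}'-c^{-2}\mathbf{i}$ are of size $O(b)$ with leading part \emph{linear} in $\epsilon$. Consequently the terms $c^2c'\Lambda Q + c^2(\mathbf{a}'-c^{-2}\mathbf{i})\cdot\nabla Q$ in the $\epsilon$-equation are comparable to the linear term, and after dividing by $b_n^\star$ they do not vanish as $b_n^\star\to 0$: they converge to $\alpha(t)\Lambda Q + \boldsymbol{\beta}(t)\cdot\nabla Q$ with $\alpha,\boldsymbol{\beta}$ essentially $\la w,f\ra$ and $\la w,\mathbf{g}\ra$, which are not forced to vanish by the orthogonality conditions $\la w,\nabla Q\ra=\la w,Q^2\ra=0$. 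So the correct limit equation is $\partial_t w = \partial_x\mathcal{L}w + \alpha\Lambda Q + \boldsymbol{\beta}\cdot\nabla Q$ (as in Lemma \ref{L:convergence}), not the clean equation $\partial_t\eta=\partial_x\mathcal{L}\eta$ you wrote. This matters downstream: in the paper's virial argument (Lemmas \ref{L:linear-Liouville}, \ref{L:virial}) the coefficient $\alpha$ is eliminated through the dual variable $v=\mathcal{L}w$ and the constraint $\la v,Q\ra=0$, which is precisely what produces the rank-two correction $P$ in the quadratic form $B+P$ whose positivity is verified numerically; your clean equation would call for a different (unproved) spectral statement, while your actual limit object does not satisfy the clean equation.

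Two further steps are asserted but do not follow as stated. First, the uniform weighted decay of $\eta_n=\epsilon_n/b_n^\star$ cannot be "inherited from \eqref{E:intro-101}": dividing a fixed (non-normalized) decay bound by $b_n^\star\to 0$ gives nothing uniform in $n$. One needs decay \emph{proportional to} $\|\tilde\epsilon_n\|_{L_t^\infty L^2_{\mathbf{x}}}$, uniformly in $n$, which the paper obtains from the $J_\pm$ monotonicity (Lemma \ref{L:ep-decay}); likewise your "uniform $H^k$ bounds by the regularity-boost machinery" require the normalized comparability $\|\tilde\epsilon_n\|_{L^\infty_t H^k}\lesssim\|\tilde\epsilon_n\|_{L^\infty_t L^2}$ of Lemma \ref{L:ep-comparability}, whose proof needs the extra coupling of frequency-localized and non-localized virial identities, not just a rerun of the Proposition \ref{P:wk-lim} argument. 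Second, finiteness of $\int\!\!\int\psi|\eta|^2$ over all time does not by itself force $\eta\equiv 0$; the paper closes the Liouville step by observing that $\la\mathcal{L}w,w\ra + \tfrac{2}{\la\Lambda Q,Q\ra}\la w,Q\ra^2$ is conserved in time, controlled in time-average by the virial bound, hence identically zero, and then uses positive definiteness of $\mathcal{L}$ under the orthogonality conditions. Without that conserved quantity (or some substitute), your last implication is a gap.
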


\subsection{Outline of proof of Proposition \ref{P:wk-lim}}

The proof of Proposition \ref{P:wk-lim} is decomposed into three key lemmas, as follows.

\begin{lemma}
\label{L:soft-step}
There exists $\alpha_0>0$ sufficiently small so that for all $0<\alpha \leq \alpha_0$, the following holds.  Suppose that $u$ is a Class B solution to the 3D ZK and is $\alpha$-orbitally stable.  Let $t_m \nearrow +\infty$ be an arbitrary sequence of times.  Then there exists a subsequence $t_{m'}$ such that the following holds
\begin{enumerate}
\item 
For each\footnote{meaning that the weak limit exists and we \emph{define} $\tilde u(t)$ to be the value of the limit.} $t\in \mathbb{R}$, $u(\bullet+\mathbf{a}(t_{m'}), t+t_{m'}) \rightharpoonup \tilde u(t)$ weakly in $H_{\mathbf{x}}^1$.
\item For each $R>0$ and each finite time interval $I$, $u(\mathbf{x}+\mathbf{a}(t_{m'}), t+t_{m'}) \mathbf{1}_{<R}(\mathbf{x})$ converges strongly in $C(I; L_{\mathbf{x}}^2)$ to $\tilde u(\mathbf{x},t) \mathbf{1}_{<R}(\mathbf{x})$.
\item $\tilde u$ is a Class B solution to the 3D ZK. 
\item $\tilde u$ is $\alpha$-orbitally stable with associated parameters (as in Lemma \ref{L:geom-decomp}) $\tilde{\mathbf{a}}(t)$ and $\tilde c(t)$.  In fact, for every $t\in \mathbb{R}$, we have 
\begin{equation}
\label{E:param-conv}
\begin{aligned}
&\mathbf{a}(t+t_{m'}) - \mathbf{a}(t_{m'}) \to \tilde{\mathbf{a}}(t)\quad \mbox{and} \quad c(t+t_{m'}) \to \tilde c(t)\quad \text{ as }m'\to \infty. 
\end{aligned}
\end{equation}
In particular, $\tilde{\mathbf{a}}(0)=0$.
\end{enumerate}
\end{lemma}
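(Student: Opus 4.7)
The plan is a compactness-plus-diagonal-extraction argument, followed by verification that the weak limit inherits the PDE, regularity, and modulation properties of the sequence. Setting $u_m(\mathbf{x},t) \defeq u(\mathbf{x}+\mathbf{a}(t_m),t+t_m)$, each $u_m$ is again a Class B, $\alpha$-orbitally stable solution. Lemma \ref{L:geom-decomp} gives $\sup_{m,t}\|u_m(t)\|_{H^1_{\mathbf{x}}}\lesssim 1$, and the PDE with Sobolev control of $u_m^2$ gives $\sup_{m,t}\|\partial_t u_m(t)\|_{H^{-2}_{\mathbf{x}}}\lesssim 1$. Interpolating these yields, for each $R>0$, a uniform H\"older-in-time estimate $\|u_m(t)-u_m(s)\|_{L^2(|\mathbf{x}|<R)}\leq C_R |t-s|^{1/3}$. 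For item (1), Banach-Alaoglu at each rational $t$ followed by a diagonal extraction produces weak-$H^1$ limits on $\mathbb{Q}$; since the weak-$H^1$ topology is metrizable on bounded subsets of $H^1$ and the $H^{-2}$ equicontinuity in $t$ forces weak-$H^1$ equicontinuity, an Arzel\`a-Ascoli argument upgrades this to weak-$H^1$ convergence for every $t\in\mathbb{R}$, defining $\tilde u(t)$.

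For item (2), the H\"older-in-time estimate above, combined with the compact embedding $H^1(B_R)\hookrightarrow L^2(B_R)$ at each fixed $t$, enables a classical Arzel\`a-Ascoli argument in $C(I;L^2(B_R))$, producing the strong $L^2_{\loc}$ convergence uniform on compact $t$-intervals after a further (diagonal) subsequence. For item (3), $\tilde u$ inherits $L^\infty_t H^1_{\mathbf{x}}$ and $\partial_t \tilde u\in L^\infty_t H^{-2}_{\mathbf{x}}$ by weak lower semicontinuity, and the $C_t H^s_{\mathbf{x}}$ regularity for $s<1$ follows by interpolating $L^\infty_t H^1_{\mathbf{x}}$ against the $H^{-2}$-Lipschitz-in-$t$ continuity. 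The distributional PDE passes to the limit: the linear terms by weak-$H^1$ convergence, and $\partial_x u_m^2 \to \partial_x \tilde u^2$ by item (2), since $u_m^2 \to \tilde u^2$ in $L^1_{\loc}$.

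For item (4), weak-$H^1$ lower semicontinuity propagates $\alpha$-orbital stability from $u$ to $\tilde u$, so Lemma \ref{L:geom-decomp} supplies unique modulation parameters $\tilde c(t),\tilde{\mathbf{a}}(t)$ for $\tilde u$. Orbital stability confines $c(t+t_{m'})$ to a compact subinterval of $(0,\infty)$, and the bound $|\mathbf{a}'(s)-c(s)^{-2}\mathbf{i}|\lesssim \alpha$ from Lemma \ref{L:geom-decomp} keeps $\mathbf{a}(t+t_{m'})-\mathbf{a}(t_{m'})$ bounded on compact $t$-intervals. A further diagonal extraction produces candidate limits; passing the orthogonality conditions $\la \epsilon(t+t_{m'}),\nabla Q\ra=0$ and $\la \epsilon(t+t_{m'}),Q^2\ra=0$ to the limit against the Schwartz test functions $\nabla Q$ and $Q^2$ shows these candidates are modulation parameters for $\tilde u$, hence by uniqueness coincide with $\tilde c(t),\tilde{\mathbf{a}}(t)$; $\tilde{\mathbf{a}}(0)=0$ is immediate since $\mathbf{a}(t_{m'})-\mathbf{a}(t_{m'})=0$. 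The principal technical obstacle is item (2): upgrading the pointwise-in-$t$ weak-$H^1$ convergence to strong $C(I;L^2_{\loc})$ convergence uniform in $t$ demands the H\"older-in-time estimate above, which rests on the $H^{-2}$ time-regularity extracted from the equation -- weaker than one might like, yet just enough for Arzel\`a-Ascoli. Once item (2) is in hand, the limit passages in items (3) and (4) are essentially book-keeping.
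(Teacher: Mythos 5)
Your proposal is correct and follows essentially the same route as the paper's proof: a diagonal extraction over rational times via Banach--Alaoglu, a uniform-in-$m$ H\"older-in-time estimate to extend weak convergence to all $t\in\mathbb{R}$, Rellich--Kondrachov plus a finite net (equivalently Arzel\`a--Ascoli) for the strong $C(I;L^2_{\loc})$ convergence, and limit passage in the equation and in the orthogonality conditions with the uniqueness from Lemma \ref{L:geom-decomp} to identify $\tilde c,\tilde{\mathbf{a}}$. The only notable variation is that you obtain the $|t-s|^{1/3}$ estimate by interpolating the uniform $H^1_{\mathbf{x}}$ bound against the $\partial_t u\in L^\infty_t H^{-2}_{\mathbf{x}}$ bound built into the Class B definition, whereas the paper derives the same H\"older continuity through frequency-projected Duhamel estimates (Lemma \ref{L:wk-2}); both yield the same conclusion.
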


This is proved in \S \ref{S:wk-lim}.  Lemma \ref{L:soft-step} provides the $\alpha$-orbitally stable limiting solution $\tilde u$, but only as a Class B solution, and it is constructed by weak-* compactness methods.    Using that $\mathbb{Q}$ is countable, a subsequence $t_{m'}$ is obtained along which $u(\bullet+\mathbf{a}(t_{m'}), t+t_{m'})$ converges weakly in $H_{\mathbf{x}}^1$ for each $t\in \mathbb{Q}$.    Using a frequency projected uniform continuity in time property of $u$ and density of $\mathbb{Q}$ in $\mathbb{R}$, this weak convergence is extended to hold for all $t\in \mathbb{R}$ (not just $t\in \mathbb{Q}$).  Defining $\tilde u(t)$ to be this weak limit, the fact that it is an $\alpha$-orbitally stable Class B solution to the 3D ZK is inherited from the corresponding properties of $u$ via elementary arguments. 

The limiting solution $\tilde u$ provided in Lemma \ref{L:soft-step} is obtained merely as a Class B solution -- this is all that is possible using weak-* compactness machinery.  The fact that $\tilde u$ is exponentially decaying and smooth is separately obtained in Lemma \ref{L:exp-decay} and Lemma \ref{L:regularity-boost} below, using monotonicity lemmas and a virial-type regularity gain estimate, respectively

\begin{lemma}
\label{L:exp-decay}
The Class B solution $\tilde u$ constructed in Lemma \ref{L:soft-step} satisfies exponential decay in space, uniformly-in-time.  Specifically, 
$$
\| \tilde u(\mathbf{x}+\tilde{\mathbf{a}}(t),t) \|_{L_t^\infty L^2_{\mathbf{x}}(|\mathbf{x}|>R)} \lesssim e^{-R/32}.
$$
\end{lemma}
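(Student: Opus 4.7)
The plan is to invoke the monotonicity estimate Lemma \ref{L:Ipm-estimates}, applied to $\tilde u$ itself, which is justified because $\tilde u$ is a Class B $\alpha$-orbitally stable solution by Lemma \ref{L:soft-step}. The key new feature beyond applying monotonicity to the original $u$ is that $\tilde u$ is defined on the full time axis $\mathbb{R}$, so we can push the ``past'' time in the monotonicity inequality all the way to $-\infty$.

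First I would cover the exterior $\{|\mathbf{x}|>R\}$ in the soliton frame using finitely many conic half-spaces: some are right-opening cones of the form $\mathcal{C}^+_R(t) = \{\cos\theta(x - \tilde{\mathbf{a}}_x(t) + R) + \sin\theta\sqrt{1+(y-\tilde{\mathbf{a}}_y(t))^2+(z-\tilde{\mathbf{a}}_z(t))^2} > 0\}$ with $\theta \in [0, \tfrac{\pi}{3}-\delta]$, directly controlled by Lemma \ref{L:Ipm-estimates}; for regions behind the soliton, I would use the ``wedge-subtraction'' device from the outline of Theorem \ref{T:main}, applying two right-conic estimates with slightly different slopes and subtracting them to trap the mass in the wedge between. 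Thus it suffices to show that for each such right-opening cone,
$$\int |\tilde u(\mathbf{x},t)|^2\,\chi_{\mathcal{C}^+_R(t)}(\mathbf{x})\,d\mathbf{x} \lesssim e^{-R/16}$$
uniformly in $t \in \mathbb{R}$, with the factor of $1/32$ in the statement absorbing a constant loss from the covering.

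For this estimate, I would apply Lemma \ref{L:Ipm-estimates} to $\tilde u$ on the interval $[t_1, t]$ with $t_1 \to -\infty$, so that the mass in $\mathcal{C}^+_R(t)$ is bounded by the mass of $\tilde u(t_1)$ in the corresponding time-shifted cone, plus an error of order $e^{-R/16}$. Because the cone moves leftward in the lab frame at speed $1-\delta$ while the soliton moves rightward at speed close to $1$, the shifted cone recedes ahead of $\tilde{\mathbf{a}}(t_1)$ to infinity in the soliton frame as $t_1 \to -\infty$. The orbital-stability concentration of $\tilde u$ near $\tilde{\mathbf{a}}(t_1)$, together with the exponential decay of $Q$, should force the mass of $\tilde u(t_1)$ in this receding cone to vanish in the limit, leaving the error $e^{-R/16}$ as the only surviving contribution.

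The main obstacle is justifying the vanishing of this ``past'' mass with a rate compatible with $e^{-R/32}$: orbital stability alone gives only an $O(\alpha^2)$ bound on the mass of $\tilde u(t_1)$ in the receding cone, which is not exponentially small in $R$. I expect the resolution to combine three ingredients, namely mass conservation for $\tilde u$ from Lemma \ref{L:mass-conservation}, the local-in-space $L^2$-strong convergence from Lemma \ref{L:soft-step}(2), and the orthogonality decomposition of Lemma \ref{L:geom-decomp}. These together should identify the total mass of $\tilde u(t_1)$ with $M(Q)$ up to an $\alpha$-controlled remainder that can be absorbed into the monotonicity error; once the cone in the soliton frame lies at distance much larger than $O(1)$, only the exponentially-decaying $Q$-profile contributes to leading order, and the $e^{-R/16}$ error budget handles the remainder to produce the claimed decay. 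I anticipate that the bookkeeping of the cone's geometric recession and the careful tracking of the monotonicity constants is the most delicate part of the argument.
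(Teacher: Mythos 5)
Your overall strategy---apply the $I_\pm$ monotonicity directly to $\tilde u$ and exploit that $\tilde u$ lives on all of $\mathbb{R}_t$ by sending the ``past'' time to $-\infty$---has a genuine gap, and it is exactly at the step you flag. When you apply \eqref{E:Ip-right} to $\tilde u$ on $[t_{-1},t_0]$ and let $t_{-1}\to -\infty$, the term $I_{+,\theta,r,t_0}(t_{-1})$ is the mass of $\tilde u(t_{-1})$ in a cone that has receded far ahead of the soliton. All you know about $\tilde u$ at time $t_{-1}$ is the orbital decomposition $\tilde u(\mathbf{x}+\tilde{\mathbf{a}}(t_{-1}),t_{-1}) = Q\text{-profile} + \tilde\eta(t_{-1})$ with $\|\tilde\eta(t_{-1})\|_{L^2}\lesssim \alpha$; the $Q$-part is exponentially small in the cone, but the residual may a priori place $O(\alpha^2)$ mass arbitrarily far to the right, and this contribution neither vanishes as $t_{-1}\to-\infty$ nor is exponentially small in $R$. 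Your proposed repair (mass conservation of $\tilde u$, the local strong $L^2$ convergence of Lemma \ref{L:soft-step}(2), and the orthogonality conditions) does not supply any spatial localization of $\tilde\eta$ at far-past times---that localization is precisely the content of the lemma, so the argument as sketched is circular: an $O(\alpha^2)$ term cannot be ``absorbed into the monotonicity error'' $Ce^{-\delta r}$ uniformly in $r$. The same obstruction hits your wedge-subtraction plan for the region behind the soliton, since the left-decay mechanism in the paper rests on comparing two forward-in-time cone estimates for a solution with a fixed, decaying reference time slice.

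The paper avoids this by never running monotonicity on $\tilde u$ backwards in time. Instead (Lemma \ref{L:u-decay} and \S\ref{S:app-mon}) it proves the decay-on-the-right estimate \eqref{E:pf-main-1} and the decay-on-the-left estimate \eqref{E:pf-main-5} for the \emph{original} solution $u$ in the limit $t\nearrow+\infty$; there the seed of the argument is that $u(0)$ is a single fixed $L^2$ function, so its mass in the receding weight window genuinely tends to zero, and the left estimate is obtained by playing two slopes against each other via the limit $\ell=c_*^{-1}$. These asymptotic bounds are then transferred to $\tilde u(\cdot,t)$ for each fixed $t$ by weak lower semicontinuity of the weighted $L^2$ norm along $u(\cdot+\mathbf{a}(t+t_{m'}),t+t_{m'})\rightharpoonup \tilde u(\cdot+\tilde{\mathbf{a}}(t),t)$, using \eqref{E:param-conv} to recenter, and combining $\theta=\frac{\pi}{4}$ right-decay with left-decay gives the uniform exponential bound. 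If you want to salvage your outline, replace the ``$t_{-1}\to-\infty$ for $\tilde u$'' step by this transfer from $u$: it is the only place in the argument where a fixed time slice with known tail decay is available.
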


This is proved in \S \ref{S:app-mon}, by applying the monotonicity estimates \eqref{E:pf-main-1} and \eqref{E:pf-main-5} in Lemma \ref{L:u-decay}, which were obtained from the $I_+$  monotonicity estimate \eqref{E:Ip-right} in Lemma \ref{L:Ipm-estimates} (in \S\ref{S:monotonicity}).

\begin{lemma}
\label{L:regularity-boost}
Any Class B solution $\tilde u$ of the 3D ZK satisfying the exponential decay as in Lemma \ref{L:exp-decay} is in fact smooth.
\end{lemma}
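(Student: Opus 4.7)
The plan is to set up a bootstrap that iteratively raises the Sobolev regularity of $\tilde u$ by a fixed fractional amount $\theta > 0$ per pass, starting from $\tilde u \in L^\infty_t H^s_{\mathbf{x}}$ for every $s < 1$ (given by Class B), and continuing through $L^\infty_t H^{1+\theta}_{\mathbf{x}}, L^\infty_t H^{1+2\theta}_{\mathbf{x}},\ldots$ until $\tilde u \in L^\infty_t H^s_{\mathbf{x}}$ for every $s$, which combined with the equation yields smoothness in space (and hence in time). The three tools I would combine are Littlewood--Paley decomposition, the local-in-time dispersive estimates of Ribaud \& Vento \cite{RV} applied to frequency-localized pieces on short intervals, and a discrete Gronwall argument patching these local estimates across arbitrarily long time windows while using the uniform $H^1_{\mathbf{x}}$ bound from orbital stability to keep constants controlled.

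Concretely, I would fix a short length $\tau > 0$, a dyadic frequency $N$, and an interval $I = [t_0, t_0 + \tau]$, apply $P_N$ to the equation, and use Duhamel to write $P_N \tilde u$ as the linear ZK propagation of $P_N \tilde u(t_0)$ plus the forced term driven by $P_N \partial_x(\tilde u^2)$. The Ribaud--Vento resolution space $Y^s(I)$ with $s > 1$ controls the linear evolution and delivers a bilinear estimate of the form $\|\partial_x(uv)\|_{Z^s(I)} \lesssim \tau^\eta \|u\|_{Y^s(I)} \|v\|_{Y^s(I)}$; the small power $\tau^\eta$ is what makes the bootstrap close on each short interval. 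Summing the paraproduct pieces of $\tilde u^2$ yields a frequency-by-frequency inequality on $I$ for each $\|P_N \tilde u\|_{Y^s(I)}$ in terms of $\|P_N \tilde u(t_0)\|_{H^s}$ and high--high $\times$ low and low--high $\times$ high interactions of the $\tilde u$ pieces.

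The essential obstacle, and what I expect to be the crux of the proof, is that the Class B hypothesis only gives $\tilde u \in H^s_{\mathbf{x}}$ for $s < 1$, strictly below the Ribaud--Vento threshold $s > 1$. This is exactly where the uniform-in-time exponential spatial decay provided by Lemma \ref{L:exp-decay} must be used. Translating the soliton to the origin and splitting $\tilde u = \chi_R \tilde u + (1 - \chi_R)\tilde u$ with a smooth cutoff $\chi_R$ at radius $R$, the tail piece $(1-\chi_R)\tilde u$ is $O(e^{-R/32})$ in $L^\infty_t L^2_{\mathbf{x}}$, so all contributions from it to the bilinear estimate are exponentially small in $R$ and absorbable. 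For the localized piece, the effective compact support combined with the Kato-type local smoothing intrinsic to the ZK propagator and weighted $L^2$ bounds implied by exponential decay converts bilinear interactions that would otherwise lose $\epsilon$ derivatives into ones that close at the $H^1$-boundary. This is where the argument actually crosses from below to above the Ribaud--Vento regularity threshold.

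Finally, I would iterate across consecutive intervals $I_n = [t_0 + n\tau, t_0 + (n+1)\tau]$ via discrete Gronwall, using $\alpha$-orbital stability to keep the linearized constants uniform and the exponential decay to keep the tail error absorbable at every stage. Choosing $R$ proportional to a large multiple of the iteration step produces a uniform-in-$N$ bound $\|P_N \tilde u\|_{L^\infty_t H^{1+\theta}_{\mathbf{x}}} \lesssim 1$ for some $\theta > 0$, i.e., $\tilde u \in L^\infty_t H^{1+\theta}_{\mathbf{x}}$. Rerunning the entire argument with the new regularity $1+\theta$ in place of $1$, and iterating, yields $\tilde u \in L^\infty_t H^s_{\mathbf{x}}$ for every $s$, hence spatial smoothness; smoothness in time then follows by reading off $\partial_t \tilde u = -\partial_x \Delta \tilde u - \partial_x(\tilde u^2)$ inductively.
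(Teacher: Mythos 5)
Your high-level ingredient list (Littlewood--Paley, the Ribaud--Vento local estimates, a discrete Gronwall, and the exponential decay as the source of the extra room) matches the paper's outline, but the step you yourself identify as the crux --- how one actually crosses from the Class B regularity $s<1$ to the supercritical-for-RV level $s>1$ --- is not carried out, and the mechanism you sketch for it does not work as stated. Splitting $\tilde u=\chi_R\tilde u+(1-\chi_R)\tilde u$ and declaring the tail ``absorbable'' because it is $O(e^{-R/32})$ in $L_t^\infty L_{\mathbf{x}}^2$ is circular: in a bilinear estimate at regularity $s>1$ (your $\|\partial_x(uv)\|_{Z^s}\lesssim \|u\|_{Y^s}\|v\|_{Y^s}$) the tail must be measured in an $s>1$ norm, and no such bound is available at that stage --- $L^2$-smallness of the tail buys nothing without interpolating against a high norm you do not yet have. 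Likewise, ``effective compact support plus Kato-type local smoothing'' is an assertion, not an estimate: truncation does not commute with $P_N$, and local smoothing for the inhomogeneous flow does not by itself convert an $H^1$ bound into control above $H^1$. What the paper actually does at this point is quantitative and different in kind: it proves the frequency-projected weighted virial identity \eqref{E:HR10}, with the unbounded weight $x$ rather than a cutoff, whose good quadratic terms give local smoothing and whose nonlinear term is estimated in \eqref{E:HR6} using only $\|u\|_{H^1_{\mathbf{x}}}$ and weighted $L^2$ norms (finite for every power of $\la x\ra$ thanks to the exponential decay), with a gain $N^{-\frac12+}$ from Bernstein. Summing in $N$ yields Lemma \ref{L:L2boost}, i.e.\ $u\in L_I^2 H_{\mathbf{x}}^{5/4-}$ --- a genuine derivative gain, but only time-averaged --- and it is this time-averaged gain that is then fed into the Ribaud--Vento maximal/smoothing estimates and upgraded to an $L_I^\infty H_{\mathbf{x}}^{9/8}$ bound via a Gronwall argument that is discrete \emph{in frequency} (Lemmas \ref{L:maximal-compare} and \ref{L:reg-boost-last}), not an iteration over consecutive time intervals.

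Two further points where your scheme diverges in a way you should fix. First, no propagation of regularity across long time windows is needed (and iterating a Gronwall over $O(T/\tau)$ intervals would risk constants growing in $T$): since the hypotheses --- the uniform $H^1_{\mathbf{x}}$ bound and the uniform-in-time exponential decay --- are global in time, the local argument on a single short interval around an arbitrary $t_0$ already gives a bound at $t_0$ with constants independent of $t_0$. Second, in the recursive step to higher regularity the relevant interplay is not ``$R$ proportional to the iteration step'' but the growth of the weight power: to gain a fixed increment at level $s_1$ one must take $\theta\sim 1/(2s_1)$, hence weights $\la x\ra^{1/\theta}$ of ever higher polynomial order, and it is exactly the exponential decay that keeps all of these weighted norms finite (see \eqref{E:s2s1}--\eqref{E:HR12}). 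Without an analogue of the virial identity \eqref{E:HR10} and the estimate \eqref{E:HR6}, your proposal has no quantitative engine producing the initial regularity gain, so the bootstrap never gets started.
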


This is proved in \S \ref{S:higher-regularity}.  The proof hinges on a frequency projected virial-type identity \eqref{E:HR10} for Class B solutions.  When it is integrated in time and terms are estimated using weighted Sobolev estimates and Bernstein's inequality, we obtain in Lemma \ref{L:L2boost} a bound on $\|u\|_{L_I^2 H_{\mathbf{x}}^{5/4-}}$ in terms of weighted $L^2_{\mathbf{x}}$ bounds and (unweighted) energy bounds $H_{\mathbf{x}}^1$.  Note that $\|u\|_{L_I^2 H_{\mathbf{x}}^{5/4-}}$ reflects a gain in regularity, but \emph{averaged in time}.    At this point, we are able to tap into the feature of the Ribaud \& Vento local well-posedness machinery (as outlined in \S\ref{S:RV-estimates}) that the right-side bounds in their argument are slightly above $H_{\mathbf{x}}^1$ but have time integration ``to spare''.  We can then use discrete Grownwall type estimates in the frequency decomposition in Lemmas \ref{L:maximal-compare} and \ref{L:reg-boost-last} to bootstrap the regularity gain to $L_I^\infty H_{\mathbf{x}}^{9/8}$, an honest improvement in regularity (it is $L^\infty$ in time).  This argument can be, in fact, be applied recursively to achieve any level of regularity.   We note that it is possible to gain regularity in this way because the solution is assumed to have exponential spatial decay.  

It is apparent that the conclusions of Lemmas \ref{L:soft-step},  \ref{L:exp-decay}, and \ref{L:regularity-boost} combined yield the conclusions of Proposition \ref{P:wk-lim}.

\subsection{Outline of proof of Proposition \ref{P:rigidity}}

The proof of Proposition \ref{P:rigidity} proceeds by contradiction.  Suppose that the conclusion of Proposition \ref{P:rigidity} is false.  Then there exists a sequence $\tilde u_n$ of smooth $\alpha_n$-orbitally stable solutions to the 3D ZK, $|\alpha_n| \to 0$ such that the following holds.  Let $\tilde c_n(t)>0$ and $\tilde{\mathbf{a}}_n(t) \in \mathbb{R}^3$ be the modulation parameters associated to $\tilde u_n$ given by Lemma \ref{L:geom-decomp}, and let
\begin{equation}
\label{E:intro-100}
\tilde \epsilon_n(t) \defeq \tilde c_n(t)^2 \tilde u_n (\tilde c_n(t)\mathbf{x} + \tilde{\mathbf{a}}_n(t), t) - Q(\mathbf{x}).
\end{equation}
Then for each $n$, for some $t$, 
$$
b_n(t) \defeq \| \tilde \epsilon_n(t) \|_{L_{\mathbf{x}}^2}>0.
$$
It follows that for \emph{all} $t\in \mathbb{R}$, $b_n(t)>0$.  (Indeed, if $b_n(t)=0$ for some $t$, then $b_n(t)=0$ for all $t\in \mathbb{R}$).  We can assume, without loss of generality by replacing $\tilde u_n(t)$ by $\tilde u_n(t+t_{*n})$ for some $t_{*n}$ that
$$ 
b_n(0) \geq \frac12 \sup_{t\in \mathbb{R}} b_n(t) \defeq B_n>0.
$$
Moreover, by a shift and slight rescaling of $\tilde u_n$, for each $n$, we can assume that
$$
\tilde c_n(0) =1 \quad \text{ and } \quad \tilde{\mathbf{a}}_n(0)=0.
$$
Let
\begin{equation}
\label{E:intro-102}
w_n(t) = \frac{\tilde \epsilon_n(t)}{B_n}
\end{equation}
so that for all $n$, 
$$
\|w_n(0) \|_{L_{\mathbf{x}}^2} \geq \frac12 \,, \qquad \|w_n \|_{L_t^\infty L_{\mathbf{x}}^2} \leq 1.
$$
We will obtain a contradiction from the following five lemmas, which, in particular, imply that $w_n(0) \to 0$ strongly in $L_{\mathbf{x}}^2$.

Although we know from \eqref{E:intro-101} that each $\tilde u_n$, and hence, each $\tilde \epsilon_n$, satisfies uniform-in-time spatial decay, we do not know \emph{a priori} that this decay is \emph{uniform in $n$}, and moreover, \emph{normalized} according the mass of $\tilde \epsilon_n$.   Nevertheless, these properties can be proved using the $J_\pm$ monotonicity estimates in \S\ref{S:monotonicity}. The result is

\begin{lemma}[uniform spatial decay]
\label{L:ep-decay}
Let $\tilde \epsilon_n$ be as defined in \eqref{E:intro-100}.  Then $\tilde \epsilon_n$ satisfies uniform-in-$n$, uniform-in-time, exponential spatial decay:
$$
\| \tilde \epsilon_n \|_{L_t^\infty L_\mathbf{x}^2({|\mathbf{x}|>R})} \lesssim e^{-R/32} \| \tilde \epsilon_n \|_{L_t^\infty L_{\mathbf{x}}^2}.
$$
Consequently, $w_n$ defined by \eqref{E:intro-102} satisfies
$$
\| w_n \|_{L_t^\infty L_\mathbf{x}^2({|\mathbf{x}|>R})} \lesssim e^{-R/32} 
$$
uniformly in $n$.
\end{lemma}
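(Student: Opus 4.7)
The plan is to invoke the $J_\pm$ monotonicity estimates from Section \ref{S:monotonicity}, which are the analogues for the perturbation $\tilde\epsilon$ of the $I_\pm$ estimates for the full solution $\tilde u$ that drive Lemma \ref{L:exp-decay}. The distinction is essential: for Lemma \ref{L:exp-decay} it sufficed that $\tilde u$ is close to the exponentially-localized $Q$, which makes the far-field mass of $\tilde u$ automatically $O(e^{-R/32})$. Here, by contrast, we need the bound to be proportional to $\|\tilde\epsilon_n\|_{L^\infty_t L^2}$, because the perturbation itself could be arbitrarily small (indeed, $B_n\to 0$ is not ruled out at this stage) and its mass is the very object of interest. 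The $J_\pm$ estimates being bilinear in $\tilde\epsilon_n$ will therefore deliver a bound of the shape $\|\tilde\epsilon_n(t_0)\|^2_{L^2(|\mathbf{x}|>R)} \lesssim e^{-R/16}\,\|\tilde\epsilon_n\|^2_{L^\infty_t L^2}$, from which the claimed estimate follows by taking square roots.

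Concretely, I would apply $J_\pm$ with exponentially-decaying weights of the form $\psi_\pm((\mathbf{x}\cdot\hat e - R)/32)$ written in the modulated frame of $\tilde u_n$, for each unit vector $\hat e$ whose projection on $\mathbf{i}$ has the appropriate sign. The equation for $\tilde\epsilon_n$ in this frame couples the linearization of ZK around $Q$ with a self-interaction quadratic in $\tilde\epsilon_n$ and with modulation source terms driven by $\tilde c_n'$ and $\tilde{\mathbf a}_n' - \tilde c_n^{-2}\mathbf{i}$. Computing $\partial_t J_\pm$, integrating by parts, and exploiting the dispersive structure of ZK yields a sign-definite leading quadratic form; the self-interaction contributes a cubic expression that is small by orbital stability ($\|\tilde\epsilon_n\|_{H^1} \lesssim \alpha_n$), and the modulation source terms are quadratic in $b_n(t) \leq \|\tilde\epsilon_n\|_{L^\infty_t L^2}$ by Lemma \ref{L:geom-decomp}. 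Integrating the monotonicity inequality from a reference time $t_1 \to \pm\infty$ up to $t_0$, at which point the initial weighted integral is absorbed by the uniform $L^\infty_t L^2$ bound, yields the desired one-sided estimate. Summing $J_+$ over directions $\hat e$ with $\hat e\cdot\mathbf{i}\geq 0$ and $J_-$ over those with $\hat e\cdot\mathbf{i}\leq 0$, as in the directional setup for $I_\pm$ in Lemma \ref{L:Ipm-estimates}, covers the exterior $\{|\mathbf{x}|>R\}$, and dividing by $B_n$ supplies the statement for $w_n$.

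The main obstacle is closing the monotonicity in a manner strictly quadratic in $\tilde\epsilon_n$ uniformly in $n$: the cubic self-interaction must be dominated by the dissipative quadratic form (this is where $\alpha_n\to 0$ enters decisively), and the modulation source terms, which enter the equation for $\tilde\epsilon_n$ linearly through the velocity differences $\tilde c_n'$ and $\tilde{\mathbf a}_n' - \tilde c_n^{-2}\mathbf{i}$, must be shown to contribute only $O(b_n(t)^2)$ rather than $O(b_n(t))$ to $\partial_t J_\pm$. The bound $|c^2 c'| + |c(\mathbf{a}' - c^{-2}\mathbf{i})| \lesssim b(t)^2$ from Lemma \ref{L:geom-decomp} is precisely the tool that achieves this. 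The specific rate $1/32$ in the exponential decay should emerge from balancing the propagation speed of the weight against the linear dispersion rate of ZK, exactly as in the analogous analysis for the $I_\pm$ estimates.
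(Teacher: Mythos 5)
Your overall strategy is indeed the paper's: invoke the $J_\pm$ monotonicity of Lemma \ref{L:Jpm-estimates}, whose error terms are quadratic in $\eta$ and hence proportional to $\|\tilde\epsilon_n\|_{L_t^\infty L_{\mathbf{x}}^2}^2$, push the reference time to $\mp\infty$, combine a rightward and a leftward conic estimate to cover $\{|\mathbf{x}|>R\}$, and divide by $B_n$ to get the statement for $w_n$. However, there is a genuine gap at the step where the one-sided estimate is closed. You write that after integrating the monotonicity from a reference time tending to $\mp\infty$ ``the initial weighted integral is absorbed by the uniform $L^\infty_t L^2$ bound.'' That cannot give the result: bounding $J_{+,\theta,r,t_0}(t_{-1})$ by $\|\tilde\epsilon_n\|_{L_t^\infty L_{\mathbf{x}}^2}^2$ yields only $J_{+,\theta,r,t_0}(t_0)\lesssim \|\tilde\epsilon_n\|_{L_t^\infty L_{\mathbf{x}}^2}^2$, with no factor $e^{-\delta r}$, i.e.\ no spatial decay at all. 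What actually closes the argument is that the initial term \emph{vanishes} in the limit: for $t_{-1}\ll t_0$ the transition line of $\phi_+$ sits at $x\approx r+\lambda(t_0-t_{-1})$, which recedes to $+\infty$, and each $\tilde\epsilon_n$ enjoys uniform-in-time spatial decay by the standing hypothesis \eqref{E:intro-101} of Proposition \ref{P:rigidity} (equivalently, $\{\tilde\epsilon_n(t)\}_t$ is $L^2$-compact, though not uniformly in $n$); hence $J_{+,\theta,r,t_0}(t_{-1})\to 0$ as $t_{-1}\searrow-\infty$, and only the quadratic error $Ce^{-\delta r}\|\eta\|_{L_t^\infty L_{\mathbf{x}}^2}^2$ from \eqref{E:Jp-right} survives; symmetrically for \eqref{E:Jm-left} with $t_1\nearrow+\infty$. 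This qualitative, non-uniform-in-$n$ use of the a priori decay is the missing ingredient; the monotonicity then upgrades it to the quantitative, uniform-in-$n$ bound normalized by $\|\tilde\epsilon_n\|_{L_t^\infty L_{\mathbf{x}}^2}$.

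Two smaller points. First, summing planar weights $\psi_\pm((\mathbf{x}\cdot\hat e-R)/32)$ over all unit vectors with $\hat e\cdot\mathbf{i}\geq 0$ is not admissible: the monotonicity holds only for weights whose orientation makes angle at most $\frac{\pi}{3}-\delta$ with the $x$-direction, and the $J_+$ estimate is available only with the weight transition to the right of the soliton (and $J_-$ only to the left), since the proof requires $\phi_\pm Q_{c,\mathbf{a}}$ to be exponentially small on the relevant time interval. No directional sum is needed: the two conic weights of Lemma \ref{L:Jpm-estimates} with $\theta=\pi/4$ and shifts $+r$, $-r$ already cover the exterior of a ball of radius $\sim r$. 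Second, your treatment of the modulation terms leans on ``$|c^2c'|+|c(\mathbf{a}'-c^{-2}\mathbf{i})|\lesssim b^2$,'' but Lemma \ref{L:geom-decomp} gives this only for the differences with the linear projections $\la\epsilon,f\ra$ and $\la\epsilon,\mathbf{g}\ra$; the parameter velocities themselves are $O(b)$. The modulation contributions to $\partial_t J_\pm$ are nonetheless quadratic in $\eta$, because they are linear in $\eta$ with $O(b)$ coefficients and carry the exponentially small factor $\phi_\pm(\Lambda Q)_{c,\mathbf{a}}$ — but all of this is already packaged inside Lemma \ref{L:Jpm-estimates}, so it need not be re-derived here.
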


This is proved in \S \ref{S:uniform-n-decay}.  As mentioned, it is rather quickly deduced as a consequence of the $J_\pm$ monotonicity in Lemma \ref{L:Jpm-estimates}.

\begin{lemma}[comparability of Sobolev norms]
\label{L:ep-comparability}
Let $\tilde \epsilon_n$ be as defined in \eqref{E:intro-100}.  Then $\tilde \epsilon_n$ satisfies, for all $k$,
\begin{equation}
\label{E:intro-tilde-comp}
\| \tilde \epsilon_n \|_{L_t^\infty H_{\mathbf{x}}^k} \lesssim_k \| \tilde \epsilon_n \|_{L_t^\infty L_{\mathbf{x}}^2}
\end{equation}
uniformly in $n$.  Consequently, $w_n$ defined by \eqref{E:intro-102} satisfies, for each $k\geq 0$,
$$
\| w_n \|_{L_t^\infty H_{\mathbf{x}}^k } \lesssim_k 1
$$
uniformly in $n$.
\end{lemma}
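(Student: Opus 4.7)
The plan is to prove the rescaled statement $\|w_n\|_{L_t^\infty H^k_{\mathbf{x}}} \lesssim_k 1$ uniformly in $n$, which is equivalent to the $\tilde\epsilon_n$ assertion via the scaling $\tilde\epsilon_n = B_n w_n$. I would proceed by induction on $k$, with the inductive step supplied by the regularity-boost scheme of Lemma \ref{L:regularity-boost} applied to $w_n$ rather than to a generic $\tilde u$.

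\emph{Base case $k=1$.} The $\alpha_n$-orbital stability bound only yields $\|w_n\|_{L_t^\infty H^1}\lesssim \alpha_n/B_n$, which need not be bounded since $B_n/\alpha_n$ may be arbitrarily small. The improved bound $\|w_n\|_{L_t^\infty H^1}\lesssim 1$ would be extracted from the conserved Lyapunov functional $E+\tfrac12 M$, whose Hessian at $Q$ is $\mathcal{L}=I-\Delta-2Q$. Using the mass hypothesis $M(\tilde u_n)=M(Q)$, the scaling identities $M(\tilde v_n)=\tilde c_n M(Q)$ and $E(\tilde v_n)=\tilde c_n^3 E(\tilde u_n)$ for $\tilde v_n = Q + \tilde\epsilon_n$, the orthogonality conditions of Lemma \ref{L:geom-decomp}, and the standard coercivity of $\mathcal{L}$ on the resulting constrained subspace, the quadratic Taylor expansion produces $\|\tilde\epsilon_n\|_{L_t^\infty H^1}^2 \lesssim B_n^2 + O(\alpha_n\|\tilde\epsilon_n\|_{L_t^\infty H^1}^2)$. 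For $\alpha_n$ small the last term absorbs into the left side, giving $\|\tilde\epsilon_n\|_{L_t^\infty H^1}\lesssim B_n$, equivalently $\|w_n\|_{L_t^\infty H^1}\lesssim 1$.

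\emph{Inductive step.} $w_n$ satisfies
\begin{equation*}
\partial_t w_n+\partial_x\Delta w_n+2\partial_x(Q w_n)+B_n\,\partial_x w_n^2=F_n,
\end{equation*}
a perturbation of the linear linearized 3D ZK equation, where $F_n$ collects modulation drifts that by Lemma \ref{L:geom-decomp} split into a bounded linear-in-$w_n$ operator plus $O(B_n)$ corrections. Crucially, $w_n$ inherits uniform-in-$n$ exponential spatial decay from Lemma \ref{L:ep-decay}, since that estimate is already normalized to the $L^2$ mass of $\tilde\epsilon_n$. With this decay and the inductive $H^k$ bound in hand, the frequency-projected virial identity underlying Lemma \ref{L:regularity-boost} applies to $w_n$ with constants uniform in $n$, delivering the averaged gain $\|w_n\|_{L^2_I H^{k+1/4-}}\lesssim_k 1$ on any finite interval $I$. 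The discrete Gronwall and Ribaud--Vento bootstrap of Lemmas \ref{L:maximal-compare}--\ref{L:reg-boost-last} then promotes this to the honest bound $\|w_n\|_{L_t^\infty H^{k+1/8}}\lesssim_k 1$. Iterating reaches every integer $k$ in finitely many steps.

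\emph{Main obstacle.} The delicate step is the base case: one must upgrade the natural $\|\tilde\epsilon_n\|_{H^1}\lesssim \alpha_n$ produced by orbital stability to the sharp $\lesssim B_n$ scaling. This requires genuine use of the conservation of $E$ and $M$ together with careful bookkeeping, via the mass identity $(\tilde c_n-1)M(Q)=2\la Q,\tilde\epsilon_n\ra+B_n^2$, of how $\tilde c_n-1$ and $\la Q,\tilde\epsilon_n\ra$ interact through the expansion of the Lyapunov functional. The inductive step, by contrast, is essentially a uniform-in-$n$ rerun of Lemma \ref{L:regularity-boost}; the uniformity is guaranteed because the only $n$-dependent quantities entering the scheme are fixed $Q$-dependent potentials and the small prefactor $B_n$ in front of the nonlinear term $\partial_x w_n^2$.
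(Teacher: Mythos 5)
Your inductive step is essentially the paper's argument (uniform-in-$n$ rerun of the regularity-boost scheme for $\zeta_n=w_n$ recentered, with the nonlinearity carrying the small prefactor $B_n$ and the normalized decay from Lemma \ref{L:ep-decay} supplying the weights; compare Lemmas \ref{L:comp2}--\ref{L:comp6}). The genuine gap is in your base case $k=1$. First, the normalization $M(\tilde u_n)=M(Q)$ is not available here: Proposition \ref{P:rigidity} (and hence this lemma) is stated for smooth $\alpha$-orbitally stable decaying solutions with no mass constraint, and in the application the limit object has mass $c_*^{-1}M(Q)\neq M(Q)$ in general. Second, and more fundamentally, the Lyapunov/coercivity expansion cannot produce $\|\tilde\epsilon_n\|_{L_t^\infty H^1_{\mathbf x}}\lesssim B_n$. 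Expanding $E+\tfrac12 c^{-2}M$ about $Q_{c,\mathbf a}$ kills the linear term and yields, at each time, $\tfrac12\la \mathcal{L}_c\eta,\eta\ra-\tfrac13\int\eta^3=F\bigl(c(t);E(u),M(u)\bigr)$, a quantity determined by the \emph{conserved} data and $c(t)$. Coercivity then bounds $\|\eta(t)\|_{H^1}^2$ by $B_n^2+F$, but nothing in the hypotheses makes $F=O(B_n^2)$: the only a priori information is $\|\eta(t)\|_{H^1}\lesssim\alpha_n$, so $F$ is merely $O(\alpha_n^2)$, and there is no anchor time at which $\|\nabla\eta\|_{L^2}\lesssim B_n$ is known (we only know $b_n(0)\geq\tfrac12 B_n$, a lower bound, and $\|\eta(t)\|_{L^2}\leq B_n$). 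In other words, orbital-stability coercivity propagates an $H^1$ smallness from the conserved functionals evaluated somewhere; it cannot convert a uniform-in-time $L^2$ bound into an $H^1$ bound. A soliton accompanied by persistent, spatially localized, small-$L^2$ but $O(\alpha_n)$-energy high-frequency radiation is not excluded by conservation laws alone -- excluding it is exactly a dispersive statement, so your claimed inequality $\|\tilde\epsilon_n\|_{L_t^\infty H^1}^2\lesssim B_n^2+O(\alpha_n\|\tilde\epsilon_n\|_{L_t^\infty H^1}^2)$ does not follow from the Taylor expansion.

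The paper closes precisely this point by a local-smoothing (virial) mechanism instead of energy conservation: in Lemma \ref{L:comp1} the \emph{unlocalized} identity $\partial_t\int(x-a_1)\zeta^2$ is used, where the cubic term integrates by parts to $-\tfrac23\int\zeta^3$ and carries the factor $B$, giving the time-averaged bound $\|\zeta\|_{L_I^2H^1_{\mathbf x}}\lesssim1$ on intervals $|I|\sim\delta$ from the weighted decay $\|\la x-a_1\ra^{1/\theta}\zeta\|_{L_t^\infty L^2_{\mathbf x}}\lesssim1$; this is then fed into the frequency-localized virial identity (Lemma \ref{L:comp2}) and the Ribaud--Vento maximal estimates (Lemma \ref{L:comp3}), and finally one chooses in each interval a time $t_0$ with $\|\zeta(t_0)\|_{H^1}\lesssim\delta^{-1/2}$ and runs Duhamel to upgrade to $\|\zeta\|_{L_I^\infty H^1}\lesssim\delta^{-1/2}$ uniformly in $B$ and $I$ (Lemma \ref{L:comp4}). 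If you replace your energy-based base case by this virial/local-theory argument, the rest of your plan goes through.
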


This is proved in \S \ref{S:Sobolev-comparability}.  The proof is similar to the proof of Lemma \ref{L:regularity-boost} given in \S\ref{S:higher-regularity}, although additional ingredients are introduced to handle the $H_{\mathbf{x}}^1$ bound ($k=1$ case of Lemma \ref{L:ep-comparability}), which was automatic in the context of Lemma \ref{L:regularity-boost}.   At issue here is the need to obtain the small factor $\| \tilde \epsilon_n \|_{L_t^\infty L_{\mathbf{x}}^2}$ on the right side of \eqref{E:intro-tilde-comp}. 
The idea is to couple a virial-type identity without frequency localization to one with frequency localization.  The one without frequency location allows for a reduction of order of derivatives via integration by parts  in the nonlinear term, which gives a bound that can be used in the nonlinear term estimates for the virial-type identity \emph{with} frequency localization. 

\begin{lemma}[convergence]
\label{L:convergence}
For each $T>0$, $w_n \to w$ in $C([-T,T]; L_{\mathbf{x}}^2)$ satisfying the following:
\begin{enumerate}
\item 
$w$ is uniform-in-time smooth:  for each $k\geq 0$
$$
\| w\|_{L_t^\infty H_{\mathbf{x}}^k} < \infty,
$$
\item 
$w$ has uniform-in-time spatial decay:  
$$
\|  w \|_{L_t^\infty L_{\mathbf{x}}^2(|\mathbf{x}|>R)} \lesssim e^{-\delta R},
$$
\item 
$w(0)$ is nontrivial:
$$
\| w(0) \|_{L_{\mathbf{x}}^2} = 1,
$$
\item 
$w$ satisfies the equation
$$
\partial_t w = \partial_x \mathcal{L}w + \alpha \Lambda Q + \boldsymbol{\beta} \cdot \nabla Q,
$$
where $\alpha$ and $\boldsymbol{\beta} = (\beta_1,\beta_2,\beta_3)$ are time-dependent coefficients,
\item 
$w$ satisfies the orthogonality conditions
$$
\la w, \nabla Q \ra =0 \quad \text{and} \quad \la w, Q^2 \ra =0.
$$
\end{enumerate} 
\end{lemma}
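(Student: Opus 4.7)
The plan is to produce $w$ as a strong subsequential limit of $w_n$ in $C([-T,T]; L^2_{\mathbf{x}})$ for each $T$, via an Aubin--Lions compactness argument, and then to pass to the limit in the equation satisfied by $w_n$ and in the orthogonality conditions. The inputs are the uniform-in-$n$ Sobolev bounds of Lemma \ref{L:ep-comparability}, the uniform-in-$n$ exponential spatial decay of Lemma \ref{L:ep-decay}, the parameter estimates of Lemma \ref{L:geom-decomp}, and the smallness $B_n\to 0$, which will kill the quadratic nonlinearity in the limit.

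First I would derive the equation for $w_n$. Writing the 3D ZK for $\tilde u_n$ in the rescaled and translated coordinates associated to $\tilde c_n(t), \tilde{\mathbf{a}}_n(t)$ and using $-Q+\Delta Q+Q^2=0$, one obtains, schematically,
$$\partial_t \tilde\epsilon_n = \tilde c_n^3\,\partial_x\mathcal{L}\tilde\epsilon_n - \tilde c_n^2\,\partial_x(\tilde\epsilon_n^2) + \frac{\tilde c_n'}{\tilde c_n}\,\Lambda(Q+\tilde\epsilon_n) + \bigl(\tilde{\mathbf{a}}_n' - \tilde c_n^{-2}\mathbf{i}\bigr)\cdot\nabla(Q+\tilde\epsilon_n).$$
Dividing by $B_n$ and invoking Lemma \ref{L:geom-decomp} with $b_n\leq B_n$ to see that $\tilde c_n'/B_n$ and $(\tilde{\mathbf{a}}_n' - \tilde c_n^{-2}\mathbf{i})/B_n$ are uniformly bounded in $L_t^\infty$, and Lemma \ref{L:ep-comparability} to bound $w_n$ in $L_t^\infty H_{\mathbf{x}}^k$, I conclude that $\partial_t w_n$ is uniformly bounded in $L_t^\infty H_{\mathbf{x}}^{k-3}$ for every $k$. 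The remaining nonlinear piece $B_n\tilde c_n^2\partial_x(w_n^2)$ carries a factor $B_n\to 0$ and will disappear in the limit.

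With these bounds in hand, Aubin--Lions applied on a large ball together with a Rellich compactness argument extracts a subsequence converging strongly in $C([-T,T];L^2_{\mathbf{x}}(|\mathbf{x}|\leq R))$ for every $T,R>0$; the uniform exponential decay of Lemma \ref{L:ep-decay} upgrades this to strong convergence in $C([-T,T];L^2_{\mathbf{x}}(\mathbb{R}^3))$, and a diagonal argument yields a single subsequence $w_n\to w$ uniformly on each compact time interval. Items (1) and (2) of the conclusion then follow by weak-$*$ lower semicontinuity from the Sobolev and weighted $L^2$ bounds; item (3) is immediate from the strong convergence at $t=0$, after arranging $t_{*n}$ to be a near-maximizer of $b_n$ so that $\|w_n(0)\|_{L^2}\to 1$. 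Using orbital stability $|\tilde c_n - 1|\lesssim \alpha_n\to 0$ and the strong $L^2$-convergence of $w_n$ tested against the Schwartz functions $f,\mathbf{g}$ of Lemma \ref{L:geom-decomp}, the modulation quotients $\tilde c_n'/B_n$ and $(\tilde{\mathbf{a}}_n' - \tilde c_n^{-2}\mathbf{i})/B_n$ converge pointwise to time-dependent functions proportional to $\la w,f\ra$ and $\la w,\mathbf{g}\ra$; passing to the limit in the $w_n$ equation yields item (4). Dividing the orthogonality relations $\la\tilde\epsilon_n,\nabla Q\ra = 0 = \la\tilde\epsilon_n,Q^2\ra$ by $B_n$ and passing to the $L^2$-limit gives item (5).

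The main technical obstacle is securing uniform-in-$n$ control of $\partial_t w_n$: because the modulation contributions to the $\tilde\epsilon_n$ equation are divided by $B_n$, the coefficients $\tilde c_n'/B_n$ and $(\tilde{\mathbf{a}}_n'-\tilde c_n^{-2}\mathbf{i})/B_n$ could a priori blow up, and it is Lemma \ref{L:geom-decomp} (combined with $b_n\leq B_n$) that prevents this. A related delicate point is to obtain genuine convergence (not merely boundedness) of these quotients, which relies on the strong $L^2$-convergence of $w_n$ on compact sets, which suffices because $f$ and $\mathbf{g}$ are Schwartz.
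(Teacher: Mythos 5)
Your proposal is correct in substance, but it follows a genuinely different route from the paper. You compactify in space--time: uniform $L_t^\infty H_{\mathbf{x}}^k$ bounds (Lemma \ref{L:ep-comparability}), uniform exponential decay (Lemma \ref{L:ep-decay}), and a uniform bound on $\partial_t w_n$ obtained from the modulation ODEs of Lemma \ref{L:ODE-bounds} (note it is really \eqref{E:param-ODEs}, with $b_n\leq B_n$, that keeps the $B_n$-divided coefficients bounded, as you correctly flag) feed an Aubin--Lions/Arzel\`a--Ascoli argument giving subsequential convergence in $C([-T,T];L^2_{\mathbf{x}})$, after which you pass to the limit in the PDE distributionally, the $B_n\partial_x(w_n^2)$ term and the $O(B_n)$ modulation remainders dying off. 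The paper instead compactifies only the data at $t=0$: it applies Rellich--Kondrachov to $\tilde\zeta_n(0)$ (working in the original, unmodulated frame via \eqref{E:tilzeta}), \emph{defines} $\zeta_\infty$ as the solution of the limiting linear equation \eqref{E:con1} (well-posed by the Ribaud--Vento estimates) with that data, and proves $\tilde\zeta_n\to\zeta_\infty$ in $C([-T,T];H^k_{\mathbf{x}})$ for every $k$ by a Gronwall estimate on the difference \eqref{E:con21}, with Lemma \ref{L:param-conv} ($|\tilde c_n-1|\lesssim TB_n$, $|\tilde{\mathbf{a}}_n-t\mathbf{i}|\lesssim \la T\ra^2 B_n$) controlling the drift of the parameters. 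The paper's approach buys convergence in every Sobolev norm and identifies the limit dynamics without ever passing to the limit inside the nonlinear equation, at the cost of invoking well-posedness of the limit problem; your approach is softer and more standard, needs no well-posedness theory for the limit equation, but delivers directly only $C_tL^2$ convergence (sufficient for the lemma, and upgradeable to $C_tH^k$ by interpolating against the uniform $H^{k+1}$ bounds) and requires the extra step of controlling $\partial_t w_n$. Two small points: the closeness $|\tilde c_n-1|\ll 1$ on compact time intervals is most cleanly obtained by integrating the modulation ODE from $\tilde c_n(0)=1$ (as in Lemma \ref{L:param-conv}) rather than from orbital stability alone, which requires an extra mass-comparison remark; and your near-maximizer choice of $t_{*n}$ giving $\|w_n(0)\|_{L^2_{\mathbf{x}}}\to 1$ is fine (the paper's normalization $b_n(0)\geq\frac12 B_n$ yields only $\|w(0)\|_{L^2_{\mathbf{x}}}\geq\frac12$, which is equally sufficient for the contradiction).
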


This is proved in \S \ref{S:convergence}.  Working with $\tilde \zeta_n$, a recentered and renormalized version of $\tilde \epsilon_n$ (see \eqref{E:tilzeta}), first pass to a subsequence via Rellich-Kondrachov compactness so that $\tilde \zeta_n(0) \to \zeta_\infty(0)$, which is smooth and exponentially decaying.  Taking $\zeta_\infty(t)$ to solve the expected limiting equation \eqref{E:con1}, we aim to prove that $\tilde \zeta_n(t) \to \zeta_\infty(t)$ for all $t\in \mathbb{R}$.    Letting $\hat \zeta_n= \tilde \zeta_n - \zeta_\infty$, we derive the evolution equation for the difference, from which we deduce a Gronwall estimate on $\hat \zeta_n$, which shows the convergence in terms of $\tilde b_n \to 0$.  In the original frame of reference, the limit is $w$, as described in the statement of Lemma \ref{L:convergence}.  All the properties of $w$ stated in Lemma \ref{L:convergence} are inherited from the sequence $w_n =\tilde \epsilon_n/B_n$.  

Now that we have constructed a nontrivial limiting solution $w$ with the properties stated in Lemma \ref{L:convergence}, the next step in the argument by contradiction is to prove that it cannot exist.  This is achieved in the following lemma.

\begin{lemma}[linear Liouville property]
\label{L:linear-Liouville}
Suppose that $w$ solves
\begin{equation}
\label{E:w-eq}
\partial_t w =\partial_x \mathcal{L}w + \alpha \Lambda Q + \boldsymbol{\beta} \cdot \nabla Q,
\end{equation}
where $\alpha$ and $\boldsymbol{\beta}$ are time-dependent, and further suppose that $w$ satisfies the orthogonality conditions
\begin{equation}
\label{E:extra-orth}
\la w, Q^2 \ra =0 \quad \text{and} \quad \la w, \nabla Q \ra =0.
\end{equation}
If $w$ satisfies global uniform-in-time spatial decay
\begin{equation}
\label{E:w-dec}
\| \la x \ra^{1/2} w \|_{L_t^\infty H_{\mathbf{x}}^2} < \infty,
\end{equation}
then
$w\equiv 0$.
\end{lemma}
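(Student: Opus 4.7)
\medskip

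\noindent\textbf{Proof plan.}  The plan is to run a virial-type monotonicity argument adapted to the linear linearized equation \eqref{E:w-eq}, combined with a dual-variable reformulation that turns the required sign condition into a spectral positivity statement that can be checked (if necessary numerically) against $\mathcal{L}$ and the ground state $Q$. The spatial decay assumption \eqref{E:w-dec} is what makes integration by parts with polynomial weights legal and provides the uniform bound needed to extract pointwise-in-$t$ vanishing from monotonicity.

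\smallskip

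\noindent\textbf{Step 1: Virial functional.}  First I would introduce a bounded, weighted functional of the form $\mathcal{F}(w)(t) = \langle \mathcal{A}\, w(t), w(t)\rangle$, where $\mathcal{A}$ is a self-adjoint operator built from a slowly varying weight $\varphi(x)$ that is monotone in the direction of propagation $x$ (for example a smoothed $\tanh$ of $x/R$ for large $R$, possibly composed with $\mathcal{L}$ to balance derivatives). The decay \eqref{E:w-dec} yields $\sup_t |\mathcal{F}(w)(t)| < \infty$, so if one can show $\frac{d}{dt}\mathcal{F}(w) \geq \delta \, \|w\|_{X}^2$ for some norm $\|\cdot\|_X$ that controls $w$ on a large compact set, the inequality integrated over $\mathbb{R}$ forces $\|w\|_X \equiv 0$, hence $w\equiv 0$ by \eqref{E:w-dec} and a density argument. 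Computing $\frac{d}{dt}\mathcal{F}(w)$ using the equation \eqref{E:w-eq} produces the main commutator term $\langle [\partial_x \mathcal{L}, \mathcal{A}] w, w\rangle$, while the modulation terms contribute $2\alpha \langle \mathcal{A}w, \Lambda Q\rangle + 2\boldsymbol{\beta}\cdot \langle \mathcal{A}w, \nabla Q\rangle$. These last contributions must be absorbed by the orthogonality conditions \eqref{E:extra-orth}: differentiating these in time and using \eqref{E:w-eq} gives explicit formulas for $\alpha$ and $\boldsymbol{\beta}$ as linear functionals of $w$ localized near $Q$, so each is bounded by $\|w\|_{L^2(|\mathbf{x}|\leq R_0)}$ and is therefore dominated by the interior part of $\frac{d}{dt}\mathcal{F}(w)$ once $R_0$ is taken large.

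\smallskip

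\noindent\textbf{Step 2: Dual problem and spectral positivity.}  The heart of the matter is showing that the quadratic form $\mathcal{Q}(w) = \langle [\partial_x \mathcal{L}, \mathcal{A}] w, w\rangle$ is positive-definite on the subspace cut out by \eqref{E:extra-orth}. Following the dual-variable trick, I would set $v = \mathcal{L}w$ (or more precisely $v = \mathcal{L}w$ up to a projection onto $\ker \mathcal{L}\partial_x$), so that $\mathcal{L}\partial_x$ acting on $w$ becomes, after conjugation by $\mathcal{L}^{-1}$, a symmetric operator acting on $v$ in which the commutator with a first-order weight operator extracts a term built from $Q$ and its derivatives. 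The positivity of the resulting quadratic form reduces to a finite-dimensional spectral condition of the type $\langle \mathcal{M}v, v\rangle \geq c\|v\|_{L^2}^2$ on the subspace orthogonal to an explicit finite-dimensional family built from $Q$, $\nabla Q$, $Q^2$, $\Lambda Q$. In three dimensions there is no closed form for $Q$, so this is the step I expect to be the main obstacle: both the construction of the multiplier $\mathcal{A}$ (so that $\mathcal{M}$ has a tractable structure) and the verification that the orthogonality conditions exactly suffice to kill the negative subspace of $\mathcal{M}$ must be done by hand, and the final coercivity constant is then certified by the robust numerical computation referenced in the introduction.

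\smallskip

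\noindent\textbf{Step 3: From monotonicity to rigidity.}  With Step~2 in hand, one has $\frac{d}{dt}\mathcal{F}(w)(t) \geq c\,\|w(t)\|_{L^2(\varphi')}^2$ up to modulation contributions that were absorbed in Step~1. Integrating from $-T$ to $T$, using $|\mathcal{F}(w(\pm T))| \lesssim 1$ from \eqref{E:w-dec}, and sending $T\to\infty$ gives $\int_{\mathbb{R}} \|w(t)\|_{L^2(\varphi')}^2\,dt < \infty$, which combined with uniform continuity in time (from the equation and the $H^2_{\mathbf{x}}$ bound in \eqref{E:w-dec}) forces $\|w(t)\|_{L^2(\varphi')} \to 0$ as $|t|\to\infty$. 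A time translation argument then shows $\mathcal{F}(w)(t)$ is constant, so $w(t)$ lies in the kernel of the positive commutator form for every $t$, which by the coercivity of Step~2 and the orthogonality conditions \eqref{E:extra-orth} means $w(t)\equiv 0$. If the direct argument above does not immediately close because the coercivity only controls a localized seminorm, the standard remedy is to run the virial in the moving reference frame for two different speeds and combine the resulting controls, or equivalently to iterate the monotonicity with a sequence of weights $\varphi_R$ of increasing width and pass to the limit $R\to\infty$, which is where the weight $\langle x\rangle^{1/2}$ in \eqref{E:w-dec} is used to guarantee convergence of the boundary terms.
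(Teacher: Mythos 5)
There is a genuine gap, and it is the central one: your argument never identifies the conserved coercive quantity that the paper's proof pivots on, namely
\[
\la \mathcal{L}w,w \ra + \frac{2}{\la \Lambda Q, Q \ra} \la w, Q \ra^2 ,
\]
which is \emph{constant in time} for solutions of \eqref{E:w-eq} satisfying \eqref{E:extra-orth} (the terms $\alpha \Lambda Q + \boldsymbol{\beta}\cdot\nabla Q$ drop out exactly, using $\la \mathcal{L}w,\nabla Q\ra = \la w,\mathcal{L}\nabla Q\ra=0$ and $\mathcal{L}\Lambda Q = -2Q$). The paper then bounds the time integral of this constant by $\|w\|_{L_t^2H_{\mathbf{x}}^1}^2$, which is finite by the separately proved virial estimate (Lemma \ref{L:virial}, via the dual variable $v=\mathcal{L}w$ and Lemma \ref{L:conversion}) together with \eqref{E:w-dec}; a constant with finite integral over $\mathbb{R}$ is zero, and positive definiteness of the quadratic form under \eqref{E:extra-orth} then forces $w\equiv 0$ at every time. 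Your Step 3 tries to close without any such conserved functional: from monotonicity of a bounded virial functional $\mathcal{F}$ you only get $\int_{\mathbb{R}}\|w(t)\|_X^2\,dt<\infty$ and hence $\|w(t)\|_X\to 0$ as $|t|\to\infty$; the assertion that ``a time translation argument then shows $\mathcal{F}(w)(t)$ is constant'' is unjustified (monotone and bounded does not mean constant), and more fundamentally the flow \eqref{E:w-eq} does not preserve $\|w\|_{L^2}$ (the generator $\partial_x\mathcal{L}$ is not skew-adjoint), so smallness at $t=\pm\infty$ cannot be transported back to finite times without a conserved, coercive quantity. This is precisely the role of the linearized-energy functional above, and omitting it leaves the rigidity step unproved.

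A secondary but related problem is your choice of multiplier. With a bounded, tanh-type weight the commutator only controls a localized seminorm, and the hypothesis \eqref{E:w-dec} provides decay only in the $x$-variable (not in $y,z$), so the intended passage from ``local smallness plus decay'' to vanishing of the full norm does not go through. The paper instead runs the virial for the dual variable $v=\mathcal{L}w$ with the \emph{unbounded} weight $x$ (Lemma \ref{L:v-virial}): the boundary term $\int x\,v^2$ is exactly what $\|\la x\ra^{1/2}v\|_{L_t^\infty L_{\mathbf{x}}^2}$ (hence \eqref{E:w-dec}) controls, and the resulting operator $B+P$ has continuous spectrum $[\tfrac12,\infty)$, so the numerically certified positivity yields a \emph{global} bound $\|v\|_{L_t^2H_{\mathbf{x}}^1}\lesssim \|\la x\ra^{1/2}v\|_{L_t^\infty L_{\mathbf{x}}^2}$ rather than a localized one. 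Your Steps 1--2 are close in spirit to that part of the machinery, but as written they would not produce the global $L_t^2H_{\mathbf{x}}^1$ control, and even granting it, the proof of the lemma still requires the conserved quadratic form described above.
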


This is proved in \S \ref{S:linear-Liouville} by observing that the quadratic in $w$ quantity 
$$
Q(w) \defeq \la \mathcal{L}w,w \ra + \frac{2}{\la \Lambda Q, Q \ra} \la w, Q \ra^2
$$
is constant in time.  This follows by computing $\partial_t Q(w)$, plugging in the equation for $w$, and appealing to the orthogonality conditions \eqref{E:extra-orth}.  However, the time integral $\int_{t=-\infty}^{\infty} Q(w) \, dt$ is in fact controlled by the left side of \eqref{E:vir-bd}, but the right side of  \eqref{E:vir-bd} is finite by the assumption \eqref{E:w-dec}.  This forces $Q(w)\equiv 0$, and by the positive definiteness of $\mathcal{L}$ (subject to \eqref{E:extra-orth}), this forces $w\equiv 0$.

\begin{lemma}[virial estimate]
\label{L:virial}
Suppose that $w$ solves
$$
\partial_t w =\partial_x \mathcal{L}w + \alpha \Lambda Q + \boldsymbol{\beta} \cdot \nabla Q,
$$
where $\alpha$ and $\boldsymbol{\beta}$ are time-dependent, and further suppose that $w$ satisfies the orthogonality conditions
$$
\la w, Q^2 \ra =0 \quad \text{and} \quad \la w, \nabla Q \ra =0.
$$
Then $w$ satisfies the global-in-time estimate
\begin{equation}
\label{E:vir-bd}
\| w \|_{L_t^2 H_{\mathbf{x}}^3} \lesssim \| \la x \ra^{1/2} w \|_{L_t^\infty H_{\mathbf{x}}^2}.
\end{equation}
\end{lemma}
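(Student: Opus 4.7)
The plan is a positive commutator (virial/Morawetz) argument carried out at the level of derivatives of $w$. Since the desired estimate gains one derivative over the endpoint norms on the right-hand side, the natural weight is $\phi(\mathbf{x})=\psi(x)$ with $\psi$ smooth, $\psi'(x)\geq c_0 > 0$ bounded, and $\psi(x)\sim \la x\ra$. For each multi-index $\alpha$ with $|\alpha|\leq 2$, differentiate \eqref{E:w-eq} to obtain an equation of the same form for $\partial^\alpha w$ (up to commutator terms involving $\partial^\beta Q$ for $|\beta|\le 2$, which are exponentially localized by \eqref{prop-Q}), and consider the functional
$$
\mathcal{I}(t) \defeq \sum_{|\alpha|\leq 2}\int \phi(\mathbf{x})\,(\partial^\alpha w(\mathbf{x},t))^2\,d\mathbf{x}.
$$
By the choice of $\phi$, we immediately have $|\mathcal{I}(t)|\lesssim \|\la x\ra^{1/2}w\|_{H_{\mathbf{x}}^2}^2$, which matches the right-hand side of \eqref{E:vir-bd}.

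Next I compute $\tfrac12 \partial_t\mathcal{I}$ by inserting the equation and integrating by parts. The commutator $[\phi,\partial_x \Delta]$ produces the principal positive contribution
$$
\tfrac{3}{2}\int \phi'\,(\partial_x\partial^\alpha w)^2\,d\mathbf{x} + \tfrac{1}{2}\int \phi'\bigl|\nabla_{\!\perp}\partial^\alpha w\bigr|^2\,d\mathbf{x},
$$
which, summed over $|\alpha|\leq 2$ and combined with the lower-order $\int \phi'(\partial^\alpha w)^2$ coming from the identity term of $\mathcal{L}$, controls $\|w\|_{H_{\mathbf{x}}^3}^2$ from below. The potential contributions involving $Q$, namely terms of the form $\int\phi\,(\partial^\beta Q)\,(\partial^\gamma w)(\partial^\delta w)$, are perturbative thanks to the exponential decay of $Q$ and its derivatives, and can be absorbed into the principal part. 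The modulation terms $\alpha\Lambda Q + \boldsymbol{\beta}\cdot\nabla Q$ are handled by solving for $\alpha(t),\boldsymbol{\beta}(t)$ from differentiating the orthogonality conditions $\la w,\nabla Q\ra = 0$ and $\la w, Q^2\ra = 0$ in time and substituting \eqref{E:w-eq}; this yields $|\alpha(t)|+|\boldsymbol{\beta}(t)|\lesssim \|w(t)\|_{L^2(\supp Q)}$, so their contribution to $\partial_t\mathcal{I}$ is a localized quadratic form in $w$ that is lower order.

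The main obstacle, and the place where the paper invokes \S\ref{S:numerics}, is proving coercivity of the resulting quadratic form modulo the orthogonality conditions. The principal part is positive by construction, but one must show that the sum of the principal positive part and the indefinite potential perturbations together dominate all the lower order quadratic corrections. After passing to the dual problem — i.e.\ expressing things in terms of $v=\mathcal{L}^{1/2}w$ or, equivalently, conjugating the operator — the required coercivity reduces to positivity of a specific self-adjoint operator on the subspace cut out by the orthogonality conditions. Since no explicit spectral decomposition is available for this 3D operator, the spectral condition is verified by rigorous numerical analysis with controlled error, as in \S\ref{S:numerics}. I expect the delicate bookkeeping to be in ensuring that every error term produced by the differentiation-and-commutator procedure (in particular those generated by commutators with the potential $Q$ at orders $|\alpha|=1,2$) either reduces to a form controlled by the verified spectral condition, or is genuinely of lower order and absorbable.

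Once coercivity is in hand, we obtain a pointwise-in-time differential inequality
$$
-\tfrac{1}{2}\partial_t\mathcal{I}(t) \geq c\,\|w(t)\|_{H_{\mathbf{x}}^3}^2 - C\,\|w(t)\|_{L_{\mathbf{x}}^2(\supp Q)}^2,
$$
with the last term absorbed using the coercivity via orthogonality. Integrating over $[-T,T]$ and using $|\mathcal{I}(\pm T)|\lesssim \|\la x\ra^{1/2}w\|_{L_t^\infty H_{\mathbf{x}}^2}^2$, then sending $T\to\infty$, produces the estimate
$$
\|w\|_{L_t^2 H_{\mathbf{x}}^3}^2 \lesssim \|\la x\ra^{1/2}w\|_{L_t^\infty H_{\mathbf{x}}^2}^2,
$$
which is \eqref{E:vir-bd}.
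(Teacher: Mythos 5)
There is a genuine gap, and it sits exactly where the lemma is hardest. Your plan asserts that the $Q$-dependent terms generated by the weighted commutator computation are ``perturbative thanks to the exponential decay of $Q$'' and can be absorbed into the principal positive part. They cannot: exponential localization does not make them small in size, and the resulting quadratic form is genuinely indefinite (in the paper's formulation the relevant operator has a negative eigenvalue $\approx -0.03$), so positivity holds only on the subspace cut out by the orthogonality conditions and only after a quantitative spectral verification. You do acknowledge this later, but then the reduction you propose does not close: the coercivity you need is for a quadratic form in the full $2$-jet of $w$ (all $\partial^\alpha w$, $|\alpha|\le 2$, coupled through commutators with derivatives of $Q$), for which no orthogonality conditions on $\partial^\alpha w$ with $|\alpha|\ge 1$ are available, and this is \emph{not} the operator whose positivity is checked in \S\ref{S:numerics}. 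The numerics there verify positivity of the specific operator $A=B+P$, with $B=\frac12-\frac32\partial_x^2-\frac12\partial_y^2-\frac12\partial_z^2-(xQ)_x$ and $P$ of rank two, on $\{v:\la v,Q\ra=\la v,\nabla Q\ra=0\}$ via the angle lemma (Lemma \ref{L:angle}); your form would require its own (much more involved) spectral analysis, which the proposal neither performs nor reduces to the verified one. Finally, the suggested dual variable $v=\mathcal{L}^{1/2}w$ is not available: $\mathcal{L}=I-\Delta-2Q$ has a negative eigenvalue, so it admits no self-adjoint square root, and even formally such a conjugation is nonlocal and destroys the virial structure.

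The paper's route avoids all of this by setting $v=\mathcal{L}w$ (Lemmas \ref{L:conversion} and \ref{L:v-virial}). The algebra is what makes it work: $\mathcal{L}\nabla Q=0$ kills the $\boldsymbol{\beta}\cdot\nabla Q$ forcing, $\mathcal{L}\Lambda Q=-2Q$ turns the $\alpha\Lambda Q$ term into $-2\alpha Q$ with $\alpha$ determined by $\la v,Q\ra=0$, and the hypotheses $\la w,Q^2\ra=0$, $\la w,\nabla Q\ra=0$ transfer cleanly to $\la v,Q\ra=0$, $\la v,\nabla Q\ra=0$ because $\mathcal{L}Q=-Q^2$ and $\mathcal{L}\nabla Q=0$. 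One then needs only an $L^2$-level virial identity for $v$ with weight $x$, whose quadratic form is exactly $B+P$, i.e.\ the object the numerics certify; no differentiated equations or higher-order commutators appear. The three derivatives on $w$ in \eqref{E:vir-bd} are recovered at the end, not through the virial, but through the ellipticity and coercivity of $\mathcal{L}$ on $(\ker\mathcal{L})^\perp=\spn\{\nabla Q\}^\perp$: $\|w\|_{H^3}\lesssim\|v\|_{H^1}$ and $\|\la x\ra^{1/2}v\|_{L^2}\lesssim\|\la x\ra^{1/2}w\|_{H^2}$. If you want to salvage a direct-in-$w$ argument, you would have to exhibit and verify the coercivity of your derivative-level form from scratch; as written, the proposal borrows the paper's numerical certificate for an operator it never actually arrives at.
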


This is proved in \S \ref{S:virial}.  This inequality is proved via passage to a dual problem in $v=\mathcal{L}w$ and the proof that $v$ satisfies a virial identity.  The desired inequality reduces to the positivity of a certain quadratic form.  The positivity of this quadratic form is checked numerically, and details of the numerical method are provided in \S \ref{S:numerics}.

\subsection{Notational conventions}
We will use $\mathbf{x} = (x,y,z)$ for the spatial variable and $\boldsymbol{\xi}$ for the Fourier variable in $\mathbb{R}^3$.  The Littlewood-Paley frequency projection is $\widehat{P_N f}(\boldsymbol{\xi}) = m(\boldsymbol{\xi}/N) \hat f(\xi)$, where $m(\boldsymbol{\xi})$ is smooth, supported in $\frac12\leq |\boldsymbol{\xi}| \leq 2$, and satisfies $\sum_{N \in 2^{\mathbb{Z}}} m(\boldsymbol{\xi}/N) = 1$.  However, we will use only $N \geq 1$, and in fact replace $P_1 = \sum_{N\leq 1} P_N$ to cover all low frequencies.  We will use the notation $P_{\leq M} = \sum_{N\leq M} P_N$.   While weighted estimates use weight $x$ (not $\mathbf{x}$), all frequency projections are done with respect to all three variables using $P_N$ as defined above in terms of $m(\boldsymbol{\xi})$.  In some arguments in \S\ref{S:RV-estimates}, \S\ref{S:higher-regularity} and \S\ref{S:Sobolev-comparability}, we use the shorthand $\ln^+N = \ln(N+2)$ so that for all $N \geq 1$, we have $\ln^+N \geq 1$ (avoiding $\ln 1=0$).

Throughout the paper we refer to Class B solutions, which were defined in Definition \ref{D:ClassB}.  
For an $\alpha$-orbitally stable solution $u$ to the 3D ZK (as defined in Definition \ref{D:orb-stab}) and modulation parameters $\mathbf{a}(t)$ and $c(t)$ (as given in Lemma \ref{L:geom-decomp}), we use the following notations for the \emph{remainder}:
$$
\epsilon(\mathbf{x},t) \defeq c(t)^2u(c(t)\mathbf{x}+\mathbf{a}(t),t) - Q(\mathbf{x}).
$$
With $Q_{c,\mathbf{a}}(\mathbf{x}) = c^{-2} Q(c^{-1}(\mathbf{x}-\mathbf{a}))$, we define
$$
\eta(\mathbf{x},t) \defeq c^{-2}\epsilon(c^{-1}(\mathbf{x}-\mathbf{a})) = u(\mathbf{x},t) - Q_{c, \mathbf{a}}(\mathbf{x})
$$
(see \eqref{E:eta-def} and \eqref{E:eta-eq}), and 
$$
\zeta(\mathbf{x},t) \defeq B^{-1} \eta(t)
$$
for $B = \|b(t) \|_{L_t^\infty}$, where $b(t) = \|\eta(t) \|_{L_{\mathbf{x}}^2}$  (see \eqref{E:zeta-1}).

Integrals related to the monotonicity property of solutions to the 3D ZK are denoted by $I_\pm$ and $J_\pm$ and defined in \eqref{E:Ipm-de} and \eqref{E:J-def}, respectively.

\section{Review of local theory estimates}
\label{S:RV-estimates}

In this section we review Ribaud \& Vento \cite{RV} local estimates as they become an essential tool later in our arguments. We start with the following result.

\begin{lemma}[Ribaud \& Vento, Lemma 3.3]
\label{L:RV1}
For $M \geq 1$, and $I$, a time interval of length $|I|\leq 1$, we have 
\begin{equation}
\label{E:HR20}
\| P_M U(t) \phi  \|_{L_x^2 L_{yz I}^\infty} \lesssim (\ln^+ M)^2 M \| P_M \phi \|_{L_{\mathbf{x}}^2},
\end{equation}
\begin{equation}
\label{E:HR21}
\| P_M \int_0^t\partial_x U(t-s)f(\bullet,s) \,ds  \|_{L_x^2 L_{yz I}^\infty} \lesssim (\ln^+ M)^2 M \| P_M f \|_{L_x^1 L_{yzI}^2},
\end{equation}
\begin{equation}
\label{E:HR21b}
\| P_M \int_0^t\partial_x U(t-s)f(\bullet,s) \,ds  \|_{L_I^\infty L_{\mathbf{x}}^2} \lesssim  \| P_M f \|_{L_x^1 L_{yzI}^2}.
\end{equation}
\end{lemma}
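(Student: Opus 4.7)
The plan is to establish the three estimates in the order \eqref{E:HR21b}, then \eqref{E:HR20}, and finally to deduce \eqref{E:HR21} from the first two via a Christ--Kiselev argument.

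For \eqref{E:HR21b}, the argument is soft and requires no frequency localization. Using unitarity of $U(t)$ and the factorization $\int_0^t \partial_x U(t-s) P_M f\,ds = U(t)\int_0^t U(-s)\partial_x P_M f\,ds$, the estimate reduces by duality (pairing against an $L^2_{\mathbf{x}}$ test function and applying H\"older in $L^1_x L^2_{yzI}$ versus $L^\infty_x L^2_{yzI}$) to the Kato-type local smoothing estimate $\|\partial_x U(t)\phi\|_{L^\infty_x L^2_{y,z,t}} \lesssim \|\phi\|_{L^2}$. Since the linear symbol of the 3D ZK propagator is $\Phi(\boldsymbol\xi)=\xi_1(\xi_1^2+\xi_2^2+\xi_3^2)$, at fixed $(\xi_2,\xi_3)$ the map $\xi_1\mapsto\tau=\Phi(\boldsymbol\xi)$ is a diffeomorphism with Jacobian $3\xi_1^2+\xi_2^2+\xi_3^2$. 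Changing variables $\xi_1\mapsto\tau$ and applying Plancherel in $(y,z,t)$ eliminates the $x$-dependence and yields
$$
\|\partial_x U(t)\phi(x,\cdot,\cdot,\cdot)\|_{L^2_{yzt}}^2 \;=\; \int \frac{\xi_1^2\,|\hat\phi(\boldsymbol\xi)|^2}{3\xi_1^2+\xi_2^2+\xi_3^2}\,d\boldsymbol\xi \;\leq\; \|\phi\|_{L^2}^2,
$$
uniformly in $x$. The passage from the untruncated $\int_I$ to the retarded $\int_0^t$ costs only a constant by Christ--Kiselev, since the time exponent on the right is $2$ and on the left is $\infty$.

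The main work lies in \eqref{E:HR20}, the maximal-function-in-$(y,z,t)$ estimate at frequency $M$. A natural approach is a $TT^*$ argument on the operator $T\phi \defeq P_M U(t)\phi$ viewed as a map $L^2_{\mathbf{x}} \to L^2_x L^\infty_{yzI}$, which reduces the problem to controlling the oscillatory kernel $\int \chi(\boldsymbol\xi/M)^2 e^{i\mathbf{x}\cdot\boldsymbol\xi + it\xi_1|\boldsymbol\xi|^2}\,d\boldsymbol\xi$. The phase $\xi_1|\boldsymbol\xi|^2$ is doubly degenerate: it vanishes on $\{\xi_1=0\}$, and the transverse group velocity $2\xi_1(\xi_2,\xi_3)$ also collapses there. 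I would further decompose $P_M$ according to the dyadic size $|\xi_1|\sim L$ for $1\leq L\leq M$; change variables $(\xi_1,\xi_2,\xi_3)\mapsto(\tau,\xi_2,\xi_3)$ on each piece so that Plancherel becomes available after trading the time variable; and use the one-dimensional Sobolev embedding $H^{1/2+}\hookrightarrow L^\infty$ in each of $y$, $z$, $t$ to pass from $L^\infty_{yzI}$ to $L^2$-based fractional norms at the cost of only an $\varepsilon$-derivative per variable. Two dyadic summations (one longitudinal in $L$, one transverse in $(\xi_2,\xi_3)$) then produce the two logarithmic factors $(\ln^+M)^2$, each collapsing to a single logarithm after summing in the Sobolev $\varepsilon$. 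The factor of $M$ is forced by scaling: the natural time scale at frequency $M$ is $|I|\sim M^{-3}$, while the hypothesis fixes $|I|\leq 1$, and under the dilation $\mathbf{x}\mapsto M\mathbf{x}$, $t\mapsto M^3 t$ this discrepancy introduces exactly one power of $M$. This oscillatory integral analysis is the principal technical obstacle.

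With \eqref{E:HR20} and \eqref{E:HR21b} in hand, \eqref{E:HR21} follows by writing
$$
\int_0^t \partial_x U(t-s) P_M f\,ds \;=\; U(t)\int_0^t U(-s)\partial_x P_M f\,ds,
$$
applying \eqref{E:HR20} to the outer $U(t)$, and bounding the inner integral --- after extending $\int_0^t$ to $\int_I$ --- in $L^2_{\mathbf{x}}$ by $\|P_M f\|_{L^1_x L^2_{yzI}}$ via the same duality argument used for \eqref{E:HR21b}. Restoring the truncation $\int_0^t$ is handled by the Christ--Kiselev lemma as before, again with time exponents $2$ and $\infty$.
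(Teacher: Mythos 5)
Your treatment of \eqref{E:HR21b} and \eqref{E:HR21} is essentially the paper's argument: both estimates are obtained by composing the dual form of the Kato-type local smoothing bound $\partial_x U(\cdot):L_{\mathbf{x}}^2\to L_x^\infty L_{yzt}^2$ (equivalently $L_x^1L_{yzI}^2\to L_{\mathbf{x}}^2$ after dualizing) with, respectively, unitarity of $U(t)$ and the maximal estimate \eqref{E:HR20}, and then restoring the retarded time integral via Christ--Kiselev, exactly as in \S\ref{S:RV-estimates}. Your explicit Plancherel/change-of-variables verification of the local smoothing constant, using that $\xi_1\mapsto \xi_1|\boldsymbol{\xi}|^2$ has Jacobian $3\xi_1^2+\xi_2^2+\xi_3^2$, is correct and is a detail the paper only cites.

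The genuine gap is \eqref{E:HR20}, which you yourself flag as ``the principal technical obstacle'' and then leave as a sketch. The paper does not reprove this estimate: it runs the $TT^*$ equivalence (boundedness of $P_M\Lambda^*:L_{\mathbf{x}}^2\to L_x^2L_{yzI}^\infty$ with norm $(\ln^+M)^2M$ is equivalent to boundedness of $P_M^2\Lambda^*\Lambda:L_x^2L_{yzI}^1\to L_x^2L_{yzI}^\infty$ with norm $(\ln^+M)^4M^2$) and then \emph{cites} Ribaud--Vento, Lemma 3.3, for the kernel bound $\|K_M\|_{L_x^1L_{yzI}^\infty}\lesssim (\ln^+M)^4M^2$, which is the entire content of the estimate. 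Your proposed substitute --- dyadic decomposition in $|\xi_1|\sim L$, a change of variables to $\tau$, and one-dimensional Sobolev embedding $H^{1/2+}\hookrightarrow L^\infty$ in each of $y,z,t$ --- does not, as stated, produce the claimed bound: at spatial frequency $M$ the time frequency is of size $M^3$, so half a derivative in $t$ costs $M^{3/2}$ and half a derivative in each of $y,z$ costs $M^{1/2}$, which overshoots the single factor of $M$ by a wide margin unless genuine oscillatory-integral/stationary-phase input (van der Corput-type decay of $K_M$ in $t$ and in $(y,z)$) is brought in --- and that input is precisely what the cited Ribaud--Vento kernel analysis supplies. Likewise, the claims that the two dyadic sums yield exactly $(\ln^+M)^2$ and that scaling ``forces'' exactly one power of $M$ identify the expected homogeneity but are not arguments. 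Since \eqref{E:HR21} is then deduced \emph{from} \eqref{E:HR20}, the unproved maximal estimate leaves two of the three bounds unestablished; the efficient repair is to do what the paper does and invoke Ribaud--Vento's Lemma 3.3 for the kernel estimate, feeding it into your (otherwise correct) $TT^*$ and Christ--Kiselev framework.
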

\begin{proof}
In all of the estimates, the time variables are restricted to the unit-sized interval $I$. The boundedness of the following are equivalent
\begin{itemize}
\item 
$P_M \Lambda:  L_x^2L_{yzI}^1 \to  L_{\mathbf{x}}^2 $, with operator norm $(\ln^+ M)^2M$,
\item 
$P_M \Lambda^*:  L_{\mathbf{x}}^2 \to L_x^2 L_{yzI}^\infty $,  with operator norm $(\ln^+ M)^2M$,
\item 
$P_M^2 \Lambda^* \Lambda: L_x^2L_{yzI}^1 \to L_x^2 L_{yzI}^\infty$,  with operator norm $(\ln^+ M)^4M^2$,
\end{itemize}
where
$$\Lambda f(\mathbf{x}) = \int_{s=0}^1 U(-s) f(\mathbf{x}, s) \, ds,$$
$$\Lambda^* \phi(\mathbf{x},t) = U(t) \phi(\mathbf{x}),$$
$$\Lambda^*\Lambda f(\mathbf{x},t) = \int_{s=0}^1 U(t-s) f(\mathbf{x},s) \, ds.$$
The kernel of the operator $P_M^2 \Lambda^*\Lambda$ is
$$
K_M(\mathbf{x},t) = \int_{|\boldsymbol{\xi}|\sim M} e^{i(\mathbf{x}\cdot \boldsymbol{\xi} + t\xi|\boldsymbol{\xi}|^2)} \, d\boldsymbol{\xi}. 
$$
To establish that $P_M^2 \Lambda^*\Lambda: L_x^2L_{yzI}^1 \to L_x^2 L_{yzI}^\infty$ is bounded with operator norm $\lesssim (\ln^+ M)^4M^2$, it suffices to show that
$$
\|K_M \|_{L_x^1L_{yzI}^\infty} \lesssim (\ln^+ M)^4M^2.
$$
This was proved in Ribaud \& Vento, Lemma 3.3.  Since this establishes that  $P_M^2 \Lambda^*\Lambda: L_x^2L_{yzI}^1 \to L_x^2 L_{yzI}^\infty$ is bounded with operator norm $\lesssim (\ln^+ M)^4M^2$, we have equivalent fact that $P_M \Lambda_*: L_{\mathbf{x}}^2 \to L_x^2 L_{yz I}^\infty$ is  bounded with operator norm $\lesssim (\ln^+ M)^2M$, which is precisely \eqref{E:HR20}.

The local smoothing estimate from Ribaud \& Vento (and other references) asserts  the boundedness of 
$$
\partial_x \Lambda^*:  L_{\mathbf{x}}^2 \to L_x^\infty L_{yz I}^2.
$$ 
Hence, we have also the boundedness of
\begin{equation}
\label{E:RV1}
\partial_x \Lambda:  L_x^1 L_{yz I}^2 \to  L_{\mathbf{x}}^2.
\end{equation}
This, combined with the fact that $P_M \Lambda^*:  L_{\mathbf{x}}^2 \to L_x^2 L_{yzI}^\infty $  is bounded with operator norm $(\ln^+ M)^2M$, yields the boundedness of 
$$
P_M \partial_x \Lambda^* \Lambda:  L_x^1 L_{yz I}^2 \to   L_x^2 L_{yzI}^\infty
$$
with operator norm $(\ln^+ M)^2M$.  Combining with the Christ-Kiselev lemma gives \eqref{E:HR21}. 

The standard unitarity property for $U(t)$ implies the boundedness of the map $\Lambda^*: L_{\mathbf{x}}^2 \to L_I^\infty L_{\mathbf{x}}^2$, which together with \eqref{E:RV1} yields the boundedness of
$$
\partial_x \Lambda^* \Lambda : L_x^1 L_{yz I}^2 \to L_I^\infty L_{\mathbf{x}}^2.
$$
Again, combined with the Christ-Kiselev lemma, it gives \eqref{E:HR21b}. 
\end{proof}

\section{Class B solutions satisfy mass conservation}\label{S:ClassB-mass}
In this section, we prove Lemma \ref{L:mass-conservation}, demonstrating that Class B solutions satisfy mass conservation.     Let $P_{<N}$ be the Littlewood-Paley projection onto frequencies $\lesssim N$.  We note that $P_{<N}^2 \neq P_{<N}$, since the frequency cutoff is smoothed, but nevertheless $P_{<N}^2-P_{<N}$ is a multiplier operator with symbol supported in $|\boldsymbol{\xi}| \sim N$.    We define $P_{>N} = I-P_{<N}$, which yields
$$
\partial_t \| P_{<N} u\|_{L_{\mathbf{x}}^2}^2 = 2 \int P_{<N}u \, \partial_t P_{<N}u \, d\mathbf{x}.
$$
Substituting ZK, we continue as
$$ 
\partial_t \| P_{<N} u\|_{L_{\mathbf{x}}^2}^2 = - 2\int P_{<N} u \, \partial_x \Delta P_{<N}u \, d \mathbf{x} - 2 \int P_{<N} u \, \partial_x P_{<N} u^2 \, d\mathbf{x},
$$
noting that both integrals are finite (absolutely convergent) due to the frequency cutoff (so we are not manipulating infinities!).  By integration by parts
$$ 
\partial_t \| P_{<N} u\|_{L_{\mathbf{x}}^2}^2  =  2\int \nabla P_{<N} u \cdot \partial_x \nabla P_{<N}u \, d \mathbf{x} + 2 \int \partial_x P_{<N}^2 u \, u^2 \, d\mathbf{x}.
$$
The first integral is zero, and for the second integral we insert $I = P_{<N}+P_{>N}$ in front of each copy of $u$ and expand to obtain
\begin{align*}
\partial_t \| P_{<N} u\|_{L_{\mathbf{x}}^2}^2 &=  2 \int \partial_x P_{<N}^2 u \, P_{<N} u \, P_{<N} u \, d\mathbf{x} +  4 \int \partial_x P_{<N}^2 u \, P_{<N} u \, P_{>N} u \, d\mathbf{x} \\
& \qquad +  2 \int \partial_x P_{<N}^2 u \, P_{>N} u \, P_{>N} u \, d\mathbf{x}\,.
\end{align*}
The key is to notice that the first integral becomes zero when $P_{<N}^2$ is replaced by $P_{<N}$, so 
\begin{align*}
\partial_t \| P_{<N} u\|_{L_{\mathbf{x}}^2}^2 &=  -4\int (P_{<N}^2-P_{<N}) u \, P_{<N} u \, \partial_x P_{<N} u \, d\mathbf{x}+  4 \int \partial_x P_{<N}^2 u \, P_{<N} u \, P_{>N} u \, d\mathbf{x} \\
&\qquad + 2 \int \partial_x P_{<N}^2 u \, P_{>N} u \, P_{>N} u \, d\mathbf{x}.
\end{align*}
Now all three integrals involve at least one term at frequency $|\boldsymbol{\xi}| \gtrsim N$.  We use 
H\"older as follows for each of the three terms:
\begin{align*}
\left| \partial_t \| P_{<N} u\|_{L_{\mathbf{x}}^2}^2 \right|  &\lesssim \| (P_{<N}^2-P_{<N}) u \|_{L_{\mathbf{x}}^3} \|P_{<N} u\|_{L_{\mathbf{x}}^6}  \| \partial_x P_{<N} u\|_{L_{\mathbf{x}}^2} \\
&\qquad + \| \partial_x P_{<N}^2 u \|_{L_{\mathbf{x}}^2} \| P_{<N} u \|_{L_{\mathbf{x}}^6} \| P_{>N} u\|_{L_{\mathbf{x}}^3} \\
&\qquad + \| \partial_x P_{<N}^2 u \|_{L_x^2} \| P_{>N} u \|_{L_{\mathbf{x}}^6} \| P_{>N} u\|_{L_{\mathbf{x}}^3}.
\end{align*}
Following with Sobolev embedding, we get 
\begin{align*}
\left| \partial_t \| P_{<N} u\|_{L_{\mathbf{x}}^2}^2 \right| &\lesssim \| (P_{<N}^2-P_{<N}) u \|_{\dot H_\mathbf{x}^{1/2}} \|P_{<N} u\|_{\dot H_\mathbf{x}^1}  \| P_{<N}u\|_{\dot H_\mathbf{x}^1} \\
&\quad + \| P_{<N}^2 u \|_{\dot H_\mathbf{x}^1} \| P_{<N} u \|_{\dot H_\mathbf{x}^1} \| P_{>N} u\|_{\dot H_\mathbf{x}^{1/2}} \\
&\quad + \| P_{<N}^2 u \|_{\dot H_\mathbf{x}^1} \| P_{>N} u \|_{\dot H_\mathbf{x}^1} \| P_{>N} u\|_{\dot H_\mathbf{x}^{1/2}}.
\end{align*}

Since the $\dot H_\mathbf{x}^{1/2}$ norms lie on terms with $P_{>N}$, we can boost to $\dot H_\mathbf{x}^1$ and gain $N^{-1/2}$, i.e., use $\| P_{>N} u \|_{\dot H_\mathbf{x}^{1/2}} \lesssim N^{-1/2} \| u \|_{\dot H_\mathbf{x}^1}$. This gives
$$
\left| \partial_t \| P_{<N} u\|_{L_{\mathbf{x}}^2}^2 \right|   \lesssim N^{-1/2} \| u \|_{\dot H_{\mathbf{x}}^1}^3.
$$
Now integrate in time, for fixed $t_1<t_2$, to obtain
$$
\left| \|P_{<N} u(t_1) \|_{L_{\mathbf{x}}^2}^2 - \|P_{<N} u(t_2) \|_{L_{\mathbf{x}}^2}^2 \right| \lesssim N^{-1/2} \| u \|_{L_{[t_1,t_2]}^\infty \dot H_{\mathbf{x}}^1}^3 |t_2-t_1|.
$$
Send $N\to \infty$, to obtain that 
$$
\|u(t_1) \|_{L_{\mathbf{x}}^2}^2 = \|u(t_2) \|_{L_{\mathbf{x}}^2}^2,
$$
which indicates that the mass at any two distinct times $t_1$ and $t_2$ is the same, completing the proof of Lemma \ref{L:mass-conservation}.

\section{Decomposition of orbitally stable solutions}
\label{S:geom-decomp}

In this section, we introduce three versions of the remainder function: $\epsilon$, $\eta$, and $\zeta$, and derive the equations that each of these functions satisfy, and derive the parameter dynamics.  Some of these lemmas will be proved only under the assumption that the solution is of Class B.  In particular, we will cover the proof of Lemma \ref{L:geom-decomp}.  

Note that in Lemma \ref{L:implicit1}, it is possible to use  $s,k \ll -1$, since $Q_{c,\mathbf{a}}$, $\partial_c Q_{c,\mathbf{a}}$, $\nabla_{\mathbf{a}} Q_{c,\mathbf{a}}$, etc., are smooth and exponentially decaying in space, and $u$ appears as a dual object in the proof.   This will be exploited in Lemma \ref{L:implicit1b}.

\begin{lemma}
\label{L:implicit1}
Suppose $\alpha \ll 1$, $s, k \in \mathbb{R}$.  Suppose $u(\mathbf{x})\in H_{\mathbf{x}}^{s,k}$ (suppressing time dependence)  and there are given $\hat c>0$ and $\hat{\mathbf{a}}\in \mathbb{R}^3$ such that 
$$
\|\hat c^2 u(\hat c \mathbf{x}+\hat{\mathbf{a}}) - Q(\mathbf{x}) \|_{H_{\mathbf{x}}^{s,k}} \leq \alpha.
$$
Then there exists $c>0$ and $\mathbf{a}\in \mathbb{R}^3$ with
$$
|c-\hat c | \lesssim \alpha \quad \mbox{and} \quad |\mathbf{a} - \hat{\mathbf{a}}|  \lesssim \alpha
$$
such that, if we define
$$
\epsilon(\mathbf{x}) = c^2 u(c\mathbf{x}+\mathbf{a}) - Q(\mathbf{x}),
$$
then $\epsilon$ satisfies the orthogonality conditions
$$
\la \epsilon, \nabla Q \ra =0 \quad \mbox{and} \quad \la \epsilon, Q^2 \ra =0.
$$
Moreover, this defines an infinitely differentiable mapping
$$
H_{\mathbf{x}}^{s,k} \to \mathbb{R}^4\, \quad \text{given by} \quad u \mapsto (c, \mathbf{a}).
$$
Specifically, each of the derivative maps $c'$, $a_j'$, for $j=1,2,3$, are Lipschitz continuous maps $H_{\mathbf{x}}^{s,k} \to H_{\mathbf{x}}^{-s,-k}$.  
\end{lemma}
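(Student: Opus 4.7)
The plan is to apply the implicit function theorem in the weighted Sobolev setting, pinning down the four parameters $(c,\mathbf{a})\in (0,\infty)\times\mathbb{R}^3$ via the four orthogonality conditions $\la \epsilon,\psi_i\ra=0$, where $\psi_1:=Q^2$ and $\psi_{j+1}:=\partial_j Q$ for $j=1,2,3$. First I would renormalize: set $\hat v(\mathbf{x}):=\hat c^2 u(\hat c\mathbf{x}+\hat{\mathbf{a}})$ and $(\lambda,\mathbf{b}):=(c/\hat c,\,(\mathbf{a}-\hat{\mathbf{a}})/\hat c)$, so that $c^2u(c\mathbf{x}+\mathbf{a})=\lambda^2\hat v(\lambda\mathbf{x}+\mathbf{b})$, the hypothesis becomes $\|\hat v-Q\|_{H_{\mathbf{x}}^{s,k}}\leq\alpha$, and the task reduces to locating $(\lambda,\mathbf{b})$ near $(1,\mathbf{0})$ with $\la \lambda^2\hat v(\lambda\cdot+\mathbf{b})-Q,\psi_i\ra=0$. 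Define $G:H_{\mathbf{x}}^{s,k}\times(0,\infty)\times\mathbb{R}^3\to\mathbb{R}^4$ by
\[
G_i(\hat v,\lambda,\mathbf{b}) := \lambda^{-1}\la \hat v,\,\psi_i(\lambda^{-1}(\cdot-\mathbf{b}))\ra - \la Q,\psi_i\ra,
\]
where the pairing is interpreted as the $H_{\mathbf{x}}^{s,k}$--$H_{\mathbf{x}}^{-s,-k}$ duality (the second slot lies in any weighted Sobolev space because $\psi_i\in\mathcal{S}$).

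To verify the IFT hypotheses, joint $C^\infty$ smoothness of $G$ follows because $(\lambda,\mathbf{b})\mapsto\psi_i(\lambda^{-1}(\cdot-\mathbf{b}))$ is $C^\infty$ into $H_{\mathbf{x}}^{-s,-k}$, since any derivative in $(\lambda,\mathbf{b})$ just acts on a Schwartz function, and $G(Q,1,\mathbf{0})=0$. Differentiating at the base point gives $\partial_\lambda[\lambda^2\hat v(\lambda\mathbf{x}+\mathbf{b})]|_{\text{base}}=2Q+\mathbf{x}\cdot\nabla Q=\Lambda Q$ and $\partial_{b_j}[\,\cdot\,]|_{\text{base}}=\partial_j Q$, so the Jacobian $\partial_{(\lambda,\mathbf{b})}G$ is the $4\times 4$ matrix whose $i$-th row is $(\la \Lambda Q,\psi_i\ra,\la \partial_1 Q,\psi_i\ra,\la \partial_2 Q,\psi_i\ra,\la \partial_3 Q,\psi_i\ra)$. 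By the radial symmetry of $Q$, every off-diagonal entry vanishes by an odd-even parity argument in some coordinate (for instance $\la \partial_j Q,Q^2\ra=\frac{1}{3}\int\partial_j(Q^3)=0$, $\la \Lambda Q,\partial_j Q\ra=0$, and $\la \partial_i Q,\partial_j Q\ra=0$ for $i\neq j$), while the diagonal entries $\la \Lambda Q,Q^2\ra=\int Q^3$ (obtained by differentiating $\int Q_c^3=c^{-3}\int Q^3$ in $c$) and $\|\partial_j Q\|_{L_{\mathbf{x}}^2}^2$ are strictly positive. Hence the Jacobian is invertible.

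The IFT then produces a unique $C^\infty$ map $\hat v\mapsto(\lambda(\hat v),\mathbf{b}(\hat v))$ in a neighborhood of $Q$ in $H_{\mathbf{x}}^{s,k}$ with the quantitative estimate $|\lambda-1|+|\mathbf{b}|\lesssim\|\hat v-Q\|_{H_{\mathbf{x}}^{s,k}}\lesssim\alpha$; undoing the renormalization via $c=\lambda\hat c$ and $\mathbf{a}=\hat{\mathbf{a}}+\hat c\,\mathbf{b}$ delivers the desired $(c,\mathbf{a})$ and the bounds $|c-\hat c|,|\mathbf{a}-\hat{\mathbf{a}}|\lesssim\alpha$. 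For the derivative claim, the IFT formula $(\lambda',\mathbf{b}')=-J(\hat v)^{-1}\partial_{\hat v}G$ combined with $\partial_{\hat v}G_i(h)=\lambda^{-1}\la h,\psi_i(\lambda^{-1}(\cdot-\mathbf{b}))\ra$ identifies $\partial_{\hat v}G_i\in H_{\mathbf{x}}^{-s,-k}$ as the Schwartz function $\lambda^{-1}\psi_i(\lambda^{-1}(\cdot-\mathbf{b}))$, depending smoothly on $(\lambda,\mathbf{b})$; composing with $J^{-1}$ yields $c',a_j'$ as Lipschitz maps $H_{\mathbf{x}}^{s,k}\to H_{\mathbf{x}}^{-s,-k}$. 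The only mildly delicate point, and the one that motivates the duality rewriting above, is making sense of the expression $c^2 u(c\mathbf{x}+\mathbf{a})$ when $s,k$ are very negative; by pushing the regularity and decay demands entirely onto the Schwartz test functions $\psi_i$, this is bypassed, and the substantive content of the argument is the standard nondegeneracy computation for $J$ described above.
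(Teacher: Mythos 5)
Your proposal is correct and follows essentially the same route as the paper: an implicit function theorem argument applied to the four orthogonality functionals, with smoothness obtained by letting all derivatives fall on Schwartz test functions via the $H_{\mathbf{x}}^{s,k}$--$H_{\mathbf{x}}^{-s,-k}$ duality, and the Lipschitz claim for $c'$, $a_j'$ read off from implicit differentiation and the inverse Jacobian. The only (cosmetic) deviation is that you pair against the fixed functions $Q^2$, $\nabla Q$ (matching the lemma's stated orthogonality conditions) and verify invertibility of the $4\times4$ Jacobian explicitly by parity and $\la \Lambda Q, Q^2\ra=\int Q^3>0$, whereas the paper phrases the functional with the moving test functions $\partial_c Q_{c,\mathbf{a}}$, $\nabla_{\mathbf{a}}Q_{c,\mathbf{a}}$ and asserts the invertibility; the substance is the same.
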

\begin{proof}
By scaling and translation, we can assume that $\hat c=1$ and $\hat{\mathbf{a}}=0$.
Let $Q_{c,\mathbf{a}}(\mathbf{x}) = c^{-2}Q(c^{-1}(\mathbf{x}-\mathbf{a}))$.  Then
$$
F(u,c,\mathbf{a}) = \begin{bmatrix}  \la u - Q_{c,\mathbf{a}}, \partial_c Q_{c,\mathbf{a}} \ra  \\  \la u - Q_{c,\mathbf{a}}, \nabla_{\mathbf{a}} Q_{c,\mathbf{a}} \ra \end{bmatrix}
$$
defines a mapping
$$
F: H_{\mathbf{x}}^{s,k} \times \mathbb{R}^4 \to \mathbb{R}^4,
$$
for which we know that $F(Q,1,0)=0$.  The mapping $F$ is infinitely differentiable in each component ($u$, $c$, $\mathbf{a}$), and each derivative has uniform norms for $\frac12\leq c \leq 2$ and $\mathbf{a}\in \mathbb{R}^3$.  We compute  the $4$-vector valued first derivative functions as
$$
\la d_u F(u,c,\mathbf{a}), v\ra  = \begin{bmatrix}  \la  v , \partial_c Q_{c,\mathbf{a}} \ra  \\  \la v, \nabla_{\mathbf{a}} Q_{c,\mathbf{a}} \ra \end{bmatrix},
$$
$$
\partial_c F(u,c,\mathbf{a})  = -\begin{bmatrix}  \la  \partial_c Q_{c,\mathbf{a}} , \partial_c Q_{c,\mathbf{a}} \ra  \\  \la \partial_c Q_{c,\mathbf{a}}, \nabla_{\mathbf{a}} Q_{c,\mathbf{a}} \ra \end{bmatrix}+ \begin{bmatrix}  \la u - Q_{c,\mathbf{a}}, \partial_c^2 Q_{c,\mathbf{a}} \ra  \\  \la u - Q_{c,\mathbf{a}}, \partial_c \nabla_{\mathbf{a}} Q_{c,\mathbf{a}} \ra \end{bmatrix},
$$
$$
\partial_{a_j} F(u,c,\mathbf{a})  = -\begin{bmatrix}  \la \partial_{a_j}   Q_{c,\mathbf{a}} , \partial_c Q_{c,\mathbf{a}} \ra  \\  \la \partial_{a_j} Q_{c,\mathbf{a}}, \nabla_{\mathbf{a}} Q_{c,\mathbf{a}} \ra \end{bmatrix}+ \begin{bmatrix}  \la u - Q_{c,\mathbf{a}}, \partial_{a_j}\partial_c Q_{c,\mathbf{a}} \ra  \\  \la u - Q_{c,\mathbf{a}}, \partial_{a_j} \nabla_{\mathbf{a}} Q_{c,\mathbf{a}} \ra \end{bmatrix}.
$$
It is straightforward to check that the $4\times 4$ matrix-valued map $\partial_{c,\mathbf{a}}F(u,c,\mathbf{a})$ is invertible at $(u,c, \mathbf{a}) = (Q,1,0)$, and thus, by the implicit function theorem, the mappings  $u \mapsto c(u)$ and $u \mapsto \mathbf{a}(u)$ that satisfy the $4$-vector equation
$$
F(u, c(u), \mathbf{a}(u))=0
$$
exist and are unique.  By implicit differentiation, the following $4$-vector valued identity holds
\begin{align*}
0 &= \la d_u  [F(u,c(u),\mathbf{a}(u))], v\ra \\
&= \la (d_uF)(u,c(u),\mathbf{a}(u)), v\ra  
+ (\partial_c F)(u,c(u),\mathbf{a}(u)) \la c'(u), v\ra  \\
 &  \qquad + \sum_{j=1}^3 (\partial_{a_j} F)(u,c(u),\mathbf{a}(u)) \la a_j'(u), v\ra.
\end{align*}
This is actually four equations in the four unknowns $\la c'(u), v\ra$ and $\la a_j'(u), v\ra$, for $j=1, 2, 3$.
Due to the invertibility of $\partial_{c,\mathbf{a}}F(u,c,\mathbf{a})$, we can solve for $\la c'(u), v\ra$ and $\la a_j'(u), v\ra$, for $j=1, 2, 3$.  We obtain that $c'(u)$, which is a bounded linear map $H_{\mathbf{x}}^{s,k} \to \mathbb{R}$, and hence, associated with an element of $H_{\mathbf{x}}^{-s,-k}$.  Thus, $c'$ itself a Lipschitz continuous map $c':H_{\mathbf{x}}^{s,k} \to H_{\mathbf{x}}^{-s,-k}$.  
\end{proof}

\begin{lemma}
\label{L:implicit1b}
There exists $\alpha>0$ sufficiently small so that, if $u$ is a Class B $\alpha$-orbitally stable solution to the 3D ZK, then there exist \emph{unique} translation $\mathbf{a}(t)$ and scale parameters $c(t)>0$ so that $\epsilon$ defined by
$$\epsilon(\mathbf{x},t) = c(t)^2u(c(t)\mathbf{x}+\mathbf{a}(t),t) - Q(\mathbf{x})$$
satisfies, for all $t$, the orthogonality conditions
$$\la \epsilon(t), \nabla Q \ra =0 \quad \mbox{and} \quad \la \epsilon(t), Q^2 \ra =0.$$
The translation and scale parameters $\mathbf{a}(t)=(a_x(t),a_y(t),a_z(t))$ and $c(t)$ are $C^{1,\frac23}$ functions.
\end{lemma}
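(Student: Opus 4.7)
The plan is to apply Lemma \ref{L:implicit1} pointwise in time to produce the parameters, and then read off temporal regularity from the Class B structure of $u(t)$. For each fixed $t$, $\alpha$-orbital stability supplies a pair $(\hat c(t), \hat{\mathbf{a}}(t))$ with $\|\hat c(t)^2 u(\hat c(t)\mathbf{x}+\hat{\mathbf{a}}(t),t) - Q\|_{H^1_{\mathbf{x}}} \leq \alpha$, meeting the hypothesis of Lemma \ref{L:implicit1} with $s=1$, $k=0$. The lemma produces the unique pair $(c(t),\mathbf{a}(t))$ near $(\hat c(t),\hat{\mathbf{a}}(t))$ satisfying both orthogonality conditions; continuity of $t \mapsto (c(t),\mathbf{a}(t))$ follows by composing the Lipschitz map $u\mapsto(c,\mathbf{a})$ with the strong continuity $t\mapsto u(t)\in H^s_{\mathbf{x}}$ (any $s<1$) guaranteed by the Class B definition, and global uniqueness is immediate from the local uniqueness in Lemma \ref{L:implicit1} once $\alpha$ is small.

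For the $C^1$ regularity, I invoke the stronger conclusion of Lemma \ref{L:implicit1}: for any $s,k\in\mathbb{R}$, the differentials $c'(u), a_j'(u)$ are Lipschitz maps $H^{s,k}_{\mathbf{x}}\to H^{-s,-k}_{\mathbf{x}}$. Choosing $s=-2$, $k=0$ (exactly as signalled in the remark preceding Lemma \ref{L:implicit1}) places $c'(u(t)), a_j'(u(t))\in H^2_{\mathbf{x}}$, the natural pairing space for $\partial_t u(t)\in H^{-2}_{\mathbf{x}}$ from Class B. The chain rule then gives
\[
\frac{dc}{dt}(t)=\la c'(u(t)),\partial_t u(t)\ra, \qquad \frac{da_j}{dt}(t)=\la a_j'(u(t)),\partial_t u(t)\ra,
\]
and substituting the 3D ZK equation $\partial_t u = -\partial_x\Delta u - \partial_x u^2$ and integrating by parts against the smooth, exponentially decaying kernels representing $c'(u(t)), a_j'(u(t))$ converts these into explicit formulas for the parameter derivatives in terms of $u(t)$ alone.

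For the Hölder exponent $\tfrac23$, the single quantitative input is interpolation.  Split
\[
\tfrac{dc}{dt}(t_1)-\tfrac{dc}{dt}(t_2) = \la c'(u(t_1))-c'(u(t_2)),\partial_t u(t_1)\ra + \la c'(u(t_2)),\partial_t u(t_1)-\partial_t u(t_2)\ra.
\]
The first term is controlled by the Lipschitz bound on $c'$ together with $\|u(t_1)-u(t_2)\|_{H^{-2}_{\mathbf{x}}}\leq|t_1-t_2|\sup_t\|\partial_t u(t)\|_{H^{-2}_{\mathbf{x}}}$; for the second, substituting the equation and integrating by parts reduces matters to pairing a Schwartz-type function against $u(t_1)-u(t_2)$ and $u(t_1)^2-u(t_2)^2$. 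Since $u\in L^\infty_t H^1_{\mathbf{x}}$ by orbital stability and $u$ is Lipschitz in time into $H^{-2}_{\mathbf{x}}$, interpolation between scales $-2$ and $1$ at $\sigma=0$ (which sits at fraction $\theta=(\sigma+2)/3=2/3$) gives
\[
\|u(t_1)-u(t_2)\|_{L^2_{\mathbf{x}}}\lesssim|t_1-t_2|^{2/3},
\]
and Hölder's inequality together with the Sobolev embedding $H^1_{\mathbf{x}}\hookrightarrow L^6_{\mathbf{x}}, L^3_{\mathbf{x}}$ dispatches the quadratic difference $(u(t_1)-u(t_2))(u(t_1)+u(t_2))$.

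The main obstacle is essentially bookkeeping: arranging that every occurrence of $\partial_t u$ in the differentiated orthogonality relations is either paired with a smooth, decaying factor or converted (via the equation and integration by parts) into a difference of $u$ or $u^2$ against smooth weights, so the interpolated $L^2$ Hölder bound applies cleanly. The exponent $2/3$ is the sharp value dictated by the regularity gap $\partial_t u\in H^{-2}_{\mathbf{x}}$ versus $u\in H^1_{\mathbf{x}}$, which is in turn fixed by the third-order dispersion of 3D ZK.
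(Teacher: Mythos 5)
Your overall architecture (apply Lemma \ref{L:implicit1} pointwise in time, then differentiate the composition $c(t)=c(u(t))$ by pairing the smooth, exponentially decaying representatives of $c'(u(t))$, $a_j'(u(t))$ against $\partial_t u(t)$ and use the ZK equation to get the modulus of continuity) is the same as the paper's, except that the paper works at $s=-4$ rather than $s=-2$. The gap is in the single quantitative step that is supposed to produce the exponent $\tfrac23$: the claimed bound $\|u(t_1)-u(t_2)\|_{L^2_{\mathbf{x}}}\lesssim |t_1-t_2|^{2/3}$ is false. Interpolating $H^0$ between $H^{-2}$ and $H^{1}$ gives $\|f\|_{L^2}\leq \|f\|_{H^{-2}}^{1/3}\,\|f\|_{H^{1}}^{2/3}$; the Lipschitz-in-time information enters through the $H^{-2}$ endpoint, which carries weight $1/3$, not $2/3$ (you have swapped the interpolation weights). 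This is consistent with \eqref{E:wk-106b}, which gives exponent $(1-s)/3$, i.e.\ $1/3$ at $s=0$, and that is sharp for an $L_t^\infty H^1_{\mathbf{x}}$ solution of a third-order dispersive equation. With the correct exponent, your splitting only yields $|c'(t_1)-c'(t_2)|\lesssim |t_1-t_2|^{1/3}$, i.e.\ $C^{1,\frac13}$, which falls short of the asserted $C^{1,\frac23}$.

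The repair is to measure the difference $u(t_1)-u(t_2)$ in $H^{-1}_{\mathbf{x}}$ rather than $L^2_{\mathbf{x}}$: interpolating between the Lipschitz bound in $H^{-2}_{\mathbf{x}}$ and boundedness in $H^{1}_{\mathbf{x}}$ does give $\|u(t_1)-u(t_2)\|_{H^{-1}_{\mathbf{x}}}\lesssim |t_1-t_2|^{2/3}$. The linear term is then harmless (pair $\partial_x\Delta$ of the smooth kernel, which is Schwartz-like, with $u(t_1)-u(t_2)$ in $H^{-1}_{\mathbf{x}}$), but for the quadratic term you need a bilinear estimate that accepts one factor at negative regularity, e.g.\ \eqref{E:prod-Sob}, which tolerates $u(t_1)-u(t_2)\in H^{-1}_{\mathbf{x}}$ only if the dual weight is placed in $H^{-\alpha}_{\mathbf{x}}$ with $\alpha\leq -\tfrac32$. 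This is exactly why the paper takes $s=-4$ (so $c'(u),a_j'(u)\in H^{4}_{\mathbf{x}}$) and proves $\|\partial_t u(t_1)-\partial_t u(t_2)\|_{H^{-4}_{\mathbf{x}}}\lesssim \|u(t_1)-u(t_2)\|_{H^{-1}_{\mathbf{x}}}\lesssim |t_1-t_2|^{2/3}$, as in \eqref{E:wk-107b}. With that modification your argument closes; as written, it does not reach the exponent $\tfrac23$.
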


We remark that even though the function space mappings $c: H^{s,k} \to \mathbb{R}$ and $a_j:H^{s,k} \to \mathbb{R}$ in Lemma \ref{L:implicit1} are infinitely differentiable, the compositions $c(t) = c(u(t))$ and $a_j(t) =c(u(t))$ are not more than once differentiable, since we do not have a meaning for $u''(t)$ when $u(t)$ is a Class B solution.  Lemma \ref{L:implicit1b} asserts that these parameters have H\"older continuous first derivatives of order $\frac23$, and this seems to be the best we can do.  To see that $u''(t)$ is not defined, formally compute, by substitution of ZK,
$$
\partial_t^2 u = -\partial_t (\partial_x \Delta u + \partial_x (u^2))= - \partial_x \Delta \partial_t u - 2\partial_x( u \, \partial_t u).
$$
All that we know is $u\in H_{\mathbf{x}}^1$ and $\partial_t u \in H_{\mathbf{x}}^{-2}$, and there is no way to define the product of two such functions in 3D.  

\begin{proof}[Proof of Lemma \ref{L:implicit1b}]
To see this, we apply Lemma \ref{L:implicit1} at each time $t$ with  $s=-4$ and $k=0$.   Since in Lemma \ref{L:implicit1}, $c$ and $\mathbf{a}$ are functions of $u$, we have $c: H_{\mathbf{x}}^s \to \mathbb{R}$,
$$
c': H_{\mathbf{x}}^s \to   (H_{\mathbf{x}}^s)^* \simeq H_{\mathbf{x}}^{-s},
$$ 
and for $u_1, u_2 \in H_{\mathbf{x}}^s$,
$$
\| c'(u_2)-c'(u_1) \|_{H_{\mathbf{x}}^{-s}} \lesssim \| u_2 - u_1 \|_{H_x^s}.
$$
Similar statements hold for $a_j'$.    
Taking $c(t) = c(u(t))$ and $\mathbf{a}(t) = \mathbf{a}(u(t))$, we obtain
$$
c'(t) = \la c'(u(t)),u'(t)\ra \,, \qquad a_j'(t) = \la a_j'(u(t)), u'(t) \ra. 
$$ 
With our choice of $s=-4$, we have $c'(u(t)) \in H_{\mathbf{x}}^4$ and $a_j'(u(t)) \in H_{\mathbf{x}}^4$, and thus, we need to estimate $u'(t) \in H_{\mathbf{x}}^{-4}$.    Since the argument for $a_j'(t)$ is similar, we only write the argument for $c'(t)$.  Note that for $t_1<t_2$,
\begin{align*}
c'(t_2) - c'(t_1) &= \la c'(u(t_2)),u'(t_2) \ra - \la c'(u(t_1)),u'(t_1)\ra\\
&= \la c'(u(t_2))-c'(u(t_1)), u'(t_2) \ra + \la c'(u(t_1)), u'(t_2)-u'(t_1) \ra,
\end{align*}
and thus,
\begin{align*}
|c'(t_2)-c'(t_1)| &\lesssim \| c'(u(t_2)) - c'(u(t_1))\|_{H_{\mathbf{x}}^4} \| u'(t_2) \|_{H_{\mathbf{x}}^{-4}} + \| c'(u(t_2)) \|_{H_{\mathbf{x}}^4} \|u'(t_2)-u'(t_1) \|_{H_{\mathbf{x}}^{-4}} \\
&\lesssim \|u(t_2) - u(t_1) \|_{H_{\mathbf{x}}^{-4}}  \| u'(t_2) \|_{H_{\mathbf{x}}^{-4}} + \|u'(t_2)-u'(t_1) \|_{H_{\mathbf{x}}^{-4}} \, .
\end{align*}
By \eqref{E:wk-106b} and \eqref{E:wk-107b},
$$
|c'(t_2)-c'(t_1)| \lesssim |t_2-t_1|^{2/3}.
$$
\end{proof}

Now we prove the remaining properties of the parameters $c(t)=c(u(t))$ and $\mathbf{a}(t)=\mathbf{a}(u(t))$ asserted in Lemma \ref{L:geom-decomp}.  Let $u$ be a solution to the 3D ZK that is $\alpha$-orbitally stable and let $c(t)$ and $\mathbf{a}(t)$ be the unique parameters so that the orthogonality conditions
$$
\la \epsilon, Q^2 \ra =0 \quad \mbox{and} \quad \la \epsilon, \nabla Q \ra=0
$$
hold.
Let
\begin{equation}
\label{E:ep-def}
\epsilon(\mathbf{x},t) = c(t)^2 \,u\left(c(t) \mathbf{x} + \mathbf{a}(t),t\right) - Q(\mathbf{x})
\end{equation}
and
$$
Q_{c, \mathbf{a}}(\mathbf{x}) = c^{-2}Q(c^{-1}(\mathbf{x}-\mathbf{a})).
$$
We further extend this notational convention to an arbitrary function $f(\mathbf{x})$, denoting 
\begin{equation}
\label{E:shift-notation}
f_{c, \mathbf{a}}(\mathbf{x}) \defeq c^{-2}f(c^{-1}(\mathbf{x}-\mathbf{a})).
\end{equation}
In particular,
$$
\nabla Q_{c,\mathbf{a}} = c^{-1} (\nabla Q)_{c,\mathbf{a}} \quad \mbox{and} \quad \partial_c Q_{c,\mathbf{a}} = -c^{-1} (\Lambda Q)_{c,\mathbf{a}}.
$$

Rewriting \eqref{E:ep-def} as
$$
u(\mathbf{x},t) = Q_{c,\mathbf{a}}(\mathbf{x}) + c^{-2}\epsilon(c^{-1}(\mathbf{x}-\mathbf{a})), 
$$
and substituting into the 3D ZK equation, using the equation for $Q$, we obtain the equation for $\epsilon$:
\begin{align} \label{ep-eq}
c^3 \partial_t \epsilon 
=  \partial_x \mathcal{L}\epsilon + c^2c' \Lambda Q + c^2(\mathbf{a}' - c^{-2} \mathbf{i}) \cdot \nabla Q 
+ c^2c' \Lambda \epsilon + c^2(\mathbf{a}' - c^{-2}\mathbf{i}) \cdot \nabla \epsilon  - \partial_x \epsilon^2,
\end{align}
where
$$
\mathcal{L} = I - \Delta - 2Q.
$$
Now let
\begin{equation}
\label{E:eta-def}
\eta(\mathbf{x},t) = c^{-2}\epsilon(c^{-1}(\mathbf{x}-\mathbf{a}))
\end{equation}
so that
$$
u(\mathbf{x},t) = Q_{c, \mathbf{a}}(\mathbf{x}) + \eta(\mathbf{x},t).
$$
The equation for $\eta$ is
\begin{equation}
\label{E:eta-eq}
\partial_t \eta = - \partial_x \Delta \eta - 2 \partial_x ( Q_{c,\mathbf{a}} \eta) - \partial_x \eta^2 + c'c^{-1} (\Lambda Q)_{c,\mathbf{a}} + c^{-1}(\mathbf{a}' - c^{-2} \mathbf{i}) \cdot (\nabla Q)_{c,\mathbf{a}}.
\end{equation}

\begin{lemma}
\label{L:ODE-bounds}
Suppose that $u$ is a Class B, $\alpha$-orbitally stable solution to the 3D ZK with associated parameters $\mathbf{a}(t)$ and $c(t)$ as in Lemma \ref{L:geom-decomp}.  Let $b(t) \defeq \|\epsilon(t) \|_{L_{\mathbf{x}}^2}$.   Then $\mathbf{a}(t)=(a_x(t),a_y(t),a_z(t))$ and $c(t)$ are $C^{1,\frac23}$ functions, and moreover,
\begin{equation}
\label{E:param-ODEs}
\begin{aligned}
&\left|c^2c' - \frac{\la \epsilon, \mathcal{L} \partial_x (Q^2)\ra}{ \la \Lambda Q, Q^2 \ra} \right| \lesssim b^2,  && 
\left| c^2(a_x' -c^{-2}) - \frac{ \la \epsilon, \mathcal{L} Q_{xx}\ra }{ \|Q_x\|_{L^2}^2 } \right| \lesssim b^2, \\
&\left| c^2a_y' - \frac{ \la \epsilon, \mathcal{L} Q_{xy}\ra }{ \|Q_y\|_{L^2}^2 } \right| \lesssim b^2, && 
\left| c^2a_z' - \frac{ \la \epsilon, \mathcal{L} Q_{xz}\ra }{ \|Q_z\|_{L^2}^2 } \right| \lesssim b^2.
\end{aligned}
\end{equation}
\end{lemma}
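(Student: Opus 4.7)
My plan is to derive the ODE bounds \eqref{E:param-ODEs} by differentiating the two orthogonality conditions $\la\epsilon,Q^2\ra=0$ and $\la\epsilon,\nabla Q\ra=0$ in $t$, substituting the evolution equation \eqref{ep-eq} for $\epsilon$, and inverting the resulting $4\times 4$ linear system for $(c^2c',\,c^2(\mathbf a'-c^{-2}\mathbf i))$. The $C^{1,\frac23}$ regularity is already furnished by Lemma \ref{L:implicit1b}, so only the quantitative estimates remain.

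Differentiating $\la\epsilon(t),Q^2\ra=0$ in $t$ is legitimate, because $Q^2$ is Schwartz and pairs against $\partial_t\epsilon\in H_\mathbf{x}^{-2}$ (Class B regularity) via $H^{-2}$--$H^{2}$ duality. Multiplying by $c^3$ and substituting \eqref{ep-eq}, self-adjointness of $\mathcal{L}$ gives $\la\partial_x\mathcal{L}\epsilon,Q^2\ra=-\la\epsilon,\mathcal{L}\partial_x Q^2\ra$, while the $c^2(\mathbf a'-c^{-2}\mathbf i)\cdot\nabla Q$ contribution vanishes because $\la\nabla Q,Q^2\ra=\tfrac13\int\nabla(Q^3)\,d\mathbf{x}=0$. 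A direct computation with $\Lambda Q=2Q+\mathbf x\cdot\nabla Q$ yields $\la\Lambda Q,Q^2\ra=\int Q^3\,d\mathbf{x}>0$. Performing the analogous computation by pairing \eqref{ep-eq} against each $\partial_{x_j}Q$ and invoking the radial parity of $Q$, the cross-terms $\la\Lambda Q,\partial_{x_j}Q\ra=0$ and $\la\partial_{x_k}Q,\partial_{x_j}Q\ra=\delta_{jk}\|Q_{x_j}\|_{L_{\mathbf{x}}^2}^2$ decouple the leading coefficient matrix into a diagonal $A=\operatorname{diag}(\la\Lambda Q,Q^2\ra,\|Q_x\|_{L^2}^2,\|Q_y\|_{L^2}^2,\|Q_z\|_{L^2}^2)$ with strictly positive entries.

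Collecting the remaining terms, the system takes the schematic form $(A+B(t))\,\mathbf v(t)=\mathbf r(t)+\mathbf s(t)$, where $\mathbf v=(c^2c',\,c^2(\mathbf a'-c^{-2}\mathbf i))^T$. The leading vector $\mathbf r$ has entries $\la\epsilon,\mathcal{L}\partial_xQ^2\ra$, $\la\epsilon,\mathcal{L} Q_{xx}\ra$, $\la\epsilon,\mathcal{L} Q_{xy}\ra$, $\la\epsilon,\mathcal{L} Q_{xz}\ra$ and satisfies $|\mathbf r|\lesssim b$ by Cauchy--Schwarz against these smooth exponentially decaying targets. The perturbation matrix $B$ is built from the pairings $\la\Lambda\epsilon,Q^2\ra$, $\la\nabla\epsilon,Q^2\ra$, and their $\partial_{x_j}Q$-analogues; moving $\Lambda$ and $\nabla$ back onto the test functions by duality gives $|B|\lesssim b$. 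The vector $\mathbf s$ collects the purely quadratic contribution $\la\partial_x\epsilon^2,Q^2\ra=-\la\epsilon^2,\partial_xQ^2\ra$ and its $\partial_{x_j}Q$-analogues, each $O(b^2)$ directly by H\"older with $\|\epsilon\|_{L_{\mathbf{x}}^2}^2=b^2$ against the bounded smooth targets. Expanding $\mathbf v=(A+B)^{-1}(\mathbf r+\mathbf s)=A^{-1}\mathbf r+O(b^2)$ by Neumann series (valid for $\alpha$ sufficiently small so that $|A^{-1}B|\ll 1$) produces exactly \eqref{E:param-ODEs}.

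The main technical obstacle I foresee is the self-consistency of this inversion: the error matrix $B$ multiplies the \emph{a priori} unknown parameter vector $\mathbf v$, so one must carefully justify closure. This is handled by the $O(b)$ gain on $B$ coming from $\|\epsilon\|_{L_{\mathbf{x}}^2}=b$ together with the invertibility of $A$ and the smallness of $\alpha$; once this closure is in place, every remaining estimate reduces to Cauchy--Schwarz between $\epsilon$ and a Schwartz weight.
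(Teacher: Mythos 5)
Your proposal is correct and follows essentially the same route as the paper: pairing the equation \eqref{ep-eq} for $\epsilon$ with $Q^2$ and the components of $\nabla Q$ (equivalently, differentiating the orthogonality conditions in time), which produces a $4\times 4$ system with diagonal leading matrix $A$, an $O(b)$ perturbation matrix, $O(b)$ linear data $\la \epsilon, \mathcal{L}\partial_x(Q^2)\ra$, $\la \epsilon, \mathcal{L}Q_{xx}\ra$, etc., quadratic terms of size $O(b^2)$, and a Neumann-series inversion, exactly as in \eqref{System}. The only cosmetic difference is that you justify the time differentiation via $H^{-2}$--$H^{2}$ duality from the Class B regularity, whereas the paper appeals to a regularization argument, and the $C^{1,\frac23}$ regularity is in both cases inherited from Lemma \ref{L:implicit1b}.
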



\begin{proof}
Multiplying equation \eqref{ep-eq} by $Q^2$ and $Q_{x}$, $Q_{y}$, $Q_{z}$, respectively, and integrating by parts, we formally obtain the following equations (with regularization arguments to make computations rigorous)
\begin{align*}
0 &=  -\la \epsilon, \mathcal{L} \partial_{x} (Q^2)\ra + c^2c' \la \epsilon, Q^2-\Lambda Q^2\ra +c^2c' \la \Lambda Q, Q^2\ra\\
&+c^2(a'_{x} -c^{-2}) \left[\la Q_{x},Q^2\ra-\la \epsilon, (Q^2)_{x}\ra\right]\\
&+c^2a'_{y} \left[\la Q_{y},Q^2\ra-\la \epsilon, (Q^2)_{y}\ra\right]+c^2a'_{z} \left[\la Q_{z},Q^2\ra-\la \epsilon, (Q^2)_{z}\ra\right] +\la \epsilon^2, \partial_{x}(Q^2)\ra\\
\end{align*}
and
\begin{align*}
0 &=  -\la \epsilon, \mathcal{L}  Q_{xx}\ra + c^2c' \la \epsilon, Q_{x}-\Lambda Q_{x}\ra +c^2c' \la \Lambda Q, Q_{x}\ra\\
&+c^2(a'_{x} -c^{-2}) \left[\la Q_{x},Q_{x}\ra-\la \epsilon, Q_{xx}\ra\right]\\
&+c^2a'_{y} \left[\la Q_{y},Q_{x}\ra-\la \epsilon, Q_{xy}\ra\right]+c^2a'_{z} \left[\la Q_{z},Q_{x}\ra-\la \epsilon, Q_{xz}\ra\right] +\la \epsilon^2, Q_{xx}\ra, 
\end{align*}
similarly, 
\begin{align*}
0 &=  -\la \epsilon, \mathcal{L}  Q_{yx}\ra + c^2c' \la \epsilon, Q_{y}-\Lambda Q_{y}\ra 
+c^2c' \la \Lambda Q, Q_{y}\ra\\
&+c^2(a'_{x} -c^{-2}) \left[\la Q_{x},Q_{y}\ra-\la \epsilon, Q_{yx}\ra\right]\\
&+c^2a'_{y} \left[\la Q_{y},Q_{y}\ra-\la \epsilon, Q_{yy}\ra\right]+c^2a'_{z} \left[\la Q_{z},Q_{y}\ra-\la \epsilon, Q_{yz}\ra\right] +\la \epsilon^2, Q_{yx}\ra, 
\end{align*}
and
\begin{align*}
0 &=  -\la \epsilon, \mathcal{L}  Q_{zx}\ra + c^2c' \la \epsilon, Q_{z}-\Lambda Q_{z}\ra 
+c^2c' \la \Lambda Q, Q_{z}\ra\\
&+c^2(a'_{x} -c^{-2}) \left[\la Q_{x},Q_{z}\ra-\la \epsilon, Q_{zx}\ra\right]\\
&+c^2a'_{y} \left[\la Q_{y},Q_{z}\ra-\la \epsilon, Q_{zy}\ra\right]+c^2a'_{z} \left[\la Q_{z},Q_{z}\ra-\la \epsilon, Q_{zz}\ra\right]+\la \epsilon^2, Q_{zx}\ra. 
\end{align*}
Noting that $\la \Lambda Q, Q^2\ra=\|Q\|_{L^3}^3$ and $\la \Lambda Q, \nabla Q \ra=0$ ($L^2$-critical case), we deduce the following linear system
\begin{equation}\label{System}
(A - B(\epsilon)) 
\begin{bmatrix} 
c^2c' \\ c^2(a'_{x} -c^{-2})  \\ c^2a'_{y}\\c^2a'_{z} 
\end{bmatrix} 
=  \begin{bmatrix} 
\la \epsilon, \mathcal{L} \partial_{x} (Q^2)\ra \\ 
\la \epsilon, \mathcal{L}  Q_{xx}\ra \\ 
\la \epsilon, \mathcal{L}  Q_{xy}\ra \\
\la \epsilon, \mathcal{L}  Q_{xz}\ra 
\end{bmatrix} 
- \begin{bmatrix} 
\la \epsilon^2, \partial_{x}(Q^2)\ra \\ 
\la \epsilon^2, Q_{xx}\ra \\ 
\la \epsilon^2, Q_{xy}\ra \\
\la \epsilon^2, Q_{xz}\ra  
\end{bmatrix},
\end{equation}
where 
$$
A = 
\begin{bmatrix} 
\|Q\|_{L^3}^3 & & & \\ 
& \|Q_{x}\|_{L^2}^2 && \\ 
& & \|Q_{y}\|_{L^2}^2& \\
&&& \|Q_{z}\|_{L^2}^2 
\end{bmatrix}
$$
and
$$
B(\epsilon) = 
\begin{bmatrix} 
\la \epsilon, \Lambda Q^2 - Q^2\ra & \la \epsilon, (Q^2)_{x}\ra  &  \la \epsilon, (Q^2)_{y}\ra & \la \epsilon, (Q^2)_{z}\ra \\ 
\la \epsilon, \Lambda Q_{x} - Q_{x}\ra & \la \epsilon, Q_{xx}\ra  &  \la \epsilon, Q_{xy}\ra & \la \epsilon, Q_{xz}\ra \\ 
\la \epsilon, \Lambda Q_{y} - Q_{y}\ra & \la \epsilon, Q_{xy}\ra  &  \la \epsilon, Q_{yy}\ra & \la \epsilon, Q_{yz}\ra\\
\la \epsilon, \Lambda Q_{z} - Q_{z}\ra & \la \epsilon, Q_{xz}\ra  &  \la \epsilon, Q_{yz}\ra & \la \epsilon, Q_{zz}\ra 
\end{bmatrix}.
$$
Note that the matrix $B(\epsilon)$ has norm $\|B(\epsilon)\|\lesssim \| \epsilon \|_{L_t^\infty L_{\mathbf{x}}^2}$. Therefore, if $b= \| \epsilon \|_{L_t^\infty L_{\mathbf{x}}^2} \ll 1$, then there exists the inverse matrix $(I+A^{-1}B(\epsilon))^{-1}$, and moreover, the Neumann expansion is given by
$$
(I+A^{-1}B(\epsilon))^{-1}=I+\sum_{k=1}^{\infty}(A^{-1}B(\epsilon))^k.
$$

Setting the matrix $C(\epsilon)=\sum_{k=1}^{\infty}(A^{-1}B(\epsilon))^k$, the system \eqref{System} can be rewritten as 
$$
\begin{bmatrix} 
c^2c' - \frac{\la \epsilon, \mathcal{L} \partial_x (Q^2)\ra}{ \|Q\|_{L^3}^3} \\ 
c^2(a'_{x} -c^{-2}) - \frac{ \la \epsilon, \mathcal{L} Q_{xx}\ra }{ \|Q_{x}\|_{L^2}^2 }  \\ 
c^2a'_{y} - \frac{ \la \epsilon, \mathcal{L} Q_{xy}\ra }{ \|Q_{y}\|_{L^2}^2 }\\
c^2a'_{z}- \frac{ \la \epsilon, \mathcal{L} Q_{xz}\ra }{ \|Q_{z}\|_{L^2}^2 }  
\end{bmatrix} 
=  C(\epsilon)A^{-1}\begin{bmatrix} \la \epsilon, \mathcal{L} \partial_{x} (Q^2)\ra \\ 
\la \epsilon, \mathcal{L}  Q_{xx}\ra \\ 
\la \epsilon, \mathcal{L}  Q_{xy}\ra \\
\la \epsilon, \mathcal{L}  Q_{xz}\ra 
\end{bmatrix} 
- (I+A^{-1}B(\epsilon))^{-1}A^{-1}\begin{bmatrix} \la \epsilon^2, \partial_{x}(Q^2)\ra \\ 
\la \epsilon^2, Q_{xx}\ra \\ 
\la \epsilon^2, Q_{xy}\ra \\
\la \epsilon^2, Q_{xz}\ra  
\end{bmatrix}.
$$

Finally, since $\|C(\epsilon)\|\lesssim b$ and $\|(I+A^{-1}B(\epsilon))^{-1}\|\lesssim 1$, we deduce estimates \eqref{E:param-ODEs}.
\end{proof}


Note that the estimates in \eqref{E:param-ODEs} recast in terms of $\eta$ are the following (where we use the notation \eqref{E:shift-notation}),
\begin{equation}
\label{E:eta-ODEs}
\begin{aligned}
&\left|c c' - \frac{\la \eta, (\mathcal{L} \partial_x (Q^2))_{c,\mathbf{a}} \ra}{ \la \Lambda Q, Q^2 \ra} \right| \lesssim b^2,  && 
\left| c(a_x' -c^{-2}) - \frac{ \la \eta, (\mathcal{L} Q_{xx})_{c,\mathbf{a}} \ra }{ \|Q_x\|_{L^2}^2 } \right| \lesssim b^2, \\
&\left| c a_y' - \frac{ \la \eta, (\mathcal{L} Q_{xy})_{c,\mathbf{a}} \ra }{ \|Q_y\|_{L^2}^2 } \right| \lesssim b^2, && 
\left| c a_z' - \frac{ \la \eta, (\mathcal{L} Q_{xz})_{c,\mathbf{a}} \ra }{ \|Q_z\|_{L^2}^2 } \right| \lesssim b^2.
\end{aligned}
\end{equation}

For convenience, define the functions
$$
f\defeq \frac{\mathcal{L}\partial_x (Q^2)}{\la \Lambda Q, Q\ra} \quad \mbox{and} \quad
\mathbf{g} = \left(  \frac{ \mathcal{L}Q_{xx} }{ \|Q_x\|_{L_{\mathbf{x}}^2}}, \frac{\mathcal{L}Q_{xy}}{ \|Q_y\|_{L_{\mathbf{x}}^2}}, \frac{\mathcal{L}Q_{xz} }{ \|Q_z\|_{L_{\mathbf{x}}^2}}\right). 
$$
Using \eqref{E:eta-ODEs} as a guide, rewrite \eqref{E:eta-eq} as
\begin{equation}
\label{E:eta-eq2}
\begin{aligned}[t]
\partial_t \eta &= - \partial_x \Delta \eta - 2 \partial_x ( Q_{c,\mathbf{a}} \eta) + c^{-2} \la \eta, f_{c,\mathbf{a}}\ra (\Lambda Q)_{c,\mathbf{a}} + c^{-2}\la \eta, \mathbf{g}_{c,\mathbf{a}} \ra \cdot (\nabla Q)_{c,\mathbf{a}}
 \\
& \qquad - \partial_x \eta^2 + (c'c^{-1}  - c^{-2} \la \eta, f_{c, \mathbf{a}}\ra ) (\Lambda Q)_{c,\mathbf{a}} \\
& \qquad + (c^{-1}(\mathbf{a}' - c^{-2} \mathbf{i}) -  \la \eta, \mathbf{g}_{c,\mathbf{a}}\ra )  \cdot (\nabla Q)_{c,\mathbf{a}}
\end{aligned}
\end{equation}
so that now the top line consists of linear terms in $\eta$ and the second and third lines are quadratic.  Let
$$
B = \|b(t) \|_{L_t^\infty},
$$
and define
$$
B \zeta(t) \defeq \eta(t).
$$
Substituting into \eqref{E:eta-eq2}, we obtain
\begin{equation}
\label{E:zeta-1}
\begin{aligned}[t]
\partial_t \zeta &= - \partial_x \Delta \zeta - 2 \partial_x ( Q_{c,\mathbf{a}} \zeta) + c^{-2} \la \zeta, f_{c,\mathbf{a}}\ra (\Lambda Q)_{c,\mathbf{a}} + c^{-2}\la \zeta, \mathbf{g}_{c,\mathbf{a}} \ra \cdot (\nabla Q)_{c,\mathbf{a}}
 \\
& \qquad - B \partial_x \zeta^2 + B \omega_c (\Lambda Q)_{c,\mathbf{a}} + B\boldsymbol{\omega}_{\mathbf{a}} \cdot (\nabla Q)_{c,\mathbf{a}},
\end{aligned}
\end{equation}
where
\begin{equation}
\label{E:omegas}
\omega_c \defeq B^{-2}(c'c^{-1}  - c^{-2} B\la \zeta, f_{c, \mathbf{a}}\ra)\quad \mbox{and} \quad \boldsymbol{\omega}_{\mathbf{a}} \defeq B^{-2}(c^{-1}(\mathbf{a}' - c^{-2} \mathbf{i}) -  Bc^{-2}\la \zeta, \mathbf{g}_{c,\mathbf{a}}\ra).
\end{equation}
By \eqref{E:eta-ODEs}, we have 
$$
|\omega_c| \lesssim 1 \quad \mbox{and} \quad |\boldsymbol{\omega}_{\mathbf{a}}| \lesssim 1.
$$

\section{Monotonicity:  $I_\pm$ lemma for $u$, $J_{\pm}$ lemma for $\eta$}
\label{S:monotonicity}

In this section, we introduce key monotonicity lemmas for controlling the movement of mass of $u$ and $\eta$. The monotonicity properties in various ZK contexts have been used in \cite{FHRY, FHR2, FHR3}. The lemmas below will be needed in later sections.  

\begin{lemma}[weighted Gagliardo-Nirenberg]
\label{L:weighted-GN}
For a weight function $\psi(\mathbf{x})>0$ such that pointwise $|\nabla \psi (\mathbf{x})| \lesssim \psi(\mathbf{x})$, and $E\subset \mathbb{R}^3$ any measurable subset, 
\begin{equation}
\label{E:wk-111}
\int_{E} \psi |u|^3 \, d \mathbf{x} \lesssim  \left( \int_{E} |u|^2 \, d\mathbf{x} \right)^{1/2} \left( \int \psi |u|^2 \, d\mathbf{x} \right)^{1/4} \left( \int \psi (|\nabla u|^2 + |u|^2) \, d \mathbf{x} \right)^{3/4}.
\end{equation}
The estimate holds with constant independent of $E$.
\end{lemma}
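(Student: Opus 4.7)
The plan is to reduce \eqref{E:wk-111} to the standard (unweighted) Gagliardo--Nirenberg inequality in $\mathbb{R}^3$ by the substitution $v \defeq \psi^{1/2} u$, using the hypothesis $|\nabla \psi| \lesssim \psi$ to absorb the weight derivative. The subset $E$ only appears in one factor on the right-hand side of \eqref{E:wk-111}, namely the unweighted $(\int_E |u|^2)^{1/2}$; this suggests peeling off exactly one power of $u$ by Cauchy--Schwarz on $E$ before handing the rest over to a global estimate.

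First, I would factor the integrand on $E$ as $\psi |u|^3 = |u| \cdot \psi |u|^2$ and apply Cauchy--Schwarz on $E$:
$$
\int_E \psi |u|^3 \, d\mathbf{x} \leq \left( \int_E |u|^2 \, d\mathbf{x} \right)^{1/2} \left( \int_E \psi^2 |u|^4 \, d\mathbf{x} \right)^{1/2}.
$$
This delivers the first factor on the right side of \eqref{E:wk-111} immediately, and I would then enlarge the second integral from $E$ to all of $\mathbb{R}^3$ (losing nothing in the inequality since the integrand is nonnegative) and rewrite it as $\int \psi^2 |u|^4 \, d\mathbf{x} = \|v\|_{L^4(\mathbb{R}^3)}^4$. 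Second, I would apply the standard 3D Gagliardo--Nirenberg inequality
$$
\|v\|_{L^4}^4 \lesssim \|v\|_{L^2} \, \|\nabla v\|_{L^2}^3,
$$
which corresponds to the interpolation exponent $\theta = 3/4$ in $1/p = 1/2 - \theta/3$ with $p=4$. Taking a square root converts this into the $1/4$ and $3/4$ exponents appearing in the target inequality. Third, I would translate back: clearly $\|v\|_{L^2}^2 = \int \psi |u|^2 \, d\mathbf{x}$, while Leibniz gives $\nabla v = \psi^{1/2} \nabla u + \tfrac12 \psi^{-1/2} (\nabla \psi) u$, whence
$$
|\nabla v|^2 \lesssim \psi |\nabla u|^2 + \psi^{-1} |\nabla \psi|^2 |u|^2 \lesssim \psi \bigl(|\nabla u|^2 + |u|^2\bigr),
$$
using the hypothesis $|\nabla \psi|^2 \lesssim \psi^2$ in the last step. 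Integrating and combining with the Cauchy--Schwarz bound gives \eqref{E:wk-111}.

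The only delicate point---and the one place where any assumption on $\psi$ enters---is the control of $\nabla v$: the substitution $v = \psi^{1/2} u$ naively produces the singular factor $\psi^{-1/2}$, and the estimate only closes because the hypothesis $|\nabla \psi| \lesssim \psi$ produces precisely the compensating $\psi$ factor needed to bound $\psi^{-1} |\nabla \psi|^2$ by $\psi$. The resulting implicit constant depends only on the 3D Gagliardo--Nirenberg constant and on the (absolute) constant in $|\nabla \psi| \lesssim \psi$, and in particular is independent of $E$, as claimed.
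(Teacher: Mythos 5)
Your proof is correct and follows essentially the same route as the paper: peel off one factor of $|u|$ in $L^2(E)$, pass to $v=\psi^{1/2}u$, and use $|\nabla\psi|\lesssim\psi$ to control $\nabla v$ by the weighted gradient term. The only cosmetic difference is that you invoke the $L^4$ Gagliardo--Nirenberg inequality after a two-factor Cauchy--Schwarz, whereas the paper performs a three-factor H\"older split ($L^2\times L^4\times L^4$) and then applies the Sobolev embedding $\|v\|_{L^6}\lesssim\|\nabla v\|_{L^2}$ -- which is exactly what your GN step unwinds to.
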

\begin{proof}
First, split as follows
$$
\int_E \psi |u|^3 \,d \mathbf{x} = \int_E |u| \cdot \psi^{1/4} |u|^{1/2} \cdot \psi^{3/4} |u|^{3/2} \,d \mathbf{x}.
$$
Applying H\"older with norms $L^2$, $L^4$, and $L^4$, we get
\begin{equation}
\label{E:wk-113}
\int_E \psi |u|^3 \,d \mathbf{x} \leq  \left( \int_E |u|^2\, d\mathbf{x} \right)^{1/2} \left( \int \psi |u|^2\, d\mathbf{x} \right)^{1/4} \left( \int \psi^3 |u|^6 \, d \mathbf{x} \right)^{1/4}.
\end{equation}
Applying Sobolev embedding for the last term, we have
$$ 
\left( \int \psi^3 |u|^6 \, d \mathbf{x} \right)^{1/4} = \|\psi^{1/2} u \|_{L^6}^{3/2} \lesssim \| \nabla[\psi^{1/2} u ] \|_{L^2}^{3/2}. 
$$
Distributing the derivative and using that $|\nabla \psi| \lesssim \psi$, it follows that
$$ 
\left( \int \psi^3 |u|^6 \, d \mathbf{x} \right)^{1/4} \lesssim \left( \int \psi( |\nabla u|^2+|u|^2) \, d \mathbf{x} \right)^{3/4}.
$$
Combining with \eqref{E:wk-113} yields \eqref{E:wk-111}.    
\end{proof}

Recall that if  $u(t)$ is a Class B solution to the 3D ZK with $M(u)=M(Q)$  that is $\alpha$-orbitally stable for $\alpha \ll 1$, and  $\mathbf{a}(t)$ and $c(t)$ are the unique parameters as in Lemma \ref{L:geom-decomp}, then 
$$
|c(t)-1| \leq \alpha
$$
and, with $\mathbf{i}=(1,0,0)$, by \eqref{E:param-ODEs} in Lemma \ref{L:ODE-bounds}, we get
\begin{equation}
\label{E:mon1}
|\mathbf{a}'(t) -\mathbf{i} | \lesssim \alpha.
\end{equation}
By Taylor expansion, $Q(c\mathbf{x}) = Q(\mathbf{x}) + (c-1) \mathbf{x} \cdot \nabla Q(\mathbf{x}) + \cdots $,
and thus,
$$
\| c^{-2} Q(c^{-1} \mathbf{x}) - Q(\mathbf{x}) \|_{H_{\mathbf{x}}^1} \lesssim \alpha.
$$
It follows that
\begin{equation}
\label{E:mon2}
\| u(\mathbf{x}+\mathbf{a}(t), t) - Q(\mathbf{x}) \|_{H_{\mathbf{x}}^1} \lesssim \alpha.
\end{equation}

For the purposes of the following lemma, let $\kappa$ be a constant larger than both implicit constants in \eqref{E:mon1} and \eqref{E:mon2}. 

\begin{figure}[ht]
\includegraphics[scale=0.8]{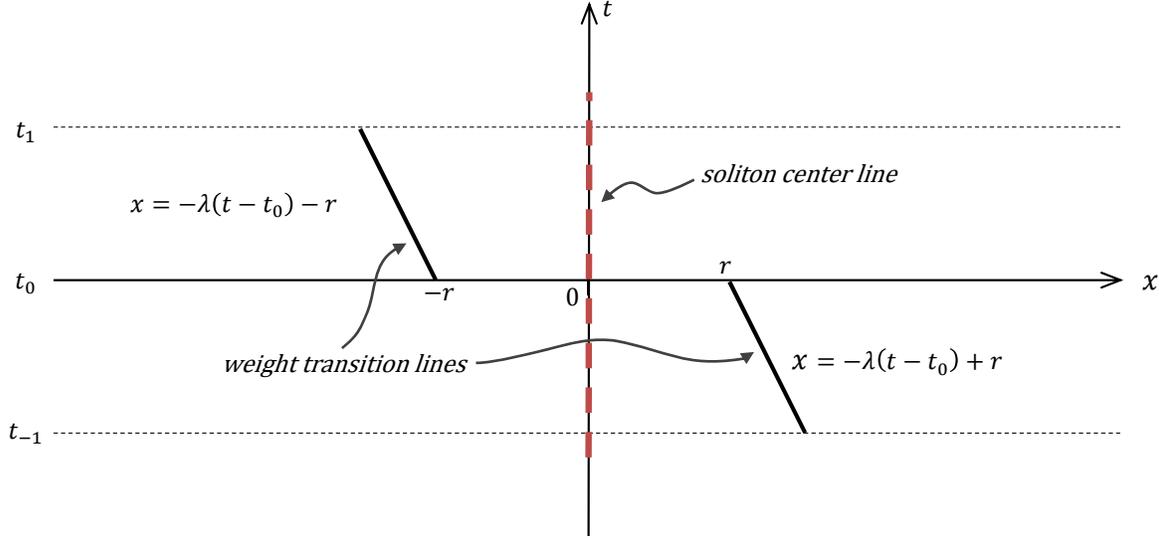}
\caption{The $I_\pm$ estimates.  The vertical line $x=0$ is the soliton center.  The lines $x = -\lambda(t-t_0)+r$ for $t<t_0$ and $x = -\lambda(t-t_0)-r$ for $t>t_0$ are the $\phi_\pm$ weight transition lines in \eqref{E:Ip-right}, \eqref{E:Im-right} and \eqref{E:Ip-left}, \eqref{E:Im-left}, respectively.  Note that this depiction is for $(y,z)=(0,0)$.  Away from $(y,z)=(0,0)$, the weight transition lines are shifted to the left (for $\theta\geq 0$) by $\tan \theta \sqrt{1+y^2+z^2}$.}
\label{F:I}
\end{figure}

\begin{lemma}[conic $I_\pm$ estimates]
\label{L:Ipm-estimates}
Let $u(t)$ be a Class B solution to the 3D ZK with $M(u)=M(Q)$,  that is $\alpha$-orbitally stable for $\alpha \ll 1$, and let $\mathbf{a}(t)$ and $c(t)$ be the unique parameters as in Lemma \ref{L:geom-decomp}.  Let $\delta$ be a constant satisfying $0<16\kappa \alpha \leq \delta \ll 1$.  Let 
$$|\theta| \leq \frac{\pi}{3}-\delta$$ 
be an angle and fix a speed constant $\lambda$ satisfying 
\begin{equation}
\label{E:mon4}
\delta \leq \lambda \leq 1-\delta
\end{equation}
and fix a shift distance $r>0$.  For $K\geq 4\delta^{-1}$, let
\begin{equation}
\label{E:Ipm-de}
I_{\pm, \theta, r, t_0} (t) = \int_{\mathbb{R}^3} \phi_\pm \left(  \cos \theta(x-r+\lambda(t-t_0)) +  \sin \theta \sqrt{1+y^2+z^2} \right) u^2(\mathbf{x}+\mathbf{a}(t), t) \, d\mathbf{x},
\end{equation}
where
$$
\phi_+(x) = \frac{2}{\pi} \operatorname{arctan}(e^{x/K}) \,, \qquad \phi_-(x) = \phi_+(-x)
$$
so that $\phi_+(x)$ increases from $0$ to $1$ and $\phi_-(x)$ decreases from $1$ to $0$.  
Suppose that 
$$
t_{-1}<t_0<t_1.
$$

The estimates for $I_+$ bound the \emph{future in terms of the past}, see Figure \ref{F:I}. We have
\begin{equation}
\label{E:Ip-right}
I_{+,\theta, r,t_0}(t_0) \leq I_{+,\theta, r,t_0}(t_{-1}) + C e^{-\delta r}
\end{equation}
and
\begin{equation}
\label{E:Ip-left}
I_{+,\theta, -r,t_0}(t_1) \leq I_{+,\theta, -r,t_0}(t_0) + Ce^{-\delta r}
\end{equation}
for some $C$ depending on $\delta$ and $K$. The estimates for $I_-$ bound the \emph{past in terms of the future}.  We have
\begin{equation}
\label{E:Im-left}
I_{-,\theta,-r,t_0}(t_0) \leq I_{-,\theta,-r,t_0}(t_1) + Ce^{-\delta r}
\end{equation}
and
\begin{equation}
\label{E:Im-right}
I_{-,\theta, r,t_0}(t_{-1}) \leq I_{-,\theta, r,t_0}(t_0) + Ce^{-\delta r}.
\end{equation}
\end{lemma}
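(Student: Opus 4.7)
The plan is to derive a virial-type differential identity for $I_{\pm,\theta,r,t_0}(t)$, show that its right-hand side is controlled by an exponentially localized (in $r$) quantity, and then integrate in time. I focus on \eqref{E:Ip-right}; the remaining three estimates follow from the same computation, using the symmetry $\phi_-(x)=\phi_+(-x)$ (which flips the sign of $\phi'_\pm$, reversing the monotonicity direction and yielding the $I_-$ estimates) and the analogous argument with the sign of $r$ reversed for \eqref{E:Ip-left}.

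Working in the soliton-centered variable $\mathbf{z}=\mathbf{y}-\mathbf{a}(t)$ and setting $\psi(\mathbf{z},t)=\phi_+(q)$ with $q=\cos\theta(z_1-r+\lambda(t-t_0))+\sin\theta\sqrt{1+z_2^2+z_3^2}$, differentiating in $t$, substituting the 3D ZK equation for $\partial_t u$, and integrating by parts in the dispersive and nonlinear terms yields
\begin{align*}
\frac{d}{dt} I_+
&= \int(\partial_t\psi-\mathbf{a}'\cdot\nabla\psi)\,u^2\,d\mathbf{z} + \int u^2\,\partial_{z_1}\Delta\psi\,d\mathbf{z} \\
&\quad - \int \partial_{z_1}\psi\,(3u_{z_1}^2+u_{z_2}^2+u_{z_3}^2)\,d\mathbf{z} - 2\sum_{j=2,3}\int \partial_{z_j}\psi\,u_{z_1}u_{z_j}\,d\mathbf{z} + \tfrac{4}{3}\int \partial_{z_1}\psi\,u^3\,d\mathbf{z},
\end{align*}
where every spatial derivative of $\psi$ carries the exponential localization of $\phi'_+(q)$ (with higher derivatives shedding factors of $K^{-1}$).

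The sign structure is now favorable. The drift coefficient factors as $[(\lambda-a_1')\cos\theta-(a_2'z_2+a_3'z_3)\sin\theta/\sqrt{1+z_2^2+z_3^2}]\phi'_+(q)$; using Lemma \ref{L:ODE-bounds}, \eqref{E:mon1}, the bound $\cos\theta\geq 1/2$ (valid for $|\theta|\leq \pi/3-\delta$), and the hypothesis $16\kappa\alpha\leq\delta$, this is $\leq -\tfrac{\delta}{4}\phi'_+$. The dispersive quadratic form in $\nabla u$ has matrix with diagonal $(3\cos\theta,\cos\theta,\cos\theta)$ and first-row/column off-diagonals $(a,b)$ satisfying $a^2+b^2\leq\sin^2\theta$; its positive definiteness is equivalent to $\sin^2\theta<3\cos^2\theta$, i.e.\ $|\theta|<\pi/3$, and the angle constraint $|\theta|\leq \pi/3-\delta$ gives a uniform positive lower bound $c(\delta)>0$ on its eigenvalues. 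The term $\int u^2\partial_{z_1}\Delta\psi$ is $O(K^{-2})\int\phi'_+ u^2$, negligible for $K\geq 4\delta^{-1}$. Finally, the cubic $\int\phi'_+ u^3$ is treated by the weighted Gagliardo-Nirenberg inequality (Lemma \ref{L:weighted-GN}, using $|\nabla\phi'_+|\lesssim K^{-1}\phi'_+$) together with mass conservation bounding $\|u\|_{L^2}$: for any $\mu>0$, $|\int \phi'_+ u^3|\lesssim \mu\int\phi'_+|\nabla u|^2+C_\mu\int\phi'_+ u^2$. Taking $\mu \ll c(\delta)$ absorbs the gradient part into the negative quadratic contribution, producing
\[
\frac{d}{dt} I_+ + c'(\delta)\int\phi'_+|\nabla u|^2\,d\mathbf{z}\leq C(\delta)\int \phi'_+ u^2\,d\mathbf{z}.
\]

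The main obstacle is bounding the right-hand side by a quantity whose time integral over $[t_{-1},t_0]$ is $\leq C e^{-\delta r}$ independently of $|t_0-t_{-1}|$. Splitting $u=Q_{c,\mathbf{a}}+\eta$: the soliton contribution $\int\phi'_+ Q_{c,\mathbf{a}}^2\lesssim e^{-\delta(r+\lambda(t_0-t))/K}$ by combining \eqref{prop-Q} with the geometric fact that on the bulk of the support of $\phi'_+$ the distance to the soliton center is $\gtrsim r+\lambda(t_0-t)$ (since for $|\theta|\leq \pi/3-\delta$ the slab $\{q\approx 0\}$ lies at soliton-frame distance $\approx (r+\lambda(t_0-t))\cos\theta\geq (r+\lambda(t_0-t))/2$), while near the soliton $\phi'_+$ itself is exponentially small in $K^{-1}(r+\lambda(t_0-t))$. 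The remainder contribution $\int\phi'_+\eta^2$ is the delicate one — the naive bound $\|\phi'_+\|_{L^\infty}\|\eta\|_{L^2}^2\lesssim\alpha^2/K$ is not exponentially small in $r$ — and is closed using the Kato-smoothing gradient integral retained on the left-hand side: integrating the differential inequality on $[t_{-1},t_0]$ delivers a local-smoothing bound $\int_{t_{-1}}^{t_0}\int\phi'_+|\nabla u|^2\lesssim e^{-\delta r}$ which, combined with weighted Gagliardo-Nirenberg and orbital stability, self-absorbs the $\eta$ piece. Dropping the nonnegative gradient integral on the left then gives $I_+(t_0)-I_+(t_{-1})\leq Ce^{-\delta r}$.
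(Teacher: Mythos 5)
Your virial computation (drift sign via Lemma \ref{L:ODE-bounds} and $\lambda\le 1-\delta$, positivity of the gradient quadratic form exactly for $|\theta|<\pi/3$, smallness of the $\partial_x\Delta$-of-the-weight term for $K\ge 4\delta^{-1}$) matches the paper's. The genuine gap is in your treatment of the lower-order terms. Applying Young globally to the cubic term, $|\int\phi'_+u^3|\lesssim \mu\int\phi'_+|\nabla u|^2+C_\mu\int\phi'_+u^2$, leaves a term $C_\mu\int\phi'_+u^2$ whose constant $C_\mu$ is large (it scales like a power of $\|u\|_{L^2}\sim\|Q\|_{L^2}$ over $\mu$), far exceeding the only negative zeroth-order term available, which has coefficient of size $\delta$. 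Your proposed rescue is circular: the ``local smoothing'' bound $\int_{t_{-1}}^{t_0}\int\phi'_+|\nabla u|^2\lesssim e^{-\delta r}$ can only be extracted from the integrated inequality \emph{after} the right-hand side $\int_{t_{-1}}^{t_0}\int\phi'_+u^2\,dt$ has been controlled, and that is precisely the quantity at issue; moreover $\int\phi'_+\eta^2$ is not dominated by $\int\phi'_+|\nabla u|^2$ (same weight, no smallness, no frequency localization), and its naive time integral grows linearly in $t_0-t_{-1}$ (at best one gets $\phi'_+\lesssim K^{-1}\phi_+$, hence $I_+'\lesssim K^{-1}I_+$ and a Gronwall factor $e^{C(t_0-t_{-1})/K}$, useless on long intervals). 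So as written the argument does not close.

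The fix used in the paper avoids ever producing a large-constant $\int\phi'u^2$ term: apply the weighted Gagliardo--Nirenberg inequality (Lemma \ref{L:weighted-GN}) only on the exterior region $E$ of a large ball around the soliton, where orbital stability plus the decay of $Q$ give $\|u(\cdot+\mathbf{a}(t),t)\|_{L^2(E)}\le 2\kappa\alpha\le\delta/8$; the first factor in \eqref{E:wk-111} then supplies the smallness, so the entire cubic contribution on $E$ is absorbed by the good term $-\delta\int\phi'(|\nabla u|^2+u^2)$ with no leftover. On the complementary compact set near the soliton one does not decompose $u=Q_{c,\mathbf{a}}+\eta$ at all: the plain Gagliardo--Nirenberg bound together with the pointwise estimate $|\phi'|\le e^{-r}e^{-\lambda|t-t_0|}$ there (valid for the relevant sign of $r$ and time direction) gives $I_+'(t)\lesssim e^{-r}e^{-\delta|t-t_0|}$, which integrates to $Ce^{-\delta r}$ uniformly in $t_0-t_{-1}$. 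Note also that the paper derives the $I_-$ estimates from the $I_+$ ones by the space--time inversion $\bar u(\mathbf{x},t)=u(-\mathbf{x},-t)$ (which reverses the drift as well), not merely by flipping the sign of $\phi'$ as your one-line remark suggests; flipping the weight alone turns the drift term into a positive contribution and the argument no longer closes.
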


\begin{remark}
For Lemma \ref{L:Ipm-estimates}, one needs only to assume that $u$ is a Class B solutions, since the calculations in the proof can be reproduced using frequency projected regularizations, and the errors managed as in the proof of mass conservation for Class B solutions in \S\ref{S:ClassB-mass}.  We will not carry out the details.
\end{remark}

\begin{proof}[Proof of Lemma \ref{L:Ipm-estimates}]
We consider first the case $\phi=\phi_+$ and $I=I_+$.  The estimates for $\phi_-$ and $I_-$ follow by time inversion, as explained at the end of the proof.   Note that
$$
\phi'(\omega) = \frac{1}{\pi K} \sech(\omega/K)
$$
and
$$
|\phi''(\omega)| \leq \frac{1}{K} |\phi'| \,, \qquad |\phi'''(\omega)| \leq \frac{1}{K^2} |\phi'|.
$$
In the following, 
$$
\phi(\cdots) = \phi( \cos \theta(x-r+\lambda(t-t_0)) +  \sin \theta \sqrt{1+y^2+z^2} ).
$$
Before proceeding, let us note that
$$
\nabla [\phi(\cdots)] =  ( \cos \theta, \frac{y}{\sqrt{1+y^2+z^2}} \sin \theta, \frac{z}{1+y^2+z^2} \sin \theta) \phi'(\cdots),
$$
and thus,
$$
| (\mathbf{a}- \mathbf{i}) \cdot \nabla[ \phi(\cdots) ]| \leq \alpha \kappa \phi' \leq \frac{\delta}{16} \phi'.
$$
Also note that, by integration by parts
\begin{align*}
\indentalign -2\int \phi \, u \, \partial_x \Delta u \,d \mathbf{x} \\
&= \int \left\{ -\partial_x[ \phi(\cdots)] (3u_x^2+u_y^2+u_z^2) - 2\partial_y[\phi(\cdots)] u_xu_y - 2\partial_z[ \phi(\cdots)] u_x u_z \right\} \, d\mathbf{x}\\
& \qquad+ \int \partial_x \Delta[ \phi(\cdots) ] u^2 \,d \mathbf{x} \\
&= \int \phi' \left\{  -\cos\theta  (3u_x^2+u_y^2+u_z^2) - \frac{2y\sin \theta}{\sqrt{1+y^2+z^2}} u_xu_y - \frac{2z\sin \theta}{\sqrt{1+y^2+z^2}} u_x u_z \right\} \, d\mathbf{x}\\
& \qquad+ \int \partial_x \Delta[ \phi(\cdots) ] u^2 \,d \mathbf{x}.
\end{align*}

Using Peter-Paul, we split the products as
$$ 
\frac{2|y| }{\sqrt{1+y^2+z^2}} |u_x u_y| \leq \frac{ y^2 \sqrt{3}}{1+y^2+z^2} u_x^2 + \frac{1}{\sqrt{3}} u_y^2,
$$
$$ 
\frac{2|z| }{\sqrt{1+y^2+z^2}} |u_x u_z| \leq \frac{ z^2 \sqrt{3}}{1+y^2+z^2} u_x^2 + \frac{1}{\sqrt{3}} u_z^2,
$$
and adding, we obtain
$$
\frac{2|y| }{\sqrt{1+y^2+z^2}} |u_x u_y| + \frac{2|z| }{\sqrt{1+y^2+z^2}} |u_x u_z| \leq \frac{1}{\sqrt{3}}(3u_x^2+u_y^2+u_z^2).
$$
Thus, we see that we need the condition $\frac{|\sin \theta|}{\sqrt{3}} < \cos \theta - \delta$, which is implied by the condition $|\tan \theta| \leq \sqrt{3}- 2\delta$, which is implied by the angle condition in the hypothesis.

Note 
\begin{align*}
\partial_x \Delta [ \phi(\cdots)] &= \left[\cos^3\theta + \frac{ (y^2+z^2) \sin^2\theta \cos\theta}{1+y^2+z^2} \right]\phi'''(\cdots) \\ 
& \qquad + \frac{(2+y^2+z^2) \sin \theta \cos \theta}{ (1+y^2+z^2)^{3/2} } \phi''(\cdots),
\end{align*}
and thus,
$$
|\partial_x \Delta [ \phi(\cdots)] | \leq \frac{2}{K} \phi'.
$$
Putting all this together (and using that $\frac{2}{K} \leq \delta$), we obtain
$$
-2\int \phi \, u \, \partial_x \Delta u \,d \mathbf{x}  \leq - \delta \int \phi' (3u_x^2+u_y^2+u_z^2).
$$

We compute
$$
I' = \lambda \cos \theta \int \phi' \, u^2 \, d\mathbf{x} + 2 \int \, \phi \, u \, \nabla u \cdot \mathbf{a}' \,d \mathbf{x} - 2\int \phi \, u \, \partial_x \Delta u \, d\mathbf{x} +\frac43 \int \phi' \, u^3 \, d \mathbf{x}.
$$

Note that
\begin{align*}
2\int \phi \, u \, \nabla u \cdot \mathbf{a}' \,d \mathbf{x} 
&= - \int \mathbf{a} \cdot \nabla [ \phi(\cdots) ]  \, u^2 \, d\mathbf{x}\\
&= - \int (\mathbf{a}-\mathbf{i}) \cdot \nabla [ \phi(\cdots) ]  \, u^2 \, d\mathbf{x} - \cos\theta \int \phi' \, u^2 \, d \mathbf{x}.
\end{align*}

Putting all the inequalities together, yields
$$
I' \leq - \delta \int \phi'  (u^2 + 3u_x^2+u_y^2+u_z^2)+\frac43 \int \phi' \, u^3 \, d \mathbf{x}.
$$

Apply \eqref{E:wk-111} in Lemma \ref{L:weighted-GN} with $\psi( \mathbf{x}) = \phi'( \cdots)$, 
and with the set $E\subset \mathbb{R}^3$ taken to be the exterior of a neighborhood of $0$ large enough so that $\|Q\|_{L^2_E} \leq \kappa \alpha$.  Then it follows that
$$
\| u( \mathbf{x}+\mathbf{a}(t),t)\|_{L^2_E} \leq \| u( \mathbf{x}+\mathbf{a}(t),t) - Q(\mathbf{x}) \|_{L_{\mathbf{x}}^2} + \|Q\|_{L^2_{E}} \leq 2\kappa \alpha \leq \frac{\delta}{8}.
$$
By \eqref{E:wk-111}
$$
\int_E \phi'(\cdots) |u|^3 \, d \mathbf{x} \leq \frac{\delta}{8}  \int \phi'(\cdots) (|\nabla u|^2 + |u|^2) \,d \mathbf{x}.
$$
On $E^c$, we use the standard Gagliardo-Nirenberg inequlaity
$$
\int_{E^c} \phi' |u|^3 \,d \mathbf{x} \leq \sup_{\mathbf{x} \in E^c} |\phi'(\cdots)| \int |u|^3 \,d \mathbf{x} \leq \sup_{\mathbf{x} \in E^c} |\phi'(\cdots)| \|\nabla u \|_{L_t^\infty L_{\mathbf{x}}^2}^{3/2},
$$
combined with the following pointwise bounds for $\phi'(\cdots)$ on $E^c$. 

On $E^c$ (that is, near $0$), if $t<t_0$, then we have $-r<0$ and $\lambda(t-t_0)<0$, so that 
$$
|\phi'(\cdots)| \leq e^{-r} e^{-\lambda|t-t_0|}, 
$$
and consequently,
$$
I_{+,r,t_0}'(t) \lesssim e^{-r} e^{-\delta |t-t_0|}
$$ 
with constant depending on $\delta$ and $K$.  After integrating from $t_{-1}$ to $t_0$, we obtain \eqref{E:Ip-right}.

On $E^c$ (that is, near $0$), if $t>t_0$, then we have $r>0$ and we have $\lambda(t-t_0)>0$, so that
$$
|\phi'(\cdots)| \leq e^{-r} e^{-\lambda|t-t_0|} 
$$
again, and consequently,
$$
I_{+,-r,t_0} '(t) \lesssim e^{-r} e^{-\delta |t-t_0|} $$ 
with constant depending on $\delta$ and $K$.  After integrating from $t_0$ to $t_1$, 
we obtain \eqref{E:Ip-left}.

Now we turn to the $I_-$ estimates involving $\phi_-$.  We will obtain these as consequences of the $I_+$ estimates involving $\phi_+$ by space-time inversion, as follows.  Given $u$, let
$$
\bar u(\mathbf{x},t) = u(-\mathbf{x},-t).
$$
Then $\bar u$ is an $\alpha$-orbitally stable Class B solution to the 3D ZK, with associated modulation parameters $\bar c$ and $\bar{\mathbf{a}}$ satisfying
$$
\bar c(t) = c(-t) \,, \qquad \bar{\mathbf{a}}(t) = -\mathbf{a}(-t).
$$
In referencing $I_+$ and $I_-$ we will add an additional subscript indicating the function $u$ or $\bar u$ as well.  Plugging $\bar u$ into $I_+$, we note the change of variables $\mathbf{x}\to -\mathbf{x}$ in the integration shows that
\begin{equation}
\label{E:pm-conversion}
I_{\bar u, +,-\theta,-r,-t_0}(-t) = I_{u, -, \theta,r,t_0}(t).
\end{equation}
Given $t_{-1}<t_0<t_1$, note that $-t_1<-t_0<-t_{-1}$, so we can apply \eqref{E:Ip-right} with $t_0$ replaced by $-t_0$ and $t_{-1}$ replaced by $-t_1$ to obtain
$$
I_{\bar u, +, -\theta,r,-t_0}(-t_0) \leq I_{\bar u, +, -\theta, r, -t_0}(-t_1) + Ce^{-\delta r}.
$$
Using \eqref{E:pm-conversion}, this gives
$$
I_{u,-,\theta,-r,t_0}(t_0) \leq I_{u,-,\theta,-r,t_0}(t_1) + Ce^{-\delta r},
$$
which is \eqref{E:Im-left}.  We also apply \eqref{E:Ip-left} with $t_0$ replaced by $-t_0$ and $t_1$ replaced by $-t_{-1}$ to obtain
$$
I_{\bar u,+,-\theta, -r,-t_0}(-t_{-1}) \leq I_{\bar u,+,-\theta, -r,-t_0}(-t_0) + Ce^{-\delta r}.
$$
Using \eqref{E:pm-conversion}, this gives
$$
I_{u,-,\theta, r,t_0}(t_{-1}) \leq I_{u,-,\theta, r,t_0}(t_0) + Ce^{-\delta r},
$$
which is \eqref{E:Im-right}.

\end{proof}


Replacing $u$ by $\eta$ in $I_\pm$ gives us new quantities that we denote $J_\pm$ that will be applied to obtain uniform decay estimates for $\tilde \epsilon_n$ in \S \ref{S:uniform-n-decay}.   The main difference is that, out of the four estimates \eqref{E:Ip-right}, \eqref{E:Ip-left}, \eqref{E:Im-right}, \eqref{E:Im-left} for $I_\pm$, only \eqref{E:Ip-right} and \eqref{E:Im-left} have analogues for $J_\pm$. (See also Figure \ref{F:I}.)  The reason is that $\phi_\pm Q$ needs to be small over the relevant interval.  On the interval $[t_{-1},t_0]$ with weight transition line to the right of $x=0$, the product $\phi_+Q$ is small.  On the interval $[t_0,t_1]$ with weight transition line to the left of $x=0$, the product $\phi_-Q$ is small. 

\begin{lemma}[conic $J_\pm$ estimates]
\label{L:Jpm-estimates}
Let $\eta(t)$ be defined by \eqref{E:eta-def}, so that $\eta$ solves \eqref{E:eta-eq}.  Let 
$$
|\theta| \leq \frac{\pi}{3}-\delta
$$ 
be an angle and fix a speed constant $\lambda$ satisfying 
\begin{equation}
\label{E:mon4J}
\delta \leq \lambda \leq 1-\delta,
\end{equation}
and fix a shift distance $r>0$.  For $K\geq 4\delta^{-1}$, let
\begin{equation}
\label{E:J-def}
J_{\pm, \theta, r, t_0} (t) = \int_{\mathbb{R}^3} \phi_\pm \left(  \cos \theta(x-r+\lambda(t-t_0)) +  \sin \theta \sqrt{1+y^2+z^2} \right) \eta^2(\mathbf{x}+\mathbf{a}(t), t) \, d\mathbf{x},
\end{equation}
where
$$
\phi_+(x) = \frac{2}{\pi} \operatorname{arctan}(e^{x/K}) \,, \qquad \phi_-(x) = \phi_+(-x)
$$
so that $\phi_+(x)$ increases from $0$ to $1$ and $\phi_-(x)$ decreases from $1$ to $0$.  
Suppose that 
$$
t_{-1}<t_0<t_1.
$$
The estimate for $J_+$ bounds the \emph{future in terms of the past}, and is only available on the right of the soliton:
\begin{equation}
\label{E:Jp-right}
J_{+,\theta, r,t_0}(t_0) \leq J_{+,\theta, r,t_0}(t_{-1}) + C e^{-\delta r} \| \eta \|_{L_{[t_{-1},t_0]}^\infty L_{\mathbf{x}}^2}^2 
\end{equation}
for some $C$ depending on $\delta$ and $K$.   The estimate for $J_-$ bounds the \emph{past in terms of the future}, and is only available on the left of the soliton:
\begin{equation}
\label{E:Jm-left}
J_{-,\theta,-r,t_0}(t_0) \leq J_{-,\theta,-r,t_0}(t_1) + Ce^{-\delta r} \| \eta \|_{L_{[t_0,t_1]}^\infty L_{\mathbf{x}}^2}^2. 
\end{equation}
\end{lemma}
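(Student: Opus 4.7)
The strategy is to mimic the proof of Lemma \ref{L:Ipm-estimates} but with $u$ replaced by $\eta$, using the evolution equation \eqref{E:eta-eq} for $\eta$ in place of the 3D ZK equation for $u$. Introducing the shorthand $\tilde\eta(\mathbf{x},t) = \eta(\mathbf{x}+\mathbf{a}(t),t)$, $\tilde Q(\mathbf{x},t) = c(t)^{-2}Q(c(t)^{-1}\mathbf{x})$, and abbreviating by $\phi$ (for $\phi_+$) the composition $\phi_+(\cos\theta(x-r+\lambda(t-t_0))+\sin\theta\sqrt{1+y^2+z^2})$, I would compute
\[
J_+' = \lambda\cos\theta\!\int\!\phi'\tilde\eta^2 + 2\!\int\!\phi\,\tilde\eta\bigl[(\partial_t\eta)(\,\cdot+\mathbf{a})+\mathbf{a}'\!\cdot\nabla\tilde\eta\bigr]\,d\mathbf{x}.
\]
The terms arising from $-\partial_x\Delta\eta$, the transport $\mathbf{a}'\cdot\nabla\tilde\eta$, and the weight velocity $\lambda\cos\theta$ are \emph{identical in form} to those in the proof of Lemma \ref{L:Ipm-estimates}, so under \eqref{E:mon4J} and the angle condition $|\theta|\leq\pi/3-\delta$, they combine (via the same Peter–Paul argument and using $|\mathbf{a}'-\mathbf{i}|\lesssim\alpha\ll\delta$) to furnish a non-positive contribution dominated by
\[
-\tfrac{\delta}{2}\int\phi'\bigl(\tilde\eta^2+3\tilde\eta_x^2+\tilde\eta_y^2+\tilde\eta_z^2\bigr)\,d\mathbf{x}.
\]

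The novelty, relative to Lemma \ref{L:Ipm-estimates}, is the appearance of three additional terms on the right-hand side of \eqref{E:eta-eq}: the cross term $-2\partial_x(\tilde Q\tilde\eta)$ and the modulation source terms $c'c^{-1}(\Lambda Q)_{c,\mathbf{a}}+c^{-1}(\mathbf{a}'-c^{-2}\mathbf{i})\cdot(\nabla Q)_{c,\mathbf{a}}$. For the cross term, I would integrate by parts once to convert $-4\int\phi\tilde\eta\partial_x(\tilde Q\tilde\eta)$ into $-2\int\phi\tilde Q_x\tilde\eta^2+2\cos\theta\int\phi'\tilde Q\tilde\eta^2$. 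The purely nonlinear term $-\frac{4}{3}\int\phi\partial_x\tilde\eta^3=\frac{4}{3}\cos\theta\int\phi'\tilde\eta^3$ is handled exactly as in Lemma \ref{L:Ipm-estimates} via Lemma \ref{L:weighted-GN}, where the smallness $\|\tilde\eta\|_{L^2}\lesssim\alpha$ absorbs it into the negative dispersive term. The source terms are estimated by Cauchy–Schwarz using the parameter ODE bounds from Lemma \ref{L:ODE-bounds} (rewritten for $\eta$ in \eqref{E:eta-ODEs}), which give $|c'c^{-1}|+|c^{-1}(\mathbf{a}'-c^{-2}\mathbf{i})|\lesssim b(t)\sim\|\tilde\eta\|_{L^2}$; this gains a factor of $\|\tilde\eta\|_{L^2}$ that, paired with the Cauchy–Schwarz partner $\|\tilde\eta\|_{L^2}$, produces the desired quadratic right-hand side $\|\eta\|_{L_t^\infty L_{\mathbf{x}}^2}^2$.

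The crucial asymmetry of the statement (only \eqref{E:Jp-right} for the right, \eqref{E:Jm-left} for the left) now becomes transparent: each of the new terms involves either $\tilde Q$, $\nabla\tilde Q$, or $\Lambda Q$ paired with $\phi$ or $\phi'$. These $Q$-weights are exponentially localized near $\mathbf{x}=0$ (by \eqref{prop-Q}), whereas in the $\phi_+$, $t\in[t_{-1},t_0]$, $r>0$ scenario, the transition line of $\phi_+$ lies at $x=r-\lambda(t-t_0)\geq r>0$, so on $\{|\mathbf{x}|\lesssim r/2\}$ both $\phi_+$ and $\phi_+'$ are bounded by $Ce^{-\delta r}e^{-\delta|t-t_0|}$. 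Splitting each of the cross/source integrals via $\{|\mathbf{x}|\leq r/2\}\cup\{|\mathbf{x}|>r/2\}$, using the above on the inner piece and using $|Q|+|\nabla Q|+|\Lambda Q|\lesssim e^{-r/2}$ on the outer piece, yields the pointwise-in-time bound
\[
J_+' \leq Ce^{-\delta r}e^{-\delta|t-t_0|}\,\|\eta\|_{L_t^\infty L_{\mathbf{x}}^2}^2.
\]
Integrating over $[t_{-1},t_0]$ produces \eqref{E:Jp-right}. The analogous computation for $\phi_-$ with $r<0$ on $[t_0,t_1]$ places the transition line to the left of the soliton, and the identical localization argument works — equivalently, \eqref{E:Jm-left} can be deduced from \eqref{E:Jp-right} by the time–space inversion $\bar\eta(\mathbf{x},t)=\eta(-\mathbf{x},-t)$ exactly as in the last paragraph of the proof of Lemma \ref{L:Ipm-estimates}, using relation \eqref{E:pm-conversion}.

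The main technical obstacle I anticipate is the bookkeeping for the source terms: one must verify that the coefficients $c'c^{-1}$ and $c^{-1}(\mathbf{a}'-c^{-2}\mathbf{i})$ really produce the small factor $\|\eta\|_{L_t^\infty L_{\mathbf{x}}^2}$ (rather than merely $\alpha$), so that the right-hand side is \emph{quadratic} in $\eta$ and not merely $Ce^{-\delta r}$. This is precisely what the parameter ODE estimates \eqref{E:eta-ODEs} afford, once one notices that the leading $\langle\eta,(\cdot)_{c,\mathbf{a}}\rangle$ pieces are themselves linear in $\eta$, and the $b^2$ error is harmless. The other bookkeeping concern — the need for time-exponential decay $e^{-\delta|t-t_0|}$ to make the time integral finite independently of $|t_0-t_{-1}|$ — is automatic from the rigid translation of the $\phi_\pm'$ transition line in time, as in the proof of Lemma \ref{L:Ipm-estimates}.
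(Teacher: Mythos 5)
Your proposal is correct and follows essentially the same route as the paper's proof: the same monotonicity computation for $J_+'$, the good negative terms from dispersion plus the drift $-(1-\lambda)\cos\theta$, absorption of the cubic term via Lemma \ref{L:weighted-GN} and the smallness of $\|\eta\|_{L^2_{\mathbf{x}}}$, control of the $Q$-localized cross and modulation source terms by the mismatch between the weight's transition line and the soliton center (with the modulation ODE bounds \eqref{E:eta-ODEs} supplying the extra factor of $\|\eta\|_{L^2_{\mathbf{x}}}$ that makes the error quadratic), and the $J_-$ case by symmetry/inversion. The only cosmetic difference is your splitting at $|\mathbf{x}|=r/2$ in place of the paper's global bound $\phi_+(\omega)\lesssim e^{\omega/K}$ paired with the exponential decay of $Q$; for $\theta$ near $\frac{\pi}{3}-\delta$ your split degrades the rate in $r$ to roughly $e^{-c\delta^2 r}$, a harmless quantitative loss of the same kind the paper itself tolerates in its exponents.
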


\begin{proof}
We carry out only the proof of \eqref{E:Jp-right} for $J_+$ with $\phi_+$, and suppress the subscript notation.  Abbreviating the expression for $J$ by suppressing the arguments of $\phi$ and $\eta$,
$$
J = \int_{\mathbb{R}^3} \phi \eta^2 \, d\mathbf{x},
$$
we have
$$
J' = \lambda \cos\theta \int_{\mathbb{R}^3} \phi'  \, \eta^2 \, d\mathbf{x} + 2\mathbf{a}' \cdot \int_{\mathbb{R}^3} \phi \,\eta \, \nabla \eta  \, d\mathbf{x} + 2\int_{\mathbb{R}^3} \phi\, \eta \,\partial_t\eta \, d\mathbf{x}.
$$
Using that $\nabla [ \phi(\cdots) ] = \phi'(\cdots)\boldsymbol{\Omega}_\theta(y,z)$, where
$$
\boldsymbol{\Omega}_\theta(y,z) =  \left( \cos \theta, \sin \theta \frac{y}{\sqrt{1+y^2+z^2}}, \sin \theta \frac{z}{\sqrt{1+y^2+z^2}}\right),
$$
combined with integration by parts in the middle term, gives
$$
J' =  
\int_{\mathbb{R}^3} [\lambda \cos\theta - \mathbf{a}'\cdot \boldsymbol{\Omega}_\theta] \, \phi'  \, \eta^2  \, d\mathbf{x} + 2\int_{\mathbb{R}^3} \phi\, \eta \,\partial_t\eta \, d\mathbf{x}.
$$
Replacing $\mathbf{a}' = \mathbf{a}'-\mathbf{i} + \mathbf{i}$, yields
$$
J' = -(1-\lambda) \cos \theta \int \phi' \, \eta^2 \, d\mathbf{x} + 2\int_{\mathbb{R}^3} \phi\, \eta \,\partial_t\eta \, d\mathbf{x} + ( \mathbf{i}- \mathbf{a}')\cdot \int_{\mathbb{R}^3} \boldsymbol{\Omega}_\theta \, \phi'  \, \eta^2  \, d\mathbf{x}. 
$$
Plugging in \eqref{E:eta-eq}, we obtain
\begin{equation}
\label{E:Jp-1}  
\begin{aligned}
J' = \; & -(1-\lambda) \cos \theta \int \phi' \, \eta^2 \, d\mathbf{x} 
- 2\int_{\mathbb{R}^3} \phi\, \eta \, \partial_x \Delta \eta \, d\mathbf{x} \\
&- 4\int_{\mathbb{R}^3} \phi \, \eta \, \partial_x (Q_{c,\mathbf{a}} \eta) \, d\mathbf{x} 
-2 \int_{\mathbb{R}^3} \phi \, \eta \, \partial_x (\eta^2) \, d\mathbf{x}  \\
\quad&+ c' c^{-1} \int_{\mathbb{R}^3} (\Lambda Q)_{c,\mathbf{a}} \phi \, \eta  \, d\mathbf{x} 
+ c^{-1} ( \mathbf{a}- c^{-2}\mathbf{i}) \cdot \int_{\mathbb{R}^3} \, (\nabla Q)_{c,\mathbf{a}} \phi  \, \eta \\
\quad&+(\mathbf{a}' - \mathbf{i})\cdot \int_{\mathbb{R}^3} \boldsymbol{\Omega}_\theta \, \phi'  \, \eta^2  \, d\mathbf{x} \\
= \; & A_1+A_2+A_3+A_4+A_5+A_6+A_7.
\end{aligned}
\end{equation}
We note that $1-\lambda \geq \delta$ and by the same calculations as in the proof of Lemma \ref{L:Ipm-estimates}, 
$$
A_2=-2\int \phi \, \eta \, \partial_x \Delta \eta \, d \mathbf{x} \leq - \delta \int \phi' |\nabla \eta|^2 \,d \mathbf{x}.
$$
Thus, the first two terms $A_1$ and $A_2$ in \eqref{E:Jp-1} are ``good terms'' with the negative upper bound
\begin{equation}
\label{E:Jp-2}
A_1+A_2 \lesssim -\delta \int \phi' (|\nabla \eta|^2 +\eta^2) \, d\mathbf{x}.
\end{equation}  

Note that $\phi(\omega) \lesssim e^{\omega/K}$ for all $x\in \mathbb{R}$ (although it is a terrible estimate for $\omega\gg 1$), and recall $K \sim \delta^{-1}$ and $\cos\theta \geq \frac12$.  With
$$
\omega = \cos\theta(-r + \lambda(t-t_0)) + (x \cos \theta + \sqrt{1+y^2+z^2} \sin \theta),
$$
we have
$$
\phi(\omega) \lesssim e^{\delta(-r+ \lambda(t-t_0))} e^{+\delta |\mathbf{x}|}.
$$
Recall that since the $\eta$ terms are evaluated at $\mathbf{x}+\mathbf{a}(t)$, the functions $Q_{c,\mathbf{a}}$, $\nabla Q_{c,\mathbf{a}}$ and $\Lambda Q_{c,\mathbf{a}}$ are exponentially concentrated near $\mathbf{x}=0$.  Hence,
$$
\phi(\omega) Q_{c,\mathbf{a}}(\mathbf{x}+\mathbf{a}(t)) \lesssim e^{\delta(-r+\lambda(t-t_0))} e^{-|\mathbf{x}|/4},
$$
and similarly, for $\phi |\nabla Q_{c,\mathbf{a}}|$ and $\phi |\Lambda Q_{c,\mathbf{a}}|$.  
For $t<t_0$, this is a good estimate and can be written as
$$
\phi(\omega) Q_{c,\mathbf{a}}(\mathbf{x}+\mathbf{a}(t)) \lesssim e^{-\delta r} e^{-\delta^2 |t-t_0|} e^{-|\mathbf{x}|/4}.
$$
In \eqref{E:Jp-1}, this estimate is used to control the three terms $A_3$, $A_5$, and $A_6$ and to obtain the bounds (using also \eqref{E:eta-ODEs}),
$$
|A_3|+|A_5|+|A_6| \lesssim  e^{-\delta r} e^{-\delta^2|t-t_0|} \| \eta\|_{L_{t\in [t_{-1},t_0]}^\infty L_{\mathbf{x}}^2}^2.
$$
 
In \eqref{E:Jp-1}, it remains to consider $A_4$ and $A_7$, given by  
$$
A_4=-2 \int_{\mathbb{R}^3} \phi \, \eta \, \partial_x (\eta^2) \, d\mathbf{x} \,,  \qquad A_7= (\mathbf{a}' - \mathbf{i})\cdot \int_{\mathbb{R}^3} \boldsymbol{\Omega}_\theta \, \phi'  \, \eta^2  \, d\mathbf{x}.
$$
By integration by parts,
$$
A_4 = \frac43 \cos \theta \int \phi' \, \eta^3 \, d\mathbf{x},
$$
and by Lemma \ref{L:weighted-GN},
$$
|A_4| \lesssim \| \eta\|_{L_{\mathbf{x}}^2} \int \phi' (|\nabla \eta|^2 + \eta^2) \, d\mathbf{x}.
$$
Since $\|\eta\|_{L_{\mathbf{x}}^2} \ll \delta$, this term is absorbed by the right side of \eqref{E:Jp-2}.  Since $|\mathbf{a}'-\mathbf{i}| \ll \delta$ and $|\boldsymbol{\Omega}_\theta| \leq 1$, we also have that $A_7$ is absorbed by the right side of \eqref{E:Jp-2}.

Combining the above bounds into \eqref{E:Jp-1}, we have for $t<t_0$,
$$
J'(t) \leq e^{-\delta r} e^{-\delta^2 |t-t_0|} \| \eta \|_{L_{t\in [t_{-1},t_0]}^\infty L_{\mathbf{x}}^2}^2.
$$
Integrating from $t=t_{-1}$ to $t=t_0$ gives \eqref{E:Jp-right}.
\end{proof}

\section{Weak convergence implies asymptotic stability}
\label{S:proof-main-theorem}

In this section, we obtain Lemma \ref{L:u-decay} below as a consequence of monotonicity estimates in Lemma \ref{L:Ipm-estimates}.  At the end of the section, Lemma \ref{L:u-decay} is applied to show that Theorem \ref{T:main} follows from Proposition \ref{P:wk-lim} and Proposition \ref{P:rigidity}.  We note that Lemma \ref{L:u-decay} is also applied in \S\ref{S:app-mon} to prove Lemma \ref{L:exp-decay}, part of the proof of Proposition \ref{P:wk-lim} itself.

With $\phi_+$ as defined in Lemma \ref{L:Ipm-estimates}, let
\begin{equation}
\label{E:pf-main-3}
\frac{1}{c_*} \defeq \frac{1}{\|Q\|_{L_{\mathbf{x}}^2}^2} \limsup_{t\nearrow +\infty} \int \phi_+(x  + \frac1{10}t) u^2(\mathbf{x}+ \mathbf{a}(t),t) \, d\mathbf{x}.
\end{equation}
From the assumed orbital stability of $u$, we have
$$
|c_*-1| \lesssim \alpha_0 \quad \mbox{and} \quad |\mathbf{a}'(t) - c_*^{-2} \mathbf{i}| \lesssim \alpha_0.
$$
To fix reference constants, take $\alpha_0$ small enough so that 
$$
| \mathbf{a}'(t) - \mathbf{i} | \leq \frac{1}{100} \quad \mbox{and} \quad |c_*-1| \leq \frac{1}{100}.
$$

\begin{figure}
\begin{center}\includegraphics[scale=0.87]{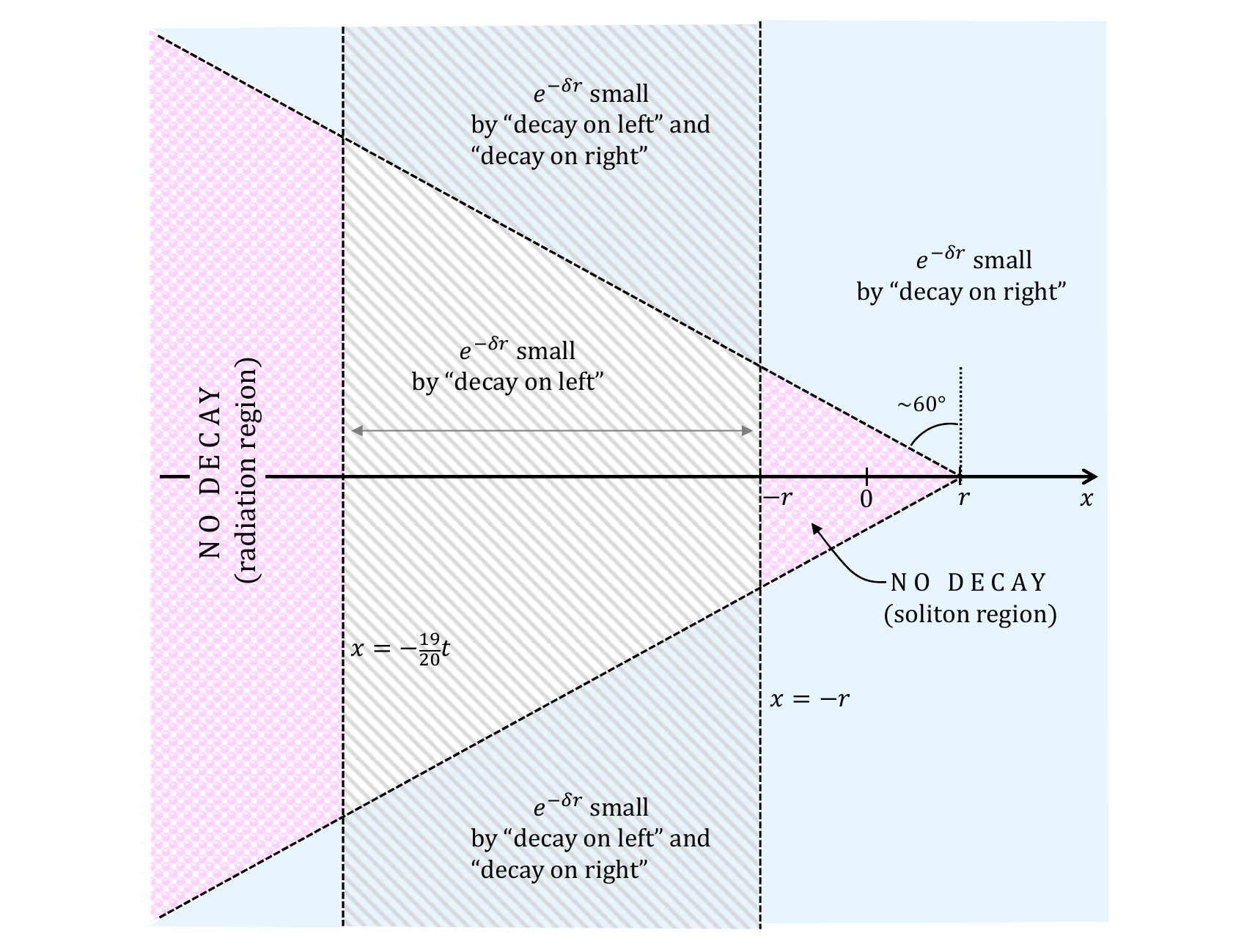}\end{center}
\caption{\label{F:decay}In Lemma \ref{L:u-decay}, \eqref{E:pf-main-1} gives a ``decay on the right estimate'', and \eqref{E:pf-main-5} gives a ``decay on the left estimate''.  The weight $\phi_+(\rho)$ transitions from $0$ to $1$ smoothly as $\rho$ moves from left to right across $0$.  Thus $\rho=0$ corresponds to a ``transition line''.  In \eqref{E:pf-main-1}, $\rho>0$ corresponds to $x>r-\tan \theta \sqrt{1+y^2+z^2}$, where we can take $\theta$ close to $60^\circ$.  Thus, this gives decay in the conic region pictured.  For \eqref{E:pf-main-5}, we take $\theta=0$, so this ``decay on the left'' estimate occurs between the vertical lines $x=-\frac{19}{20}t$ and $x=-r$.  When the two are combined, we obtain $L^2$-smallness outside the triangular region around $0$ but we have no estimate in the region labeled ``no decay''.}
\end{figure}

See Figure \ref{F:decay} for a depiction of the estimates in the following lemma.

\begin{lemma}
\label{L:u-decay}
In \eqref{E:pf-main-3}, the $\limsup$ can be replaced by $\lim$.  Moreover, for any $0\leq \theta \leq \frac{\pi}{3}-\delta$, we have the decay on the right estimate
\begin{equation}
\label{E:pf-main-1}
\lim_{t\nearrow +\infty} \int \phi_+\left(\cos\theta(x-r) + \sin\theta \sqrt{1+y^2+z^2}\right)u^2(\mathbf{x}+\mathbf{a}(t),t) \, d\mathbf{x} \lesssim e^{-\delta r},
\end{equation}
and the decay on the left estimate
\begin{equation}
\label{E:pf-main-5}
\lim_{t\nearrow +\infty} \int [\phi_+(x+\frac{19t}{20}) - \phi_+(x+r)] u^2(\mathbf{x}+\mathbf{a}(t),t)] \, d\mathbf{x} \lesssim e^{-\delta r}.
\end{equation}
By \eqref{E:pf-main-3}, \eqref{E:pf-main-1}, and \eqref{E:pf-main-5},  for each $r>0$, for $t$ sufficiently large,
\begin{equation}
\label{E:pf-main-11}
\Big| \| u( \mathbf{x} + \mathbf{a}(t),t) \|_{L_{\mathbf{x}}^2(|\mathbf{x}|\leq r)}^2 - c_*^{-1} \|Q\|_{L_{\mathbf{x}}^2}^2 \Big| \lesssim e^{-\delta r}.
\end{equation}
\end{lemma}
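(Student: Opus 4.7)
The plan is to deduce all four assertions from the conic monotonicity estimates of Lemma \ref{L:Ipm-estimates}, with the parameters $\lambda$, $\theta$, the shift, and the reference time each calibrated to match the weight appearing in the assertion at hand, combined with dominated convergence and the mass conservation $\|u\|^2=\|Q\|^2$.  For existence of the limit in \eqref{E:pf-main-3}, set $F(t)=\int\phi_+(x+t/10)\,u^2(\mathbf{x}+\mathbf{a}(t),t)\,d\mathbf{x}$ and apply \eqref{E:Ip-left} with $\theta=0$, $\lambda=1/10$, and shift $r_s=t_0/10$, aligning the weight at time $t_1$ with $\phi_+(x+t_1/10)$.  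The resulting near-monotonicity $F(t_1)\leq F(t_0)+Ce^{-\delta t_0/10}$ for $0<t_0<t_1$ collapses $\limsup F$ onto $\liminf F$ upon sending $t_1$ and then $t_0$ to infinity, so $\lim F$ exists and equals $c_*^{-1}\|Q\|^2$ by the definition of $c_*$.

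For \eqref{E:pf-main-1}, fix $r>0$ and $\theta\in[0,\pi/3-\delta]$, and apply \eqref{E:Ip-right} with any admissible $\lambda$, a fixed reference time $t_{-1}$, and $t_0\to\infty$.  On every bounded subset of $\mathbb{R}^3$ the weight argument $\cos\theta(x-r+\lambda(t_{-1}-t_0))+\sin\theta\sqrt{1+y^2+z^2}$ tends pointwise to $-\infty$, so dominated convergence against the fixed $L^1_{\mathbf{x}}$ dominator $u^2(t_{-1})$ forces $I_{+,\theta,r,t_0}(t_{-1})\to 0$, leaving $\limsup_{t_0\to\infty}I_{+,\theta,r,t_0}(t_0)\leq Ce^{-\delta r}$; a symmetric application of \eqref{E:Ip-left} promotes this to a limit.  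For \eqref{E:pf-main-5}, the Step-1 argument with $\lambda=19/20$ and shift $r_s=19t_0/20$ yields existence of $L\defeq\lim\int\phi_+(x+19t/20)u^2$ satisfying $L\geq c_*^{-1}\|Q\|^2$ (via $\phi_+(x+19t/20)\geq\phi_+(x+t/10)$); the matching $L\leq c_*^{-1}\|Q\|^2$ reflects the fact that radiation has asymptotically moved past $x=-19t/20$ in the soliton frame and is the most delicate step.  Independently, applying \eqref{E:Ip-left} with fixed shift $r$ and the later time tending to infinity shows that the later-time integral slides past the radiation and approaches $c_*^{-1}\|Q\|^2$, giving $\int\phi_+(x+r)u^2\geq c_*^{-1}\|Q\|^2-Ce^{-\delta r}$ for all $t$; subtracting from $L=c_*^{-1}\|Q\|^2$ produces \eqref{E:pf-main-5}.

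For \eqref{E:pf-main-11}, mass conservation gives $\|u(\cdot+\mathbf{a}(t),t)\|_{L^2_{\mathbf{x}}(|\mathbf{x}|\leq r)}^2=\|Q\|^2-\|u(\cdot+\mathbf{a}(t),t)\|_{L^2_{\mathbf{x}}(|\mathbf{x}|>r)}^2$; I decompose $\{|\mathbf{x}|>r\}$ into a right-opening cone $\{x>r-\tan\theta\sqrt{1+y^2+z^2}\}$ with $\theta$ close to $\pi/3-\delta$ (mass $\lesssim e^{-\delta r}$ by \eqref{E:pf-main-1}), an intermediate strip $\{-19t/20<x<-r\}$ (mass $\lesssim e^{-\delta r}$ by \eqref{E:pf-main-5}), and a deep-left tail $\{x<-19t/20\}$ whose mass is $\|Q\|^2-\int\phi_+(x+19t/20)u^2\to(1-c_*^{-1})\|Q\|^2$ by the previous step; summing and subtracting from $\|Q\|^2$ gives the claimed $c_*^{-1}\|Q\|^2+O(e^{-\delta r})$.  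The hardest ingredient will be the matching upper bound $L\leq c_*^{-1}\|Q\|^2$ in Step 3, essentially encoding that linear dispersion on top of the soliton carries radiation leftward at speeds strictly greater than $19/20$ in the soliton frame---plausible from the sign of the group velocity in the linearization but extracted here by a comparison across the family of limits $L_\lambda\defeq\lim\int\phi_+(x+\lambda t)u^2$, $\lambda\in[\delta,1-\delta]$, using the $I_\pm$ monotonicity at intermediate speeds.  A secondary subtlety is that the right-opening cone with $\theta\leq\pi/3-\delta$---the cap being already tight for the Peter--Paul splitting of $yu_xu_y+zu_xu_z$ in the proof of Lemma \ref{L:Ipm-estimates}---does not quite envelop a small left-polar sliver of $\{|\mathbf{x}|>r\}$, which is then handled using the exponential spatial decay of $u=Q_{c,\mathbf{a}}+\eta$, via decay of $Q$ and the uniform-in-time decay of $\eta$ furnished by the $J_\pm$ estimates in Lemma \ref{L:Jpm-estimates}.
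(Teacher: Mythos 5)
Your outline follows essentially the paper's own route: all four assertions are extracted from the conic $I_\pm$ monotonicity of Lemma \ref{L:Ipm-estimates} with calibrated $(\lambda,r,t_0)$ --- near-monotonicity of $\int\phi_+(x+\lambda t)\,u^2$ via \eqref{E:Ip-left} for existence of the limits, \eqref{E:Ip-right} plus dominated convergence for the decay on the right, a cross-speed comparison identifying $L=\lim\int\phi_+(x+\tfrac{19}{20}t)u^2$ with $c_*^{-1}\|Q\|_{L^2}^2$, and a fixed-shift application of \eqref{E:Ip-left} giving the asymptotic lower bound $\int\phi_+(x+r)u^2\geq c_*^{-1}\|Q\|_{L^2}^2-Ce^{-\delta r}$, from which \eqref{E:pf-main-5} and then \eqref{E:pf-main-11} follow.

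Two caveats. First, the step you yourself flag as hardest, $L\leq c_*^{-1}\|Q\|_{L^2}^2$, is only named, not executed; it does close by the mechanism you indicate, but the one detail you must supply is the time calibration: transporting the $\tfrac{19}{20}$-line at time $t_1$ back to the $\tfrac1{10}$-line at time $t_0$ requires the connecting slope $\bigl(\tfrac{19}{20}t_1-\tfrac1{10}t_0\bigr)/(t_1-t_0)\leq 1-\delta$, which forces $t_1\gtrsim 18\,t_0$, and simultaneously $t_0\to\infty$ so that the error $Ce^{-\delta t_0/10}$ vanishes; the paper takes $t_0=t_1/100$ (Figure \ref{F:mon-2}) and sends $t_0\to\infty$ along a sequence realizing the $\liminf$ of the $\tfrac1{10}$-weighted integral. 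Second, your proposed patch for the lateral sliver of $\{|\mathbf{x}|>r\}$ not covered by the cone and the strip is incorrect as stated: the $J_\pm$ estimates of Lemma \ref{L:Jpm-estimates} do \emph{not} furnish uniform-in-time spatial decay of $\eta$ for the original solution $u$ --- they yield decay only when $J_\pm$ tends to zero as $t\to\mp\infty$, which is precisely the compactness enjoyed by the limiting objects $\tilde u$, $\tilde\epsilon_n$ in \S\ref{S:uniform-n-decay} and fails for a general orbitally stable $u$ carrying radiation (if $\eta$ had uniform-in-time decay, most of the theorem would already be in hand). No such input is needed: the uncovered region is contained in $\{|\mathbf{x}|\leq Cr\}$ with $C$ depending only on $\theta$, so one simply runs the decomposition with $r$ replaced by $r/C$, losing only a constant in the exponent, which is within the $\lesssim e^{-\delta r}$ convention of \eqref{E:pf-main-11}.
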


\begin{proof}
Apply \eqref{E:Ip-right} in Lemma \ref{L:Ipm-estimates} with $0\leq \theta\leq \frac{\pi}{3}-\delta$, $\lambda = \frac12$, $t_0=t$, $t_{-1}=0$, and any $r>0$, to obtain
\begin{align*}
\indentalign \int \phi_+\left(\cos\theta(x-r)+\sin\theta \sqrt{1+y^2+z^2}\right)u^2(\mathbf{x}+\mathbf{a}(t),t) \, d\mathbf{x} \\
&\leq \int \phi_+\left(\cos\theta(x-r-\frac12t)+\sin\theta \sqrt{1+y^2+z^2}\right) u^2(\mathbf{x}+\mathbf{a}(0),0) \,d \mathbf{x} + Ce^{-\delta r}.
\end{align*}
As $t \nearrow +\infty$, the integral on the right-side goes to $0$, since $u(0)$ is a fixed function and the effective support window $x>r+\frac12 t - \tan\theta \sqrt{1+y^2+z^2}$ moves outside of any compact set.  Thus, we obtain the decay on the right estimate \eqref{E:pf-main-1}.

Now we begin the left-side estimates.  Suppose that $t\geq t'>0$.  Apply \eqref{E:Ip-left} in Lemma \ref{L:Ipm-estimates} with $\theta=0$, $\lambda = \frac{19}{20}$, $t_1=t$, $t_0=t'$, $r=\frac{19}{20}t'$ to get
\begin{equation}
\label{E:pf-main-2}
\int \phi_+(x +\frac{19}{20}t) u^2(\mathbf{x}+\mathbf{a}(t),t) \,d \mathbf{x} \leq \int \phi_+(x+\frac{19}{20}t') u^2(\mathbf{x}+\mathbf{a}(t'),t') \, d\mathbf{x} + e^{-19\delta t'/20}.
\end{equation}
Consequently, 
\begin{equation}
\label{E:pf-main-4}
\ell \defeq \frac{1}{\|Q\|_{L_{\mathbf{x}}^2}^2} \lim_{t\to +\infty} \int \phi_+(x +\frac{19}{20}t) u^2(\mathbf{x}+\mathbf{a}(t),t) \,d \mathbf{x}
\end{equation}
exists.  To prove this, take, for the moment
$$
\ell(t) \defeq \frac{1}{\|Q\|_{L_{\mathbf{x}}^2}^2} \int \phi_+(x +\frac{19}{20}t) u^2(\mathbf{x}+\mathbf{a}(t),t) \,d \mathbf{x},
$$
$$ 
\ell_- \defeq \liminf_{t\to \infty} \ell(t)\,, \quad \ell_+\defeq\limsup_{t\to \infty} \ell(t).
$$ 
We will show that $\ell_+=\ell_-$.  Construct two sequences $t_m'$ and $t_m$ as follows:
\begin{itemize}
\item 
select $t_1'$ so that $t_1'>1$ and $|\ell(t_1') - \ell_-| \leq 2^{-1}$,
\item 
select $t_1$ so that $t_1>t_1'$ and $|\ell(t_1) - \ell_+| \leq 2^{-1}$,
\item 
select $t_2'$ so that $t_2'>2$ and $|\ell(t_2') - \ell_-| \leq 2^{-2}$,
\item 
select $t_2$ so that $t_2>t_2'$ and $|\ell(t_2) - \ell_+| \leq 2^{-2}$,
\item 
etc.
\end{itemize}
Then $t_m' \nearrow +\infty$, and for all $m$, $t_m>t_m'$, and moreover,
$$
\ell_- = \lim_{m\to \infty} \ell(t_m') \,, \qquad \ell_+ = \lim_{m\to \infty} \ell(t_m).
$$
By \eqref{E:pf-main-2}, we have
$$
\ell(t_m) \leq \ell(t_m') + e^{-19\delta t_m'/20}.
$$
Sending $m\to \infty$, we obtain $\ell_+ \leq \ell_-$, completing the proof that $\ell$ exists.  
\begin{figure}[ht]
\includegraphics[scale=0.71]{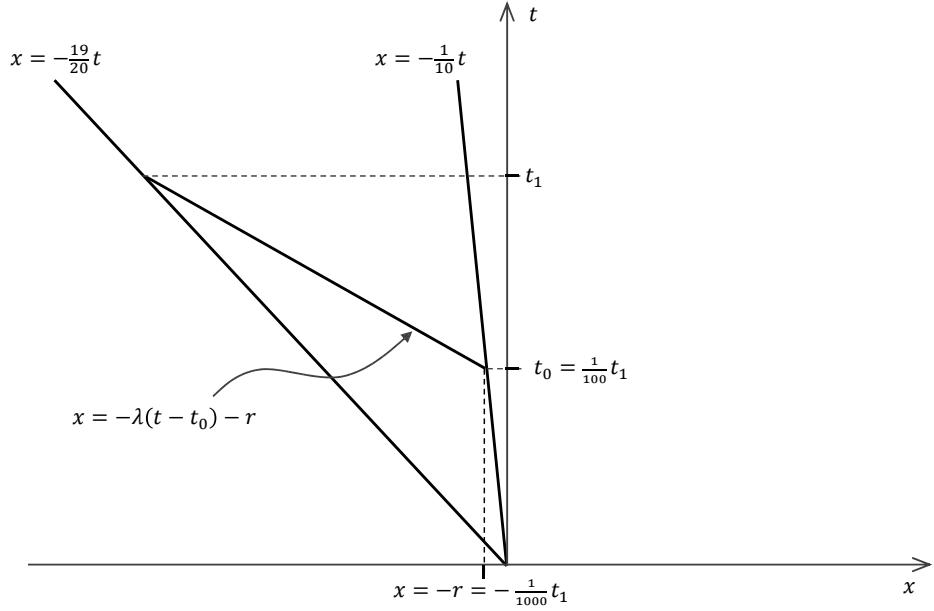}
\caption{\label{F:mon-2}Take $0<t_0 = \frac{t_1}{100} < t_1$.  To link $x=-\frac{19 t}{20}$ at $t=t_1$ to $x=-\frac{t}{10}$ at $t=t_0$, we follow the line $x=-\lambda(t-t_0)-r$ with $r=-\frac{t_0}{10}$.  Solving, yields $\lambda = \frac{100}{99}(\frac{19}{20}-\frac{1}{1000})<1$.  }
\end{figure}

Next, we claim that in fact $\ell = c_*^{-1}$.  For this, see Figure \ref{F:mon-2}.  Take
$$
0< t_0 = \frac{t_1}{100} < t_1.
$$
Apply  \eqref{E:Ip-left} in Lemma \ref{L:Ipm-estimates} with $\theta=0$, $\lambda = \frac{100}{99}(\frac{19}{20}-\frac{1}{1000})$,  $r=\frac{1}{10}t_0$ to obtain
$$
\int \phi_+( x + \frac{19}{20} t_1) u^2(\mathbf{x}+\mathbf{a}(t_1),t_1) \,d \mathbf{x} \leq
\int \phi_+(x + \frac{1}{10}t_0) u^2(\mathbf{x}+ \mathbf{a}(t_0),t_0) \, d\mathbf{x} + Ce^{-\delta t_0/10}.$$
Sending $t_0\nearrow +\infty$ along a sequence that achieves the $\liminf$ (since $t_1=100t_0$, $t_1\nearrow +\infty$), we obtain
$$
\ell \leq \liminf_{t\nearrow +\infty} \int \phi_+(x + \frac{1}{10}t_0) u^2(\mathbf{x}+ \mathbf{a}(t_0),t_0) \, d\mathbf{x}.
$$ 
On the other hand, noting that for all $x$ and all $t>0$,  $\phi_+(x+\frac{1}{10}t) \leq \phi_+(x+\frac{19}{20}t)$, it is straightforward from the definitions that 
\begin{align*}
\frac{1}{c_*} &= \frac{1}{\|Q\|_{L_{\mathbf{x}}^2}^2} \limsup_{t\nearrow +\infty} \int \phi_+(x  + \frac1{10}t) u^2(\mathbf{x}+ \mathbf{a}(t),t) \, d\mathbf{x} \\
&\leq \frac{1}{\|Q\|_{L_{\mathbf{x}}^2}^2} \limsup_{t\nearrow +\infty} \int \phi_+(x +\frac{19}{20}t) u^2(\mathbf{x}+\mathbf{a}(t),t) \,d \mathbf{x} = \ell.
\end{align*}
Hence, $\ell = \frac{1}{c_*}$, and the $\limsup$ in the definition \eqref{E:pf-main-3} can be replaced by $\lim$.   Taking the difference between \eqref{E:pf-main-4} and \eqref{E:pf-main-3}, using that $\ell=\frac{1}{c_*}$, we obtain
\begin{equation}
\label{E:pf-main-6}
0= \lim_{t\nearrow +\infty} \int [\phi_+(x+\frac{19}{20}t)-\phi_+(x  + \frac1{10}t)] u^2(\mathbf{x}+ \mathbf{a}(t),t) \, d\mathbf{x}.
\end{equation}
Now, apply \eqref{E:Ip-left} in Lemma \ref{L:Ipm-estimates} with $\theta=0$, $\lambda = \frac12$, and any $r>0$, for
$$ 
0<t_0= \frac{4}{5}t_1+2r<t_1
$$
to obtain
$$
\int \phi_+(x+\frac{t_1}{10}) u^2(\mathbf{x}+\mathbf{a}(t_1),t_1) \,d \mathbf{x} \leq \int \phi_+(x+r) u^2(\mathbf{x}+\mathbf{a}(t_0),t_0) \, d\mathbf{x}+Ce^{-\delta r},
$$
and hence,
$$
\lim_{t_0\nearrow +\infty} \int [\phi_+(x+\frac{t_1}{10}) u^2(\mathbf{x}+\mathbf{a}(t_1),t_1)- \phi_+(x+r) u^2(\mathbf{x}+\mathbf{a}(t_0),t_0)] \, d\mathbf{x} \lesssim e^{-\delta r}.
$$
However, given that the limit in \eqref{E:pf-main-3} exists,
$$
\lim_{t_0\nearrow +\infty} \int [\phi_+(x+\frac{t_1}{10}) u^2(\mathbf{x}+\mathbf{a}(t_1),t_1)- \phi_+(x+\frac{t_0}{10}) u^2(\mathbf{x}+\mathbf{a}(t_0),t_0)] \, d\mathbf{x} =0.
$$
Taking the difference of the above two equations, we obtain
$$
\lim_{t_0\nearrow +\infty} \int [\phi_+(x+\frac{t_0}{10}) - \phi_+(x+r)] u^2(\mathbf{x}+\mathbf{a}(t_0),t_0)] \, d\mathbf{x} \lesssim e^{-\delta r}.
$$
Making the notational changes of replacing $t_0$ by $t$ in this estimate, and adding it to \eqref{E:pf-main-6}, we obtain \eqref{E:pf-main-5}.
\end{proof}

Now, we complete the proof that Proposition \ref{P:wk-lim} and \ref{P:rigidity} imply Theorem \ref{T:main}.    First, we claim that
\begin{equation}
\label{E:pf-main-8}
\begin{aligned}
&u(\mathbf{x}+\mathbf{a}(t),t) \rightharpoonup c_*^{-2} Q(c_*^{-1}\mathbf{x}) \text{ weakly in }H_{\mathbf{x}}^1,\\
&u(\mathbf{x}+\mathbf{a}(t),t) \to c_*^{-2} Q(c_*^{-1}\mathbf{x}) \text{ strongly in }L_{\mathbf{x}}^2(|\mathbf{x}|\leq R) \text{ for any }R>0 . 
\end{aligned}
\end{equation}

Let $t_m \nearrow +\infty$ be any sequence.  By Proposition \ref{P:wk-lim}, there exists a subsequence $t_{m'}$ such that
\begin{equation}
\label{E:pf-main-9}
\begin{aligned}
&u(\mathbf{x}+ \mathbf{a}(t_{m'}),t_{m'}+t) \rightharpoonup \tilde u(x,t) \text{ weakly in }H_{\mathbf{x}}^1,\\
&u(\mathbf{x}+ \mathbf{a}(t_{m'}),t_{m'}+t) \to \tilde u(x,t) \text{ strongly in }L_{\mathbf{x}}^2(|\mathbf{x}|\leq R)\text{ for any }R>0  
\end{aligned}
\end{equation}
for every $t\in \mathbb{R}$, with $\tilde u$ satisfying the conditions of Proposition \ref{P:rigidity}.  By Proposition \ref{P:rigidity}, there exists $c_+>0$ and $\mathbf{a}_+\in \mathbb{R}^3$ such that for all $t\in \mathbb{R}$,
$$
\tilde u(x,t) = c_+^{-2}Q(c_+^{-1}(\mathbf{x}-\mathbf{a}_+-tc_+^{-2}))
$$
so that $\mathbf{a}_+= \tilde{\mathbf{a}}(0) = 0$ and $\tilde c(t)=c_+$ for all $t\in \mathbb{R}$. 
Inserting this into \eqref{E:pf-main-9} and evaluating at $t=0$, we obtain
\begin{equation}
\label{E:pf-main-10}
\begin{aligned}
&u(\mathbf{x}+\mathbf{a}(t_{m'}),t_{m'}) \rightharpoonup c_+^{-2} Q(c_+^{-1}\mathbf{x}) \text{ weakly in }H_{\mathbf{x}}^1,\\
&u(\mathbf{x}+\mathbf{a}(t_{m'}),t_{m'}) \to c_+^{-2} Q(c_+^{-1}\mathbf{x}) \text{ strongly in }L_{\mathbf{x}}^2(|\mathbf{x}|\leq R) \text{ for any }R>0, 
\end{aligned}
\end{equation}
where \emph{a priori} $c_+$ can depend on the choice of sequence $t_m$.   To complete the proof of \eqref{E:pf-main-8}, we must show that  $c_+=c_*$ as defined in \eqref{E:pf-main-3}.   The estimate \eqref{E:pf-main-11}, and the fact that $u(\mathbf{x}+\mathbf{a}(t_{m'}),t_{m'})$ converges strongly to $\tilde u(\mathbf{x},0)$ in $L^2(|\mathbf{x}|\leq r)$ yields that for every $r>0$,
$$
\Big| \| \tilde u( \mathbf{x},0) \|_{L_{\mathbf{x}}^2(|\mathbf{x}|\leq r)}^2 - c_*^{-1} \|Q\|_{L_{\mathbf{x}}^2}^2 \Big| \lesssim e^{-\delta r}.
$$
By \eqref{E:tilde-u-decay-1}, for every $r>0$,
$$
\Big| M(\tilde u) - c_*^{-1} M(Q)\Big| \lesssim e^{-\delta r},
$$
from which it follows that $M(\tilde u)= c_*^{-1}M(Q)$.   Since $\tilde u(\mathbf{x},0) = c_+^{-2}Q(c_+^{-1}\mathbf{x})$, we have $M(\tilde u) = c_+^{-1}$.  Hence, $c_+=c_*$, and \eqref{E:pf-main-8} is established.    

By \eqref{E:param-conv}, 
$$
c_* = c_+ = \tilde c(0) = \lim_{m'\to \infty} c(t_{m'}).
$$
Since this limit is independent of the choice of sequence $t_m$, we conclude $c(t) \to c_*$ as $t\to \infty$.  

Next we remark on how this implies the strong convergence \eqref{E:window-conv} asserted in Theorem \ref{T:main}.  We explain this in the reference frame of Lemma \ref{L:u-decay}, where $\mathbf{x}=0$ corresponds to the soliton center.  Thus, we are looking to show that we have $L_{\mathbf{x}}^2$ strong convergence in the conic region 
\begin{equation}
\label{E:pure-wedge}
x>-\frac{9}{10}t - \tan \theta \sqrt{1+y^2+z^2} \qquad \text{(pure wedge)},
\end{equation}
where $\theta< \frac{\pi}{3}-\delta$.  The local convergence \eqref{E:pf-main-8} implies the convergence in a compact neighborhood of $0$.  Taking $\tilde\theta$ such that $\theta < \tilde \theta < \frac{\pi}{3}-\delta$, then for $t$ sufficiently large, the region
\begin{equation}
\label{E:cut-wedge}
\left\{
\begin{aligned}
&x> r- \tan \tilde{\theta} \sqrt{1+y^2+z^2}  \qquad \text{(cut wedge)}
\\
& x> - \frac{19}{20}t
\end{aligned}
\right.
\end{equation}
fits inside the region \eqref{E:pure-wedge}, as depicted in Figure \ref{F:cut-cone}.  Since \eqref{E:pf-main-1} (with $\theta$ replaced by $\tilde \theta$) and \eqref{E:pf-main-5} imply the convergence in \eqref{E:cut-wedge} away from $\mathbf{x}=0$, the convergence also holds in \eqref{E:pure-wedge} away from $\mathbf{x}=0$.    This completes the proof of Theorem \ref{T:main}.

\begin{figure}
\includegraphics[scale=0.65]{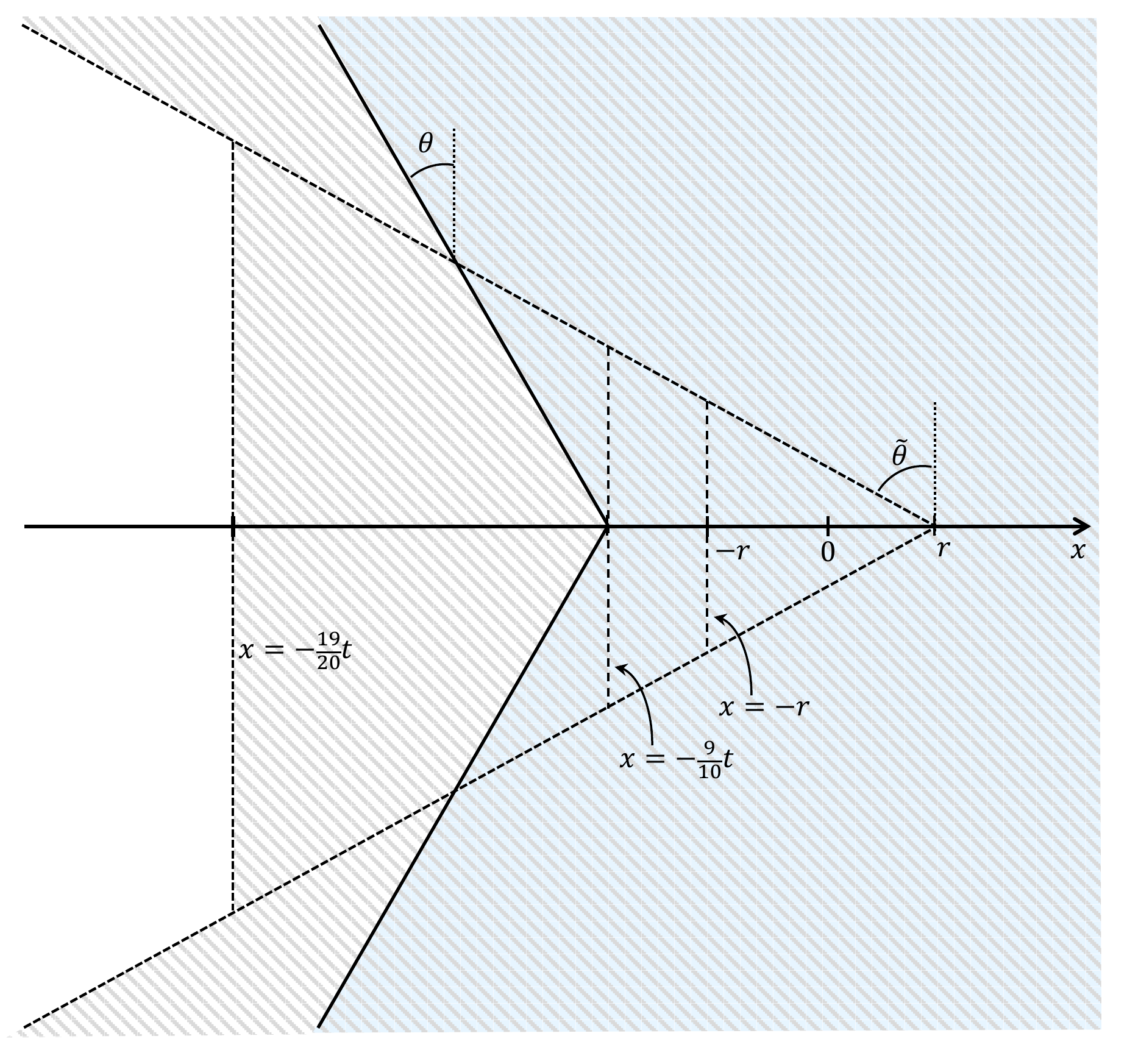}
\caption{The pure wedge \eqref{E:pure-wedge} and cut wedge \eqref{E:cut-wedge} regions.}
\label{F:cut-cone}
\end{figure}

\section{Construction of the weak time limit Class B solution $\tilde u$}
\label{S:wk-lim}

In this section, we prove Lemma \ref{L:soft-step}.  The entire contents of Lemma \ref{L:soft-step} follow from the combination of Lemmas \ref{L:wk-1}, \ref{L:wk-2}, \ref{L:wk-3}, \ref{L:wk-5}, \ref{L:wk-6} stated and proved below.

\begin{lemma}[rational time shifts]
\label{L:wk-1}
Given $t_m\nearrow +\infty$, there exists a subsequence $t_{m'}$ such that 
\begin{enumerate}
\item 
for each $t\in \mathbb{Q}$, $u(\mathbf{x}+\mathbf{a}(t_{m'}),t+t_{m'})$ converges weakly in $H^1_{\mathbf{x}}$ as $m'\to\infty$,
\item 
for each $t\in \mathbb{Q}$, $\partial_t u (\mathbf{x}+\mathbf{a}(t_{m'}),t+t_{m'})$ converges weakly in $H^{-2}_{\mathbf{x}}$ as $m'\to \infty$,
\item 
for each $t\in \mathbb{Q}$, $\mathbf{a}(t_{m'}+t) - \mathbf{a}(t_{m'})$ converges (in $\mathbb{R}^3$) as $m'\to \infty$,
\item 
for each $t\in \mathbb{Q}$, $c(t_{m'}+t)$ converges as $m'\to \infty$.
\end{enumerate}
\end{lemma}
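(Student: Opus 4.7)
The plan is to carry out a standard Cantor diagonal extraction based on weak sequential compactness. First I would collect uniform-in-$m$ bounds on each of the four quantities appearing in the statement. From the Class B hypothesis (Definition \ref{D:ClassB}) we have
$$\sup_{s\in\mathbb{R}}\|u(s)\|_{H^1_{\mathbf{x}}} + \sup_{s\in\mathbb{R}}\|\partial_t u(s)\|_{H^{-2}_{\mathbf{x}}} \leq C,$$
and since both $H^1_{\mathbf{x}}$ and $H^{-2}_{\mathbf{x}}$ norms are translation invariant, the shifted sequences $u(\mathbf{x}+\mathbf{a}(t_m),t+t_m)$ and $\partial_t u(\mathbf{x}+\mathbf{a}(t_m),t+t_m)$ satisfy these same bounds for every fixed $t$. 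For the modulation parameters, Lemma \ref{L:geom-decomp} and orbital stability yield $|c(s)-1|\lesssim\alpha$ uniformly in $s$, while the parameter ODE estimate \eqref{E:mon1} integrates to $|\mathbf{a}(t_m+t)-\mathbf{a}(t_m)-t\mathbf{i}|\lesssim\alpha|t|$. Hence for each fixed $t\in\mathbb{Q}$, the numbers $c(t_m+t)$ and the vectors $\mathbf{a}(t_m+t)-\mathbf{a}(t_m)$ lie in bounded subsets of $\mathbb{R}$ and $\mathbb{R}^3$, respectively.

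Next I would invoke sequential weak compactness. The spaces $H^1_{\mathbf{x}}$ and $H^{-2}_{\mathbf{x}}$ are separable Hilbert spaces, so their closed bounded sets are weakly sequentially compact (and metrizable in the weak topology). Combined with the Bolzano--Weierstrass theorem for $\mathbb{R}$ and $\mathbb{R}^3$, this means that for any fixed $t\in\mathbb{Q}$ and any subsequence of $(t_m)$, one can pass to a further subsequence along which all four quantities converge in the senses specified in (1)--(4).

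The extraction is then a routine Cantor diagonal construction. Enumerate $\mathbb{Q}=\{q_1,q_2,\ldots\}$ and set $(t_m^{(0)})=(t_m)$. Inductively, given a subsequence $(t_m^{(k-1)})$, apply the compactness step at $t=q_k$ to select a further subsequence $(t_m^{(k)})$ along which convergence in senses (1)--(4) holds at $t=q_k$. The diagonal $t_{m'}\defeq t_{m'}^{(m')}$ is, from index $m'\geq k$ onward, a subsequence of $(t_m^{(k)})$, so the convergence holds simultaneously at every $q_k\in\mathbb{Q}$.

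There is no substantial obstacle here: the entire argument is bookkeeping around Banach--Alaoglu in reflexive/separable Hilbert spaces, together with elementary estimates for the modulation parameters. The one mildly delicate point worth flagging is that (2) uses only the low regularity $\partial_t u(s)\in H^{-2}_{\mathbf{x}}$ granted by Class B solutions, so no further smoothness of the equation is needed; the product and derivative in $\partial_x\Delta u+\partial_x u^2$ are interpreted distributionally per Definition \ref{D:ClassB}(3), and weak-$\ast$ convergence in $H^{-2}_{\mathbf{x}}$ of the time-derivative sequence is simply inherited from the uniform bound.
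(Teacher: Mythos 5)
Your proposal is correct and follows essentially the same route as the paper: uniform bounds on the shifted solution and time-derivative from the Class B definition, boundedness of the modulation parameters from orbital stability and the parameter ODE estimates, then weak sequential compactness (Banach--Alaoglu) plus Bolzano--Weierstrass and a Cantor diagonal over the countable set $\mathbb{Q}$. Your write-up is in fact more explicit than the paper's brief argument, and the details you supply (translation invariance of the norms, the integrated bound on $\mathbf{a}(t_{m}+t)-\mathbf{a}(t_{m})$) are the right ones.
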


\begin{proof}
By \eqref{E:param-ODEs} in Lemma \ref{L:ODE-bounds}, we have that
$$
|\mathbf{a}(t_{m}+t) - \mathbf{a}(t_{m})| \lesssim \alpha |t|
$$
uniformly in $m$.  Also, mass conservation (Lemma \ref{L:mass-conservation}) and the definition of orbital stability (Definition \ref{D:orb-stab}) yield
$$
| c(t_{m}) -1 | \lesssim \alpha.
$$
These bounds and a diagonal argument, using that $\mathbb{Q}$ is countable, imply that there is a subsequence such that items (3) and (4) hold.  By passing to a further subsequence, (1) and (2) follow from the Banach--Alaoglu theorem, and a diagonal argument using that $\mathbb{Q}$ is countable.   Thus, there is a single subsequence, denoted $m'$ for which all properties (1)-(4) hold.
\end{proof}

\begin{lemma}[uniform continuity for frequency projected solution]
\label{L:wk-2}
Given dyadic $M\geq 1$, we have that for all $m'$
\begin{equation}
\label{E:wk-106}
\| P_{\leq M} u( t+t_{m'}) - P_{\leq M} u(t'+t_{m'}) \|_{L^2_{\mathbf{x}}} \lesssim \min(M^2 |t-t'|, M^{-1}).
\end{equation}
Consequently, for any $-2<s<1$,
\begin{equation}
\label{E:wk-106b}
\| u( t+t_{m'}) -  u(t'+t_{m'}) \|_{H^s_{\mathbf{x}}} \lesssim |t-t'|^{(1-s)/3},
\end{equation}
and for any $-4\leq s<-2$,
\begin{equation}
\label{E:wk-107b}
\|  \partial_t u( t+t_{m'}) - \partial_t u(t'+t_{m'}) \|_{H^s_{\mathbf{x}}} \lesssim |t-t'|^{(-s-2)/3}.
\end{equation}
\end{lemma}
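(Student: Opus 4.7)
For \eqref{E:wk-106}, I would start from the Class B bound $\|\partial_t u\|_{L^\infty_t H^{-2}_{\mathbf{x}}}\lesssim 1$ (which itself follows from $\|u\|_{L^\infty H^1}\lesssim 1$ via the equation, since $\|\partial_x u^2\|_{H^{-2}}\lesssim \|u\|_{L^3}^2\lesssim \|u\|_{H^1}^2$ by 3D Sobolev embedding). Since $P_{\leq M}$ maps $H^{-2}\to L^2$ with operator norm $\lesssim M^2$, one has $\|P_{\leq M}\partial_s u\|_{L^2}\lesssim M^2$; integrating between $t'+t_{m'}$ and $t+t_{m'}$ gives the $M^2|t-t'|$ bound. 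The complementary $M^{-1}$ bound reflects the high-frequency decay of the difference $v = u(t+t_{m'}) - u(t'+t_{m'})$: namely $\|P_{>M} v\|_{L^2}\lesssim M^{-1}\|v\|_{H^1}\lesssim M^{-1}$, which is the bound effectively used in tandem with the $M^2|t-t'|$ piece in every application of \eqref{E:wk-106}.

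For \eqref{E:wk-106b}, I would bypass Littlewood--Paley and use direct interpolation. The Class B equation gives $\|v\|_{H^{-2}}\leq |t-t'|\,\|\partial_t u\|_{L^\infty H^{-2}}\lesssim |t-t'|$, while orbital stability gives $\|v\|_{H^1}\lesssim 1$. Complex interpolation between $H^{-2}$ and $H^1$ yields $\|v\|_{H^s}\lesssim |t-t'|^{(1-s)/3}$ for every $s\in(-2,1)$, which is exactly \eqref{E:wk-106b}.

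For \eqref{E:wk-107b}, use the equation to write $\partial_t u(t+t_{m'}) - \partial_t u(t'+t_{m'}) = -\partial_x\Delta v - \partial_x\bigl[(u(t+t_{m'}) + u(t'+t_{m'}))v\bigr]$. The linear piece in $H^s$ equals $\|v\|_{H^{s+3}}\lesssim |t-t'|^{-(s+2)/3}$ by \eqref{E:wk-106b} with shifted index $s+3\in[-1,1)$; this is exactly why the hypothesis $s\in[-4,-2)$ is natural. For the nonlinear piece, I would use duality, pairing $v$ against $f\phi$ with $f = u(t+t_{m'}) + u(t'+t_{m'})\in H^1$ and $\phi\in H^{-s-1}$ (where $-s-1\in(1,3]$), choosing an exponent $\alpha$ so that fractional Leibniz and the Sobolev embedding $H^{-s-1}\hookrightarrow W^{1,\infty}$ (when $-s-1>5/2$) or $H^{-s-1}\hookrightarrow L^\infty$ (when $-s-1>3/2$) push $f\phi$ into a positive Sobolev space with $\|f\phi\|_{H^\alpha}\lesssim \|\phi\|_{H^{-s-1}}$, then pairing against $\|v\|_{H^{-\alpha}}\lesssim |t-t'|^{(1+\alpha)/3}$ from \eqref{E:wk-106b}. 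The main obstacle is precisely this 3D low-regularity product estimate, since $H^1(\mathbb{R}^3)$ is not an algebra and the endpoint $s = -2$ is borderline; I plan to circumvent it by case-splitting the range (taking $\alpha = 1$ for very negative $s$, $\alpha = 0$ near $s = -2$, with interpolation bridging the middle), noting that the target exponent $(-s-2)/3$ is comfortably below any sharp embedding threshold, so crude Sobolev--Hölder bounds suffice.
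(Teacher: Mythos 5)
Your proposal is correct, and it reaches the three estimates by a genuinely different (and in places more elementary) route than the paper. For \eqref{E:wk-106} the paper writes the Duhamel formula with the free group $U(t)$, bounds $P_{<M}[U(s)-I]$ from $H^1_{\mathbf{x}}$ to $L^2_{\mathbf{x}}$ by $\min(1,|s|M^2)$, and estimates the nonlinear Duhamel term separately (obtaining $M^{1/2}|t-t'|$); you instead integrate the Class B bound $\|\partial_t u\|_{L_t^\infty H^{-2}_{\mathbf{x}}}\lesssim 1$ (which is already part of Definition \ref{D:ClassB}, so your derivation of it from the equation is not even needed) against the operator bound $\|P_{\leq M}\|_{H^{-2}\to L^2}\lesssim M^2$ --- shorter, and it avoids invoking the integral formulation and the fractional Leibniz step. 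Your reading of the $M^{-1}$ branch is also the right one: as the estimate is actually used in proving \eqref{E:wk-106b}, it is applied to the dyadic piece $P_M$, where $\|P_M v\|_{L^2}\lesssim M^{-1}\|v\|_{H^1}$, exactly as you say. For \eqref{E:wk-106b} you interpolate directly between $\|v\|_{H^{-2}}\lesssim|t-t'|$ and $\|v\|_{H^1}\lesssim 1$, while the paper sums the dyadic bounds $M^s\min(M^2|t-t'|,M^{-1})$; the two computations are equivalent, yours being slightly cleaner and not even requiring \eqref{E:wk-106}. For \eqref{E:wk-107b} both arguments subtract the two equations and treat $\partial_x\Delta v$ via \eqref{E:wk-106b} at index $s+3$; the difference is in the bilinear term, where the paper applies the single product estimate \eqref{E:prod-Sob} uniformly on $-4\le s<-2$, whereas you argue by duality with case-dependent Sobolev--H\"older bounds. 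Your sketch there has one imprecision: for $s\in[-\tfrac52,-2)$ one has $-s-1\le\tfrac32$, so $H^{-s-1}\not\hookrightarrow L^\infty$; the $\alpha=0$ case is instead saved by $\|f\phi\|_{L^2}\le\|f\|_{L^6}\|\phi\|_{L^3}\lesssim\|f\|_{H^1}\|\phi\|_{H^{1/2}}$, after which your case-splitting goes through. Finally, when your chosen $\alpha$ exceeds $-s-3$ you overshoot the exponent; this is harmless, since for $|t-t'|\ge 1$ all three estimates follow trivially from the uniform $H^1$ and $H^{-2}$ bounds, and for $|t-t'|\le 1$ a larger exponent is stronger.
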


\begin{proof}
The bound of $M^{-1}$ follows immediately from the bound on $\| u(t) \|_{L_t^\infty H_{\mathbf{x}}^1}$.
We have
\begin{align*}
\indentalign P_{<M} u(t_{m'}+t') - P_{<M}  u(t_{m'}+t) \\
&= P_{<M} (U(t'-t)-I) u(t_{m'}+t)  - P_{<M} \int_{s=t_{m'}+t}^{t_{m'}+t'} U(t_{m'}+t'-s) \partial_x (u^2)(s) \,ds.
\end{align*}
For the first term, we use that $P_{<M} [U(s)-I]$ is $H_{\mathbf{x}}^1 \to L_{\mathbf{x}}^2$ bounded with operator norm $\leq \min(1,|s|M^2)$.  
For the second term, estimating in $L^2_{\mathbf{x}}$ in the usual way, bounding half of the derivative to $M^{1/2}$, distributing the other half via the fractional Leibniz rule, applying Sobolev, yields a bound of $M^{1/2}|t-t'|$.  The two estimates together complete the proof of \eqref{E:wk-106}.  

Now we explain how \eqref{E:wk-106b} follows from \eqref{E:wk-106}.  Dividing frequency space into dyads,
$$
\| u( t+t_{m'}) -  u(t'+t_{m'}) \|_{H^s_{\mathbf{x}}} \lesssim \sum_{M\geq 1 \text{ dyadic}} M^s \| P_M [u( t+t_{m'}) -  u(t'+t_{m'})] \|_{L^2_{\mathbf{x}}}.
$$
Applying \eqref{E:wk-106},
$$
\| u( t+t_{m'}) -  u(t'+t_{m'}) \|_{H^s_{\mathbf{x}}} \lesssim \sum_{M\geq 1 \text{ dyadic}} M^s \min( M^2|t-t'|, M^{-1}).
$$
Since $-2<s<1$, $M^{s+2}$ is a positive power of $M$ and $M^{s-1}$ is a negative power of $M$.  For $M \leq |t-t'|^{-1/3}$, the first bound is better, and for $M\geq |t-t'|^{-1/3}$ the second bound is better.  Carrying out the sum yields \eqref{E:wk-106b}.  

Next, we deduce \eqref{E:wk-107b} as a consequence of \eqref{E:wk-106b}.  Writing $u_2 = u(t+t_{m'})$ and $u_1 = u(t'+t_{m'})$, we use the 3D ZK equation 
$$
\partial_t u = -\partial_x \Delta u - \partial_x(u^2)
$$
for $u=u_2$ and $u=u_1$ to obtain
$$
\partial_t (u_2-u_1) = - \partial_x\Delta(u_2-u_1) - \partial_x[ (u_2-u_1)(u_2+u_1) ],
$$
from which it follows that
$$
\| \partial_t (u_2-u_1) \|_{H_{\mathbf{x}}^s} \lesssim \|u_2 -u_1 \|_{H_{\mathbf{x}}^{s+3}} + \|(u_2-u_1)(u_2+u_1) \|_{H_{\mathbf{x}}^{s+1}}.
$$
Then apply the inequality, for $-\infty<\alpha\leq \frac12$,
\begin{equation}
\label{E:prod-Sob}
\| fg \|_{H_{\mathbf{x}}^\alpha} \lesssim  \|f \|_{H_{\mathbf{x}}^{\max(\alpha+\frac12,-1)}}  \|g \|_{H_{\mathbf{x}}^1}
\end{equation}
to obtain, if $-4\leq s< -2$,
$$
\| \partial_t (u_2-u_1) \|_{H_{\mathbf{x}}^s }\lesssim \|u_2 - u_1 \|_{H_{\mathbf{x}}^{s+3}} \lesssim |t-t'|^{(-2-s)/3}.
$$
For $s\leq -4$, it seems, we cannot improve on the estimate $|t-t'|^{2/3}$, since the right side of \eqref{E:prod-Sob} cannot be improved if $\alpha<-\frac32$.  
\end{proof}

\begin{lemma}[density and convergence]
\label{L:wk-3} \quad
\begin{enumerate}
\item 
For all $t\in \mathbb{R}$, $u(\mathbf{x}+\mathbf{a}(t_{m'}),t+t_{m'})$ converges weakly in $H_{\mathbf{x}}^1$ as $m'\to\infty$ and $\partial_t u(\mathbf{x}+\mathbf{a}(t_{m'}),t+t_{m'})$ converges weakly in $H_{\mathbf{x}}^{-2}$ as $m'\to\infty$.   
\item 
\emph{Define}, for all $t\in \mathbb{R}$,
$$
\tilde u(t) = \operatorname{wk-lim}_{m'\to \infty} u(\mathbf{x}+\mathbf{a}(t_{m'}),t+t_{m'}),
$$
$$
\tilde v(t) = \operatorname{wk-lim}_{m'\to \infty} \partial_t u(\mathbf{x}+\mathbf{a}(t_{m'}),t+t_{m'}),
$$
where the first is a weak limit in $H^1_{\mathbf{x}}$ and the second is a weak limit in $H^{-2}_{\mathbf{x}}$.  Then we have, for every $t\in \mathbb{R}$, that $\partial_t \tilde u = \tilde v$, and $\tilde u$ is uniformly-in-time bounded in $H^1_{\mathbf{x}}$ and $\partial_t \tilde u$ is uniformly-in-time bounded in $H^{-2}_{\mathbf{x}}$.
\item 
For every $T>0$ and all $s<1$, $\tilde u \in C([-T,T]; H^s_{\mathbf{x}})$ and $\partial_t \tilde u \in C([-T,T]; H^{s-2}_{\mathbf{x}})$.
\item 
For every $T>0$ and $R>0$, $u(\mathbf{x}+\mathbf{a}(t_{m'}),t+t_{m'})\mathbf{1}_{<R}(x)$ converges to $\tilde u(\mathbf{x},t)\mathbf{1}_{<R}(\mathbf{x})$ strongly in $C([-T,T]; L_{\mathbf{x}}^2)$.
\item 
For all $t\in \mathbb{R}$, $\mathbf{a}(t_{m'}+t)-\mathbf{a}(t_{m'})$ converges.  The limit, that we denote by $\tilde{\mathbf{a}}(t)$, is Lipschitz continuous.
\item 
For all $t\in \mathbb{R}$, $c(t_{m'}+t)$ converges.  The limit, that we denote by $\tilde c(t)$, is Lipschitz continuous.
\end{enumerate}
\end{lemma}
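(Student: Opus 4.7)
The plan is to prove the six parts essentially in sequence, with the rational-time data from Lemma \ref{L:wk-1} filled in to all real times using the uniform continuity estimates \eqref{E:wk-106b} and \eqref{E:wk-107b} of Lemma \ref{L:wk-2}. For parts (1) and (2), I would extend the weak convergence from $\mathbb{Q}$ to $\mathbb{R}$ via an $\varepsilon/3$ argument: given $t \in \mathbb{R}$ and a test function $\varphi \in H_{\mathbf{x}}^{-s}$ with $-2 < s < 1$, pick $t' \in \mathbb{Q}$ close to $t$, and observe that the estimate \eqref{E:wk-106b} bounds $|\langle u(\cdot+\mathbf{a}(t_{m'}), t+t_{m'}) - u(\cdot+\mathbf{a}(t_{m'}), t'+t_{m'}), \varphi\rangle|$ by $|t-t'|^{(1-s)/3}\|\varphi\|_{H^{-s}_{\mathbf{x}}}$ uniformly in $m'$, so that the $m' \to \infty$ limit at $t$ exists by combining with Lemma \ref{L:wk-1}. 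The statement for $\partial_t u$ follows analogously from \eqref{E:wk-107b}. This \emph{defines} $\tilde u(t)$ and $\tilde v(t)$, and their uniform-in-time bounds in $H^1_{\mathbf{x}}$ and $H^{-2}_{\mathbf{x}}$ are inherited by weak lower semicontinuity. To identify $\partial_t \tilde u = \tilde v$, I would pass to the limit in the integral identity
$$\langle u(t_2+t_{m'}) - u(t_1+t_{m'}), \varphi \rangle = \int_{t_1}^{t_2} \langle \partial_t u(s+t_{m'}), \varphi \rangle \, ds$$
for $\varphi \in C_c^\infty(\mathbb{R}^3)$, using part (1) on the left and dominated convergence on the right (the integrand is uniformly bounded in $s$ and $m'$ by the $H^{-2}_{\mathbf{x}}$ bound), yielding $\tilde u(t_2) - \tilde u(t_1) = \int_{t_1}^{t_2} \tilde v(s)\,ds$ in $H_{\mathbf{x}}^{-2}$.

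For part (3), the continuity in $C([-T,T]; H^s_{\mathbf{x}})$ for $s < 1$ follows by passing \eqref{E:wk-106b} to the weak limit via lower semicontinuity, giving $\|\tilde u(t) - \tilde u(t')\|_{H^s_{\mathbf{x}}} \lesssim |t-t'|^{(1-s)/3}$ and analogously for $\partial_t \tilde u$. For part (4), I would split $u = P_{\leq M} u + P_{>M} u$. The high-frequency tail is bounded by $M^{-1}$ in $L_{\mathbf{x}}^2$ uniformly in $(m', t)$ via the $H^1_{\mathbf{x}}$ bound. For the low-frequency piece restricted by $\mathbf{1}_{<R}(\mathbf{x})$, Rellich--Kondrachov compactness converts the weak $H^1_{\mathbf{x}}$ convergence (pointwise in $t$, established in part (1)) into strong $L^2_{\mathbf{x}}$ convergence pointwise in $t$; this is then upgraded to uniform convergence on $[-T,T]$ by an Arzel\`a--Ascoli argument in $L^2_{\mathbf{x}}(|\mathbf{x}|<R)$, where the required equicontinuity in $t$ is provided by \eqref{E:wk-106}. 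Sending $M \to \infty$ completes part (4).

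Parts (5) and (6) follow from combining the rational-time convergence in Lemma \ref{L:wk-1} with the uniform-in-$m'$ Lipschitz bounds $|\mathbf{a}(t+t_{m'}) - \mathbf{a}(t'+t_{m'})| \lesssim |t-t'|$ and $|c(t+t_{m'}) - c(t'+t_{m'})| \lesssim |t-t'|$, which come from \eqref{E:param-ODEs} in Lemma \ref{L:ODE-bounds} together with $\alpha$-orbital stability (so that $|\mathbf{a}'| \lesssim 1$ and $|c'| \lesssim 1$ uniformly). The standard extension-by-uniform-continuity on a dense set gives pointwise convergence at every real $t$, and the limits $\tilde{\mathbf{a}}(t)$ and $\tilde c(t)$ inherit Lipschitz continuity as pointwise limits of uniformly Lipschitz functions.

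The main obstacle I anticipate is part (4): translating the pointwise-in-$t$ strong local $L^2$ convergence produced by Rellich--Kondrachov into convergence uniform in $t \in [-T,T]$. The subtlety is that the uniform-continuity constant in \eqref{E:wk-106} degrades like $M^2$ as the frequency cutoff grows, so one must commit to the Arzel\`a--Ascoli step at each fixed $M$ before sending $M \to \infty$ using the universal $M^{-1}$ control on the high-frequency tail; the two-parameter frequency-and-time truncation has to be orchestrated carefully, but since the high-frequency bound is independent of $t$ and $m'$, the argument closes.
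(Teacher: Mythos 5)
Your proposal is correct and follows essentially the same route as the paper's proof: rational-time convergence from Lemma \ref{L:wk-1} plus the uniform-in-$m'$ continuity bounds \eqref{E:wk-106b}, \eqref{E:wk-107b} extend the weak limits to all real $t$, the fundamental-theorem-of-calculus identity with dominated convergence yields $\partial_t \tilde u = \tilde v$, weak lower semicontinuity gives (3), pointwise-in-$t$ Rellich--Kondrachov combined with time-equicontinuity gives (4), and the Lipschitz parameter bounds from \eqref{E:param-ODEs} give (5)--(6). The only cosmetic differences are that in (1) the paper approximates an arbitrary $H^{-1}_{\mathbf{x}}$ test function by its frequency truncation $P_{<M}\phi$ (the density-plus-uniform-boundedness step you use implicitly when restricting to $\varphi \in H^{-s}_{\mathbf{x}}$, $s<1$, and should state explicitly), and in (4) the paper avoids your two-parameter frequency/time truncation entirely by using \eqref{E:wk-106b} with $s=0$ as an $L^2_{\mathbf{x}}$ modulus of continuity uniform in $m'$ together with a finite $\epsilon$-net in $[-T,T]$, so the bookkeeping obstacle you anticipate does not arise.
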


\begin{proof}
(1) Let $t\in \mathbb{R}\backslash \mathbb{Q}$ and let $\phi \in H^{-1}_{\mathbf{x}}$ be a test function. We must show that  $\la u(\bullet+\mathbf{a}(t_{m'}), t+ t_{m'}), \phi \ra_{\mathbf{x}}$ is a Cauchy sequence (of numbers).  Let $\epsilon>0$.  Since $u(t+t_{m'})$ is bounded in $H^1_{\mathbf{x}}$ (uniformly in $m'$), there exists dyadic $M>0$ sufficiently large so that 
\begin{equation}
\label{E:wk-105}
| \la u(\bullet+\mathbf{a}(t_{m'}),t+t_{m'}), P_{>M} \phi \ra | \leq ( \sup_{t\in \mathbb{R}} \|u(t) \|_{H_{\mathbf{x}}^1} ) \| P_{>M} \phi \|_{H^{-1}_{\mathbf{x}}} \leq \epsilon.
\end{equation}
It suffices to find $m_0'$ so that for any $m_1', m_2' \geq m_0'$ chosen from the $m'$ sequence, we have
\begin{equation}
\label{E:wk-102}
| \la u(\bullet+\mathbf{a}(t_{m_1'}), t+t_{m_1'}) - u(\bullet+\mathbf{a}(t_{m_2'}), t+t_{m_2'}) , P_{<M} \phi \ra_{\mathbf{x}} | \leq 3\epsilon.
\end{equation}
Indeed, once \eqref{E:wk-102} is established, \eqref{E:wk-105} and \eqref{E:wk-102} combined give  that for any $m_1', m_2' \geq m_0'$ chosen from the $m'$ sequence,
$$
| \la u(\bullet+\mathbf{a}(t_{m_1'}), t+t_{m_1'}) - u(\bullet+\mathbf{a}(t_{m_2'}),t+t_{m_2'}) , \phi \ra_{\mathbf{x}} | \leq 5\epsilon,
$$
completing the proof.  To establish \eqref{E:wk-102}, first note that the frequency restriction transfers to $u$, i.e.,
$$ 
\la u(\bullet+\mathbf{a}(t_{m_1'}), t+t_{m_1'}) - u(\bullet+\mathbf{a}(t_{m_2'}), t+t_{m_2'}) , P_{<M} \phi \ra_{\mathbf{x}}  
$$
$$
= \la  P_{<2M} u(\bullet+\mathbf{a}(t_{m_1'}), t+t_{m_1'}) - P_{<2M} u(\bullet+\mathbf{a}(t_{m_2'}), t+t_{m_2'}) , P_{<M} \phi \ra_{\mathbf{x}},
$$
and thus, we can apply Lemma \ref{L:wk-2} to obtain that for any $t'$, and either $j=1$ or $j=2$,
$$
|\la u(\bullet+\mathbf{a}(t_{m_j'}), t+t_{m_j'}) -  u(\bullet+\mathbf{a}(t_{m_j'}),t'+t_{m_j'}), P_{<M} \phi \ra | \lesssim M^{1/2} |t-t'|.
$$
We just chose $t'\in \mathbb{Q}$  so that $M^{1/2}|t-t'| \lesssim  \epsilon$ to obtain 
\begin{equation}
\label{E:wk-103}
|\la u(\bullet+\mathbf{a}(t_{m_j'}),t+t_{m_j'}) -  u(\bullet+\mathbf{a}(t_{m_j'}),t'+t_{m_j'}), P_{<M} \phi \ra | \leq \epsilon.
\end{equation}
By Lemma \ref{L:wk-1}, since $t'\in \mathbb{Q}$, there exists $m_0'$ so that for any $m_1', m_2' \geq m_0'$ chosen from the $m'$ sequence, we have
\begin{equation}
\label{E:wk-104}
| \la u(\bullet+\mathbf{a}(t_{m_1'}),t'+t_{m_1'}) - u(\bullet+\mathbf{a}(t_{m_2'}),t'+t_{m_2'}) , P_{<M} \phi \ra_{\mathbf{x}} | \leq \epsilon.
\end{equation}
Combining \eqref{E:wk-103} (for both $j=1$ and $j=2$) and \eqref{E:wk-104} gives \eqref{E:wk-102}.  This completes the proof that  $u(\mathbf{x}+\mathbf{a}(t_{m'}),t+t_{m'})$ converges weakly in $H_{\mathbf{x}}^1$ as $m'\to\infty$.  

The fact that for all $t\in \mathbb{R}$,  $\partial_t u(\mathbf{x}+\mathbf{a}(t_{m'}),t+t_{m'})$ converges weakly in $H_{\mathbf{x}}^{-2}$ as $m'\to\infty$ follows similarly, using \eqref{E:wk-107b} in place of \eqref{E:wk-106}.

(2) Now we can, as in the lemma statement, define $\tilde u$ and $\tilde v$.  Our objective is to show that in fact $\partial_t \tilde u = \tilde v$, where $\partial_t$ is defined for functions of $t$ taking values in $H_{\mathbf{x}}^{-2}$.  Now for fixed test function $\phi(\mathbf{x})$,
\begin{align*}
\indentalign \la u(\mathbf{x}+\mathbf{a}(t_{m'}), t+t_{m'}), \phi(\mathbf{x}) \ra - \la u(\mathbf{x}+\mathbf{a}(t_{m'}),t_0+t_{m'}), \phi(\mathbf{x})\ra \\
&= \int_{s=t_0}^t \la \partial_s u(\mathbf{x}+\mathbf{a}(t_{m'}), s+t_{m'}), \phi(\mathbf{x}) \ra \,ds.
\end{align*}
Send $m'\to \infty$, which gives by dominated convergence
$$
\la \tilde u(\mathbf{x}, t), \phi(\mathbf{x}) \ra - \la \tilde u(\mathbf{x},t_0), \phi(\mathbf{x})\ra = \int_{s=t_0}^t \la \tilde v(\mathbf{x}, s), \phi(\mathbf{x}) \ra \,ds.
$$
Taking $\partial_t$ we obtain
$$
\la \partial_t \tilde u(\mathbf{x}, t), \phi(\mathbf{x}) \ra = \la \tilde v(\mathbf{x}, s), \phi(\mathbf{x}) \ra .
$$
Since this holds for arbitrary $\phi$, we conclude $\partial_t \tilde u = \tilde v$.

(3) For the continuity claim for $\tilde u$, we note that by a standard property of weak limits
$$
\| \tilde u(t) - \tilde u(t') \|_{H_{\mathbf{x}}^s} \leq \liminf_{m'\to+\infty} \| u( \bullet+\mathbf{a}(t_{m'}), t+t_{m'}) - u( \bullet+\mathbf{a}(t_{m'}), t'+t_{m'}) \|_{H_{\mathbf{x}}^s},
$$
and thus, by \eqref{E:wk-106b} in Lemma \ref{L:wk-2}, we have
\begin{equation}
\label{E:wk-108}
\| \tilde u(t) - \tilde u(t') \|_{H_{\mathbf{x}}^s}  \lesssim |t-t'|^{\frac23(1-s)}.
\end{equation}
Similarly, one can argue for the claimed continuity of $\partial_t \tilde u$ appealing to \eqref{E:wk-107b} in Lemma \ref{L:wk-2}.

(4) Fix $T>0$ and $R>0$, and we aim to establish the claimed uniform-in-time convergence. Let $\epsilon>0$.   Let $S\subset [-T,T]$ be a \emph{finite} set of time points, so that any point of $[-T,T]$ is less than $\sim \epsilon^{3/2}$ from a point in $S$.    Since $u(\bullet+\mathbf{a}(t_{m'}),t+t_{m'}) \rightharpoonup \tilde u(\bullet, t)$ in $H^1$, by the Rellich-Kondrachov compactness theorem, for each $t_j\in S$, there exists $m'_j$ such that $m' \geq m'_j$ implies 
$$
\| u(\bullet+\mathbf{a}(t_{m'}),t_j+t_{m'}) - \tilde u(\bullet, t_j)\|_{L_{|\mathbf{x}|\leq R}^2} \leq \tfrac12\epsilon.
$$
By taking $m'_0$ to be the maximum over all $m'_j$ as $t_j$ ranges over the finite set $S$, we obtain that for any $m' \geq m'_0$ and any $t'\in S$,
\begin{equation}
\label{E:wk-109}
\| u(\bullet+\mathbf{a}(t_{m'}),t'+t_{m'}) - \tilde u(\bullet, t')\|_{L_{|\mathbf{x}|\leq R}^2} \leq \tfrac12\epsilon.
\end{equation}
Now for any $t\in [-T,T]$, take $t'\in S$ such that $|t-t'|\lesssim \epsilon^4$.  Note that
\begin{align*}
\indentalign \| u(\bullet+\mathbf{a}(t_{m'}),t+t_{m'}) - \tilde u(\bullet, t)\|_{L_{|\mathbf{x}|\leq R}^2} \\
& \lesssim \| u(\bullet+\mathbf{a}(t_{m'}),t+t_{m'}) -u(\bullet+\mathbf{a}(t_{m'}),t'+t_{m'})\|_{L_{\mathbf{x}}^2} \\
&\qquad +\| u(\bullet+\mathbf{a}(t_{m'}),t'+t_{m'}) - \tilde u(\bullet, t')\|_{L_{|\mathbf{x}|\leq R}^2} + \| \tilde u(t')-\tilde u(t) \|_{L_{\mathbf{x}}^2}. 
\end{align*}
By \eqref{E:wk-106b} for $s=0$, \eqref{E:wk-109}, and \eqref{E:wk-108},
$$
\| u(\bullet+\mathbf{a}(t_{m'}),t+t_{m'}) - \tilde u(\bullet, t)\|_{L_{|\mathbf{x}|\leq R}^2} \leq \epsilon$$
for $m'\geq m'_0$.

(5)-(6) By \eqref{E:param-ODEs} in Lemma \ref{L:ODE-bounds}, for any $t,t'\in \mathbb{R}$,
\begin{equation}
\label{E:par-1}
\begin{aligned}
& |c(t_{m'}+t) - c(t_{m'}+t')| \lesssim |t-t'|,\\
& |\mathbf{a}(t_{m'}+t) - \mathbf{a}(t_{m'}+t') | \lesssim |t-t'|
\end{aligned}
\end{equation}
independently of $m'$. In Lemma \ref{L:wk-2} (3)-(4), the convergence was established for $t'\in \mathbb{Q}$.  Similar to the arguments used above, we can approximate any $t\in \mathbb{R}$ by $t'\in \mathbb{Q}$ and use the estimates \eqref{E:par-1} to deduce that $c(t_{m'}+t)$ and  $\mathbf{a}(t_{m'}+t)-\mathbf{a}(t_{m'})$ are Cauchy sequences, and thus, converge, and we can define $\tilde c(t)$ and $\tilde{\mathbf{a}}(t)$ to be their limits.    Then by \eqref{E:par-1} the Lipschitz continuity of $\tilde c(t)$ and $\tilde{\mathbf{a}}(t)$ follows.
\end{proof}

\begin{lemma}
\label{L:wk-5}
$\tilde u$ is a Class B solution to the 3D ZK.
\end{lemma}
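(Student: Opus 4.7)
The plan is to verify each of the three conditions in Definition \ref{D:ClassB} for $\tilde u$. Conditions (1) and (2) fall out immediately from Lemma \ref{L:wk-3}: parts (2)--(3) of that lemma already give the uniform-in-time bounds $\tilde u \in L_t^\infty H_{\mathbf{x}}^1$ and $\partial_t \tilde u \in L_t^\infty H_{\mathbf{x}}^{-2}$, together with $\tilde u \in C([-T,T]; H_{\mathbf{x}}^s)$ and $\partial_t \tilde u \in C([-T,T]; H_{\mathbf{x}}^{s-2})$ for all $s<1$. Since $H_{\mathbf{x}}^{s-2} \hookrightarrow H_{\mathbf{x}}^{s-3}$, the required continuity of $\partial_t \tilde u$ in $H_{\mathbf{x}}^{s-3}$ is automatic.

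The real work lies in condition (3): producing the 3D ZK equation for $\tilde u$ as an $H_{\mathbf{x}}^{-2}$ identity at each time $t$. I would start from the fact that, by translation invariance of the equation, each shifted function $u_{m'}(\mathbf{x},t) \defeq u(\mathbf{x}+\mathbf{a}(t_{m'}),t+t_{m'})$ is itself Class B, so for every fixed $t\in \mathbb{R}$ and every test function $\phi\in C_c^\infty(\mathbb{R}^3)$, condition (3) applied to $u_{m'}$ plus integration by parts gives
$$\la \partial_t u_{m'}(t),\phi\ra = \la u_{m'}(t), \partial_x \Delta \phi\ra + \la u_{m'}(t)^2, \partial_x \phi\ra.$$
I would then pass to the limit $m' \to \infty$ in all three pairings. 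The left side converges because $\partial_t u_{m'}(t) \rightharpoonup \partial_t \tilde u(t)$ in $H_{\mathbf{x}}^{-2}$ (Lemma \ref{L:wk-3}(1)) and $\phi \in H_{\mathbf{x}}^2$; the first right-side term converges by the weak $L_{\mathbf{x}}^2$ convergence of $u_{m'}(t)$ tested against $\partial_x \Delta \phi \in L_{\mathbf{x}}^2$.

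The only genuinely nontrivial step, and the main technical point I expect, is passing to the limit in the nonlinear pairing $\la u_{m'}(t)^2, \partial_x \phi\ra$, where weak convergence alone is insufficient. Here I would invoke Rellich--Kondrachov compactness at the fixed time $t$: the weak $H_{\mathbf{x}}^1$ convergence, together with the uniform $H_{\mathbf{x}}^1$ bound from Lemma \ref{L:wk-3}(2) and Sobolev embedding $H_{\mathbf{x}}^1 \hookrightarrow L_{\mathbf{x}}^6$, upgrades to strong convergence $u_{m'}(t) \to \tilde u(t)$ in $L_{\text{loc}}^p(\mathbb{R}^3)$ for every $p<6$. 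Consequently $u_{m'}(t)^2 \to \tilde u(t)^2$ strongly in $L_{\text{loc}}^p$ for every $p<3$, which is more than enough to pair against $\partial_x \phi \in C_c^\infty(\mathbb{R}^3)$.

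This yields the distributional identity $\partial_t \tilde u(t) + \partial_x \Delta \tilde u(t) + \partial_x \tilde u(t)^2 = 0$ tested against arbitrary $C_c^\infty(\mathbb{R}^3)$ functions. To promote it to an equality of three functions in $H_{\mathbf{x}}^{-2}$, I would verify that each of the summands indeed lies in $H_{\mathbf{x}}^{-2}$: for $\partial_t \tilde u(t)$ and $\partial_x \Delta \tilde u(t)$ this is immediate from $\tilde u(t) \in H_{\mathbf{x}}^1$; for $\partial_x \tilde u(t)^2$ the Sobolev embedding $H_{\mathbf{x}}^1 \hookrightarrow L_{\mathbf{x}}^{12/5}$ gives $\tilde u(t)^2 \in L_{\mathbf{x}}^{6/5}$, and by duality to $H_{\mathbf{x}}^1 \hookrightarrow L_{\mathbf{x}}^6$ in 3D we have $L_{\mathbf{x}}^{6/5} \hookrightarrow H_{\mathbf{x}}^{-1}$, so the $\partial_x$ lands it in $H_{\mathbf{x}}^{-2}$. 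Density of $C_c^\infty(\mathbb{R}^3)$ in $H_{\mathbf{x}}^2$ then closes the argument.
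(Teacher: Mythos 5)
Your proposal is correct and follows essentially the same route as the paper: test against $\phi\in C_c^\infty$, move the derivatives onto $\phi$, pass to the limit using weak convergence for the linear terms and local strong convergence for the quadratic term. The only cosmetic difference is that you re-derive the fixed-time strong $L^p_{\loc}$ convergence via Rellich--Kondrachov, whereas the paper simply cites the strong $C(I;L^2_{\mathbf{x}}(|\mathbf{x}|<R))$ convergence already recorded in Lemma \ref{L:wk-3}(4); your additional verification that each summand lies in $H_{\mathbf{x}}^{-2}$ is a point the paper leaves implicit.
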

\begin{proof}
The regularity claims in Definition \ref{D:ClassB} have been established in Lemma \ref{L:wk-3}(3).  It remains to show that
$$
\partial_t \tilde u(t) + \partial_x \Delta \tilde u(t) + \partial_x \tilde u(t)^2=0
$$
holds for each $t\in \mathbb{R}$, where each of the three terms in the equation belongs to $H_{\mathbf{x}}^{-2}$.  This will follow if we show that for each test function $\phi \in C_c^\infty(\mathbb{R}^3)$ 
$$
\la \partial_t \tilde u(t), \phi \ra + \la \partial_x \Delta \tilde u(t), \phi\ra + \la \partial_x \tilde u(t)^2, \phi\ra=0.
$$
Since $u$ is a Class B solution of the 3D ZK, we have for each $t\in \mathbb{R}$ and each $m'$,
\begin{align*}
0 &= \la (\partial_t u)(\mathbf{x}+ \mathbf{a}(t_{m'}),t+t_{m'}), \phi(\mathbf{x}) \ra + \la \partial_x \Delta u(\mathbf{x}+ \mathbf{a}(t_{m'}),t+t_{m'}), \phi(\mathbf{x}) \ra \\
& \qquad + \la \partial_x u(\mathbf{x}+ \mathbf{a}(t_{m'}),t+t_{m'})^2, \phi(\mathbf{x}) \ra.
\end{align*}
Shifting spatial derivatives to the test function in the second and third terms,
\begin{align*}
0 &= \la (\partial_t u)(\mathbf{x}+ \mathbf{a}(t_{m'}),t+t_{m'}), \phi(\mathbf{x}) \ra - \la u(\mathbf{x}+ \mathbf{a}(t_{m'}),t+t_{m'}), \partial_x \Delta \phi(\mathbf{x}) \ra \\
& \qquad - \la u(\mathbf{x}+ \mathbf{a}(t_{m'}),t+t_{m'})^2, \partial_x \phi(\mathbf{x}) \ra.
\end{align*}
Send $m'\to \infty$.  In the first term, we use that $(\partial_t u)(\bullet+ \mathbf{a}(t_{m'}),t+t_{m'}) \rightharpoonup \tilde \partial_t \tilde u(\bullet, t)$ weakly in $H_{\mathbf{x}}^{-2}$.  In the second term, we use that $u(\bullet+ \mathbf{a}(t_{m'}),t+t_{m'}) \rightharpoonup  \tilde u(\bullet, t)$ weakly in $H_{\mathbf{x}}^1$.  In the third term, we let $R>0$ sufficiently large so that $\supp \phi$ is contained in the ball of radius $R$.  Since $u(\bullet+ \mathbf{a}(t_{m'}),t+t_{m'})\mathbf{1}_{<R}(\mathbf{x}) \to \tilde u(\bullet, t)\mathbf{1}_{<R}(\mathbf{x}) $ strongly in $L_{\mathbf{x}}^2$, it follows that 
$$ 
\la u(\mathbf{x}+ \mathbf{a}(t_{m'}),t+t_{m'})^2, \partial_x \phi(\mathbf{x}) \ra \to  \la  \tilde u(\mathbf{x},t)^2, \partial_x \phi(\mathbf{x}) \ra.
$$
\end{proof}

\begin{lemma}
\label{L:wk-6}
$\tilde u$ is $\alpha$-orbitally stable and $\tilde{\mathbf{a}}(t)$ and $\tilde c(t)$, constructed above in Lemma \ref{L:wk-3}, are the modulation parameters as in Lemma \ref{L:geom-decomp}. 
\end{lemma}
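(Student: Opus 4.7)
The plan is to pass to limits in the defining relations for the remainder of $u$ at time $t+t_{m'}$, using the ingredients already produced in Lemmas \ref{L:wk-3}--\ref{L:wk-5}. Since $\tilde u$ has already been shown to be a Class B solution, only two things remain: the orbital stability estimate in Definition \ref{D:orb-stab}, and the fact that the specific parameters $\tilde c(t)$ and $\tilde{\mathbf{a}}(t)$ produced in Lemma \ref{L:wk-3}(5)--(6) are the unique modulation parameters from Lemma \ref{L:geom-decomp} applied to $\tilde u$.

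For fixed $t\in\mathbb{R}$ and each $m'$, define
$$
\epsilon_{m'}(\mathbf{x},t) \defeq c(t+t_{m'})^2\, u\bigl(c(t+t_{m'})\mathbf{x} + \mathbf{a}(t+t_{m'}),\, t+t_{m'}\bigr) - Q(\mathbf{x}),
$$
which is the remainder for $u$ at time $t+t_{m'}$; by Lemma \ref{L:geom-decomp} we have $\|\epsilon_{m'}(\cdot,t)\|_{H^1_{\mathbf{x}}}\leq \alpha$ and $\la\epsilon_{m'}(\cdot,t),\nabla Q\ra=\la\epsilon_{m'}(\cdot,t),Q^2\ra=0$. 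The key step is to show that
$$
\epsilon_{m'}(\cdot,t) \;\rightharpoonup\; \tilde\epsilon(\cdot,t) \defeq \tilde c(t)^2\, \tilde u\bigl(\tilde c(t)\mathbf{x}+\tilde{\mathbf{a}}(t),t\bigr)-Q(\mathbf{x}) \quad\text{weakly in } H^1_{\mathbf{x}}.
$$
Rewriting in terms of $v_{m'}(\mathbf{y},t)\defeq u(\mathbf{y}+\mathbf{a}(t_{m'}),t+t_{m'})$ gives
$$
\epsilon_{m'}(\mathbf{x},t) = c(t+t_{m'})^2 v_{m'}\bigl(c(t+t_{m'})\mathbf{x} + [\mathbf{a}(t+t_{m'})-\mathbf{a}(t_{m'})],t\bigr) - Q(\mathbf{x}).
$$
By Lemma \ref{L:wk-3}(1), $v_{m'}(\cdot,t)\rightharpoonup \tilde u(\cdot,t)$ in $H^1_{\mathbf{x}}$, while by parts (5)--(6), $c(t+t_{m'})\to \tilde c(t)$ and $\mathbf{a}(t+t_{m'})-\mathbf{a}(t_{m'})\to \tilde{\mathbf{a}}(t)$. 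For any test function $\phi\in H^{-1}_{\mathbf{x}}$, a change of variables transfers the dilation and shift onto $\phi$; by continuity of translation and dilation on $H^{-1}_{\mathbf{x}}$, the resulting shifted/scaled test function converges strongly to the corresponding one with parameters $\tilde c(t),\tilde{\mathbf{a}}(t)$. Pairing strong convergence against weak convergence of $v_{m'}(\cdot,t)$ yields $\la \epsilon_{m'}(\cdot,t),\phi\ra \to \la \tilde\epsilon(\cdot,t),\phi\ra$, which is the claimed weak convergence.

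From this weak convergence, orbital stability is immediate by weak lower semicontinuity of the $H^1_{\mathbf{x}}$ norm:
$$
\|\tilde\epsilon(\cdot,t)\|_{H^1_{\mathbf{x}}} \leq \liminf_{m'\to\infty}\|\epsilon_{m'}(\cdot,t)\|_{H^1_{\mathbf{x}}} \leq \alpha,
$$
uniformly in $t$. The orthogonality conditions also pass to the limit: since $\nabla Q$ and $Q^2$ are Schwartz, hence in $H^{-1}_{\mathbf{x}}$, we have
$$
\la \tilde\epsilon(\cdot,t),\nabla Q\ra = \lim_{m'\to\infty}\la \epsilon_{m'}(\cdot,t),\nabla Q\ra = 0, \qquad \la \tilde\epsilon(\cdot,t),Q^2\ra = 0.
$$
By the uniqueness assertion in Lemma \ref{L:geom-decomp} (which applies because $\tilde u$ is a Class B $\alpha$-orbitally stable solution with $\alpha\ll 1$), the pair $(\tilde c(t),\tilde{\mathbf{a}}(t))$ must coincide with the modulation parameters for $\tilde u$ furnished by that lemma. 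The only mildly delicate point is the weak-convergence step $\epsilon_{m'}\rightharpoonup\tilde\epsilon$, where one must intertwine a weak $H^1$ limit with continuous numerical reparametrizations; everything else is a soft consequence of weak compactness and the already-established convergence statements.
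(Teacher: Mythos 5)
Your proposal is correct and follows essentially the same route as the paper: pass the weak $H^1_{\mathbf{x}}$ convergence of $u(\cdot+\mathbf{a}(t_{m'}),t+t_{m'})$ together with the convergent parameters through to weak convergence of the remainders $\epsilon_{m'}(\cdot,t)\rightharpoonup\tilde\epsilon(\cdot,t)$, then use weak lower semicontinuity for the $\alpha$-bound and pass the orthogonality conditions to the limit to identify $(\tilde c(t),\tilde{\mathbf{a}}(t))$ via uniqueness in Lemma \ref{L:geom-decomp}. The only cosmetic difference is that you spell out, via the duality pairing with translated/dilated test functions, the elementary fact (weak limits compose with convergent shifts and scalings) that the paper invokes without detail.
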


\begin{proof}
From Lemma \ref{L:wk-3}, we have that for all $t\in \mathbb{R}$
$$
u(\mathbf{x}+\mathbf{a}(t_{m'}),t+t_{m'}) \rightharpoonup \tilde u(\mathbf{x},t),
$$
weakly in $H_{\mathbf{x}}^1$, and also
$$
\tilde{\mathbf{a}}(t) \defeq \lim_{m'\to \infty} [\mathbf{a}(t+t_{m'}) - \mathbf{a}(t_{m'})]\,, \qquad \tilde c(t) \defeq \lim_{m'\to \infty} c(t+t_{m'}).
$$
Hence, for all $t\in \mathbb{R}$
\begin{align*}
& c(t+t_{m'})^2 u(c(t+t_{m'})\mathbf{x}+\mathbf{a}(t+t_{m'}),t+t_{m'})\\
& \quad = c(t+t_{m'})^2u(c(t+t_{m'})\mathbf{x}+[\mathbf{a}(t+t_{m'})-\mathbf{a}(t_{m'})]+\mathbf{a}(t_{m'}),t+t_{m'}) \\
& \quad \quad \rightharpoonup \tilde c(t)^2 \tilde u(\tilde c(t) \mathbf{x}+\tilde{\mathbf{a}}(t),t)
\end{align*}
weakly in $H_{\mathbf{x}}^1$.  Consequently,
\begin{align*}
&\epsilon(\mathbf{x},t+t_{m'}) \\
&= c(t+t_{m'})^2 u(c(t+t_{m'})\mathbf{x}+\mathbf{a}(t+t_{m'}),t+t_{m'}) -Q(\mathbf{x}) \\
&\rightharpoonup \tilde c(t)^2 \tilde u(\tilde c(t) \mathbf{x}+\tilde{\mathbf{a}}(t),t)
- Q(\mathbf{x})\\
&= \tilde \epsilon(\mathbf{x},t)
\end{align*}
weakly in $H_{\mathbf{x}}^1$. 
Hence,
$$
\| \tilde\epsilon(t) \|_{H_{\mathbf{x}}^1} \leq \liminf_{m'\to \infty} \| \epsilon(t+t_{m'})\|_{H_{\mathbf{x}}^1}\leq \alpha.
$$
Thus, $\tilde u$ is $\alpha$-orbitally stable.  Moreover,
$$
\la \tilde \epsilon(t), Q^2 \ra = \lim_{m'\to \infty} \la \epsilon(t+t_{m'}), Q^2 \ra =0,
$$
$$
\la \tilde \epsilon(t), \nabla Q \ra = \lim_{m'\to \infty} \la \epsilon(t+t_{m'}), \nabla Q \ra =0,
$$
so that $\tilde{\mathbf{a}}(t)$ and $\tilde c(t)$ are the (unique) parameter values that achieve the orthogonality conditions in Lemma \ref{L:geom-decomp}.  
\end{proof}

\section{$\tilde u$ has exponential decay in space}
\label{S:app-mon}

In this section, we prove Lemma \ref{L:exp-decay} by applying the estimates \eqref{E:pf-main-1} and \eqref{E:pf-main-5} in Lemma \ref{L:u-decay}, which were obtained from the $I_+$ estimate \eqref{E:Ip-right} in Lemma \ref{L:Ipm-estimates}.

We know from Lemma \ref{L:soft-step} that
$$
\mathbf{a}(t+t_{m'}) - \mathbf{a}(t_{m'}) \to \tilde{\mathbf{a}}(t) \text{ as } m' \to \infty
$$
and
$$
u(\mathbf{x}+\mathbf{a}(t_{m'}),t_{m'}+t) \rightharpoonup \tilde u(\mathbf{x}, t) \text{ as } m'\to \infty  \text{ (weakly) in }H_{\mathbf{x}}^1.
$$
It follows that\footnote{This is the following elementary fact:  If $f_n(x)\rightharpoonup f(x)$ and $a_n\to a$, then $f_n(x+a_n) \rightharpoonup f(x+a)$.}
\begin{align*}
& u(\mathbf{x}+\mathbf{a}(t+t_{m'}) ,t_{m'}+t)  = u(\mathbf{x}+[\mathbf{a}(t+t_{m'}) - \mathbf{a}(t_{m'})] + \mathbf{a}(t_{m'}),t_{m'}+t) \\
& \qquad \rightharpoonup \tilde u(\mathbf{x}+\tilde{\mathbf{a}}(t), t) \text{ as } m'\to \infty \text{ (weakly) in }H_{\mathbf{x}}^1.
\end{align*}
Since the norm of a weak limit is less than, or equal to, the limit of the norms,
\begin{align*}
\indentalign \int \phi_+(\cos\theta(x-r) + \sin \theta \sqrt{1+y^2+z^2}) \tilde u^2(\mathbf{x}+\tilde{\mathbf{a}}(t),t) d\mathbf{x} \\
&\leq \lim_{m'\to \infty} \int \phi_+(\cos\theta(x-r) + \sin \theta \sqrt{1+y^2+z^2}) u^2(\mathbf{x}+\mathbf{a}(t+t_{m'}) ,t_{m'}+t)  \, d\mathbf{x}.
\end{align*}
By \eqref{E:pf-main-1}, we have
\begin{equation}
\label{E:tilde-right}
\int \phi_+(\cos\theta(x-r)+ \sin\theta \sqrt{1+y^2+z^2}) \tilde u^2(\mathbf{x}+\tilde{\mathbf{a}}(t),t) d\mathbf{x}  \lesssim e^{-\delta r},
\end{equation}
which yields the decay on the right estimate for $\tilde u$.  Likewise,
\begin{align*}
&\int (1- \phi_+(x+r)) \tilde u^2(\mathbf{x}+\tilde{\mathbf{a}}(t),t) d\mathbf{x} \\
& \qquad \leq \lim_{m'\to \infty} \int [\phi_+(x+\frac{19(t+t_{m'})}{20}) - \phi_+(x+r)] u^2(\mathbf{x}+\mathbf{a}(t+t_{m'}) ,t_{m'}+t)  \, d\mathbf{x}.
\end{align*}
By \eqref{E:pf-main-5}, we deduce
\begin{equation}
\label{E:tilde-left}
\int (1- \phi_+(x+r)) \tilde u^2(\mathbf{x}+\tilde{\mathbf{a}}(t),t) d\mathbf{x}  \lesssim e^{-\delta r},
\end{equation}
which yields the decay on the left estimate for $\tilde u$.  

Combining $\theta=\frac{\pi}{4}$ in \eqref{E:tilde-right} and \eqref{E:tilde-left} yields, for all $t\in \mathbb{R}$,
\begin{equation}
\label{E:tilde-u-decay-1}
\int_{|\mathbf{x}| > r} \tilde u^2(\mathbf{a}+\tilde{\mathbf{a}}(t),t) \,d \mathbf{x} \lesssim e^{-\delta r}.
\end{equation}
This completes the proof of Lemma \ref{L:exp-decay}.

\section{Higher regularity of spatially decaying Class B solutions}
\label{S:higher-regularity}

In this section, we prove Lemma \ref{L:regularity-boost}. As a reminder of notation, note that in many places in this section, $x$ appears as a weight (not $\mathbf{x}$).  Also recall that $P_N$ refers to the Littlewood-Paley multiplier, and this operator acts in all three variables.  We will use the notation
$$\ln^+N \defeq \ln(N+2)$$
for $N\geq 1$ dyadic.

We note two weighted Sobolev interpolation inequalities.  First, for $0<\theta\leq 1$,
\begin{equation}
\label{E:HR1}
\| |x|^\alpha u \|_{L_{\mathbf{x}}^2} \leq \| |x|^{\alpha/\theta} u \|_{L_{\mathbf{x}}^2}^\theta \, \|u\|_{L_{\mathbf{x}}^2}^{1-\theta}.
\end{equation}
More generally, for $p\geq 2$ and $0< \theta \leq \frac{2}{p}$,
\begin{equation}
\label{E:HR2}
\| |x|^\alpha u \|_{L_{\mathbf{x}}^p} \leq \| |x|^{\alpha/\theta} u \|_{L_{\mathbf{x}}^2}^\theta \|u \|_{L_{\mathbf{x}}^{\tilde p}}^{1-\theta}\,, \quad \text{where } \tilde p = p \cdot \frac{(1-\theta)}{1-p \theta/2}.
\end{equation}
Note that \eqref{E:HR2} reduces to \eqref{E:HR1} when $p=2$.

The inequality \eqref{E:HR1} is proved by writing
$$ 
\| |x|^\alpha u \|_{L_{\mathbf{x}}^2}^2 = \int |x|^{2\alpha} |u|^{2\theta} \, \cdot \, |u|^{2-2\theta} \, d\mathbf{x},
$$
and then applying H\"older with dual pair $L_{\mathbf{x}}^{1/\theta}$ and $L_{\mathbf{x}}^{1/(1-\theta)}$.   Likewise \eqref{E:HR2} is proved by writing
$$ 
\| |x|^\alpha u \|_{L_{\mathbf{x}}^p}^p = \int |x|^{p\alpha} |u|^{p\theta} \, \cdot \, |u|^{p(1-\theta)} \, d\mathbf{x},
$$
and then applying H\"older with dual pair $L_{\mathbf{x}}^{2/p\theta}$ and $L_{\mathbf{x}}^{1/(1-p\theta/2)}$.

Second, we need the elementary fact that the commutator of $x$ and $P_N$, 
$$xP_N - P_N x,$$
is an $L_{\mathbf{x}}^2\to L_{\mathbf{x}}^2$ bounded operator with operator norm $\lesssim N^{-1}$.   This follows since the kernel of the commutator $xP_N - P_N x$ is
$$
K(\mathbf{x},\mathbf{x}') = N^3 \check{\chi}(N(\mathbf{x}-\mathbf{x}')) (x-x').
$$
More generally, we have 
\begin{lemma}
For any $N\geq 1$ and $\alpha \geq 1$,
\begin{equation}
\label{E:HR4}
\| (\la x \ra^\alpha P_N - P_N \la x \ra^\alpha) f \|_{L_{\mathbf{x}}^2} \lesssim N^{-1} \|\la x \ra^{\alpha-1} f\|_{L_{\mathbf{x}}^2},
\end{equation}
where the implicit constant depends only on $\alpha$.
\end{lemma}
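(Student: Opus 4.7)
The plan is to bound the kernel of the commutator $[\la x \ra^\alpha, P_N]$ pointwise, split it into two pieces, and then apply Young's (convolution) inequality.  Since $P_N$ is convolution with $N^3 \check{m}_N(N\cdot)$ for a Schwartz bump function (fixed, independent of $N$), the commutator has kernel
$$K_N(\mathbf{x},\mathbf{x}') = N^3 \check{m}(N(\mathbf{x}-\mathbf{x}'))\,(\la x \ra^\alpha - \la x' \ra^\alpha).$$
So everything reduces to good pointwise control of $|\la x \ra^\alpha - \la x' \ra^\alpha|$ in terms of the difference $x - x'$ together with a weight in $x'$ alone.

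The main estimate on the weight difference comes from the mean value theorem.  Because $|\partial_x \la x \ra^\alpha| \leq \alpha \la x \ra^{\alpha-1}$, we have
$$|\la x \ra^\alpha - \la x' \ra^\alpha| \leq \alpha\,|x - x'| \int_0^1 \la x' + s(x-x') \ra^{\alpha-1}\,ds.$$
Using the pointwise inequality $\la a + b \ra \leq \la a \ra \la b \ra$ and the fact that $\alpha \geq 1$, this simplifies to
$$|\la x \ra^\alpha - \la x' \ra^\alpha| \lesssim_\alpha |x - x'|\,\bigl( \la x' \ra^{\alpha-1} + |x - x'|^{\alpha-1} \bigr),$$
which is the key splitting: a factor of $|x - x'|$ to exploit (giving the $N^{-1}$ gain), and either the weight $\la x' \ra^{\alpha-1}$ (which is what we want on the right-hand side) or an additional power of $|x-x'|$ (which will be absorbed into Schwartz decay).

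Combining these, $|K_N(\mathbf{x},\mathbf{x}')|$ is bounded by a sum of two convolution kernels acting on $\la x' \ra^{\alpha-1}f(\mathbf{x}')$ and $f(\mathbf{x}')$, respectively, namely $\tilde K^{(1)}_N(\mathbf{y}) = N^3|\check m(N\mathbf{y})|\cdot|y_1|$ and $\tilde K^{(2)}_N(\mathbf{y}) = N^3 |\check m(N\mathbf{y})|\cdot|y_1|^\alpha$.  A change of variables $\mathbf{z} = N\mathbf{y}$ shows
$$\|\tilde K^{(1)}_N\|_{L^1_{\mathbf{y}}} \lesssim N^{-1}, \qquad \|\tilde K^{(2)}_N\|_{L^1_{\mathbf{y}}} \lesssim_\alpha N^{-\alpha},$$
using that $\check m$ is Schwartz.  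Young's inequality (convolution with an $L^1$ kernel is bounded on $L^2_{\mathbf{x}}$ with operator norm equal to the $L^1$ norm) then gives
$$\| [\la x \ra^\alpha, P_N] f \|_{L^2_{\mathbf{x}}} \lesssim_\alpha N^{-1} \|\la x \ra^{\alpha-1} f\|_{L^2_{\mathbf{x}}} + N^{-\alpha} \|f\|_{L^2_{\mathbf{x}}}.$$
Since $\alpha \geq 1$ and $N \geq 1$, the second term is dominated by the first (as $N^{-\alpha} \leq N^{-1}$ and $\|f\|_{L^2} \leq \|\la x \ra^{\alpha-1}f\|_{L^2}$ because $\la x \ra^{\alpha-1} \geq 1$), which yields \eqref{E:HR4}.

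There is no real obstacle — the only subtle point is ensuring that the weight that appears on the right side of the pointwise bound depends only on $x'$, not on the larger quantity $\la x\ra + \la x'\ra$.  The inequality $\la x' + s(x-x')\ra \leq \la x'\ra\la x-x'\ra$ accomplishes exactly this, transferring all additional $\la x \ra$-contributions into factors of $|x-x'|$ that the Schwartz decay of $\check m$ consumes.
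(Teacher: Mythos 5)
Your proof is correct and follows essentially the same route as the paper's: a pointwise kernel bound for the commutator exploiting that the weight difference $\la x\ra^\alpha-\la x'\ra^\alpha$ produces a factor of $|x-x'|$ (the paper phrases this as a ratio bound for the equivalent operator $(\la x\ra^\alpha P_N\la x\ra^{-\alpha}-P_N)\la x\ra$), followed by Young's inequality, with the Schwartz decay of $\check m$ absorbing the leftover powers of $|x-x'|$. One small wording issue: the sum bound $|x-x'|\bigl(\la x'\ra^{\alpha-1}+|x-x'|^{\alpha-1}\bigr)$ does not literally follow from the submultiplicative inequality $\la a+b\ra\le\la a\ra\la b\ra$ (which gives the product $\la x'\ra^{\alpha-1}\la x-x'\ra^{\alpha-1}$), but it is true -- bound the maximum of the weight along the segment by $(\la x'\ra+|x-x'|)^{\alpha-1}$ -- and in fact either the sum or the product form suffices for your kernel estimate, so nothing breaks.
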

\begin{proof}
This is equivalent to stating that the operator
$$(\la x\ra^\alpha P_N \la x \ra^{-\alpha} - P_N ) \la x \ra$$ 
is $L_{\mathbf{x}}^2\to L_{\mathbf{x}}^2$ bounded with operator norm $\lesssim N^{-1}$.  To see this, note that the kernel associated to the operator is
$$
K(\mathbf{x},\mathbf{x}')=\left( \frac{\la x\ra^\alpha}{\la x' \ra^\alpha} - 1 \right) \, N^3 \, \check \chi( N(\mathbf{x}-\mathbf{x}')) \la x' \ra.
$$
We note the pointwise estimate
$$
\left| \frac{\la x\ra^\alpha}{\la x' \ra^\alpha} - 1 \right| \lesssim  \la x' \ra^{-1}|x-x'|,
$$
which is proved by considering the regions $|x-x'| \ll \la x' \ra$ and $|x-x'| \gtrsim \la x' \ra$, separately. In the first case, the bound follows by Taylor expansion, for fixed $x'$, of the function $\la x \ra^{\alpha}$ around center $x=x'$.  In the second case, it follows by bounding $\la x \ra^\alpha \leq 2^\alpha(\la x-x' \ra^\alpha + \la x' \ra^\alpha)$.

By this pointwise estimate, we have
$$
|K(\mathbf{x},\mathbf{x}')| \lesssim N^{-1} \cdot N^3 |\check\chi(N(\mathbf{x}-\mathbf{x}'))|\, N|x-x'|,
$$
and thus, the $L_{\mathbf{x}}^2\to L_{\mathbf{x}}^2$ boundedness claim follows by Young's inequality.
\end{proof}

Let us note a corollary:  For any $N \geq 1$,
\begin{equation}
\label{E:HR3}
\| \la x \ra^\alpha P_N u \|_{L_{\mathbf{x}}^2} \lesssim \| \la x \ra^\alpha u \|_{L_{\mathbf{x}}^2}.
\end{equation}
In other words, we can drop $P_N$.  To prove \eqref{E:HR3}, write
$$
\la x \ra^\alpha P_N u = (\la x \ra^\alpha P_N - P_N \la x \ra^\alpha) u + P_N \la x \ra^\alpha u.
$$
Then apply the $L^2$ norm, and use \eqref{E:HR4} and the $L_{\mathbf{x}}^2 \to L_{\mathbf{x}}^2$ boundedness of $P_N$, which concludes the proof.

\begin{lemma}
Suppose that $u$ is a Class B solution to the 3D ZK.  Then
\begin{equation}
\label{E:HR10}
\begin{aligned}
-\frac12\partial_t \int  x |P_Nu|^2 \, d\mathbf{x} 
&= \frac32 \int |\partial_x P_N u |^2 \, d\mathbf{x} + \frac12 \int |\partial_y P_N u |^2 \, d\mathbf{x}+ \frac12 \int |\partial_z P_N u |^2 \, d\mathbf{x} \\
& \qquad +  \int \, x \, P_N u \, P_N\partial_x (u^2) \, d\mathbf{x}.
\end{aligned}
\end{equation}
\end{lemma}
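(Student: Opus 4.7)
The plan is to derive \eqref{E:HR10} by differentiating the left-hand side in time, using the frequency-localized equation for $P_Nu$, and then performing integration by parts on the linear contribution. Since $P_N$ commutes with both $\partial_t$ and $\partial_x\Delta$, applying $P_N$ to the 3D ZK equation gives
$$
\partial_t P_N u = -\partial_x \Delta P_N u - P_N \partial_x(u^2),
$$
which holds in $H_{\mathbf{x}}^{-2}$ by the Class B definition. Writing $v = P_N u$ (which is smooth in $\mathbf{x}$ because the frequency support is bounded), I would then formally compute
$$
\tfrac12 \partial_t \int x v^2 \, d\mathbf{x} = \int x v \, \partial_t v \, d\mathbf{x} = -\int x v \, \partial_x \Delta v \, d\mathbf{x} - \int x v \, P_N \partial_x(u^2) \, d\mathbf{x}.
$$
The last term is exactly the nonlinear piece appearing on the right of \eqref{E:HR10} (with the correct sign after multiplying through by $-1$).

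The main computation is to show
$$
-\int x v \, \partial_x \Delta v \, d\mathbf{x} = -\tfrac32 \int (\partial_x v)^2 \, d\mathbf{x} - \tfrac12 \int (\partial_y v)^2 \, d\mathbf{x} - \tfrac12 \int (\partial_z v)^2 \, d\mathbf{x}.
$$
I would split $\partial_x \Delta = \partial_x^3 + \partial_x\partial_y^2 + \partial_x\partial_z^2$ and handle each piece separately. For $-\int x v \, \partial_x^3 v$, integration by parts in $x$ produces $\int v \, \partial_x^2 v + \int x \partial_x v \, \partial_x^2 v$; one further integration by parts in each term gives $-\int(\partial_x v)^2 - \tfrac12 \int(\partial_x v)^2 = -\tfrac32 \int(\partial_x v)^2$, the standard KdV-type virial identity. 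For the mixed piece $-\int x v \, \partial_x \partial_y^2 v$, integrate by parts once in $x$ to get $\int v \, \partial_y^2 v + \int x \partial_x v \, \partial_y^2 v$; the first integral yields $-\int(\partial_y v)^2$, while in the second, integration by parts in $y$ produces $-\int x \partial_x \partial_y v \, \partial_y v = -\tfrac12 \int x \, \partial_x[(\partial_y v)^2] = +\tfrac12 \int(\partial_y v)^2$, giving a total of $-\tfrac12 \int(\partial_y v)^2$. The $\partial_x \partial_z^2$ piece is identical by symmetry. Summing the three contributions and substituting back yields \eqref{E:HR10} after multiplying by $-1$.

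The main obstacle is rigor: although $P_N u$ is smooth in $\mathbf{x}$, the Class B assumption alone does not ensure that $\int x |P_N u|^2 \, d\mathbf{x}$ is finite, so both the time differentiation and the integrations by parts need justification. I would handle this by replacing the weight $x$ with a smooth, compactly supported truncation $x \eta(x/R)$ (with $\eta$ a standard plateau cutoff equal to $1$ on $[-1,1]$ and supported in $[-2,2]$), carrying out the identical computation rigorously at finite $R$, and then passing $R \to \infty$. Every error term generated when a derivative falls on $\eta(x/R)$ carries a factor $R^{-1}$ times an integral of $|P_N u|^2$ or $|\nabla P_N u|^2$ against $|\eta'|(x/R)$ or $|\eta''|(x/R)$, which is supported in $|x|\sim R$; in every subsequent application of \eqref{E:HR10} the solution satisfies uniform-in-time exponential spatial decay (via Lemma \ref{L:exp-decay}), so these error terms vanish in the limit. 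The time derivative on the left-hand side is justified in the same limiting procedure, since for the truncated weight the pairing $\int x\eta(x/R) v \, \partial_t v$ makes sense by the $H^{-2}$-regularity of $\partial_t u$ and the Schwartz-in-$\mathbf{x}$ character of $v$ multiplied by the compactly supported weight.
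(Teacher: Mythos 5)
Your computation is correct and is exactly the ``direct calculation'' the paper invokes: applying $P_N$ to the equation, pairing against the weight $x$, and using the standard virial integrations by parts to produce the coefficients $\tfrac32,\tfrac12,\tfrac12$. The paper's proof simply notes that the frequency projection removes any divergent-integral issue, so your additional cutoff-in-$x$ limiting argument is a harmless (and reasonable) way of making the same manipulations rigorous in the settings where the identity is actually applied.
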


\begin{proof}
This is a direct calculation. Note that due to the $P_N$ operators, there is no divergent integrals issue for Class B solutions.
\end{proof}

\begin{lemma}
\label{L:L2boost}
Suppose that $u$ is a Class B solution of the 3D ZK on a time interval $I$ of length $|I|\leq 1$, then for $0<\theta<\frac14$ we have
\begin{equation}
\label{E:HR11}
\| u\|_{L_I^2H_{\mathbf{x}}^{\frac54-\theta}}^2 \lesssim \|\la x \ra^{1/\theta} u \|_{L_I^\infty L_{\mathbf{x}}^2}^\theta \la \| u \|_{L_I^\infty H_{\mathbf{x}}^1} \ra^{3-\theta}.
\end{equation}
\end{lemma}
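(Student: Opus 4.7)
The plan is to start from the frequency-projected virial identity \eqref{E:HR10}. Integrating \eqref{E:HR10} over $I$ and discarding positive terms on the right gives, for each dyadic $N\geq 1$,
\[
\|\nabla P_N u\|_{L_I^2 L_{\mathbf{x}}^2}^2 \lesssim \sup_{t\in I}\Big|\int_{\mathbb{R}^3} x\,|P_N u|^2\,d\mathbf{x}\Big| + \Big|\int_I\int_{\mathbb{R}^3} x\,P_N u\,\partial_x P_N(u^2)\,d\mathbf{x}\,dt\Big|.
\]
I will multiply by $N^{1/2-2\theta}$ and sum over dyadic $N\geq 1$; since $P_N u$ is localized at $|\boldsymbol{\xi}|\sim N$, the left side dominates $\|u\|_{L_I^2\dot H_{\mathbf{x}}^{5/4-\theta}}^2$. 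The $L^2$ part of the $H_{\mathbf{x}}^{5/4-\theta}$ norm is absorbed directly using $|I|\leq 1$ and $\|u\|_{L^2}^2=\|u\|_{L^2}^\theta\|u\|_{L^2}^{2-\theta}\leq\|\la x\ra^{1/\theta}u\|^\theta\|u\|_{H^1}^{2-\theta}$.

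For the boundary term, I will write $|x|\leq|x|^{1/2}\cdot|x|^{1/2}$ and apply the weighted Sobolev interpolation \eqref{E:HR1} with $\alpha=1/2$ and exponent $\theta/2$, followed by the commutator estimate \eqref{E:HR3} (valid since $1/\theta\geq 1$) to replace $P_N u$ by $u$ in the weighted factor, and Bernstein $\|P_N u\|_{L_{\mathbf{x}}^2}\lesssim N^{-1}\|u\|_{H_{\mathbf{x}}^1}$ on the unweighted factor. This yields
\[
\sup_{t\in I}\||x|^{1/2}P_N u\|_{L_{\mathbf{x}}^2}^2 \lesssim N^{-(2-\theta)}\|\la x\ra^{1/\theta}u\|_{L_I^\infty L_{\mathbf{x}}^2}^\theta\|u\|_{L_I^\infty H_{\mathbf{x}}^1}^{2-\theta},
\]
whose weighted sum $\sum_N N^{-3/2-\theta}$ converges and produces the required right-hand side.

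For the nonlinear term I will pair via Cauchy--Schwarz,
\[
\Big|\int x\,P_N u\,\partial_x P_N(u^2)\,d\mathbf{x}\Big| \leq \|x\,P_N u\|_{L_{\mathbf{x}}^2}\|\partial_x P_N(u^2)\|_{L_{\mathbf{x}}^2}.
\]
The first factor is controlled exactly as in the boundary analysis, now with \eqref{E:HR1} applied for $\alpha=1$ and exponent $\theta$, giving $\|xP_N u\|_{L_{\mathbf{x}}^2}\lesssim N^{\theta-1}\|\la x\ra^{1/\theta}u\|^\theta\|u\|_{H^1}^{1-\theta}$. For the second factor I will use a paraproduct decomposition $u=u_L+u_H$ with $u_L=P_{\leq N/10}u$: the low-low piece $P_N(u_L^2)$ vanishes by frequency support, the high-low piece is estimated through $\|u_L\|_{L_{\mathbf{x}}^\infty}\|u_H\|_{L_{\mathbf{x}}^2}$, and the high-high piece through $\|u_H\|_{L_{\mathbf{x}}^4}^2$, in each case combining the Bernstein inequality $\|P_M u\|_{L_{\mathbf{x}}^p}\lesssim M^{3(1/2-1/p)}\|P_M u\|_{L_{\mathbf{x}}^2}$ with the bound $M\|P_M u\|_{L_{\mathbf{x}}^2}\leq\|u\|_{H_{\mathbf{x}}^1}$ and summing geometric series in the dyadic parameter $M$. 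The result is the crucial gain
\[
\|P_N(u^2)\|_{L_{\mathbf{x}}^2}\lesssim N^{-1/2}\|u\|_{H_{\mathbf{x}}^1}^2, \qquad \text{hence}\qquad \|\partial_x P_N(u^2)\|_{L_{\mathbf{x}}^2}\lesssim N^{1/2}\|u\|_{H_{\mathbf{x}}^1}^2.
\]
Combining, the nonlinear contribution at level $N$ is bounded by $N^{\theta-1/2}\|\la x\ra^{1/\theta}u\|^\theta\|u\|_{H^1}^{3-\theta}$; multiplication by $N^{1/2-2\theta}$ leaves $N$-exponent $-\theta$, yielding a convergent sum for $\theta>0$.

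The main obstacle is the paraproduct gain $\|P_N(u^2)\|_{L_{\mathbf{x}}^2}\lesssim N^{-1/2}\|u\|_{H_{\mathbf{x}}^1}^2$: the naive estimate $\|P_N(u^2)\|_{L^2}\leq\|u\|_{L^4}^2\lesssim\|u\|_{H^1}^2$ provides no decay in $N$ and would produce the divergent sum $\sum_N N^{1-2\theta}$ for $\theta<1/2$. The $N^{-1/2}$ must be paid by exploiting that at least one factor in $u^2$ carries frequency $\gtrsim N$, so that the Bernstein exchange between $L^p$ and $L^2$ combined with $M\|P_M u\|_{L^2}\leq\|u\|_{H^1}$ supplies the needed negative power of $N$ after summing the dyadic parameter of the high-frequency factor.
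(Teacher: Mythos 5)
Your proposal is correct, and its skeleton is the same as the paper's: integrate the frequency-projected virial identity \eqref{E:HR10} over $I$, bound the boundary term by $N^{-(2-\theta)}\|\la x\ra^{1/\theta}u\|_{L_I^\infty L^2_{\mathbf{x}}}^\theta\|u\|_{L_I^\infty H^1_{\mathbf{x}}}^{2-\theta}$ via \eqref{E:HR1}, \eqref{E:HR3} and Bernstein (this is exactly the paper's \eqref{E:HR9}), then multiply by $N^{\frac12(1-4\theta)}$ and sum. The only place you deviate is the nonlinear term: the paper pairs it in $L^3_{\mathbf{x}}\times L^{3/2}_{\mathbf{x}}$, bounds $\|P_N(u_xu)\|_{L^{3/2}}$ crudely by $\|u\|_{H^1}^2$, and extracts the frequency gain from the weighted factor through the $L^p$ interpolation \eqref{E:HR2} with $p=3$ followed by Bernstein in $L^{3+}$; you instead pair in $L^2_{\mathbf{x}}\times L^2_{\mathbf{x}}$, take the gain $N^{-(1-\theta)}$ from $\|xP_Nu\|_{L^2}$ via \eqref{E:HR1} alone, and recover the remaining gain from the bilinear paraproduct bound $\|P_N(u^2)\|_{L^2}\lesssim N^{-1/2}\|u\|_{H^1}^2$ (i.e., $H^1\cdot H^1\subset H^{1/2}$ in 3D, the same mechanism the paper uses later in \eqref{E:HR13}). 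Both routes produce the identical exponent $N^{\theta-\frac12}$, matching \eqref{E:HR6}, so your version is a legitimate substitute; it trades the $p\neq 2$ weighted interpolation \eqref{E:HR2} for a dyadic product estimate, and it also handles the low-frequency/$L^2$ portion of the $H^{5/4-\theta}$ norm explicitly, which the paper leaves implicit.
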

This indicates that we can nearly achieve $H_{\mathbf{x}}^{5/4}$ regularity but averaged in time.

\begin{proof}
First, we prove that
\begin{equation}
\label{E:HR6}
\left|\int \, x \, P_N u \, P_N\partial_x (u^2) \, d\mathbf{x}\right| \lesssim N^{-\frac12(1 - 2\theta)}\| \la x\ra^{1/\theta} u \|_{L_{\mathbf{x}}^2}^\theta \| u \|_{H_{\mathbf{x}}^1}^{3-\theta}. 
\end{equation}
Applying H\"older, $L_{\mathbf{x}}^{3/2} \to L_{\mathbf{x}}^{3/2}$ boundedness of $P_N$, and Sobolev embedding
\begin{align*}
\left|\int \, x \, P_N u \, P_N\partial_x (u^2) \, d\mathbf{x}\right| &\lesssim \|xP_N u \|_{L_{\mathbf{x}}^3} \|P_N(  u_x u) \|_{L_{\mathbf{x}}^{3/2}} \\
&\lesssim \|xP_N u \|_{L_{\mathbf{x}}^3} \|u_x \|_{L_{\mathbf{x}}^2} \|u\|_{L_{\mathbf{x}}^6} \\
&\lesssim \| u \|_{H_{\mathbf{x}}^1}^2 \|xP_Nu \|_{L_{\mathbf{x}}^3}.
\end{align*}
Now we apply \eqref{E:HR2} for $0<\theta<\frac23$ and \eqref{E:HR3},
\begin{equation}
\label{E:HR7}
\left|\int \, x \, P_N u \, P_N\partial_x (u^2) \, d\mathbf{x}\right| \lesssim \| u \|_{H_{\mathbf{x}}^1}^2 \| \la x\ra^{1/\theta} u \|_{L_{\mathbf{x}}^2}^\theta \|P_N u \|_{L_{\mathbf{x}}^{\tilde p}}^{1-\theta},
\end{equation}
where in this case
$$
\tilde p = 3 \frac{1-\theta}{1-\frac32\theta}= 3(1+ \frac{\theta}{2-3\theta})=3+.
$$
Provided $0<\theta< \frac12$ so that $\tilde p <6$, we still have room to gain from Bernstein's inequality:  
\begin{equation}
\label{E:HR8}
\|P_N u \|_{L_{\mathbf{x}}^{\tilde p}} \lesssim N^s \|P_N u\|_{L_{\mathbf{x}}^2} \leq N^{-(1-s)}\|u\|_{H_{\mathbf{x}}^1},
\end{equation}
where
$$
s = \frac12(1+ \frac{\theta}{1-\theta})=\frac12+\,, \qquad 1-s = \frac12( 1- \frac{\theta}{1-\theta})=\frac12-.
$$
Plugging \eqref{E:HR8} into \eqref{E:HR7} yields the claimed estimate \eqref{E:HR6}.  

Next, we claim
\begin{equation}
\label{E:HR9}
\left| \int x\, |P_Nu|^2 \, d\mathbf{x} \right| \lesssim N^{-(2-\theta)} \|\la x \ra^{1/\theta}  u \|_{L_{\mathbf{x}}^2}^\theta \| u \|_{H_{\mathbf{x}}^1}^{2-\theta}.
\end{equation}
Note that by Cauchy-Schwarz and \eqref{E:HR1}, we have
\begin{align*}
\left| \int x |P_Nu|^2 \, d\mathbf{x} \right|&\leq \| x P_Nu \|_{L_{\mathbf{x}}^2} \|P_N u \|_{L_{\mathbf{x}}^2} \\
&\lesssim \|\la x \ra^{1/\theta} P_N u \|_{L_{\mathbf{x}}^2}^\theta \|P_N u \|_{L_{\mathbf{x}}^2}^{2-\theta} \\
&\lesssim N^{-(2-\theta)} \|\la x \ra^{1/\theta} P_N u \|_{L_{\mathbf{x}}^2}^\theta \|\nabla P_N u \|_{L_{\mathbf{x}}^2}^{2-\theta}.
\end{align*}
Then \eqref{E:HR9} follows from \eqref{E:HR3} and the $L^2\to L^2$ boundedness of $P_N$.

Now by \eqref{E:HR10}, \eqref{E:HR6} and \eqref{E:HR9}, over a time interval $I$ of length $|I|\leq 1$, 
$$
\begin{aligned}
\int_I \int_{\mathbf{x}} |\nabla P_N u|^2 \, d\mathbf{x}\, dt 
&\lesssim   N^{-\frac12(1 - 2\theta)} \| \la x\ra^{1/\theta} u \|_{L_I^\infty L_{\mathbf{x}}^2}^\theta  \| u \|_{L_I^\infty H_{\mathbf{x}}^1}^{3-\theta}\\
& \qquad +N^{-(2-\theta)} \|\la x \ra^{1/\theta} u \|_{L_I^\infty L_{\mathbf{x}}^2}^\theta \| u \|_{L_I^\infty H_{\mathbf{x}}^1}^{2-\theta}.
\end{aligned}
$$
We can now multiply this by $N^{\frac12(1-4\theta)}$ to obtain
$$
\begin{aligned}
N^{\frac12(1-4\theta)}\int_I \int_{\mathbf{x}} |\nabla P_N u|^2 \, d\mathbf{x}\, dt 
&\lesssim   N^{-\theta} \| \la x\ra^{1/\theta} u \|_{L_I^\infty L_{\mathbf{x}}^2}^\theta  \| u \|_{L_I^\infty H_{\mathbf{x}}^1}^{3-\theta}\\
& \qquad +N^{-\frac32-\theta} \|\la x \ra^{1/\theta} u \|_{L_I^\infty L_{\mathbf{x}}^2}^\theta \| u \|_{L_I^\infty H_{\mathbf{x}}^1}^{2-\theta}.
\end{aligned}
$$
By summing over $N\geq 1$, we obtain \eqref{E:HR11}.
\end{proof}

\begin{lemma}
\label{L:maximal-compare}
For any $t_0\in \mathbb{R}$, let $I=[t_0-\delta,t_0+\delta]$ for  $\delta \ll 1$.  Suppose that $u$ is a Class B solution of the 3D ZK on $I$, and for $0<\theta<\frac14$ we have
\begin{equation}
\label{E:HR27}
\| u\la x \ra^{1/\theta} \|_{L_I^\infty L_{\mathbf{x}}^2} <\infty \quad \mbox{and} \quad \|u \|_{L_I^\infty H_{\mathbf{x}}^1} < \infty
\end{equation}
so that \eqref{E:HR11} is available.
Then for each $N\geq 1$,
$$
\|P_N u(t) -P_NU(t-t_0) u(t_0) \|_{L_x^2 L_{yz I}^\infty} \lesssim \delta^{1/4} N^{-\frac18+\frac{\theta}{2}}(\ln^+ N)^5
$$
with implicit constant depending on the norms in \eqref{E:HR27}.  Consequently, by \eqref{E:HR20}
\begin{equation}
\label{E:HR28}
\| P_N u(t)  \|_{L_x^2 L_{yz I}^\infty} \lesssim (\ln^+ N)^2.
\end{equation}
\end{lemma}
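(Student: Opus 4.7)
My plan is to combine the Duhamel formula for $u(t) - U(t-t_0)u(t_0)$ with the inhomogeneous Ribaud--Vento estimate \eqref{E:HR21} to reduce the claim to a quantitative bound on $\|P_N(u^2)\|_{L_x^1 L_{yzI}^2}$, and then to control the latter using the time-averaged regularity gain from Lemma \ref{L:L2boost} together with the spatial decay in \eqref{E:HR27}. Writing
$$
u(t) - U(t-t_0) u(t_0) = -\int_{t_0}^t U(t-s) \, \partial_x (u^2)(s) \, ds,
$$
applying $P_N$, and invoking \eqref{E:HR21} yields
$$
\|P_N u(t) - P_N U(t-t_0) u(t_0)\|_{L_x^2 L_{yzI}^\infty} \lesssim (\ln^+N)^2 \, N \, \|P_N(u^2)\|_{L_x^1 L_{yzI}^2},
$$
so it suffices to prove $\|P_N(u^2)\|_{L_x^1 L_{yzI}^2} \lesssim \delta^{1/4} N^{-9/8+\theta/2}(\ln^+N)^3$.

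To extract the factor $\delta^{1/4}$ and reduce the $L_x^1$ norm to an $L_{\mathbf{x}}^2$-type norm, I would first apply Cauchy--Schwarz in $x$ against $\la x \ra^{-\alpha}$ for a fixed $\alpha > 1/2$ (combined with Fubini), followed by the time Hölder bound $\|\cdot\|_{L_I^2} \leq |I|^{1/4}\|\cdot\|_{L_I^4}$. The weight $\la x \ra^\alpha$ can be moved inside $P_N$ using the commutator estimate \eqref{E:HR4} at the harmless cost $O(N^{-1})$, so the task is reduced to showing $\|P_N(\la x \ra^\alpha u^2)\|_{L_I^4 L_{\mathbf{x}}^2} \lesssim N^{-9/8+\theta/2}(\ln^+N)^3$. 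To obtain this I would decompose $\la x \ra^\alpha u^2 = (\la x \ra^\alpha u) \cdot u$ via a Littlewood--Paley high--low split, apply Bernstein on the factor frequency-localized at scale $\sim N$, and use the time-interpolated bound
$$
\|u\|_{L_I^4 H_{\mathbf{x}}^{9/8-\theta/2}} \lesssim \|u\|_{L_I^2 H_{\mathbf{x}}^{5/4-\theta}}^{1/2} \|u\|_{L_I^\infty H_{\mathbf{x}}^1}^{1/2}
$$
coming from Lemma \ref{L:L2boost} and \eqref{E:HR27}, while treating the other factor through the weighted Gagliardo--Nirenberg inequality \eqref{E:HR1} and the weighted Bernstein-type bound \eqref{E:HR3} to absorb $\la x \ra^\alpha$ against the decay hypothesis $\la x \ra^{1/\theta} u \in L_I^\infty L_{\mathbf{x}}^2$. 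Dyadic summation produces the $(\ln^+N)^3$ factor from the borderline failure of $H_{\mathbf{x}}^{9/8-\theta/2} \hookrightarrow L_{\mathbf{x}}^\infty$ in three dimensions.

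Finally, the consequence \eqref{E:HR28} follows by applying \eqref{E:HR20} with $\phi = u(t_0)$ together with Bernstein $\|P_N u(t_0)\|_{L_{\mathbf{x}}^2} \lesssim N^{-1}\|u\|_{L_I^\infty H_{\mathbf{x}}^1}$, which bounds the linear piece $\|P_N U(t-t_0) u(t_0)\|_{L_x^2 L_{yzI}^\infty}$ by $(\ln^+N)^2$; combining with the difference bound, which is $\lesssim 1$ since $\delta^{1/4} N^{-1/8+\theta/2}(\ln^+N)^5 \lesssim 1$ for $\theta < 1/4$, the triangle inequality gives \eqref{E:HR28}. The main obstacle is tracking all the logarithmic losses and exponents in the weighted product estimate: because $9/8-\theta/2$ sits just below the Sobolev $L^\infty$ threshold in three dimensions, each dyadic summation can lose a log, and the weight $\la x \ra^\alpha$ must be distributed between the two factors of $u$ so that the $L_I^2 H_{\mathbf{x}}^{5/4-\theta}$ gain and the weighted $L_{\mathbf{x}}^2$ control interact to give exactly $N^{-9/8+\theta/2}$, rather than only the weaker powers of $N$ that emerge from either bound used in isolation.
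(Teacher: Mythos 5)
Your opening reduction is the same as the paper's: Duhamel plus \eqref{E:HR21} reduces everything to showing $\|P_N(u^2)\|_{L_x^1L_{yzI}^2}\lesssim \delta^{1/4}N^{-9/8+\theta/2}(\ln^+N)^3$, and your treatment of the consequence \eqref{E:HR28} is also fine. The gap is in how you propose to prove that product bound. After your Cauchy--Schwarz in $x$ and H\"older in time, you must estimate $\|P_N(\la x\ra^\alpha u\cdot u)\|_{L_I^4L_{\mathbf{x}}^2}$, and in the high--low piece the full frequency gain $N^{-9/8+\theta/2}$ is already exhausted by placing the high factor $P_{\sim N}u$ in $L_I^4L_{\mathbf{x}}^2$ (via the interpolation of \eqref{E:HR11} with the $H^1$ bound). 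That forces the low-frequency factor into $L_I^\infty L_{\mathbf{x}}^\infty$ with at most logarithmic loss, which is not available: $u$ has only $H_{\mathbf{x}}^1$ regularity uniformly in time (and only $H_{\mathbf{x}}^{5/4-\theta}$ averaged in time), while $L^\infty$ in 3D needs $3/2$ derivatives, so Bernstein on $P_{\lesssim N}$ costs a polynomial factor $\sim N^{1/2}$ (or $N^{3/8+\theta/2}$ using all the time-averaged regularity). Splitting the H\"older exponents differently does not help: e.g.\ $\|P_{\lesssim N}u\|_{L^6}\|P_{\sim N}u\|_{L^3}$ again loses $N^{1/2}$ from Bernstein on the high factor. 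The weighted hypothesis $\la x\ra^{1/\theta}u\in L_I^\infty L_{\mathbf{x}}^2$ supplies spatial integrability, not differentiability, so it cannot close this gap; and your remark that the losses are ``borderline'' logarithmic from $H^{9/8-\theta/2}\hookrightarrow L^\infty$ is incorrect — the 3D threshold is $3/2$, so the loss is polynomial.

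The ingredient you are missing is precisely the one the paper builds the proof around: the low-frequency factor must be measured in the maximal-type norm $L_x^2L_{yzI}^\infty$, for which the Ribaud--Vento estimate \eqref{E:HR20} costs only $(\ln^+N)$ factors given $H_{\mathbf{x}}^1$ data. Since that norm of $u$ itself is (up to the free evolution) the very quantity $\gamma(N)=\|P_Nu-P_NU(t-t_0)u(t_0)\|_{L_x^2L_{yzI}^\infty}$ being estimated, the paper writes $u=(u-U(t-t_0)u(t_0))+U(t-t_0)u(t_0)$ inside the nonlinear term, obtains a self-referencing inequality \eqref{E:HR26} in $\gamma(N)$ (with the factor $\delta^{1/4}N^{-1/8}$ coming from \eqref{E:HR25}, i.e.\ interpolating $\min(\delta^{1/2},N^{-1/4+\theta})$ rather than from H\"older in time), and closes it by a discrete Gronwall-type summation over dyadic frequencies using the smallness of $\delta$. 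Your proposal contains neither the maximal-norm treatment of the low-frequency factor nor this bootstrap structure, and without them the direct Lebesgue/Sobolev estimate falls short of the target exponent by at least $N^{1/2}$.
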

\begin{proof}
By the Duhamel formula,
$$
P_N u(t) = P_NU(t-t_0) u(t_0) - \int_{t_0}^t P_N U(t-s) \partial_x u(s)^2 \, ds.
$$
By \eqref{E:HR21},
\begin{equation}
\label{E:HR22}
\|P_N u(t) - P_NU(t-t_0) u(t_0) \|_{L_x^2 L_{yz I}^\infty} \lesssim (\ln^+ N)^2 N \| P_N( u^2) \|_{L_x^1L_{yzI}^2}.
\end{equation}
Using the paraproduct decomposition
$$
P_N ( u^2) \sim P_N (P_{\lesssim N} u \, P_N u) + P_N \sum_{N'\gg N} (P_{N'}u \, P_{N'}u),
$$
we obtain
$$
\| P_N (u^2) \|_{L_x^1 L_{yzI}^2} \lesssim \| P_{\lesssim N} u \|_{L_x^2L_{yzI}^\infty} \|P_N u \|_{L_{\mathbf{x}I}^2} + \sum_{N'\gg N} \| P_{N'} u \|_{L_x^2L_{yzI}^\infty} \|P_{N'} u \|_{L_{\mathbf{x}I}^2}.
$$
For the terms on the right involving  $L_{yzI}^\infty$, we replace
$$
u(t)= (u(t) - U(t-t_0)u(t_0))+U(t-t_0)u(t_0)
$$
and obtain the estimate
\begin{equation}
\label{E:HR23}
\begin{aligned}
\indentalign (\ln^+ N)^2 N \| P_N (u^2) \|_{L_x^1 L_{yzI}^2} \\
&\lesssim   (\ln^+ N)^2 N\|P_{\lesssim N} (u(t)-U(t-t_0)u(t_0)) \|_{L_x^2 L_{yz I}^\infty} \|P_N u \|_{L_{\mathbf{x}I}^2} \\
&\qquad + (\ln^+ N)^2 N\sum_{N' \gg N} \|P_{N'}(u(t)-U(t-t_0)u(t_0)) \|_{L_x^2 L_{yz I}^\infty}  \|P_{N'} u \|_{L_{\mathbf{x}I}^2}\\
&\qquad +  (\ln^+ N)^2 N\|P_{\lesssim N} U(t-t_0)u(t_0) \|_{L_x^2 L_{yz I}^\infty} \|P_N u \|_{L_{\mathbf{x}I}^2} \\
&\qquad + (\ln^+ N)^2 N\sum_{N' \gg N} \|P_{N'}U(t-t_0)u(t_0) \|_{L_x^2 L_{yz I}^\infty}  \|P_{N'} u \|_{L_{\mathbf{x}I}^2}.
\end{aligned}
\end{equation}
For the last two terms, we use that \eqref{E:HR20} implies
\begin{equation}
\label{E:HR24}
\begin{aligned}
&\|P_{\lesssim N} U(t-t_0)u(t_0) \|_{L_x^2 L_{yz I}^\infty} \lesssim (\ln^+ N)^3 \|u(t_0)\|_{H_{\mathbf{x}}^1}, \\
& \|P_{N'}U(t-t_0)u(t_0) \|_{L_x^2 L_{yz I}^\infty}\lesssim (\ln^+ N')^2 \|u(t_0)\|_{H_{\mathbf{x}}^1}.
\end{aligned}
\end{equation}
By \eqref{E:HR11} in Lemma \ref{L:L2boost}, 
\begin{equation}
\label{E:HR25}
\begin{aligned}
N \, \|P_N u \|_{L_{\mathbf{x}I}^2} &\leq \min\left( \delta^{1/2} \|u\|_{L_I^\infty H_{\mathbf{x}}^1}, N^{-\frac14+\theta} \|P_N u \|_{L_I^2H_{\mathbf{x}}^{\frac54-\theta}} \right) \\
&\lesssim \min(\delta^{1/2}, N^{-\frac14+\theta}) \lesssim \delta^{1/4} N^{-1/8}.
\end{aligned}
\end{equation}
 
Let
$$
\gamma(N) = \|P_Nu(t)-P_NU(t-t_0)u(t_0)\|_{L_x^2L_{yz I}^\infty}.
$$
Plugging \eqref{E:HR23}, \eqref{E:HR24}, and \eqref{E:HR25} into the right side of \eqref{E:HR22}, we obtain
\begin{equation}
\label{E:HR26}
\begin{aligned}
\gamma(N) & \lesssim \delta^{1/4} N^{-1/8} (\ln^+ N)^2 \sum_{N' \lesssim N}\gamma(N') + \delta^{1/4} (\ln^+ N)^2 \sum_{N' \gg N} (N')^{-1/8} \gamma(N') \\
& \qquad + \delta^{1/4} N^{-1/8} (\ln^+ N)^5.
\end{aligned}
\end{equation}
Let 
$$\Gamma(N) = \sum_{N' \lesssim N} \gamma(N').$$
If $N'' \lesssim N$, then
\begin{align*}
\sum_{N' \gg N''} (N')^{-1/8} \gamma(N') 
&\leq \sum_{N' \gg N} (N')^{-1/8} \gamma(N') + \sum_{N'' \ll N' \lesssim N} (N')^{-1/8} \gamma(N') \\
&\leq \sum_{N' \gg N} (N')^{-1/8} \Gamma(N') + (N'')^{-1/8} \Gamma(N).
\end{align*}
Hence, if $N'' \lesssim N$, then
\begin{align*}
\gamma(N'') &\lesssim \delta^{1/4} (\ln^+ N'')^2 (N'')^{-1/8} \Gamma(N) + \delta^{1/4} (\ln^+ N'')^2 \sum_{N'\gg N} (N')^{-1/8} \Gamma(N') \\
& \qquad + \delta^{1/4} (\ln^+ N'')^5 (N'')^{-1/8}.
\end{align*}
Summing in $N''$ from $1$ to $N$,
$$
\Gamma(N) \lesssim \delta^{1/4} \Gamma(N) + \delta^{1/4} (\ln^+ N)^3\sum_{N'\gg N} (N')^{-1/8} \Gamma(N') + \delta^{1/4}.
$$
For $\delta$ sufficiently small,
$$
\Gamma(N) \lesssim  \delta^{1/4}(\ln^+ N)^3 \sum_{N'\gg N} (N')^{-1/8} \Gamma(N') + \delta^{1/4}.
$$
Therefore, for any $N'' \geq N$,
$$
\Gamma(N'') \lesssim  \delta^{1/4}(\ln^+ N'')^3 \sum_{N'\gg N} (N')^{-1/8} \Gamma(N') + \delta^{1/4}.
$$
Multiply by $(N'')^{-1/8}$ and sum over $N'' \gg N$ to obtain 
\begin{align*}
\sum_{N'' \gg N} (N'')^{-1/8} \Gamma(N'') &\lesssim \delta^{1/4}\sum_{N''\gg N} (\ln^+ N'')^3 (N'')^{-1/8} \sum_{N'\gg N} (N')^{-1/8} \Gamma(N') \\
&\qquad + \delta^{1/4} \sum_{N'' \gg N} (N'')^{-1/8}.
\end{align*}
From this, we obtain (that for $\delta$ sufficiently small)
$$
\sum_{N'\gg N} (N')^{-1/8} \Gamma(N') \lesssim \delta^{1/4}N^{-1/8}.
$$
Thus, for all $N$, 
$$
\Gamma(N) \lesssim 1.
$$
Returning to \eqref{E:HR26}, we obtain
$$
\gamma(N) \lesssim \delta^{1/4} N^{-1/8} (\ln^+ N)^5.
$$
\end{proof}

\begin{lemma}
\label{L:reg-boost-last}
For any $t_0\in \mathbb{R}$, let $I=[t_0-\delta,t_0+\delta]$ for  $\delta \ll 1$.  Suppose that $u$ is a Class B solution of the 3D ZK on $I$ and for $0<\theta<\frac14$ we have
\begin{equation}
\label{E:HR27b}
\| u\la x \ra^{1/\theta} \|_{L_I^\infty L_{\mathbf{x}}^2} <\infty \quad \mbox{and} \quad \|u \|_{L_I^\infty H_{\mathbf{x}}^1} < \infty
\end{equation}
so that \eqref{E:HR11} and \eqref{E:HR28} are available.
Then, for each $N\geq 1$,
\begin{equation}
\label{E:HR30}
\|P_N u(t) -P_NU(t-t_0) u(t_0) \|_{L_I^\infty L_{\mathbf{x}}^2} \lesssim N^{-\frac54+\theta} (\ln^+ N)^3,
\end{equation}
from which it follows that
\begin{equation}
\label{E:HR31}
\|P_N u(t_0) \|_{L_{\mathbf{x}}^2} \lesssim \delta^{-1/2} (\ln^+ N)^3 N^{-\frac54+\theta}
\end{equation}
with implicit constant depending on the norms in \eqref{E:HR27b}.  
\end{lemma}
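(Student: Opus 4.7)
The plan is to exploit the $N$-loss free Ribaud--Vento estimate \eqref{E:HR21b}, which is precisely designed for situations like this one where we need to convert averaged-in-time regularity gains into uniform-in-time gains. Starting from the Duhamel identity
\[
P_N u(t) - P_N U(t-t_0)u(t_0) = -\int_{t_0}^t P_N U(t-s) \partial_x(u^2)(s) \, ds,
\]
I would apply \eqref{E:HR21b} to get
\[
\|P_N u(t) - P_N U(t-t_0)u(t_0)\|_{L_I^\infty L_{\mathbf{x}}^2} \lesssim \|P_N(u^2)\|_{L_x^1 L_{yzI}^2}
\]
without any $N$ factor on the right side. This is the key mechanism: an honest $L^\infty_t$-in-time estimate on the output is obtained in terms of a nonlinear quantity with no lost derivatives.

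Next I would paraproduct-decompose, just as in \eqref{E:HR22}--\eqref{E:HR23}:
\[
P_N(u^2) \sim P_N\bigl(P_{\lesssim N} u \cdot P_N u\bigr) + P_N \sum_{N' \gg N} P_{N'} u \cdot P_{N'} u,
\]
and estimate each piece via H\"older with the dual pair $L_x^2 L_{yzI}^\infty \times L_{\mathbf{x}I}^2$. The $L_x^2 L_{yzI}^\infty$ norms of $P_{N'} u$ are supplied \emph{directly} by the previously established maximal-function bound \eqref{E:HR28}, namely $\|P_{N'} u\|_{L_x^2 L_{yzI}^\infty} \lesssim (\ln^+ N')^2$ (note that using \eqref{E:HR28} rather than \eqref{E:HR24} is crucial here, since we cannot simply replace $u$ by $U(t-t_0)u(t_0)$ as in the previous lemma's argument). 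The $L_{\mathbf{x}I}^2$ norms are supplied by Lemma \ref{L:L2boost} applied to the frequency piece $P_{N'} u$, which yields $\|P_{N'} u\|_{L_{\mathbf{x}I}^2} \lesssim (N')^{-5/4+\theta}$. Summing the low-frequency piece ($\sum_{N' \lesssim N} (\ln^+ N')^2 \lesssim (\ln^+ N)^3$ and the frequency $N$ controls the $L^2_{\mathbf{x}I}$ factor) and the convergent high-frequency tail $\sum_{N' \gg N} (\ln^+ N')^2 (N')^{-5/4+\theta}$ together produce the target bound \eqref{E:HR30}.

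For \eqref{E:HR31}, I would use unitarity of $U(t-t_0)$ on $L^2_{\mathbf{x}}$: the function $t \mapsto \|P_N U(t-t_0) u(t_0)\|_{L^2_{\mathbf{x}}}$ is constant in $t$ and equals $\|P_N u(t_0)\|_{L^2_{\mathbf{x}}}$, so integrating over $I = [t_0 - \delta, t_0 + \delta]$ gives
\[
\|P_N U(t-t_0) u(t_0)\|_{L_I^2 L_{\mathbf{x}}^2} = (2\delta)^{1/2} \|P_N u(t_0)\|_{L_{\mathbf{x}}^2}.
\]
By the triangle inequality combined with \eqref{E:HR30} (embedded via $L^\infty_I \hookrightarrow L^2_I$ on an interval of length $2\delta$) and the $L^2_{\mathbf{x}I}$ bound on $P_N u$ from Lemma \ref{L:L2boost}, one arrives at $(2\delta)^{1/2}\|P_N u(t_0)\|_{L^2_{\mathbf{x}}} \lesssim \delta^{1/2} N^{-5/4+\theta}(\ln^+ N)^3 + N^{-5/4+\theta}$; dividing by $(2\delta)^{1/2}$ and absorbing the powers of $\delta$ (using $\delta < 1$) yields \eqref{E:HR31}.

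I do not expect a serious obstacle in this argument; the heavy lifting has already been done in Lemma \ref{L:L2boost} (the weighted virial-type gain) and Lemma \ref{L:maximal-compare} (the maximal function bound). The only delicate point is the bookkeeping of $(\ln^+ N)$ factors across the paraproduct decomposition---specifically, ensuring the low-frequency sum produces $(\ln^+ N)^3$ and not a worse power. This is a careful summation exercise rather than a conceptual difficulty.
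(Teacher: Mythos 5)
Your proposal is correct and follows essentially the same route as the paper's proof: Duhamel plus the $N$-loss-free estimate \eqref{E:HR21b}, the paraproduct split estimated with \eqref{E:HR28} in $L_x^2L_{yzI}^\infty$ and \eqref{E:HR11} in $L_{\mathbf{x}I}^2$, and then unitarity of $U(t-t_0)$ with the triangle inequality over $I$ to extract \eqref{E:HR31}. The bookkeeping you flag (low-frequency sum giving $(\ln^+ N)^3$, convergent high-frequency tail) is exactly how the paper closes the estimate.
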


\begin{proof}
By the Duhamel formula
$$
P_Nu(t)-P_NU(t-t_0) u(t_0) = -\int_{t_0}^t U(t-s) \, \partial_x u(s)^2 \, ds.
$$
By \eqref{E:HR21b},
\begin{equation}
\label{E:HR29}
\| P_Nu(t)-P_NU(t-t_0) u(t_0)\|_{L_I^\infty L_{\mathbf{x}}^2} \lesssim \| P_N(u^2) \|_{L_x^1L_{yz I}^2}.
\end{equation}
Using the paraproduct decomposition
$$
P_N ( u^2) \sim P_N (P_{\lesssim N} u \, P_N u) + P_N \sum_{N'\gg N} (P_{N'}u \, P_{N'}u),
$$
we obtain
$$
\| P_N (u^2) \|_{L_x^1 L_{yzI}^2} \lesssim \| P_{\lesssim N} u \|_{L_x^2L_{yzI}^\infty} \|P_N u \|_{L_{\mathbf{x}I}^2} + \sum_{N'\gg N} \| P_{N'} u \|_{L_x^2L_{yzI}^\infty} \|P_{N'} u \|_{L_{\mathbf{x}I}^2}.
$$
By \eqref{E:HR11} and \eqref{E:HR28}, we get
$$
\| P_N (u^2) \|_{L_x^1 L_{yzI}^2} \lesssim (\ln^+ N)^3 N^{-\frac54+\theta} + \sum_{N'\gg N} (\ln^+ N')^2 (N')^{-\frac54+\theta} \lesssim (\ln^+ N)^3 N^{-\frac54+\theta}.
$$
Combining this with \eqref{E:HR29}, we obtain \eqref{E:HR30}.

Since $\| P_N U(t-t_0) u(t_0)\|_{L_{\mathbf{x}}^2}$ is conserved in time, we have
\begin{align*}
\|P_N u(t_0) \|_{L_{\mathbf{x}}^2} &= (2\delta)^{-1/2} \|P_N U(t-t_0) u(t_0) \|_{L_I^2 L_{\mathbf{x}}^2} \\
&\leq \delta^{-1/2} \|P_N U(t-t_0) u(t_0)- P_N u(t) \|_{L_I^2 L_{\mathbf{x}}^2} + \delta^{-1/2} \|P_N u(t) \|_{L_I^2 L_{\mathbf{x}}^2}.
\end{align*}
By \eqref{E:HR30} and \eqref{E:HR11}, we conclude that \eqref{E:HR31} holds.
\end{proof}

We note that \eqref{E:HR31} implies that $u \in L_t^\infty H_{\mathbf{x}}^{\frac54-2\theta}$.  Now we give the arguments to achieve higher regularity. 

\begin{lemma}
Suppose that $u$ is a Class B solution of the 3D ZK on a time interval $I$ of length $|I|\leq 1$, then for $\theta>0$ sufficiently small, $s_1\geq 1$ and 
\begin{equation}
\label{E:s2s1}
s_2 = 
\begin{cases}
\frac32s_1-\frac14 - \theta,  & \text{if }1\leq s_1 < \frac32, \\
(s_1+\frac12)(1- \frac12\theta), & \text{if } s_1> \frac32,
\end{cases}
\end{equation}
we have the estimate
\begin{equation}
\label{E:HR12}
\| u\|_{L_I^2H_{\mathbf{x}}^{s_2}}^2 \lesssim \|\la x \ra^{1/\theta} u \|_{L_I^\infty L_{\mathbf{x}}^2}^\theta \, \la \| u \|_{L_I^\infty H_{\mathbf{x}}^{s_1}} \ra^{3-\theta}.
\end{equation}
\end{lemma}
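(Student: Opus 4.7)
The plan is to extend the proof of Lemma \ref{L:L2boost} (the $s_1=1$ case, giving $s_2=\frac54-\theta$) by using the stronger hypothesis $\|u\|_{L^\infty_I H^{s_1}_{\mathbf{x}}}$ in place of $\|u\|_{L^\infty_I H^1_{\mathbf{x}}}$ inside the nonlinear term estimate \eqref{E:HR6}. The starting point is again the virial-type identity \eqref{E:HR10} applied to $P_N u$; integrating over $I$ yields the dyadic bound
$$\int_I \int |\nabla P_N u|^2\, d\mathbf{x}\, dt \lesssim \sup_{t\in I}\left|\int x\,|P_N u|^2\, d\mathbf{x}\right| + \int_I \left|\int x\, P_N u\, P_N \partial_x(u^2)\, d\mathbf{x}\right| dt.$$
The boundary term is already handled by \eqref{E:HR9} (the $N^{-(2-\theta)}$ decay is more than enough in all regimes), so the entire work lies in the nonlinear term.

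For the nonlinear term I would H\"older-split as
$$\left|\int x\, P_N u\, P_N \partial_x(u^2)\, d\mathbf{x}\right| \lesssim \|x P_N u\|_{L^{p'}_{\mathbf{x}}}\, \|P_N \partial_x(u^2)\|_{L^p_{\mathbf{x}}},$$
estimate the weighted factor by the interpolation \eqref{E:HR2} together with \eqref{E:HR3} so as to produce the small prefactor $\|\la x\ra^{1/\theta} u\|^\theta_{L^\infty_I L^2_{\mathbf{x}}}$ and an unweighted $L^{\tilde p}_{\mathbf{x}}$ norm of $P_N u$ that Bernstein controls by a negative power of $N$ times $\|u\|^{1-\theta}_{H^{s_1}_{\mathbf{x}}}$, and estimate the nonlinear factor by a product-in-Sobolev inequality. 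The case split in \eqref{E:s2s1} corresponds precisely to which product estimate is available. For $1\leq s_1<\frac32$ one cannot use $H^{s_1}_{\mathbf{x}}\hookrightarrow L^\infty_{\mathbf{x}}$, and must instead invoke the sharp Sobolev embedding $H^{s_1}_{\mathbf{x}}\hookrightarrow L^{6/(3-2s_1)}_{\mathbf{x}}$ together with a fractional Leibniz distribution of the derivative; optimizing in the exponents produces the linear-in-$s_1$ formula $s_2=\frac32 s_1-\frac14-\theta$. For $s_1>\frac32$ the algebra property $\|u^2\|_{H^{s_1}_{\mathbf{x}}}\lesssim \|u\|^2_{H^{s_1}_{\mathbf{x}}}$ becomes available, so essentially all derivatives may be placed on one factor, giving $\|P_N \partial_x(u^2)\|_{L^2_{\mathbf{x}}}\lesssim N^{1-s_1}\|u\|^2_{H^{s_1}_{\mathbf{x}}}$; combining with the weighted interpolation $\|x P_N u\|_{L^2_{\mathbf{x}}}\lesssim \|\la x\ra^{1/\theta} u\|^\theta_{L^2_{\mathbf{x}}}\, N^{-s_1(1-\theta)}\|u\|^{1-\theta}_{H^{s_1}_{\mathbf{x}}}$ yields a net $N$-power of $N^{1-2s_1+s_1\theta}$, from which the improved formula $s_2=(s_1+\frac12)(1-\frac12\theta)$ emerges. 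The apparent jump at $s_1=\frac32$ reflects precisely this change of regime.

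With the nonlinear bound in the schematic form $N^{-\alpha(s_1,\theta)}\|\la x\ra^{1/\theta} u\|^\theta_{L^\infty_I L^2_{\mathbf{x}}}\la \|u\|_{L^\infty_I H^{s_1}_{\mathbf{x}}}\ra^{3-\theta}$ in hand (together with the analogous bound on the boundary term from \eqref{E:HR9}), multiplication by $N^{2(s_2-1)}$ and summation over dyadic $N\geq 1$ produces \eqref{E:HR12}, provided $2(s_2-1)-\alpha(s_1,\theta)$ is strictly negative; this strict inequality is guaranteed by the $-\theta$ margin built into both clauses of \eqref{E:s2s1}. The main obstacle I anticipate is essentially bookkeeping: tracking the precise $N$-powers through the H\"older/interpolation/Bernstein chain and verifying that the restriction $0<\theta<\frac14$ inherited from Lemma \ref{L:L2boost} keeps the relevant Lebesgue exponent $\tilde p$ strictly below the Sobolev endpoint $6$ in each sub-regime, so that Bernstein's inequality yields a genuine gain in $N$ even near the critical value $s_1=\frac32$.
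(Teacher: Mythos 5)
Your outline is essentially the paper's own proof: the frequency-localized virial identity \eqref{E:HR10}, a two-regime product estimate for $\partial_x P_N(u^2)$ via paraproduct (the paper's \eqref{E:HR13}, with the split at $s_1=\frac32$ for exactly the reason you give), the weighted interpolation \eqref{E:HR1}--\eqref{E:HR3} to extract the factor $\|\la x\ra^{1/\theta}u\|_{L_I^\infty L^2_{\mathbf{x}}}^\theta$, and then multiplication by $N^{2s_2-2}$ and dyadic summation; your regime-two exponent $N^{1-2s_1+s_1\theta}$ coincides with the paper's \eqref{E:HR6b}. The paper in fact takes the simpler route of Cauchy--Schwarz ($p=p'=2$) plus \eqref{E:HR1} rather than a general H\"older split with \eqref{E:HR2}, but that is a cosmetic difference.

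One concrete correction: the boundary term is \emph{not} handled by \eqref{E:HR9} as stated. The gain $N^{-(2-\theta)}$ there comes from bounding $\|P_Nu\|_{L^2}$ by $N^{-1}\|u\|_{H^1}$, and after multiplying by $N^{2s_2-2}\sim N^{2s_1-1-\theta s_1-\frac{\theta}{2}}$ in the second regime the resulting exponent $2s_1-3-\theta(s_1-\tfrac12)$ is positive once $s_1$ exceeds roughly $\frac32$, so the dyadic sum diverges precisely in the regime you need for the recursive regularity boost. The fix is the obvious one (and is what the paper does in \eqref{E:HR9b}): run the same Cauchy--Schwarz/interpolation argument but bound $\|P_Nu\|_{L^2}^{2-\theta}\lesssim N^{-s_1(2-\theta)}\|u\|_{H^{s_1}}^{2-\theta}$, giving the $H^{s_1}$-upgraded decay $N^{-s_1(2-\theta)}$, after which the summed exponent is $-1-\frac{\theta}{2}<0$ uniformly in $s_1$. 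With that one-line repair, your plan carries through as written.
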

Thus, for $1\leq s_1 < \frac32$, we can gain nearly $\frac12 s_1-\frac14$ derivatives, and for $s_1>\frac32$, we can gain nearly $\frac12$ derivatives, although averaged in time.  It should be noted that in the case $s_1>\frac32$, the gain is precisely $\frac12 - \frac12\theta(s_1+\frac12)$, so that one needs to take $\theta \sim 1/(2s_1)$ for large $s_1$ in order to increment the regularity by, say, $\frac14$ derivatives.  Since the power on the weight on the right side is $\la x \ra^{1/\theta}$,  the power on the weight grows like $\sim 2s_1$ as we proceed to very high regularity.  

\begin{proof}
We will need the estimate
\begin{equation}
\label{E:HR13}
\|\partial_x P_N(u^2) \|_{L^2} \lesssim 
\begin{cases}
N^{\frac52-2s_1} \| u \|_{H^{s_1}}^2,  & \text{if }1\leq s_1 < \frac32, \\
N^{1-s_1} \| u \|_{H^{s_1}} \| u \|_{H^{\frac32+}},& \text{if } s_1 > \frac32.
\end{cases}
\end{equation}
To prove \eqref{E:HR13}, we will now need the paraproduct decomposition
\begin{equation}
\label{E:HR14}
P_N( u^2 ) \approx P_N( P_N u \,  P_{\lesssim N} u + \sum_{N' \gg N} P_{N'} u \, P_{N'}u).
\end{equation}
Hence, for $1\leq s_1 < \frac32$, we estimate as
$$
\|\partial_x P_N(u^2) \|_{L^2} \lesssim N \|P_N u \|_{L^{3/s_1}} \| P_{\lesssim N} u \|_{L^{p'}} +  N \sum_{N' \gg N} \| P_{N'} u \|_{L^2} \| P_{N'}u \|_{L^\infty},
$$
where 
$$\frac{1}{p'} = \frac12 - \frac{s_1}{3}.$$
By Bernstein and Sobolev embedding
$$
\|\partial_x P_N(u^2) \|_{L^2} \lesssim N^{\frac52-s_1} \|P_N u \|_{L^2} \| u \|_{H^{s_1}} +  N \sum_{N' \gg N} (N')^{-2s_1+\frac32}\| u \|_{H^{s_1}}^2,
$$
and hence, \eqref{E:HR13} holds for $1\leq s_1 < \frac32$.  For $s_1>\frac32$, we start with \eqref{E:HR14} but apply H\"older as follows
$$
\|\partial_x P_N(u^2) \|_{L^2} \lesssim N \| P_N u \|_{L^2} \|P_{\lesssim N} u \|_{L^\infty} + N \sum_{N' \gg N} \| P_{N'} u \|_{L^2} \| P_{N'}u \|_{L^\infty}.
$$
Then \eqref{E:HR13} again follows by Bernstein.

As in the proof of Lemma \ref{L:L2boost}, the key is the estimates of the type \eqref{E:HR6} and \eqref{E:HR9}:
\begin{equation}
\label{E:HR6b}
\left|\int \, x \, P_N u \, P_N\partial_x (u^2) \, d\mathbf{x}\right| \lesssim 
\| \la x\ra^{1/\theta} u \|_{L_{\mathbf{x}}^2}^\theta \, \| u \|_{H_{\mathbf{x}}^{s_1}}^{3-\theta} \begin{cases} N^{\frac52-3s_1+\theta s_1}, & \text{if }1\leq s_1 < \frac32, \\
N^{1-2s_1+s_1\theta}, & \text{if } s_1\geq \frac32,
\end{cases}
\end{equation}
\begin{equation}
\label{E:HR9b}
\left| \int x |P_Nu|^2 \, d\mathbf{x} \right| \lesssim 
N^{-2s_1+\theta s_1} 
\|\la x \ra^{1/\theta}  u \|_{L_{\mathbf{x}}^2}^\theta \| u \|_{H_{\mathbf{x}}^{s_1}}^{2-\theta}.
\end{equation}
To prove \eqref{E:HR6b}, we estimate by H\"older 
$$
\left|\int \, x \, P_N u \, P_N\partial_x (u^2) \, d\mathbf{x}\right|  \lesssim \| xP_N u \|_{L^2} \|\partial_x P_N (u^2) \|_{L^2}.
$$
By \eqref{E:HR1}, 
$$
\left|\int \, x \, P_N u \, P_N\partial_x (u^2) \, d\mathbf{x}\right| \lesssim \| |x|^{1/\theta} P_N u \|_{L^2}^\theta  \|  P_N u \|_{L^2}^{1-\theta} \|\partial_x P_N (u^2) \|_{L^2}.
$$
Combining with \eqref{E:HR13}, we obtain \eqref{E:HR6b}.  To prove \eqref{E:HR9b}, we estimate by 
H\"older
$$
\left| \int x \, |P_Nu|^2 \, d\mathbf{x} \right| \lesssim \| xP_N u \|_{L^2} \|P_N u \|_{L^2}.
$$
By \eqref{E:HR1}, 
$$
\left| \int x \, |P_Nu|^2 \, d\mathbf{x} \right| \lesssim \| |x|^{1/\theta} P_N u \|_{L^2}  \|P_N u \|_{L^2}^{2-\theta},
$$
and hence, \eqref{E:HR9b} follows.   

Let us consider first the case $1\leq s_1 < \frac32$.  Plugging \eqref{E:HR6b} and \eqref{E:HR9b} into \eqref{E:HR10} integrated over $I$, we obtain
$$ 
\| \nabla P_N u \|_{L_I^2L_{\mathbf{x}}^2}^2 \lesssim N^{-2s_1+\theta s_1} 
\|\la x \ra^{1/\theta}  u \|_{L_I^\infty L_{\mathbf{x}}^2}^\theta \| u \|_{L_I^\infty H_{\mathbf{x}}^{s_1}}^{2-\theta}+N^{\frac52-3s_1+\theta s_1}
\| \la x\ra^{1/\theta} u \|_{L_I^\infty L_{\mathbf{x}}^2}^\theta \| u \|_{L_I^\infty H_{\mathbf{x}}^{s_1}}^{3-\theta}.
$$
Multiplying by $N^{3s_1-\frac52-2\theta}$, we get 
$$ 
N^{3s_1-\frac52-2\theta} \| \nabla P_N u \|_{L_I^2L_{\mathbf{x}}^2}^2 \lesssim 
\begin{aligned}[t]
&N^{s_1-\frac52 +\theta (s_1-2)} 
\|\la x \ra^{1/\theta}  u \|_{L_I^\infty L_{\mathbf{x}}^2}^\theta \| u \|_{L_I^\infty H_{\mathbf{x}}^{s_1}}^{2-\theta}\\
&+N^{\theta (s_1-2)} \| \la x\ra^{1/\theta} u \|_{L_I^\infty L_{\mathbf{x}}^2}^\theta \| u \|_{L_I^\infty H_{\mathbf{x}}^{s_1}}^{3-\theta}. 
\end{aligned}
$$
Summing in $N$, we obtain the claimed estimate \eqref{E:HR12} (for $1\leq s_1 < \frac32$).  Next, consider the case $s_1\geq \frac32$.   Plugging \eqref{E:HR6b} and \eqref{E:HR9b} into \eqref{E:HR10} integrated over $I$, we obtain
$$ 
\| \nabla P_N u \|_{L_I^2L_{\mathbf{x}}^2}^2 \lesssim N^{-2s_1+\theta s_1} 
\|\la x \ra^{1/\theta}  u \|_{L_I^\infty L_{\mathbf{x}}^2}^\theta \| u \|_{L_I^\infty H_{\mathbf{x}}^{s_1}}^{2-\theta}+N^{1-2s_1+\theta s_1}
\| \la x\ra^{1/\theta} u \|_{L_I^\infty L_{\mathbf{x}}^2}^\theta \| u \|_{L_I^\infty H_{\mathbf{x}}^{s_1}}^{3-\theta}. 
$$
With $s_2 = (s_1+\frac12)(1-\frac12\theta)$, multiplying by $N^{-2s_2-2}$, we have
$$ 
N^{2s_2-2} \| \nabla P_N u \|_{L_I^2L_{\mathbf{x}}^2}^2 \lesssim 
\begin{aligned}[t]
&N^{-1-\frac12 \theta} 
\|\la x \ra^{1/\theta}  u \|_{L_I^\infty L_{\mathbf{x}}^2}^\theta \| u \|_{L_I^\infty H_{\mathbf{x}}^{s_1}}^{2-\theta}\\
&+N^{-\frac12\theta} \| \la x\ra^{1/\theta} u \|_{L_I^\infty L_{\mathbf{x}}^2}^\theta \| u \|_{L_I^\infty H_{\mathbf{x}}^{s_1}}^{3-\theta}. 
\end{aligned}
$$
Summing in $N$, we obtain the claimed estimate \eqref{E:HR12} (for $s_1>\frac32$).
\end{proof}

We can assume $s_1>\frac54-$.  By Lemma \ref{L:maximal-compare}, 
$$
\| P_N u(t) - P_N U(t-t_0) u(t_0) \|_{L_x^2 L_{yzI}^\infty} \lesssim \delta^{1/4} N^{-\frac18+ \frac{\theta}{2}} (\ln^+ N)^5.
$$
Applying Lemma \ref{L:RV1}, \eqref{E:HR20} to estimate the term $P_N U(t-t_0) u(t_0)$, we obtain
\begin{equation}
\label{E:regb-2}
\begin{aligned}
\|P_N u(t) \|_{L_x^2L_{yzI}^\infty} &\lesssim N \ln^+ N \|P_N u(t_0) \|_{L_x^2} +  \delta^{1/4} N^{-\frac18+ \frac{\theta}{2}} (\ln^+ N)^5 \\
&\lesssim N^{1-s_1} \ln^+ N  + \delta^{1/2} N^{-\frac18+\frac{\theta}{2}} (\ln^+ N)^5 \lesssim N^{-1/16}.
\end{aligned}
\end{equation}
Revisiting the proof of Lemma \ref{L:reg-boost-last}, 
\begin{equation}
\label{E:regb-1}
\| P_Nu(t)-P_NU(t-t_0) u(t_0)\|_{L_I^\infty L_{\mathbf{x}}^2} \lesssim \| P_N(u^2) \|_{L_x^1L_{yz I}^2}.
\end{equation}
Using the paraproduct decomposition
$$
P_N ( u^2) \sim P_N (P_{\lesssim N} u \, P_N u) + P_N \sum_{N'\gg N} (P_{N'}u \, P_{N'}u), 
$$
we obtain
$$
\| P_N (u^2) \|_{L_x^1 L_{yzI}^2} \lesssim \| P_{\lesssim N} u \|_{L_x^2L_{yzI}^\infty} 
\|P_N u \|_{L_{ \mathbf{x}I}^2} + \sum_{N'\gg N} \| P_{N'} u \|_{L_x^2L_{yzI}^\infty} 
\|P_{N'} u \|_{L_{\mathbf{x}I}^2}.
$$
Plugging into \eqref{E:regb-1}, we have
\begin{align*}
\indentalign \| P_Nu(t)-P_NU(t-t_0) u(t_0)\|_{L_I^\infty L_{\mathbf{x}}^2} \\
& \lesssim \| P_{\lesssim N} u \|_{L_x^2L_{yzI}^\infty} \|P_N u \|_{L_{\mathbf{x}I}^2} + \sum_{N'\gg N} \| P_{N'} u \|_{L_x^2L_{yzI}^\infty} \|P_{N'} u \|_{L_{\mathbf{x}I}^2}.
\end{align*}
By \eqref{E:HR12}, \eqref{E:regb-2}, we get
\begin{equation}
\label{E:regb-2b}
\| P_Nu(t)-P_NU(t-t_0) u(t_0)\|_{L_I^\infty L_{\mathbf{x}}^2}  \lesssim N^{-s_2} + \sum_{N' \gg N} (N')^{-1/16} (N')^{-s_2} \lesssim N^{-s_2}.
\end{equation}

Since $\| P_N U(t-t_0) u(t_0)\|_{L_{\mathbf{x}}^2}$ is conserved in time, we have
\begin{align*}
\|P_N u(t_0) \|_{L_{\mathbf{x}}^2} &= (2\delta)^{-1/2} \|P_N U(t-t_0) u(t_0) \|_{L_I^2 L_{\mathbf{x}}^2} \\
&\leq \delta^{-1/2} \|P_N U(t-t_0) u(t_0)- P_N u(t) \|_{L_I^2 L_{\mathbf{x}}^2} + \delta^{-1/2} \|P_N u(t) \|_{L_I^2 L_{\mathbf{x}}^2}.
\end{align*}
Plugging \eqref{E:regb-2b} and \eqref{E:HR12} into the above, we get
$$
\|P_N u(t_0) \|_{L_{\mathbf{x}}^2}  \lesssim N^{-s_2}.
$$
Multiplying by $N^{s_2-}$ and square summing, we obtain that $u(t_0) \in H^{s_2-}$, while we started with the assumption that $\|u \|_{L_I^\infty H_{\mathbf{x}}^{s_1}} < \infty$.  Noting that $t_0$ was arbitrary in $I$, and recalling \eqref{E:s2s1} expressing $s_2$ in terms of $s_1$, we see that we can incrementally step up to arbitrarily high regularity.  


\section{$\tilde \epsilon_n$ has exponential decay}
\label{S:uniform-n-decay}

This is the first section addressing Prop. 2.  We use the $J_\pm$ monotonicity in Lemma \ref{L:Jpm-estimates} to prove Lemma \ref{L:ep-decay}, which establishes the uniform-in-$n$ exponential spatial decay of $\tilde \epsilon_n$.    In place of $\tilde \epsilon_n$, we pass to $\eta$ (subscript $n$ and tildes dropped) defined by \eqref{E:eta-def} and solving equation \eqref{E:eta-eq} in terms of which Lemma \ref{L:Jpm-estimates} is phrased.  In the estimates, we can path back and forth between the $\tilde \epsilon_n$ and $\eta$, since $\tilde c_n\sim 1$ uniformly in time.

Fix any $t_0\in \mathbb{R}$ and apply Lemma \ref{L:Jpm-estimates}.  In particular, we apply \eqref{E:Jp-right} and use that the uniform-in-time $L^2$ compactness hypothesis on $\tilde \epsilon_n$ implies
$$
\lim_{t_{-1} \searrow -\infty} J_{+,\theta,r,t_0}(t_{-1}) = 0$$
to conclude that 
\begin{equation}
\label{E:Jmon-p}
J_{+,\theta,r,t_0}(t_0) \lesssim e^{-\delta r} \sup_{t\in \mathbb{R}} \| \tilde \epsilon_n \|_{L_t^\infty L_{\mathbf{x}}^2}^2.
\end{equation}
Likewise, we apply \eqref{E:Jm-left} and use that the uniform-in-time $L^2$ compactness hypothesis on $\tilde \epsilon_n$ implies
$$\lim_{t_1\nearrow +\infty} J_{-,\theta,-r,t_0}(t_1) = 0$$
to conclude that
\begin{equation}
\label{E:Jmon-m}
J_{-,\theta,-r,t_0}(t_0) \lesssim e^{-\delta r} \sup_{t\in \mathbb{R}}  \| \tilde \epsilon_n \|_{L_t^\infty L_{\mathbf{x}}^2}^2.
\end{equation}
Let us take $\theta = \frac{\pi}{4}$ (any number less than $\frac{\pi}3-\delta$ will suffice).  Note that
$$
J_{\pm, \theta, r, t_0}(t_0) = \int_{\mathbb{R}^3} \phi_{\pm} (\cos\theta(x-r) + \sin\theta \sqrt{1+y^2+z^2}) \eta^2(\mathbf{x}+\mathbf{a}(t_0),t_0) \,d \mathbf{x}.
$$ 
The estimate \eqref{E:Jmon-p} gives the $L_{\mathbf{x}}^2$ estimate outside the cone of angle $\frac{\pi}{4}$ with the negative $x$-axis, with vertex at $(x,y,z) = (r,0,0)$.  The estimate \eqref{E:Jmon-m} gives the $L_{\mathbf{x}}^2$ estimate outside the cone of angle $\frac{\pi}{4}$ with the positive $x$-axis, with vertex at $(x,y,z) = (-r,0,0)$.  Combined they give the $L_{\mathbf{x}}^2$ estimate outside the ball of radius $r$, completing the proof of Lemma \ref{L:ep-decay} (since $t_0\in \mathbb{R}$ selected arbitrarily).  

\section{Comparability of higher Sobolev norms for $\tilde \epsilon_n$}
\label{S:Sobolev-comparability}

The goal of this section is to prove Lemma \ref{L:ep-comparability}. The proof is similar to \S\ref{S:higher-regularity}, although achieving the $H_{\mathbf{x}}^1$ bound below required a little bit more care -- there is no direct analogue in \S\ref{S:higher-regularity}, since in that section we start with the assumption of an $H_{\mathbf{x}}^1$ bound.  Here, we do have assumption \eqref{E:comp1} (right estimate) but we have to account for the $B^{-1}$ penalty when using this assumed bound.  Thus, we devised the strategy of first proving Lemma \ref{L:comp1}, which does not have $P_N$, and thus, allows for clean integration by parts in the term $\int (x-a_1) \, (\zeta^2)_x \, \zeta \, d\mathbf{x}$, to obtain the preliminary estimate \eqref{E:comp3}.  We then use \eqref{E:comp3} in the $P_N$ calculation in Lemma \ref{L:comp2}.  This is the main new idea in comparison to what is already in \S\ref{S:higher-regularity}.  

Before we begin, let us state and prove an elementary computational lemma.  In the statement, $P_N \, q \, P_M$ means the composition of operators $P_N \circ q \circ P_M$, where $q$ is the operator of multiplication by $q$.

\begin{lemma}
\label{L:weight-est}
Let  $q\in \mathcal{S}(\mathbb{R}^3)$ and $\omega>0$ arbitrary. Then for any $M,N \geq 1$, 
$$
\| P_N \, q \, P_M \|_{L^2\to L^2} \lesssim  \min \left(\frac{M}{N}, \frac{N}{M}\right)^\omega
$$
and
$$
\| \la \mathbf{x}  \ra \, P_N \, q \, P_M \|_{L^2\to L^2} \lesssim \min \left(\frac{M}{N}, \frac{N}{M}\right)^\omega
$$
with constants depending on $q$ and $\omega$.
\end{lemma}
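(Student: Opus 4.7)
I would pass to the Fourier side and view $P_N q P_M$ as an integral operator with kernel
\[
K(\boldsymbol{\xi}, \boldsymbol{\eta}) = m(\boldsymbol{\xi}/N)\, \hat{q}(\boldsymbol{\xi} - \boldsymbol{\eta})\, m(\boldsymbol{\eta}/M),
\]
supported in $|\boldsymbol{\xi}| \sim N$, $|\boldsymbol{\eta}| \sim M$. The mechanism behind the decay in $\min(M/N, N/M)$ is that when $N$ and $M$ are far apart, say $N \geq 8M$ (the case $M \geq 8N$ being symmetric by the same argument), one has $|\boldsymbol{\xi} - \boldsymbol{\eta}| \gtrsim \max(N, M)$ on the support of $K$, and the Schwartz decay of $\hat{q}$ there delivers arbitrary polynomial gain. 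The weighted estimate will then be reduced to the unweighted one via the commutator of $x_j$ with $P_N$.

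For the first estimate, in the comparable regime $\tfrac{1}{8} \leq N/M \leq 8$ the bound is immediate from $\|P_N q P_M\|_{L^2\to L^2} \leq \|q\|_{L^\infty}$ together with $\min(M/N, N/M)^\omega \gtrsim 1$. In the incomparable regime, Schwartz decay of $\hat q$ gives $|\hat{q}(\boldsymbol{\xi}-\boldsymbol{\eta})| \leq C_A \max(N,M)^{-A}$ on the support of $K$ for any $A$, so that
\[
\sup_{\boldsymbol{\xi}} \int |K(\boldsymbol{\xi},\boldsymbol{\eta})|\, d\boldsymbol{\eta} \lesssim \max(N,M)^{-A}\, M^3, \qquad \sup_{\boldsymbol{\eta}} \int |K(\boldsymbol{\xi},\boldsymbol{\eta})|\, d\boldsymbol{\xi} \lesssim \max(N,M)^{-A}\, N^3.
\]
Schur's test combined with Plancherel then yields $\|P_N q P_M\|_{L^2 \to L^2} \lesssim \max(N,M)^{-A}(NM)^{3/2}$, and taking $A$ sufficiently large relative to $\omega$ yields the first bound.

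For the weighted estimate, I would use $\la\mathbf{x}\ra \leq C(1+\sum_j|x_j|)$ to reduce matters to estimating $\|x_j P_N q P_M\|_{L^2\to L^2}$ for each coordinate $j$. The commutator identity $[x_j,P_N] = \pm i N^{-1}\tilde P_N^{(j)}$, where $\tilde P_N^{(j)}$ is the Fourier multiplier with symbol $(\partial_j m)(\boldsymbol{\xi}/N)$ (again supported in $|\boldsymbol{\xi}| \sim N$), gives
\[
x_j P_N q P_M = P_N\,(x_j q)\, P_M \;\pm\; i N^{-1}\, \tilde P_N^{(j)}\, q\, P_M.
\]
The first term is handled by the first part of the lemma applied to the Schwartz function $x_j q$. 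The second is controlled by running the same Schur argument on $\tilde P_N^{(j)}\, q\, P_M$ (the argument used only the bump support of $m(\boldsymbol{\xi}/N)$), applied with exponent $\omega+1$ in place of $\omega$, to produce a bound of the form $N^{-1}\min(M/N,N/M)^{\omega+1}$.

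The only subtlety I anticipate is absorbing this $N^{-1}$ into the target $\min(M/N,N/M)^\omega$ when $M \geq N$, since $N^{-1}$ by itself is not dominated by $(N/M)^\omega$ for $\omega > 1$. The resolution is precisely the flexibility of raising the Schur exponent by one: then
\[
N^{-1}(N/M)^{\omega+1} = (N/M)^\omega\, M^{-1} \leq (N/M)^\omega, \qquad N^{-1}(M/N)^{\omega+1} \leq (M/N)^\omega,
\]
where both inequalities use only $M,N \geq 1$, and the second also uses $M \leq N$. This closes the estimate in both regimes.
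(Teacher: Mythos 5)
Your proposal is correct and follows essentially the same route as the paper: pass to the Fourier-side kernel $m(\boldsymbol{\xi}/N)\,\hat q(\boldsymbol{\xi}-\boldsymbol{\eta})\,m(\boldsymbol{\eta}/M)$, exploit the Schwartz decay of $\hat q$ when $N$ and $M$ are incomparable, and conclude by the Schur test, handling the weight by distributing a $\boldsymbol{\xi}$-derivative (your commutator identity $x_jP_N = P_N x_j + iN^{-1}\tilde P_N^{(j)}$ is exactly the physical-side version of the paper's remark that differentiating the kernel produces no harmful factors). The only cosmetic point is that your final absorption step is automatic: since $N\geq 1$ you may simply bound $N^{-1}\min(M/N,N/M)^{\omega}\leq \min(M/N,N/M)^{\omega}$, so raising the Schur exponent to $\omega+1$ is unnecessary (though harmless).
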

\begin{proof}
By the Plancherel theorem, it suffices to prove the $L^2\to L^2$ estimates on the operators with kernels:
$$
K_1(\boldsymbol{\xi},\boldsymbol{\xi}') = \chi(N^{-1} \boldsymbol{\xi}) \, \hat q( \boldsymbol{\xi} - \boldsymbol{\xi}') \, \chi(M^{-1} \boldsymbol{\xi}') \quad \mbox{and}
$$
$$
K_2(\boldsymbol{\xi},\boldsymbol{\xi}') = \nabla_{\boldsymbol{\xi}}  [\chi(N^{-1} \boldsymbol{\xi}) \, \hat q( \boldsymbol{\xi} - \boldsymbol{\xi}') \, \chi(M^{-1} \boldsymbol{\xi}')].
$$
It suffices to examine $K_1$, since the $\nabla_{\boldsymbol{\xi}}$ operator in $K_2$, when distributed into the product, does not produce harmful factors.  

If $N\sim M$, then we just use that each component in the composition is an $L^2\to L^2$ operator with norm $\lesssim 1$ to obtain a bound of $\lesssim 1$ for the composition.

If $N\gg M$, then $|\boldsymbol{\xi} - \boldsymbol{\xi}'| \sim N$, so $|\hat q(\boldsymbol{\xi}-\boldsymbol{\xi}')| \lesssim N^{-\omega-3}$. Hence,
$$
\| K \|_{L_{\boldsymbol{\xi}}^\infty L_{\boldsymbol{\xi}'}^1}^{1/2} \| K \|_{L_{\boldsymbol{\xi}'}^\infty L_{\boldsymbol{\xi}}^1}^{1/2} \lesssim N^{-\omega-3} M^{3/2}N^{3/2} \lesssim N^{-\omega}.
$$

Similarly, if $M \gg N$, then $|\boldsymbol{\xi} - \boldsymbol{\xi}'| \sim M$, so $|\hat q(\boldsymbol{\xi}-\boldsymbol{\xi}')| \lesssim M^{-\omega-3}$. Hence,
$$
\| K \|_{L_{\boldsymbol{\xi}}^\infty L_{\boldsymbol{\xi}'}^1}^{1/2} \| K \|_{L_{\boldsymbol{\xi}'}^\infty L_{\boldsymbol{\xi}}^1}^{1/2} \lesssim M^{-\omega-3} M^{3/2}N^{3/2} \lesssim M^{-\omega}.
$$
The conclusion follows from these estimates and the Schur test.
\end{proof}

In this section, we prove Lemma \ref{L:ep-comparability}.    In the language of $\zeta$, we can phrase this in a way that does not reference the index $n$, but is instead a statement about obtaining bounds that are independent of the constant $0<B\ll 1$ in the equation for $\zeta$.  Let us recall from \S\ref{S:geom-decomp} the equation \eqref{E:zeta-1} for $\zeta$:
\begin{equation}
\label{E:zeta-1p}
\begin{aligned}[t]
\partial_t \zeta &= - \partial_x \Delta \zeta - 2 \partial_x ( Q_{c,\mathbf{a}} \zeta) + c^{-2} \la \zeta, f_{c,\mathbf{a}}\ra (\Lambda Q)_{c,\mathbf{a}} + c^{-2}\la \zeta, \mathbf{g}_{c,\mathbf{a}} \ra \cdot (\nabla Q)_{c,\mathbf{a}}
 \\
& \qquad - B \partial_x \zeta^2 + B \omega_c (\Lambda Q)_{c,\mathbf{a}} + B\boldsymbol{\omega}_{\mathbf{a}} \cdot (\nabla Q)_{c,\mathbf{a}},
\end{aligned}
\end{equation}
where by \eqref{E:eta-ODEs}, 
$$
|\omega_c| \lesssim 1 \quad \mbox{and} \quad |\boldsymbol{\omega}_{\mathbf{a}}| \lesssim 1.
$$

We can assume that for all $\theta>0$,
\begin{equation}
\label{E:comp1}
\| \la x-a_x\ra^{1/\theta} \zeta\|_{L_t^\infty L_{\mathbf{x}}^2} \lesssim_\theta 1 \quad \mbox{and} \quad
\| \zeta \|_{L_t^\infty H_{\mathbf{x}}^1} \lesssim \alpha B^{-1}
\end{equation}
with constant depending on $\theta$ but independent of $B$ and global in time, and we can assume that for all $s\geq 2$ and all finite length time intervals $I$, 
\begin{equation}
\label{E:comp2}
\| \zeta \|_{L_I^\infty H_{\mathbf{x}}^s} < \infty,
\end{equation}
where the bound is finite but can depend on anything, like the time interval or the constant $B$.  With these assumptions, we \emph{aim to prove} that for all $s \geq 1$, 
\begin{equation}
\label{E:comp2b}
\| \zeta \|_{L_t^\infty H_{\mathbf{x}}^s} \lesssim_s 1,
\end{equation}
where the constant  depends on $s$ but is independent of $B$ and global in time.  The assertion \eqref{E:comp2b} in the case $s=1$ will be established in Lemma \ref{L:comp4} below.  The argument is broken in steps with
$$
\text{Lemma }\ref{L:comp1} \implies \text{Lemma }\ref{L:comp2} \implies  \text{Lemma }\ref{L:comp3}  \implies \text{Lemma }\ref{L:comp4}.
$$
Higher values of $s$ are then addressed recursively by applying Lemma \ref{L:comp5} and Lemma \ref{L:comp6}, starting with $s=\frac32$, then proceeding by half-integer steps upward.

\begin{lemma}
\label{L:comp1}
Suppose \eqref{E:comp1} holds, and \eqref{E:comp2} holds for $s=1$.  Then, provided $|I| \ll 1$, 
\begin{equation}
\label{E:comp3}
\| \zeta \|_{L_I^2 H_{\mathbf{x}}^1} \lesssim 1
\end{equation}
with constant independent of $B$ and $I$.
\end{lemma}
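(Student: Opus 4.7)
The plan is to test the equation \eqref{E:zeta-1p} for $\zeta$ against the unlocalized weight $(x-a_x(t))\zeta$, in the spirit of \eqref{E:HR10} but without the frequency projection $P_N$. The reason for removing $P_N$, as highlighted in the outline, is that integration by parts in the quadratic term then cleanly reduces the derivative count on $\zeta$. The resulting weighted virial identity reads
\begin{equation*}
-\tfrac12 \tfrac{d}{dt}\int (x-a_x)\zeta^2 \, d\mathbf{x} = \tfrac12 \dot a_x \int \zeta^2 + \tfrac32\int\zeta_x^2 + \tfrac12\int\zeta_y^2 + \tfrac12\int\zeta_z^2 - \int(x-a_x)\zeta \cdot \mathcal{R}\, d\mathbf{x},
\end{equation*}
where $\mathcal{R}$ collects the non-dispersive pieces of the right-hand side of \eqref{E:zeta-1p}. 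Integrating over $I=[t_0,t_1]$, the time-boundary term is controlled by $\|(x-a_x)\zeta\|_{L^2_{\mathbf{x}}}\|\zeta\|_{L^2_{\mathbf{x}}}\lesssim 1$ via Cauchy--Schwarz and \eqref{E:comp1} with $\theta=1$, and the drift contribution $\tfrac12\dot a_x\int\zeta^2$ is $O(|I|)$ since $|\dot a_x|\lesssim 1$ by Lemma \ref{L:ODE-bounds}.

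The linear $Q_{c,\mathbf{a}}$-localized terms in $\mathcal{R}$, namely $-2\partial_x(Q_{c,\mathbf{a}}\zeta)$ together with the two modulation projection corrections $c^{-2}\la\zeta,f_{c,\mathbf{a}}\ra(\Lambda Q)_{c,\mathbf{a}}$ and $c^{-2}\la\zeta,\mathbf{g}_{c,\mathbf{a}}\ra\cdot(\nabla Q)_{c,\mathbf{a}}$, produce (after one integration by parts) multipliers of the form $(x-a_x)\partial^\alpha Q_{c,\mathbf{a}}$ acting on $\zeta^2$ or on $\zeta$ directly. Since linear growth in $x-a_x$ is dominated by the exponential decay of $Q$ centered at $\mathbf{a}$, each such multiplier is pointwise bounded, and the total contribution is $\lesssim \|\zeta\|_{L^2_{\mathbf{x}}}^2\lesssim 1$. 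The $B$-prefactored inhomogeneities $B\omega_c(\Lambda Q)_{c,\mathbf{a}}$ and $B\boldsymbol{\omega}_{\mathbf{a}}\cdot(\nabla Q)_{c,\mathbf{a}}$ are uniformly $\lesssim B\lesssim 1$, using $|\omega_c|,|\boldsymbol{\omega}_{\mathbf{a}}|\lesssim 1$ from \eqref{E:omegas} and the decay of $Q$.

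The decisive and most delicate step is the quadratic term $-B\partial_x\zeta^2$ in $\mathcal{R}$. Its virial contribution, after the clean integration by parts permitted by the absence of $P_N$, simplifies to $-\tfrac{2B}{3}\int\zeta^3$. By the three-dimensional Gagliardo--Nirenberg inequality combined with $\|\zeta\|_{L^2_{\mathbf{x}}}\leq 1$, we estimate $|B\int\zeta^3|\lesssim B\|\nabla\zeta\|_{L^2_{\mathbf{x}}}^{3/2}$. The a priori $H^1$-bound on $\zeta$ in \eqref{E:comp1} degenerates like $B^{-1}$, so at first glance this estimate looks dangerous; however, the explicit prefactor $B$ is exactly strong enough for Young's inequality
\begin{equation*}
B\|\nabla\zeta\|_{L^2_{\mathbf{x}}}^{3/2} \leq \epsilon\|\nabla\zeta\|_{L^2_{\mathbf{x}}}^2 + C_\epsilon B^4
\end{equation*}
to be effective. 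For $\epsilon$ sufficiently small, the first term is absorbed into the coercive gradient sum on the left-hand side of the time-integrated identity, while the second integrates to $C_\epsilon B^4 |I|\lesssim 1$ since $B\leq 1$.

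Combining the estimates yields $\int_I\|\nabla\zeta\|_{L^2_{\mathbf{x}}}^2\,dt\lesssim 1$, which together with $\|\zeta\|_{L^\infty_t L^2_{\mathbf{x}}}\leq 1$ and $|I|\lesssim 1$ gives \eqref{E:comp3} with constant independent of $B$ and of the length of $I$. The rigor of the integrations by parts is secured by the regularity hypothesis \eqref{E:comp2} (taken with $s=2$) together with the weighted decay in \eqref{E:comp1}; alternatively one can argue via a frequency-truncation and passage-to-the-limit argument analogous to the one used for mass conservation in \S\ref{S:ClassB-mass}. The main obstacle throughout is the nonlinear term, and the central insight is that the explicit factor $B$ accompanying it compensates precisely for the $B^{-1}$ deficiency in the $H^1$ bound.
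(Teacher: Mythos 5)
Your proposal is correct and follows essentially the same route as the paper: the unlocalized virial identity with weight $x-a_1$, integration by parts reducing the nonlinear contribution to $-\tfrac{2B}{3}\int \zeta^3$, Gagliardo--Nirenberg plus absorption of the gradient term using the explicit factor $B$ against the $B^{-1}$-degenerate $H^1$ bound, and the exponential decay of $Q_{c,\mathbf{a}}$ dominating the linear weight for the remaining terms. The minor variations (keeping the drift term $\dot a_x\int\zeta^2$ explicitly, Young's inequality with $C_\epsilon B^4$ instead of the paper's $\|\zeta\|_{L^2}^6 + \|\zeta\|_{\dot H^1}^2$ splitting) are inessential.
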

\begin{proof}
By plugging in \eqref{E:zeta-1p}, we obtain
\begin{align*}
\partial_t \int (x-a_1) ( \zeta)^2 \, d\mathbf{x} 
&= -2 \int (x-a_1) \,  \zeta \, \partial_x \Delta  \zeta \, d\mathbf{x} \\
& \qquad - 4 \int (x-a_1) \,  \zeta \, \partial_x  ( Q_{c,\mathbf{a}} \zeta) \, d\mathbf{x} \\
& \qquad - 2 B \int (x-a_1) \,  \zeta \, \partial_x  ( \zeta^2) \, d \mathbf{x} + G(t),
\end{align*}
where
\begin{align*}
G(t) &= -2 c^{-2} \la \zeta, f_{c,\mathbf{a}}\ra  \int (x-a_1) \,  \zeta \,  (\Lambda Q)_{c,\mathbf{a}} \, d\mathbf{x} \\
& \qquad -2 c^{-2} \la \zeta, \mathbf{g}_{c,\mathbf{a}} \ra  \cdot  \int (x-a_1) \,  \zeta \,  (\nabla Q)_{c,\mathbf{a}} \, d\mathbf{x}  \\
& \qquad + 2B \omega_c \int (x-a_1) \, \zeta \, ( \Lambda Q)_{c,\mathbf{a}} \, d\mathbf{x} \\
& \qquad + 2B \omega_{\mathbf{a}} \cdot \int (x-a_1) \,  \zeta \, (\nabla Q)_{c,\mathbf{a}} \, d \mathbf{x}.
\end{align*}
Simplifying the term $-2 \int (x-a_1) \,  \zeta \, \partial_x \Delta  \zeta \, d\mathbf{x}$ (the first term on the right) via integration by parts, moving it over to the left, and integrating in time over $I=[t_-,t_+]$, we obtain
\begin{equation}
\label{E:comp17}
\| \zeta \|_{L_I^2 \dot H_{\mathbf{x}}^1}^2 \lesssim \int_I\int_{\mathbf{x}} [3 (\partial_x  \zeta)^2 + (\partial_y \zeta)^2]  \, d\mathbf{x} \, dt = H_1+H_2+H_3 + \int_{t_-}^{t_+}G(t) \,dt,
\end{equation}
where
\begin{align*}
H_1 &\defeq \int_{\mathbf{x}} (x-a_1) ( \zeta)^2 \, d\mathbf{x}  \Big|^{t=t_+}_{t=t_-},  \\
H_2 &\defeq - 4 \int_I \int_{\mathbf{x}} (x-a_1) \,  \zeta \, \partial_x  ( Q_{c,\mathbf{a}} \zeta) \, d\mathbf{x} \, dt, \\
H_3 &\defeq - 2 B \int_I \int_{\mathbf{x}} (x-a_1) \,  \zeta \, \partial_x  ( \zeta^2) \, d \mathbf{x} \, dt.
\end{align*}
First, we address $H_3$.  By integration by parts,
$$
\int_{\mathbf{x}} (x-a_1) \zeta (\zeta^2)_x \, d\mathbf{x} = - \frac{2}{3}\int_{\mathbf{x}} \zeta^3 \, d\mathbf{x},
$$
and hence,
$$
\left| \int_{\mathbf{x}} (x-a_1) \, \zeta (\zeta^2)_x \, d\mathbf{x} \right| \lesssim \| \zeta\|_{L_{\mathbf{x}}^3}^3 \lesssim \| \zeta \|_{L^2_{\mathbf{x}}}^{3/2}  \| \zeta \|_{\dot H^1_{\mathbf{x}}}^{3/2} \lesssim \|\zeta\|_{L_{\mathbf{x}}^2}^6 + \|\zeta \|_{\dot H_{\mathbf{x}}^1}^2.
$$
Adding the time integration, we obtain
$$
|H_3| \lesssim B |I| \, \|\zeta\|_{L_I^\infty L_{\mathbf{x}}^2}^6
+ B \, \|\zeta \|_{L_I^2 \dot H_{\mathbf{x}}^1}^2 \lesssim 1
+ B \, \|\zeta \|_{L_I^2 \dot H_{\mathbf{x}}^1}^2.
$$
Owing to the $B$ coefficient, the second term is easily absorbed on the left in \eqref{E:comp17}.   Next, we address $H_2$:  
\begin{align*}
\indentalign \int_{\mathbf{x}} (x-a_1) \,  \zeta \, \partial_x  ( Q_{c,\mathbf{a}} \zeta) \, d\mathbf{x}  = \int_{\mathbf{x}} (x-a_1) \,  (\partial_xQ_{c,\mathbf{a}}) \zeta^2 \, d\mathbf{x} + \int_{\mathbf{x}} (x-a_1) \,   Q_{c,\mathbf{a}} \, \zeta \zeta_x \, d\mathbf{x}\\
& = \int_{\mathbf{x}} (x-a_1) \,  (\partial_xQ_{c,\mathbf{a}}) \zeta^2 \, d\mathbf{x} -\frac12 \int_{\mathbf{x}} \partial_x[(x-a_1) \,   Q_{c,\mathbf{a}}] \, \zeta^2 \, d\mathbf{x}.
\end{align*}
Thus,
$$
|H_2| \lesssim |I| \|\zeta\|_{L_I^\infty L_{\mathbf{x}}^2}^2 \lesssim 1.
$$
Next,
$$
|H_1| \lesssim \|\la x-a_1 \ra \zeta \|_{L_I^\infty L_{\mathbf{x}}^2} \| \zeta \|_{L_I^\infty L_{\mathbf{x}}^2} \lesssim 1.
$$
The terms in $G$ are straightforwardly bounded by 
$$
\|G\|_{L_I^1} \lesssim |I| \|\zeta\|_{L_I^\infty} \lesssim 1.
$$
With all of these estimates, the bound follows from \eqref{E:comp17} .
\end{proof}

\begin{lemma}
\label{L:comp2}
Suppose $|I| \ll 1$, \eqref{E:comp1} holds, and \eqref{E:comp2} holds for $s=1$, so that \eqref{E:comp3} holds as well.  Then for all $N\geq 1$ and $0\leq \omega \leq \frac18$,
\begin{equation}
\label{E:comp4}
\| P_N \zeta \|_{L_I^2 H_{\mathbf{x}}^1} \lesssim  N^{-\omega} B^{-\omega}
\end{equation}
with constant independent of $N$, $B$ and $I$.  (Notice that $B^{-\omega}$ is a penalty but $N^{-\omega}$ is a gain.)
\end{lemma}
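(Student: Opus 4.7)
The strategy is to adapt the virial-type identity \eqref{E:HR10} (used in Lemma \ref{L:L2boost}) to the equation \eqref{E:zeta-1p} for $\zeta$, with two modifications: we replace the weight $x$ by the time-dependent weight $x - a_1(t)$ to follow the soliton center, and we use the preliminary estimate \eqref{E:comp3} of Lemma \ref{L:comp1} to handle the nonlinearity, whose only $a~priori$ $H^1$ bound is the large quantity $\alpha B^{-1}$ from \eqref{E:comp1}. Applying $P_N$ to \eqref{E:zeta-1p}, pairing with $2(x-a_1)P_N\zeta$ in $L^2_{\mathbf{x}}$, and integrating by parts in the dispersive term yields an identity of the shape
$$
\tfrac{3}{2}\|P_N\partial_x\zeta\|_{L^2_{\mathbf{x}}}^2 + \tfrac{1}{2}\|P_N\partial_y\zeta\|_{L^2_{\mathbf{x}}}^2 + \tfrac{1}{2}\|P_N\partial_z\zeta\|_{L^2_{\mathbf{x}}}^2 = -\tfrac{1}{2}\partial_t\!\!\int(x-a_1)(P_N\zeta)^2\,d\mathbf{x} + \mathcal{E},
$$
where $\mathcal{E}$ collects (i) an $\tfrac{1}{2}a_1'(t)\|P_N\zeta\|_{L^2_{\mathbf{x}}}^2$ contribution from the moving weight, (ii) the $Q_{c,\mathbf{a}}$-coupling $-2\!\int(x-a_1)P_N\zeta\cdot P_N\partial_x(Q_{c,\mathbf{a}}\zeta)\,d\mathbf{x}$ and the analogous orthogonality-adjustment terms from the first line of \eqref{E:zeta-1p}, (iii) the $B$-coefficient nonlinear term $-B\!\int(x-a_1)P_N\zeta\cdot P_N\partial_x(\zeta^2)\,d\mathbf{x}$, and (iv) $B$-coefficient source terms involving $P_N(\Lambda Q)_{c,\mathbf{a}}$ and $P_N(\nabla Q)_{c,\mathbf{a}}$. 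Integrating in time over $I$ and adding the easy bound $\|P_N\zeta\|_{L^2_IL^2_{\mathbf{x}}}^2\le|I|\|P_N\zeta\|_{L^\infty_IL^2_{\mathbf{x}}}^2$ yields an inequality of the form $\|P_N\zeta\|_{L^2_IH^1_{\mathbf{x}}}^2 \lesssim \mathcal{B}+\int_I|\mathcal{E}|\,dt$, where $\mathcal{B}=\sup_{t\in I}|\!\int(x-a_1)(P_N\zeta)^2\,d\mathbf{x}|$.

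For the boundary term $\mathcal{B}$, Cauchy-Schwarz together with the weighted transfer \eqref{E:HR3} gives $\mathcal{B}\lesssim\|\la x-a_1\ra\zeta\|_{L^2_{\mathbf{x}}}\|P_N\zeta\|_{L^2_{\mathbf{x}}}\lesssim\|P_N\zeta\|_{L^2_{\mathbf{x}}}$; interpolating between the two a~priori bounds $\|P_N\zeta\|_{L^2_{\mathbf{x}}}\le\|\zeta\|_{L^2_{\mathbf{x}}}\lesssim 1$ and $\|P_N\zeta\|_{L^2_{\mathbf{x}}}\le N^{-1}\|\zeta\|_{\dot H^1_{\mathbf{x}}}\lesssim N^{-1}B^{-1}$ produces $\|P_N\zeta\|_{L^2_{\mathbf{x}}}\lesssim N^{-2\omega}B^{-2\omega}$ and furnishes the target power pattern. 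The $a_1'$ contribution, by the same interpolation, is $\lesssim|I|N^{-2\omega}B^{-2\omega}$. The $Q_{c,\mathbf{a}}$-coupling is handled by splitting $P_N(Q_{c,\mathbf{a}}\zeta) = Q_{c,\mathbf{a}}P_N\zeta + [P_N,Q_{c,\mathbf{a}}]\zeta$; the first piece reduces by integration by parts to $\int p(\mathbf{x})(P_N\zeta)^2\,d\mathbf{x}$ with $p$ exponentially decaying in $|\mathbf{x}-\mathbf{a}|$, and the commutator is controlled by Lemma \ref{L:weight-est} with $q=Q_{c,\mathbf{a}}$, giving arbitrary polynomial gain in $N$. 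The $B$-source terms similarly contribute $\lesssim B N^{-K}$ for any $K$ via Lemma \ref{L:weight-est}, since $(\Lambda Q)_{c,\mathbf{a}}$ and $(\nabla Q)_{c,\mathbf{a}}$ are Schwartz-class.

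The nonlinear term is the main obstacle, as a naive bound using $\|\zeta\|_{L^\infty_I H^1_{\mathbf{x}}}\lesssim B^{-1}$ would produce a catastrophic $B^{-2}$ factor. The fix is to split the time integral using \eqref{E:comp3}. H\"older in $\mathbf{x}$ with exponents $L^3_{\mathbf{x}}\times L^{3/2}_{\mathbf{x}}$ gives
$$
B\!\int_I\!\Bigl|\!\!\int\!(x-a_1)P_N\zeta\cdot P_N\partial_x(\zeta^2)\,d\mathbf{x}\Bigr|\,dt \le B\!\int_I\!\|(x-a_1)P_N\zeta\|_{L^3_{\mathbf{x}}}\|P_N\partial_x(\zeta^2)\|_{L^{3/2}_{\mathbf{x}}}\,dt.
$$
Writing $\partial_x(\zeta^2)=2\zeta\zeta_x$ and applying Sobolev gives $\|P_N\partial_x(\zeta^2)\|_{L^{3/2}_{\mathbf{x}}}\lesssim\|\zeta\|_{L^6_{\mathbf{x}}}\|\zeta_x\|_{L^2_{\mathbf{x}}}\lesssim\|\zeta\|_{H^1_{\mathbf{x}}}^2$; the weighted interpolation \eqref{E:HR2} with $p=3$, $\theta=1/2$, combined with \eqref{E:HR3} and Sobolev embedding, gives $\|(x-a_1)P_N\zeta\|_{L^3_{\mathbf{x}}}\lesssim\|P_N\zeta\|_{\dot H^1_{\mathbf{x}}}^{1/2}$. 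H\"older in $t$ with exponents $L^4_t\times L^{4/3}_t$, together with the interpolation $\|\zeta\|_{L^{8/3}_IH^1_{\mathbf{x}}}\le\|\zeta\|_{L^2_IH^1_{\mathbf{x}}}^{3/4}\|\zeta\|_{L^\infty_IH^1_{\mathbf{x}}}^{1/4}\lesssim B^{-1/4}$ (which crucially uses \eqref{E:comp3}), then yields the bound $\lesssim B^{1/2}\|P_N\zeta\|_{L^2_I\dot H^1_{\mathbf{x}}}^{1/2}$. Young's inequality absorbs this into $\varepsilon\|P_N\zeta\|_{L^2_I\dot H^1_{\mathbf{x}}}^2 + C_\varepsilon B^{2/3}$, with the first term moved to the left side and $B^{2/3}\le N^{-2\omega}B^{-2\omega}$. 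Assembling the boundary, coupling, source, $a_1'$, and nonlinear estimates yields the stated inequality; the restriction $\omega\le 1/8$ arises from the intersection of the admissible ranges in the various interpolations (the tightest constraint coming from balancing the Bernstein-type interpolation in the boundary with the weighted Gagliardo--Nirenberg in the nonlinear estimate).
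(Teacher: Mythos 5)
Your overall architecture matches the paper's: the weighted virial identity with weight $x-a_1(t)$ applied to $P_N$ of equation \eqref{E:zeta-1p}, the boundary term interpolated between the bounds $\|P_N\zeta\|_{L^2_{\mathbf{x}}}\lesssim 1$ and $\|P_N\zeta\|_{L^2_{\mathbf{x}}}\lesssim N^{-1}B^{-1}$, and the use of Lemma \ref{L:comp1} to avoid the catastrophic $B^{-2}$ in the nonlinearity. However, your treatment of the nonlinear term has a genuine gap: after H\"older, the weighted Gagliardo--Nirenberg step $\|(x-a_1)P_N\zeta\|_{L^3_{\mathbf{x}}}\lesssim\|P_N\zeta\|_{\dot H^1_{\mathbf{x}}}^{1/2}$, and Young's inequality, the leftover term is $C_\varepsilon B^{2/3}$, which carries no decay in $N$. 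Your closing assertion $B^{2/3}\le N^{-2\omega}B^{-2\omega}$ is false once $N\gtrsim B^{-1-1/(3\omega)}$ (e.g.\ for $\omega=\tfrac18$, once $N> B^{-11/3}$), so the conclusion \eqref{E:comp4}, which must hold for all $N\ge1$ with a constant independent of $N$ and $B$, does not follow; and the $N^{-\omega}$ gain is exactly what is needed downstream (in the proof of the $L^2_xL^\infty_{yzI}$ bound) for large $N$. The repair is to keep an $N$-small factor inside the nonlinear estimate rather than absorbing all of $\|P_N\zeta\|_{\dot H^1}$ by Young: the paper bounds $|H_3|\lesssim B\,\|(x-a_1)P_N\zeta\|_{L^\infty_I L^3_{\mathbf{x}}}$ (using \eqref{E:comp3} on \emph{both} factors of $\zeta\zeta_x$ in $L^1_t$), then interpolates the weighted $L^3$ norm via \eqref{E:HR2}, \eqref{E:HR1}, \eqref{E:HR3} through $\|P_N\zeta\|_{L^\infty_I L^2_{\mathbf{x}}}^{(1-\theta)/2}\lesssim (NB)^{-(1-\theta)/2}$ and $\|P_N\zeta\|_{L^\infty_I H^1_{\mathbf{x}}}^{1/2}\lesssim B^{-1/2}$, arriving at $|H_3|\lesssim B^{\theta/2}N^{-(1-\theta)/2}$, which does dominate $N^{-2\omega}B^{-2\omega}$ for $\omega\le\tfrac18$ and $\theta$ small.

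A secondary inaccuracy: for the $Q_{c,\mathbf{a}}$-coupling you claim the commutator $[P_N,Q_{c,\mathbf{a}}]\zeta$ is controlled by Lemma \ref{L:weight-est} ``with arbitrary polynomial gain in $N$.'' That lemma gives gain only for the off-diagonal blocks $P_NQ_{c,\mathbf{a}}P_M$ with $M\not\sim N$; on the diagonal $M\sim N$ it gives an $O(1)$ bound and no gain, so your claim as stated fails. The diagonal contribution must instead be handled either through the genuine commutator gain of one power $N^{-1}$, or, as in the paper, by never splitting off the commutator: expand $\zeta=\sum_M P_M\zeta$, apply Lemma \ref{L:weight-est} to get the $\min(M/N,N/M)^{\omega}$ factors, and use $\|P_M\zeta\|_{L^2_I L^2_{\mathbf{x}}}\lesssim M^{-1}$ from \eqref{E:comp3}; the sum then produces the needed $N^{-1}$, after which the remaining $\|P_N\zeta\|_{L^2_IL^2_{\mathbf{x}}}$ is absorbed by Young into the left-hand side. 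This part is repairable, but as written it is not a valid step.
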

Therefore, we can obtain a gain in $N$ at the expense of a penalty in $B$.

\begin{proof}
By plugging in \eqref{E:zeta-1p}, we obtain
\begin{align*}
\partial_t \int (x-a_1) (P_N \zeta)^2 \, d\mathbf{x} 
&= -2 \int (x-a_1) \, P_N \zeta \, \partial_x \Delta P_N \zeta \, d\mathbf{x} \\
& \qquad - 4 \int (x-a_1) \, P_N \zeta \, \partial_x P_N ( Q_{c,\mathbf{a}} \zeta) \, d\mathbf{x} \\
& \qquad - 2 B \int (x-a_1) \, P_N \zeta \, \partial_x P_N ( \zeta^2) \, d \mathbf{x} + G(t),
\end{align*}
where
\begin{align*}
G(t) &= -2 c^{-2} \la \zeta, f_{c,\mathbf{a}}\ra  \int (x-a_1) \, P_N \zeta \, P_N (\Lambda Q)_{c,\mathbf{a}} \, d\mathbf{x} \\
& \qquad -2 c^{-2} \la \zeta, \mathbf{g}_{c,\mathbf{a}} \ra  \cdot  \int (x-a_1) \, P_N \zeta \, P_N (\nabla Q)_{c,\mathbf{a}} \, d\mathbf{x}  \\
& \qquad + 2B \omega_c \int (x-a_1) \, P_N\zeta \, ( \Lambda Q)_{c,\mathbf{a}} \, d\mathbf{x} \\
& \qquad + 2B \omega_{\mathbf{a}} \cdot \int (x-a_1) \, P_N \zeta \, (\nabla Q)_{c,\mathbf{a}} \, d \mathbf{x}.
\end{align*}
Simplifying the term $-2 \int (x-a_1) \, P_N \zeta \, \partial_x \Delta P_N \zeta \, d\mathbf{x}$ (the first term on the right) via integration by parts, moving it over to the left, and integrating in time over $I=[t_-,t_+]$, we obtain
\begin{equation}
\label{E:stuff1}
N^2 \, \| P_N \zeta \|_{L_I^2 L_{\mathbf{x}}^2}^2 \lesssim \int_I\int_{\mathbf{x}} [3 (\partial_x P_N \zeta)^2 + (\partial_y P_N\zeta)^2]  \, d\mathbf{x} \, dt = H_1+H_2+H_3 + \int_{t_-}^{t_+}G(t) \,dt,
\end{equation}
where, similarly as in the previous lemma, we define
\begin{align*}
H_1 &\defeq \int_{\mathbf{x}} (x-a_1) (P_N \zeta)^2 \, d\mathbf{x}  \Big|^{t=t_+}_{t=t_-}  \\
H_2 &\defeq - 4 \int_I \int_{\mathbf{x}} (x-a_1) \, P_N \zeta \, \partial_x P_N ( Q_{c,\mathbf{a}} \zeta) \, d\mathbf{x} \, dt \\
H_3 &\defeq - 2 B \int_I \int_{\mathbf{x}} (x-a_1) \, P_N \zeta \, \partial_x P_N ( \zeta^2) \, d \mathbf{x} \, dt.
\end{align*}

The terms in $G$ are easily bounded.  Note that in estimating $H_3$, we can use \eqref{E:comp3} as follows
\begin{align*}
|H_3| &\leq B \,\| \partial_x P_N \zeta^2 \|_{L_I^1L_{\mathbf{x}}^{3/2}} \| (x-a_1) P_N \zeta \|_{L_I^\infty L_{\mathbf{x}}^3} 
\leq B \, \| \zeta \zeta_x \|_{L_I^1L_{\mathbf{x}}^{3/2}} \| (x-a_1) P_N \zeta \|_{L_I^\infty L_{\mathbf{x}}^3} \\
&\leq B \,\| \zeta\|_{L_I^2 L_{\mathbf{x}}^6} \| \zeta_x \|_{L_I^2L_{\mathbf{x}}^2} \| (x-a_1) P_N \zeta \|_{L_I^\infty L_{\mathbf{x}}^3} 
\lesssim B\, \| (x-a_1) P_N \zeta \|_{L_I^\infty L_{\mathbf{x}}^3}
\end{align*}
by Sobolev embedding and \eqref{E:comp3}.  Following through with estimate \eqref{E:HR2}, $\theta=\frac12$, we obtain
$$
|H_3| \lesssim B\, \| |x-a_1|^2 P_N\zeta \|_{L_I^\infty L_\mathbf{x}^2}^{1/2} \| P_N \zeta \|_{L_I^\infty L_{\mathbf{x}}^6}^{1/2}.
$$
By \eqref{E:HR1},
$$
|H_3| \lesssim B \| |x-a_1|^{2/\theta} P_N\zeta \|_{L_I^\infty L_\mathbf{x}^2}^{\theta/2}  \|  P_N\zeta \|_{L_I^\infty L_\mathbf{x}^2}^{(1-\theta)/2} \| P_N \zeta \|_{L_I^\infty L_{\mathbf{x}}^6}^{1/2}.
$$
Finally, by \eqref{E:HR3} and Sobolev embedding,
$$
|H_3| \lesssim B\, \| |x-a_1|^{2/\theta} \zeta \|_{L_I^\infty L_\mathbf{x}^2}^{\theta/2}  \|  P_N\zeta \|_{L_I^\infty L_\mathbf{x}^2}^{(1-\theta)/2} \| P_N \zeta \|_{L_I^\infty H_{\mathbf{x}}^1}^{1/2}.
$$
By \eqref{E:comp1}, 
$$
|H_3| \lesssim B^{\theta/2} N^{-\frac{1-\theta}{2}},
$$
which suffices for \eqref{E:comp4}.   For $H_2$, we estimate as
$$
|H_2| \lesssim N \, \| P_N \zeta \|_{L_I^2 L_{\mathbf{x}}^2} \| (x-a_1) P_N ( Q_{c,\mathbf{a}} \zeta) \|_{L_I^2 L_{\mathbf{x}}^2}.
$$
Expanding $\zeta = \sum_{M\geq 1} P_M \zeta$,
$$
|H_2| \lesssim N \, \| P_N \zeta \|_{L_I^2 L_{\mathbf{x}}^2} \sum_{M \geq 1} \| (x-a_1) P_N ( Q_{c,\mathbf{a}} P_M \zeta) \|_{L_I^2 L_{\mathbf{x}}^2}.
$$
Applying Lemma \ref{L:weight-est}, we obtain
$$
|H_2| \lesssim N \, \| P_N \zeta \|_{L_I^2 L_{\mathbf{x}}^2} \, \| \zeta \|_{L_I^2 H_\mathbf{x}^1}  \sum_{M \geq 1} \min (NM^{-1}, MN^{-1}) M^{-1}.
$$
The sum carries out to $N^{-1}$.  By \eqref{E:comp3}, we can bound
$$
|H_2| \lesssim \| P_N \zeta \|_{L_I^2 L_{\mathbf{x}}^2} \lesssim \epsilon N^2\|P_N \zeta\|_{L_I^2 L_{\mathbf{x}}^2}^2 + \epsilon^{-1} N^{-2}.
$$
The first term can be absorbed into the main term \eqref{E:stuff1}, while the second term is an acceptable contribution to the upper bound in \eqref{E:comp4}.  For $H_1$, we estimate as
$$
|H_1| \lesssim \| |x-a_1| P_N \zeta \|_{L_I^\infty L_{\mathbf{x}}^2} \| P_N \zeta \|_{L_I^\infty L_{\mathbf{x}}^2}. 
$$
From \eqref{E:comp1}, it follows that $|H_1| \lesssim 1$.  On the other hand, we can also estimate as
$$
H_1 \lesssim  \| |x-a_1| \zeta \|_{L_I^\infty L_{\mathbf{x}}^2}  N^{-1} \| P_N \zeta \|_{L_I^\infty H_{\mathbf{x}}^1},
$$
and by applying \eqref{E:comp1}, obtain $|H_1| \lesssim B^{-1}N^{-1}$.  Interpolating, we obtain a bound of $B^{-\omega} N^{-\omega}$ for any $0\leq \omega \leq 1$.
\end{proof}

\begin{lemma}
\label{L:comp3}
Assume \eqref{E:comp1} and suppose  \eqref{E:comp2} holds for $s=1$. Suppose $I$ is an interval of length $0<\delta \ll 1$.  Then \eqref{E:comp3} and \eqref{E:comp4} hold, and in addition for $N\geq 1$,
\begin{equation}
\label{E:comp5}
\|P_N \zeta \|_{L_x^2 L_{yz I}^\infty} \lesssim (\ln^+ N)^4 \delta^{-1/2}
\end{equation}
with constant independent of $B$ and $I$.
\end{lemma}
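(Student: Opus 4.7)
The proof parallels Lemma \ref{L:maximal-compare} in \S\ref{S:higher-regularity}, with the key new ingredient being a careful choice of Duhamel base point $t_0 \in I$ that compensates for the absence of a useful uniform $H_{\mathbf{x}}^1$ bound on $\zeta$ (only $\|\zeta\|_{L_t^\infty H_{\mathbf{x}}^1}\lesssim \alpha B^{-1}$ is available from \eqref{E:comp1}, with a $B^{-1}$ penalty). Writing \eqref{E:zeta-1p} in the form $\partial_t\zeta = -\partial_x\Delta\zeta + F(\zeta)$ and Duhameling from $t_0 \in I$ gives
\begin{equation*}
P_N\zeta(t) = P_N U(t-t_0)\zeta(t_0) + \int_{t_0}^t P_N U(t-s) F(\zeta(s))\,ds.
\end{equation*}
Using \eqref{E:comp3} to obtain $\|P_N\zeta\|_{L_I^2 L_{\mathbf{x}}^2}\lesssim N^{-1}$ and averaging in time, I would select $t_0$ so that $\|P_N\zeta(t_0)\|_{L_{\mathbf{x}}^2}\lesssim \delta^{-1/2}N^{-1}$. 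Then \eqref{E:HR20} yields
\begin{equation*}
\|P_N U(t-t_0)\zeta(t_0)\|_{L_x^2 L_{yz I}^\infty} \lesssim (\ln^+ N)^2 N \cdot \delta^{-1/2} N^{-1} = (\ln^+ N)^2 \delta^{-1/2},
\end{equation*}
which is the target bound up to a logarithmic slack factor.

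The seven forcing terms in $F$ split into easy and hard pieces. The four source-type terms (both $\la \zeta, \cdot\ra$ terms and the two terms with prefactor $B$) all have Schwartz spatial profile $(\Lambda Q)_{c,\mathbf{a}}$ or $(\nabla Q)_{c,\mathbf{a}}$, so $\|P_N \cdot\|_{L_{\mathbf{x}}^2} \lesssim N^{-K}$ for any $K$; combined with a Minkowski-plus-\eqref{E:HR20} argument, they contribute $\lesssim 1$. The linear term $-2\partial_x(Q_{c,\mathbf{a}}\zeta)$ is controlled via \eqref{E:HR21}: a paraproduct decomposition using rapid decay of $P_M Q_{c,\mathbf{a}}$ for $M \gtrsim N$ together with $\|P_N\zeta\|_{L_{\mathbf{x}I}^2}\lesssim N^{-1}$ from \eqref{E:comp3} yields a contribution $\lesssim (\ln^+ N)^2$.

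The nontrivial piece is $-B\partial_x\zeta^2$. Setting $\gamma(N) \defeq \|P_N\zeta\|_{L_x^2 L_{yz I}^\infty}$ and $\Gamma(N)\defeq\sum_{M\leq N}\gamma(M)$, a paraproduct decomposition of $P_N(\zeta^2)$, followed by \eqref{E:HR21} and \eqref{E:comp4} with $\omega=\tfrac{1}{8}$ (so that $N\|P_N\zeta\|_{L_{\mathbf{x}I}^2}\lesssim N^{-1/8} B^{-1/8}$), produces the analog of \eqref{E:HR26}:
\begin{equation*}
\gamma(N)\lesssim (\ln^+ N)^2\delta^{-1/2} + B^{7/8}(\ln^+ N)^2\Bigl[N^{-1/8}\Gamma(N) + \sum_{M\gg N} M^{-1/8}\gamma(M)\Bigr].
\end{equation*}
The small factor $B^{7/8}$ here plays the role of $\delta^{1/4}$ in \eqref{E:HR26}; it arises because the prefactor $B$ of the nonlinearity only partially offsets the $B^{-1/8}$ loss from \eqref{E:comp4}. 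The discrete Gronwall iteration of Lemma \ref{L:maximal-compare} then closes for $B\ll 1$ and produces $\gamma(N)\lesssim(\ln^+ N)^4\delta^{-1/2}$.

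The main obstacle is the absence of a uniform $H_{\mathbf{x}}^1$ bound on $\zeta$ free of $B$-penalties, which forces us to replace the fixed-time estimate used in Lemma \ref{L:maximal-compare} by the averaged estimate \eqref{E:comp3}; this is precisely what produces the $\delta^{-1/2}$ factor in the conclusion. Closing the Gronwall iteration also requires the exponent bookkeeping $B\cdot B^{-1/8}=B^{7/8}$ to remain small, which is guaranteed since $B\lesssim\alpha\ll 1$.
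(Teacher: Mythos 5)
Your proposal is correct and follows essentially the same route as the paper: Duhamel from a time $t_0\in I$ chosen via the averaged bound \eqref{E:comp3} (costing the $\delta^{-1/2}$), rapid-decay treatment of the $(\Lambda Q)_{c,\mathbf{a}}$, $(\nabla Q)_{c,\mathbf{a}}$ source terms, \eqref{E:HR21} plus a paraproduct and \eqref{E:comp4} with $\omega=\tfrac18$ to extract the small prefactor $B^{1-\omega}$ on the quadratic term, and a discrete Gronwall closure in the style of Lemma \ref{L:maximal-compare} yielding the $(\ln^+ N)^4\delta^{-1/2}$ bound. The only cosmetic difference is that the paper picks a single $t_0$ minimizing $\|\zeta(t)\|_{H^1_{\mathbf{x}}}$ over $I$ (so $N\|P_N\zeta(t_0)\|_{L^2_{\mathbf{x}}}\leq\|\zeta(t_0)\|_{H^1_{\mathbf{x}}}\lesssim\delta^{-1/2}$ for all $N$ at once) rather than your per-frequency choice, which is equally valid since the recursion only involves the intrinsic quantities $\gamma(M)$.
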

\begin{proof}
Let $t_0\in I$ be such that
\begin{equation}
\label{E:comp10}
\|\zeta(t_0)\|_{H_{\mathbf{x}}^1} = \min_{t\in I} \|\zeta(t)\|_{H_{\mathbf{x}}^1} \leq \delta^{-1/2} \| \zeta \|_{L_I^2H_{\mathbf{x}}^1} \lesssim \delta^{-1/2}.
\end{equation}
Then we estimate
\begin{equation}
\label{E:comp8}
\gamma_N \defeq \| P_N \zeta(t) \|_{L_x^2 L_{y z I}^\infty}
\end{equation}
as follows.  Note that
\begin{equation}
\label{E:comp9}
P_N \zeta(t) = P_N  U(t-t_0) \zeta(t_0) + \int_{t_0}^t U(t-t') P_N F(t') \,d t',
\end{equation}
where $F = \sum_j F_j$ and the $F_j$ are terms in \eqref{E:zeta-1p}, specifically,
\begin{equation}
\label{E:comp9b}
\begin{aligned}
& F_1 = - 2 \partial_x ( Q_{c,\mathbf{a}} \zeta) \,,
&& F_2 =  c^{-2} \la \zeta, f_{c,\mathbf{a}}\ra (\Lambda Q)_{c,\mathbf{a}} \,,
&& F_3 =  c^{-2}\la \zeta, \mathbf{g}_{c,\mathbf{a}} \ra \cdot (\nabla Q)_{c,\mathbf{a}} \,, \\
&F_4 = - B \partial_x \zeta^2 \,,
&&F_5 = B \omega_c (\Lambda Q)_{c,\mathbf{a}} \,,
&&F_6 =  B\boldsymbol{\omega}_{\mathbf{a}} \cdot (\nabla Q)_{c,\mathbf{a}} \,,
\end{aligned}
\end{equation}
and the estimate of \eqref{E:comp8} via Lemma \ref{L:RV1} applied to \eqref{E:comp9} corresponding to $F_j$ will be denoted by $\gamma_{N,j}$, so that we have 
$$\gamma_N \leq \sum_j \gamma_{N,j}.$$
By \eqref{E:HR20},
\begin{equation}
\label{E:comp11}
\|P_N U(t-t_0) \zeta (t_0) \|_{L_x^2 L_{y z I}^\infty} \lesssim (\ln^+ N)^2 \| \zeta(t_0) \|_{H_{\mathbf{x}}^1} \lesssim \delta^{-1/2} (\ln^+ N)^2,
\end{equation}
where in the last step, we used \eqref{E:comp10}.
Now we consider the term $F_4$.  By \eqref{E:HR21},
$$
\gamma_{N,4} \lesssim B (\ln^+ N)^2 N \|P_N(\zeta^2) \|_{L_x^1 L_{yzI}^2}.
$$
Using the decomposition
\begin{equation}
\label{E:comp15}
P_N( \zeta^2) \sim P_N(P_{\lesssim N} \zeta  \cdot P_N \zeta) + \sum_{N'\geq N} P_N( P_{N'}\zeta \cdot P_{N'}\zeta) 
\end{equation}
and H\"older, we obtain
$$
\gamma_{N,4} \lesssim B (\ln^+ N)^2 N ( \|P_{\lesssim N} \zeta \|_{L_x^2 L_{yzI}^\infty} \|P_N \zeta \|_{L_x^2 L_{yzI}^2} + \sum_{N'\geq N} \|P_{N'} \zeta \|_{L_x^2 L_{yzI}^\infty} \|P_{N'} \zeta \|_{L_x^2 L_{yzI}^2} ).
$$
By \eqref{E:comp4} in Lemma \ref{L:comp2},
$$
\gamma_{N,4} \lesssim B^{1-\omega} N^{-\omega} (\ln^+ N)^2 \left( \|P_{\lesssim N} \zeta \|_{L_x^2 L_{yzI}^\infty}  + \sum_{N'\geq N} \frac{N^{1+\omega}}{(N')^{1+\omega}} \|P_{N'} \zeta \|_{L_x^2 L_{yzI}^\infty} \right),
$$
and thus,
\begin{equation}
\label{E:comp12}
\gamma_{N,4}\lesssim B^{1-\omega} N^{-\omega/2} \sum_{N' \geq 1} \min\left( 1 , \frac{N^{1+\omega}}{(N')^{1+\omega}}\right) \|P_{N'} \zeta \|_{L_x^2 L_{y z I}^\infty}.
\end{equation}
By \eqref{E:HR21},
\begin{align*}
\gamma_{N,1} &\lesssim (\ln^+ N)^2  \|P_N\partial_x ( Q_{c,\mathbf{a}} \zeta) \|_{L_x^1 L_{y z I}^2}\\
&\lesssim (\ln^+ N)^2  \|\partial_x ( Q_{c,\mathbf{a}} \zeta) \|_{L_x^1 L_{y z I}^2} \\
&\lesssim (\ln^+ N)^2 ( \|Q_{c,\mathbf{a}}\|_{L_x^2 L_{y z I}^\infty}+\|(\partial_x Q)_{c,\mathbf{a}}\|_{L_x^2 L_{y z I}^\infty}) \| \zeta \|_{L_I^2 H_{\mathbf{x}}^1}.
\end{align*}
By \eqref{E:comp3} in Lemma \ref{L:comp1},
\begin{equation}
\label{E:comp13}
\gamma_{N,1} \lesssim (\ln^+ N)^2.
\end{equation}
Since for each $\theta>0$, we have
$$
\|P_N (\Lambda Q)_{c,\mathbf{a}} \|_{L_x^2 L_{y z I}^\infty} \lesssim N^{-\theta} \quad \mbox{and} \quad 
\|P_N (\nabla Q)_{c,\mathbf{a}} \|_{L_x^2 L_{y z I}^\infty} \lesssim N^{-\theta},
$$
the remaining terms are more straightforward to estimate and we have
\begin{equation}
\label{E:comp14}
\gamma_{N,2} + \gamma_{N,3} + \gamma_{N,5} + \gamma_{N,6} \lesssim 1.
\end{equation}
By \eqref{E:comp9}, \eqref{E:comp11}, \eqref{E:comp12}, \eqref{E:comp13}, and \eqref{E:comp14}, we have
$$
\gamma_N \lesssim \delta^{-1/2}(\ln^+ N)^2 +  B^{1-\omega} N^{-\omega/2} \sum_{N' \geq 1} \min( 1 , \frac{N^{1+\omega}}{(N')^{1+\omega}}) \gamma_{N'}.
$$
Multiply by $(\ln^+ N)^{-4}$ and sum over dyadic $N \geq 1$ to obtain
$$
\sum_{N \geq 1} (\ln^+ N)^{-4} \gamma_N \lesssim \delta^{-1/2} +  B^{1-\omega} \sum_{N \geq 1} N^{-\omega/2} \sum_{N' \geq 1} \min( 1 , \frac{N^{1+\omega}}{(N')^{1+\omega}}) \gamma_{N'}.
$$
Interchanging the order of $N$ and $N'$ summation, we obtain
$$
\sum_{N \geq 1} (\ln^+ N)^{-4} \gamma_N \lesssim \delta^{-1/2} +  B^{1-\omega} \sum_{N' \geq 1} (N')^{-\omega/2} \gamma_{N'}.
$$
Since $B^{1-\omega} \ll 1$ and $(N')^{-\omega/2} \lesssim (\ln^+ N')^{-4}$, it follows that
$$
\sum_{N \geq 1} (\ln^+ N)^{-4} \gamma_N \lesssim \delta^{-1/2},
$$ 
and, in particular, \eqref{E:comp5} holds.
\end{proof}

\begin{lemma}
\label{L:comp4}
Suppose  \eqref{E:comp1} and \eqref{E:comp2} hold for $s=1$.  Suppose $I$ is an interval of length $0<\delta \ll 1$.   Then \eqref{E:comp3}, \eqref{E:comp4} and \eqref{E:comp5} hold, and moreover,
$$
\| \zeta \|_{L_I^\infty H_{\mathbf{x}}^1} \lesssim \delta^{-1/2}
$$
with constant independent of $B$ and $I$.
\end{lemma}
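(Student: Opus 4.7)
The proof follows the template of Lemma~\ref{L:reg-boost-last}: choose a good reference time $t_0\in I$, apply the Duhamel formula, and control the nonlinear contribution via local smoothing and paraproduct estimates.

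\emph{Setup.} By Lemma~\ref{L:comp1} and an averaging argument over $I$, select $t_0\in I$ with $\|\zeta(t_0)\|_{H^1}^2 \le \delta^{-1}\|\zeta\|_{L_I^2 H^1}^2 \lesssim \delta^{-1}$, so $\|\zeta(t_0)\|_{H^1}\lesssim\delta^{-1/2}$. For $t\in I$ write
\[
\zeta(t) = U(t-t_0)\zeta(t_0) - \int_{t_0}^t U(t-s)F(s)\,ds,
\]
with $F=\sum_{j=1}^6 F_j$ decomposed as in \eqref{E:comp9b}. Since $U$ is an $H^1$-isometry, the linear piece contributes $\|\zeta(t_0)\|_{H^1}\lesssim\delta^{-1/2}$, and the task reduces to bounding the nonlinear Duhamel integral in $L_I^\infty H^1$.

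\emph{Nonlinear estimates.} For each dyadic $N\ge 1$, apply local smoothing \eqref{E:HR21b} to the $\partial_x$-terms $F_1,F_4$ and $L^2$-unitarity of $U$ (with a factor $\delta$) to the non-derivative terms $F_2,F_3,F_5,F_6$. The term $F_1=-2\partial_x(Q_{c,\mathbf a}\zeta)$ is controlled using the Schwartz decay of $Q$ through Lemma~\ref{L:weight-est} together with \eqref{E:comp3}. The hardest term is $F_4=-B\partial_x\zeta^2$: use the paraproduct
\[
P_N(\zeta^2) \sim P_N(P_{\lesssim N}\zeta\cdot P_N\zeta) + \sum_{N'\gg N} P_N(P_{N'}\zeta\cdot P_{N'}\zeta),
\]
estimating the $L_x^2 L_{yzI}^\infty$-factors by Lemma~\ref{L:comp3} (giving $(\ln^+ M)^4\delta^{-1/2}$) and the $L^2_{\mathbf xI}$-factors by Lemma~\ref{L:comp2} (giving $M^{-1-\omega}B^{-\omega}$ for $0\le\omega\le\tfrac18$). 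The outcome is
\[
\Bigl\|P_N\int_{t_0}^t U(t-s)F_4(s)\,ds\Bigr\|_{L_I^\infty L^2} \lesssim B^{1-\omega}(\ln^+ N)^5\,\delta^{-1/2}\,N^{-1-\omega},
\]
the factor $B$ from $F_4$ absorbing the $B^{-\omega}$ penalty of Lemma~\ref{L:comp2}.

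\emph{Assembly and main obstacle.} To recover the $H^1$-bound, use $\|g\|_{H^1}^2\le \sum_N N^2\|P_N g\|_{L^2}^2$ and split the summation at a cutoff $N_0=N_0(B,\delta)$. The low-frequency portion, estimated by the refined Duhamel bound, contributes $\lesssim B^{2-2\omega}\delta^{-1} N_0^{1-2\omega}(\log N_0)^{10}$. The high-frequency tail is dominated via the triangle inequality for the Duhamel integral by $\|P_N\zeta\|_{L^2}+\|P_N\zeta(t_0)\|_{L^2}$ combined with the crude hypothesis $\|\zeta\|_{L_t^\infty H^1}\lesssim\alpha B^{-1}$ from \eqref{E:comp1}, giving $\lesssim \alpha^2 B^{-2}+\delta^{-1}$. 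In the regime $B\gtrsim\alpha\delta^{1/2}$ the crude bound already gives $\|\zeta\|_{H^1}\lesssim\delta^{-1/2}$ directly; in the small-$B$ regime one optimises $N_0$ and exploits the smallness of $B^{1-\omega}$ to reconcile the two contributions. The main obstacle is that the raw $\ell^2(N)$-summation $\sum_N N^{-2\omega}$ diverges for $\omega\le\tfrac12$, while Lemma~\ref{L:comp2} provides only $\omega\le\tfrac18$; the delicate frequency-splitting above---trading summation-divergent refined control at low frequencies against the crude but $B$-dependent high-frequency control inherited from \eqref{E:comp1}---is what makes the proof close.
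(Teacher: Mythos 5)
Your setup and the core estimates are essentially the paper's proof: pick $t_0$ by averaging so that $\|\zeta(t_0)\|_{H_{\mathbf{x}}^1}\lesssim \delta^{-1/2}$, write Duhamel with the decomposition \eqref{E:comp9b}, use \eqref{E:HR21b} for the $\partial_x$-terms, and for $F_4$ the paraproduct with \eqref{E:comp5} on the $L_x^2L_{yzI}^\infty$ factors and \eqref{E:comp4} on the $L^2_{\mathbf{x}I}$ factors, giving the frequency-localized bound $B^{1-\omega}(\ln^+N)^5\delta^{-1/2}N^{-1-\omega}$. Up to that point you have reproduced the argument.

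The problem is your ``assembly'' step. The obstacle you identify is spurious: the summation index $N$ runs over \emph{dyadic} values, so $\sum_{N\geq 1\,\text{dyadic}} N^{-\omega}(\ln^+ N)^{5}$ is essentially a geometric series and converges for every $\omega>0$; with $\omega=\tfrac18$ from Lemma~\ref{L:comp2} the sum closes immediately and yields $\mu_4\lesssim B^{1-\omega}\delta^{-1/2}\lesssim\delta^{-1/2}$, which is exactly how the paper concludes (and the corresponding $\ell^2$-weighted sum $\sum_N N^{-2\omega}(\ln^+N)^{10}$ converges for the same reason). You appear to have treated the sum as an integer sum, manufactured a divergence, and then patched it with a frequency cutoff $N_0$ and a case split in $B$. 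That patch, as written, does not close: in the small-$B$ regime the high-frequency tail is estimated only by the crude bound $\|\zeta\|_{L_t^\infty H_{\mathbf{x}}^1}\lesssim \alpha B^{-1}$ from \eqref{E:comp1}, which has \emph{no decay in $N_0$} (one would need an $H^{1+}$ bound, i.e.\ precisely what is being proved), so the contribution $\alpha^2B^{-2}$ cannot be reconciled with a $B$-independent constant by any choice of $N_0$; the phrase ``one optimises $N_0$ and exploits the smallness of $B^{1-\omega}$'' is not an argument. Delete the splitting entirely, sum the dyadic series directly, and your proof coincides with the paper's.
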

\begin{proof}
We start by writing the Duhamel formula
$$
\zeta(t) = U(t-t_0) \zeta(t_0) + \sum_{j=1}^6 \int_{t_0}^t U(t-t_0) F_j(t') \, dt'
$$
with $F_j$ defined by \eqref{E:comp9b}.   By the standard estimate for the linear flow,
$$
\| \zeta \|_{L_I^\infty H_{\mathbf{x}}^1} \lesssim \delta^{-1/2} + \sum_{j=1}^6 \mu_j,
$$
where
$$
\mu_j = \left\| \int_{t_0}^t U(t-t_0) F_j(t') \, dt' \right\|_{L_I^\infty H_{\mathbf{x}}^1}.
$$
By \eqref{E:HR21b},
$$
\mu_4 \lesssim B\| \nabla( \zeta^2) \|_{L_x^1 L_{y z I}^2}.
$$
Using, as usual, \eqref{E:comp15},
$$
\mu_4 \lesssim B \sum_{N \geq 1} N \left( \| P_{\lesssim N} \zeta \|_{L_x^2 L_{y z I}^\infty} \|P_N \zeta \|_{L_I^2 L_\mathbf{x}^2} + \sum_{N' \geq N}  \| P_{\lesssim N'} \zeta \|_{L_x^2 L_{y z I}^\infty} \|P_{N'} \zeta \|_{L_I^2 L_\mathbf{x}^2} \right).
$$
By  \eqref{E:comp4} and \eqref{E:comp5},
$$
\mu_4 \lesssim B^{1-\omega} \sum_{N\geq 1} \left(  (\ln^+ N)^5 \delta^{-1/2} N^{-\omega} + \sum_{N'\geq N} \frac{N}{N'} (\ln^+ N')^4 (N')^{-\omega} \delta^{-1/2} \right) \lesssim \delta^{-1/2}.
$$
By \eqref{E:HR21b} and \eqref{E:comp3},
$$
\mu_1 \lesssim \| \nabla (Q_{c,\mathbf{a}} \zeta) \|_{L_x^1 L_{y z I}^2} \lesssim (\|Q_{c,\mathbf{a}} \|_{L_x^2 L_{y z I}^\infty} +\|\nabla Q_{c,\mathbf{a}} \|_{L_x^2 L_{y z I}^\infty}) \|\zeta \|_{L_I^2 H_{\mathbf{x}}^1}  \lesssim 1.
$$
The estimates for $\mu_2$, $\mu_3$, $\mu_5$, and $\mu_6$ are more straightforward, since the terms $(\Lambda Q)_{c,\mathbf{a}}$ and  $(\nabla Q)_{c,\mathbf{a}}$ absorb derivatives.
\end{proof}

Thus, we have established that \eqref{E:comp2b} holds for $s=1$.    From here, the argument is similar but a bit easier, and we increment by half-derivatives recursively with Lemmas \ref{L:comp5}-\ref{L:comp6} below.

\begin{lemma}
\label{L:comp5}
Suppose  \eqref{E:comp1} holds, and \eqref{E:comp2b} holds for some $s\geq 1$.   Then for $|I|\leq 1$, 
\begin{equation}
\label{E:comp7}
\| \zeta \|_{L_I^2 H_{\mathbf{x}}^{s+\frac34}} \lesssim 1.
\end{equation}
\end{lemma}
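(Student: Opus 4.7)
The proof proceeds by the virial-gain strategy already used in Lemmas \ref{L:L2boost} and \ref{L:comp2}, applied to the $\zeta$-equation \eqref{E:zeta-1p}, with two crucial advantages over Lemma \ref{L:L2boost}: the nonlinearity in \eqref{E:zeta-1p} carries the small prefactor $B\ll 1$, and the input regularity assumption \eqref{E:comp2b} is uniform in $B$. Together, these will promote the $\tfrac14-\theta$ gain of Lemma \ref{L:L2boost} to the $\tfrac34$ gain claimed here.

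The starting point is the identity obtained by computing $-\tfrac12\partial_t\!\int(x-a_1)|P_N\zeta|^2\,d\mathbf{x}$ and plugging in \eqref{E:zeta-1p}. After integrating over $I$ with $|I|\leq 1$, this yields
$$
\|\nabla P_N\zeta\|_{L^2_IL^2_{\mathbf{x}}}^2\lesssim |H_1|+|H_2|+|H_3|+\int_{t_-}^{t_+}|G(t)|\,dt,
$$
where $H_1,H_2,H_3,G$ are exactly as in the proof of Lemma \ref{L:comp2}: $H_1$ is the boundary term, $H_2$ and $H_3$ are the contributions of $\partial_x(Q_{c,\mathbf{a}}\zeta)$ and $B\partial_x\zeta^2$, respectively, and $G$ collects the four remainder terms involving $(\Lambda Q)_{c,\mathbf{a}}$ and $(\nabla Q)_{c,\mathbf{a}}$.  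The target is the estimate $\int_I\|\nabla P_N\zeta\|_{L^2_{\mathbf{x}}}^2\,dt \lesssim N^{-2s-\varepsilon}$ uniform in $N$ and $B$: summed against $N^{2(s+3/4)-2}=N^{2s-1/2}$ this produces $\sum_{N\geq 1}\|P_N\zeta\|_{L^2_IH^{s+3/4}_{\mathbf{x}}}^2\lesssim 1$, which is \eqref{E:comp7}.

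For the boundary term $H_1$, Cauchy--Schwarz combined with the weighted interpolation \eqref{E:HR1}, the weight-frequency estimate \eqref{E:HR3}, and Bernstein's inequality, together with \eqref{E:comp1} and \eqref{E:comp2b}, yield
$|H_1|\lesssim \|\la x-a_1\ra^{1/\theta}\zeta\|_{L^\infty_IL^2_{\mathbf{x}}}^{\theta}\,\|\zeta\|_{L^\infty_IH^s_{\mathbf{x}}}^{2-\theta}\,N^{-s(2-\theta)}\lesssim N^{-2s-\varepsilon}$
provided $\theta>0$ is chosen small relative to $s$. For $H_2$, expand $\zeta=\sum_MP_M\zeta$ and invoke Lemma \ref{L:weight-est} to extract rapid decay in $\min(M/N,N/M)$ from the Schwartz character of $Q_{c,\mathbf{a}}$; summing yields faster-than-polynomial decay in $N$. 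The four terms in $G$ are handled analogously via the Schwartz decay of $(\Lambda Q)_{c,\mathbf{a}}$ and $(\nabla Q)_{c,\mathbf{a}}$ together with $|\omega_c|,|\boldsymbol{\omega}_{\mathbf{a}}|\lesssim 1$.

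The main obstacle is the nonlinear term $H_3=-2B\int_I\!\!\int(x-a_1)P_N\zeta\,\partial_xP_N(\zeta^2)\,d\mathbf{x}\,dt$, because the derivative on $\zeta^2$ eats into the Bernstein gain that is needed to reach the exponent $-2s-\varepsilon$. The plan for $H_3$ is to integrate $\partial_x$ by parts so that the weight $(x-a_1)$ is shifted onto $P_N\zeta$, then to apply H\"older in $L_I^\infty L_{\mathbf{x}}^2\cdot L_I^1L_{\mathbf{x}}^2$; the weighted derivative $\|(x-a_1)\partial_xP_N\zeta\|_{L^2_{\mathbf{x}}}$ is controlled through the commutator estimate \eqref{E:HR4} coupled with a weighted version of \eqref{E:comp2b} obtained by interpolating between \eqref{E:comp1} and \eqref{E:comp2b}, while $\|P_N(\zeta^2)\|_{L^2_{\mathbf{x}}}$ is controlled by a paraproduct decomposition against \eqref{E:comp2b}. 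The resulting bound carries an overall factor of $B\ll 1$, which is precisely what makes $H_3$ fit into the $N^{-2s-\varepsilon}$ budget. Dyadic summation then concludes the proof.
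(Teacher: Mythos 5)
Your overall frame (the weighted $P_N$ virial identity for \eqref{E:zeta-1p}, followed by dyadic summation against $N^{2s-\frac12}$) matches the paper, but the quantitative core of your plan---proving the per-frequency bound $\|\nabla P_N\zeta\|_{L_I^2L_{\mathbf{x}}^2}^2\lesssim N^{-2s-\varepsilon}$ uniformly in $N$ and $B$ from \eqref{E:comp1} and \eqref{E:comp2b} alone---cannot be carried out, and the two places where it fails are exactly $H_2$ and $H_3$. For $H_2$, Lemma \ref{L:weight-est} gives decay only in the off-diagonal ratio $\min(M/N,N/M)$; for the diagonal blocks $M\sim N$ (which dominate, since multiplication by the Schwartz function $Q_{c,\mathbf{a}}$ barely moves frequencies) there is no gain at all, and the hypotheses only give $\|P_N\zeta\|_{L_I^2L_{\mathbf{x}}^2}\lesssim N^{-s}$, so the diagonal contribution to $H_2$ is of size $N\|P_N\zeta\|_{L_I^2L_{\mathbf{x}}^2}^2\sim N^{1-2s}$, short of your target by a full power of $N$; your claim of ``faster-than-polynomial decay in $N$'' is therefore false. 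For $H_3$, the factor $B$ is a constant independent of $N$ and cannot buy any power of $N$: the best your weighted-commutator/paraproduct route extracts from \eqref{E:comp1} and \eqref{E:comp2b} is roughly $|H_3|\lesssim B\,N^{1-2s+}$, and $\sum_N N^{2s-\frac12}\cdot B N^{1-2s+}$ diverges. (Your $H_1$ estimate also has the correction with the wrong sign: \eqref{E:HR1} yields $N^{-s(2-\theta)}=N^{-2s+s\theta}$, not $N^{-2s-\varepsilon}$, although that piece is still summable once $s\theta<\tfrac12$.)

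The missing idea is absorption into the left-hand side rather than per-frequency decay. The paper first notes that $\sum_{N\geq1} N^{2s+\frac32}\|P_N\zeta\|_{L_I^2L_{\mathbf{x}}^2}^2$ is finite a priori by the qualitative bound \eqref{E:comp2}, which is what makes absorption legitimate. It then bounds $H_3\lesssim B\sum_{N'\gtrsim N}(N')^2\|P_{N'}\zeta\|_{L_I^2L_{\mathbf{x}}^2}^2$---keeping one factor of the unknown at the target regularity instead of estimating it by the hypotheses---so that after the dyadic sum the nonlinear contribution is $B$ times the left side of \eqref{E:comp16} and is absorbed because $B\ll1$. For $H_2$ it obtains, via Cauchy--Schwarz and the discrete Schur test, $\sum_N N^{2s-\frac12}|H_2|\lesssim\sum_N N^{2s+\frac12}\|P_N\zeta\|_{L_I^2L_{\mathbf{x}}^2}^2$, one power of $N$ below the left side: the low frequencies are bounded directly by \eqref{E:comp2b}, and the high frequencies are absorbed after splitting at a $\delta$-dependent threshold. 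Without this self-referential absorption (and the a priori finiteness that justifies it), the estimates you propose do not close.
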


\begin{proof}
We  know that 
$$
\sum_{N\geq 1} N^{2s+\frac32} \| P_N \zeta \|_{L_I^2 L_{\mathbf{x}}^2}  < \infty,
$$
so we just have to prove that it is bounded independently of $B$ and $I$, which is a key difference from the analysis here and that in \S \ref{S:higher-regularity}, and allows us to give a simpler argument here.  
By plugging in \eqref{E:zeta-1p}, we obtain
\begin{align*}
\partial_t \int (x-a_1) (P_N \zeta)^2 \, d\mathbf{x} 
&= -2 \int (x-a_1) \, P_N \zeta \, \partial_x \Delta P_N \zeta \, d\mathbf{x} \\
& \qquad - 4 \int (x-a_1) \, P_N \zeta \, \partial_x P_N ( Q_{c,\mathbf{a}} \zeta) \, d\mathbf{x} \\
& \qquad - 2 B \int (x-a_1) \, P_N \zeta \, \partial_x P_N ( \zeta^2) \, d \mathbf{x} + G(t),
\end{align*}
where
\begin{align*}
G(t) &= -2 c^{-2} \la \zeta, f_{c,\mathbf{a}}\ra  \int (x-a_1) \, P_N \zeta \, P_N (\Lambda Q)_{c,\mathbf{a}} \, d\mathbf{x} \\
& \qquad -2 c^{-2} \la \zeta, \mathbf{g}_{c,\mathbf{a}} \ra  \cdot  \int (x-a_1) \, P_N \zeta \, P_N (\nabla Q)_{c,\mathbf{a}} \, d\mathbf{x}  \\
& \qquad + 2B \omega_c \int (x-a_1) \, P_N\zeta \, ( \Lambda Q)_{c,\mathbf{a}} \, d\mathbf{x} \\
& \qquad + 2B \omega_{\mathbf{a}} \cdot \int (x-a_1) \, P_N \zeta \, (\nabla Q)_{c,\mathbf{a}} \, d \mathbf{x}.
\end{align*}
Simplifying the term $-2 \int (x-a_1) \, P_N \zeta \, \partial_x \Delta P_N \zeta \, d\mathbf{x}$ (the first term on the right) via integration by parts, moving it over to the left, and integrating in time over $I=[t_-,t_+]$, we obtain
$$
N^2 \| P_N \zeta \|_{L_I^2 L_{\mathbf{x}}^2}^2 \lesssim \int_I\int_{\mathbf{x}} [3 (\partial_x P_N \zeta)^2 + (\partial_y P_N\zeta)^2]  \, d\mathbf{x} \, dt = H_1+H_2+H_3 + \int_{t_-}^{t_+}G(t) \,dt,
$$
where
\begin{align*}
H_1 &\defeq \int_{\mathbf{x}} (x-a_1) (P_N \zeta)^2 \, d\mathbf{x}  \Big|^{t=t_+}_{t=t_-},  \\
H_2 &\defeq - 4 \int_I \int_{\mathbf{x}} (x-a_1) \, P_N \zeta \, \partial_x P_N ( Q_{c,\mathbf{a}} \zeta) \, d\mathbf{x} \, dt, \\
H_3 &\defeq - 2 B \int_I \int_{\mathbf{x}} (x-a_1) \, P_N \zeta \, \partial_x P_N ( \zeta^2) \, d \mathbf{x} \, dt.
\end{align*}
Multiply by $N^{2s-\frac12}$ and sum over dyadic $N\geq 1$, to obtain
\begin{equation}
\label{E:comp16}
\sum_{N\geq 1} N^{2s+\frac32} \|P_N \zeta\|_{L_I^2 L_{\mathbf{x}}^2}^2 \lesssim \sum_{j=1}^3 \sum_{N\geq 1} N^{2s-\frac12} H_j + \sum_{N\geq 1} N^{2s-\frac12} \|G\|_{L_I^1}.
\end{equation}
Let us first focus on the term 
$$
H_3 =  2 B \int_I \int_{\mathbf{x}}  \, P_N \zeta \, \partial_x P_N ( \zeta^2) \, d \mathbf{x} \, dt + 2 B \int_I \int_{\mathbf{x}} (x-a_1) \, \partial_x P_N \zeta \,  P_N ( \zeta^2) \, d \mathbf{x} \, dt.
$$
Using \eqref{E:comp15},
\begin{align*}
H_3 &\lesssim B \| \la x-a_1\ra P_N ( P_{\lesssim N} \zeta \, P_N \zeta) \, \partial_x P_N \zeta \|_{L_I^1 L_{\mathbf{x}}^1} \\
& \qquad + B \sum_{N'\geq N}\| \la x-a_1\ra P_N ( P_{N'} \zeta \, P_{N'} \zeta) \, \partial_x P_N \zeta \|_{L_I^1 L_{\mathbf{x}}^1}.
\end{align*}
Consider the first term on the right side of the above estimate.  Since $\la x \ra P_N \la x \ra^{-1}$ is an $L^2_{\mathbf{x}} \to L^2_{\mathbf{x}}$ bounded operator with operator norm $\lesssim 1$ (independent of $N\geq 1$),
\begin{align*}
H_{31} &\lesssim B\| \la x-a_1 \ra P_{\lesssim N} \zeta \, \, P_N \zeta \|_{L_I^2 L_{\mathbf{x}}^2} \, \| \partial_x P_N \zeta\|_{L_I^2 L_{\mathbf{x}}^2}\\
&\lesssim B \| \la x-a_1\ra P_{\lesssim N} \zeta \|_{L_I^\infty L_{\mathbf{x}}^3} \, \|P_N \zeta \|_{L_I^2 L_{\mathbf{x}}^6} \, \| \partial_x P_N \zeta \|_{L_I^2 L_\mathbf{x}^2}.
\end{align*}
By the Bernstein inequality and the fact that $\| \la x-a_1\ra P_{\lesssim N} \zeta \|_{L_I^\infty L_{\mathbf{x}}^3} \lesssim 1$ by the hypotheses (since $s>\frac12$), we get
$$
H_{31} \lesssim B N^2 \|P_N \zeta \|_{L_I^2 L_\mathbf{x}^2}^2.
$$
A similar analysis of the other term gives
$$
H_{32} \lesssim B \sum_{N'\geq N} (N')^2 \|P_{N'} \zeta \|_{L_I^2 L_\mathbf{x}^2}^2.
$$
Thus,
$$ 
H_3 \lesssim  B \sum_{N' \gtrsim N} (N'^2)  \| P_{N'} \zeta \|_{L_I^2 L_{\mathbf{x}}^2}^2. 
$$
By reversing the order of the double sum (sum in $N$ and sum in $N'$), we obtain
$$
\sum_{N \geq 1} N^{2s-\frac12} H_3 \lesssim B\sum_{N' \geq 1} (N')^{2s+\frac32} \|P_{N'}\zeta\|_{L_I^2L_{\mathbf{x}}^2}^2.
$$
Since $B\ll 1$, this term can be absorbed back on the left in \eqref{E:comp16}.   

The term $H_2$ is handled as in Lemma \ref{L:comp2}.  
$$
|H_2| \lesssim N \|P_N \zeta \|_{L_I^2L_{\mathbf{x}}^2} \| (x-a_1) \tilde P_N ( Q_{c,\mathbf{a}} \zeta) \|_{L_I^2 L_{\mathbf{x}}^2},
$$
where $\tilde P_N$ is a Littlewood-Paley multiplier different from $P_N$.
By Lemma \ref{L:weight-est},
$$
\| (x-a_1) P_N( Q_{c,\mathbf{a}} P_M \zeta) \|_{L_{\mathbf{x}}^2} \lesssim \min\left( \frac{M}{N}, \frac{N}{M}\right)^{s+1} \|P_M \zeta \|_{L_{\mathbf{x}}^2}.
$$
Thus, upon expanding $\zeta = \sum_{M \geq 1} P_M \zeta$, we obtain
$$
|H_2| \lesssim N \| P_N \zeta \|_{L_{\mathbf{x}}^2}  \sum_{M\geq 1} \min\left( \frac{M}{N}, \frac{N}{M}\right)^{s+1}\| P_M \zeta \|_{L_{\mathbf{x}}^2}.
$$
By Cauchy-Schwarz and the discrete Schur test applied to the kernel 
$$
K(M,N) = N^{s+\frac14} \min(MN^{-1}, NM^{-1})^{s+1} M^{-s-\frac14} \leq \min( N^{-1}M, N^{2s+1}M^{-(2s+1)}),
$$
we obtain
$$
\sum_{N\geq 1} N^{2s-\frac12}|H_2|  \lesssim \sum_{N\geq 1} N^{2s+\frac12} \|P_N \zeta\|_{L_{\mathbf{x}}^2}^2.
$$
This term is easy to absorb for $N \gg 1$, but for $N \lesssim 1$, it is trivially bounded.  Specifically, for $0< \delta \ll 1$ small but independent of $N$,
\begin{align*}
\sum_{N\geq 1} N^{2s-\frac12} H_2 &\lesssim \sum_{1\leq N\leq \log_2 \delta^{-1}} N^{2s+\frac12}  \|P_N \zeta \|_{L_I^2 L_{\mathbf{x}}^2}^2 + \sum_{N\geq \log_2 \delta^{-1}} N^{2s+\frac12}  \|P_N \zeta \|_{L_I^2 L_{\mathbf{x}}^2}^2 \\
&\lesssim \delta^{-1/2}|I|^{1/2}\sum_{1\leq N\leq \log_2 \delta^{-1}} N^{2s}  \|P_N \zeta \|_{L_I^\infty L_{\mathbf{x}}^2}^2 + \delta \sum_{N\geq \log_2 \delta^{-1}} N^{2s+\frac32}  \|P_N \zeta \|_{L_I^2 L_{\mathbf{x}}^2}^2.
\end{align*}
For $\delta$ sufficiently small, the second term can be absorbed on the left in \eqref{E:comp16}.

For $H_1$ we use
\begin{align*}
H_1 &\leq \|\la x-a_1 \ra P_N \zeta \|_{L_I^\infty L_{\mathbf{x}}^2} \|P_N \zeta \|_{L_I^\infty L_{\mathbf{x}}^2} \\
& \lesssim \| \la x-a_1 \ra^{1/\theta} P_N \zeta \|_{L_I^\infty L_{\mathbf{x}}^2}^\theta \|P_N \zeta \|_{L_I^\infty L_\mathbf{x}^2}^{2-\theta} \\
& \lesssim  N^{-s(2-\theta)} ( N^s \|P_N \zeta \|_{L_I^\infty L_{\mathbf{x}}^2})^{2-\theta},
\end{align*}
and therefore,
$$
\sum_{N \geq 1} N^{2s-\frac12} H_1  \lesssim \sum_{N \geq 1} N^{-\frac12 + s\theta} ( N^s \|P_N \zeta \|_{L_{\mathbf{x}}^2} )^{2-\theta}.
$$
Since by hypothesis $N^s \|P_N \zeta \|_{L_{\mathbf{x}}^2} \leq 1$, the above sum evaluates to $\lesssim 1$, provided we take $\theta< \frac{1}{2s}$. Thus, this contributes a constant term to the right side of \eqref{E:comp16}.  Finally, the terms in $G$ are straightforward to bound in \eqref{E:comp16}, using 
$$
\| \la x-a_1 \ra P_N q \|_{L_{\mathbf{x}}^2} \lesssim \|\tilde P_N \la x -a_1 \ra q \|_{L_{\mathbf{x}}^2},
$$
where $\tilde P_N$ is a new Littlewood-Paley multiplier, and that for all $\omega>0$,
$$
\| \tilde P_N [ \la x- a_1 \ra(\Lambda Q)_{x,\mathbf{a}}] \|_{L_{\mathbf{x}}^2} \lesssim_\omega N^{-\omega} \,, \qquad \| \tilde P_N [\la x- a_1 \ra (\nabla Q)_{x,\mathbf{a}}] \|_{L_{\mathbf{x}}^2} \lesssim_\omega N^{-\omega}.
$$ 
\end{proof}

\begin{lemma}
\label{L:comp6}
Suppose  $|I|\leq 1$,  \eqref{E:comp1} holds, \eqref{E:comp2b} holds for some $s\geq 1$, and thus, \eqref{E:comp7} holds.  Then
$$
\| \zeta \|_{L_I^\infty H_{\mathbf{x}}^{s+\frac12}} \lesssim 1,
$$
i.e., \eqref{E:comp2b} holds for $s \mapsto s+\frac12$.
\end{lemma}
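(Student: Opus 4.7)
The plan is to run the same kind of argument as in Lemma~\ref{L:comp4}, one regularity level higher, leveraging the inductive hypothesis \eqref{E:comp2b} at level $s$ together with the gain \eqref{E:comp7} from Lemma~\ref{L:comp5}. Since the claimed bound is uniform in $I$ and independent of $B$, it suffices to treat the case $|I|=1$ (the global-in-time bound then follows by covering $\mathbb{R}$ with unit intervals). I would fix such $I$ and, applying Chebyshev's inequality to $\|\zeta\|_{L^2_I H^{s+3/4}_{\mathbf{x}}} \lesssim 1$, choose $t_0 \in I$ with $\|\zeta(t_0)\|_{H^{s+3/4}_{\mathbf{x}}} \lesssim 1$, which in particular yields $\|\zeta(t_0)\|_{H^{s+1/2}_{\mathbf{x}}} \lesssim 1$.

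From the Duhamel representation
\[
\zeta(t) = U(t-t_0)\zeta(t_0) + \sum_{j=1}^6 \int_{t_0}^t U(t-t')F_j(t')\,dt',
\]
with the forcing terms $F_j$ decomposed as in \eqref{E:comp9b}, the linear part satisfies $\|U(t-t_0)\zeta(t_0)\|_{L_I^\infty H^{s+1/2}_{\mathbf{x}}} = \|\zeta(t_0)\|_{H^{s+1/2}_{\mathbf{x}}} \lesssim 1$. Each $F_j$ has the form $\partial_x G_j$, so \eqref{E:HR21b} gives
\[
\Bigl\|P_N\!\int_{t_0}^t U(t-t')F_j(t')\,dt'\Bigr\|_{L_I^\infty L^2_{\mathbf{x}}} \lesssim \|P_N G_j\|_{L^1_x L^2_{yzI}},
\]
and after multiplying by $N^{s+1/2}$ and $\ell^2$-summing in dyadic $N\geq 1$, the task reduces to controlling $\sum_N N^{2s+1}\|P_N G_j\|^2_{L^1_xL^2_{yzI}}$ for each $j$. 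For the key nonlinear term $G_4 = -B\zeta^2$ I would employ the paraproduct \eqref{E:comp15} together with H\"older, mimicking the $\mu_4$ estimate in Lemma~\ref{L:comp4}:
\[
\|P_N(\zeta^2)\|_{L^1_xL^2_{yzI}} \lesssim \|P_{\lesssim N}\zeta\|_{L^2_xL^\infty_{yzI}}\|P_N\zeta\|_{L^2_{\mathbf{x}I}} + \sum_{N'\gtrsim N}\|P_{N'}\zeta\|_{L^2_xL^\infty_{yzI}}\|P_{N'}\zeta\|_{L^2_{\mathbf{x}I}},
\]
using \eqref{E:comp7} for the $L^2_{\mathbf{x}I}$ factor and a higher-regularity analogue of \eqref{E:comp5} for the maximal-function factor. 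The smallness of $B$ once again supplies the margin needed to close the sum, exactly as in \eqref{E:comp12}. The term $G_1=-2Q_{c,\mathbf{a}}\zeta$ is benign by a standard paraproduct and the smoothness and exponential decay of $Q_{c,\mathbf{a}}$, while $G_2,G_3,G_5,G_6$, which involve $(\Lambda Q)_{c,\mathbf{a}}$ or $(\nabla Q)_{c,\mathbf{a}}$, have $P_N$ pieces decaying faster than any polynomial and so contribute trivially.

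The main obstacle lies in establishing the appropriate $L^2_xL^\infty_{yzI}$ estimate for $P_N\zeta$ at the new regularity level, namely a version of \eqref{E:comp5} that improves the crude $(\ln^+N)^4|I|^{-1/2}$ bound when paired with the decay $\|P_N\zeta\|_{L^2_{\mathbf{x}I}} \lesssim N^{-(s+3/4)}$. This requires rerunning the closed bootstrap of Lemma~\ref{L:comp3}, but now fed with the stronger linear contribution $\|P_N U(t-t_0)\zeta(t_0)\|_{L^2_xL^\infty_{yzI}}\lesssim (\ln^+N)^2 N^{1-s}$, coming from \eqref{E:HR20} applied to $\zeta(t_0)\in H^s_{\mathbf{x}}$. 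Once this is in hand, the summability of $\sum_N N^{2s+1}\|P_N G_j\|^2_{L^1_xL^2_{yzI}}$ follows, completing the bootstrap at level $s+\tfrac12$; the result is then iterated together with Lemma~\ref{L:comp5} in half-derivative steps, starting from the base case $s=1$ of Lemma~\ref{L:comp4}, to reach any prescribed regularity.
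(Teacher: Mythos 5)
Your proposal is correct and follows essentially the route the paper intends: the paper's own proof is just the remark that the lemma follows ``pretty quickly'' from Lemma \ref{L:comp5}, Lemma \ref{L:comp3} and the Ribaud--Vento estimates, and your Duhamel argument with \eqref{E:HR21b}, the paraproduct \eqref{E:comp15}, the maximal bound \eqref{E:comp5} and the gain \eqref{E:comp7} is exactly that argument written out. One remark: the step you flag as the ``main obstacle'' (rerunning the bootstrap of Lemma \ref{L:comp3} to upgrade \eqref{E:comp5}) is not needed, since the target is only $H^{s+\frac12}$ while \eqref{E:comp7} gives $\|P_{N'}\zeta\|_{L_I^2L_{\mathbf{x}}^2}\lesssim (N')^{-(s+\frac34)}$, so the quarter-derivative surplus absorbs the $(\ln^+ N)^4\delta^{-1/2}$ loss in \eqref{E:comp5} and the weighted sums close as they stand.
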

\begin{proof}
This is pretty quickly done using the result of Lemma \ref{L:comp5} together with Lemma \ref{L:comp3} and the Ribaud-Vento well-posedness estimates.
\end{proof}

\section{Convergence of $w_n = \tilde \epsilon_n/B_n$ to $w$}
\label{S:convergence}

In this section, we prove Lemma \ref{L:convergence}.  Recall the setup from \S\ref{S:introduction}.  
Associated to $\tilde u_n$ are the parameters $\tilde c_n(t)$, $\tilde{\mathbf{a}}_n(t)$,  remainder $\tilde{\epsilon}_n(\mathbf{x},t)$,  and 
$$
b_n(t) = \|\tilde{\epsilon}_n(\mathbf{x},t) \|_{L_{\mathbf{x}}^2} \,, \qquad B_n = \|b_n(t) \|_{L_t^\infty}.
$$
The sequence has been shifted in time to arrange that $b_n(0) \geq \frac12 B_n$, and scaled and shifted in space to arrange that
$$\tilde c_n(0)=1 \quad \mbox{and} \quad \tilde{\mathbf{a}}_n(0) = 0.$$
As in \S\ref{S:geom-decomp}, we denote
\begin{equation}
\label{E:tilzeta}
\tilde{\eta}_n(\mathbf{x},t) = \tilde c_n^{\,-2} \,\tilde\epsilon_n ( \tilde c_n^{\,-1}\mathbf{x}(x-\tilde{\mathbf{a}}_n),t) \,, \qquad \tilde{\zeta}_n= B_n \tilde{\eta}_n.
\end{equation}
Note that
\begin{equation}
\label{E:con4}
\| \tilde \zeta_n(0) \|_{L_{\mathbf{x}}^2} = \frac{\| \tilde \epsilon_n(0) \|_{L_{\mathbf{x}}^2}}{B_n} = \frac{b_n(0)}{B_n} \geq \frac12.
\end{equation}
By Lemma \ref{L:ep-decay},
\begin{equation}
\label{E:con2}
\| \tilde \zeta_n(0) \|_{L_{\mathbf{x}}^2(|\mathbf{x}|\geq r)} \leq e^{-\delta r},
\end{equation}
and by Lemma \ref{L:ep-comparability}, for all $k \geq 0$,
\begin{equation}
\label{E:con3}
\| \tilde \zeta_n \|_{L_t^\infty H_{\mathbf{x}}^k} \lesssim_k 1.
\end{equation}
By \eqref{E:con2}, \eqref{E:con3} and the Rellich-Kondrachov theorem, we can pass to a subsequence (still indexed by $n$) so that
$$\tilde{\zeta}_n(0) \to \zeta_\infty(0)$$
strongly in $H_{\mathbf{x}}^k$, for every $k\geq 0$ (this is the \emph{definition} of $\zeta_\infty(0)$).  By \eqref{E:con4}, we have
$$\|\zeta_\infty(0)\|_{L_{\mathbf{x}}^2} \geq \frac12.$$
From \S \ref{S:geom-decomp}, \eqref{E:zeta-1} and \eqref{E:omegas}, we have
\begin{equation}
\label{E:con6}
\begin{aligned}[t]
\partial_t \tilde{\zeta}_n &= - \partial_x \Delta \tilde{\zeta}_n - 2 \partial_x ( Q_{\tilde c_n,\tilde{\mathbf{a}}_n} \tilde{\zeta}_n) + \tilde c_n^{-2} \la \tilde{\zeta}_n, f_{\tilde c_n,\tilde{\mathbf{a}}_n}\ra (\Lambda Q)_{\tilde c_n,\tilde{\mathbf{a}}_n} \\
& \qquad + \tilde c_n^{-2}\la \tilde{\zeta}_n, \mathbf{g}_{\tilde c_n,\tilde{\mathbf{a}}_n} \ra \cdot (\nabla Q)_{\tilde c_n,\tilde{\mathbf{a}}_n} - B_n \partial_x \tilde{\zeta}_n^2 + B_n \omega_{\tilde c_n} (\Lambda Q)_{\tilde c_n,\tilde{\mathbf{a}}_n} \\
& \qquad + B_n\boldsymbol{\omega}_{\tilde{\mathbf{a}}_n} \cdot (\nabla Q)_{{\tilde c_n},\tilde{\mathbf{a}}_n}.
\end{aligned}
\end{equation}

\begin{lemma}
\label{L:param-conv}
On $[-T,T]$, we have
$$
|\tilde c_n -1 | \lesssim TB_n \quad \mbox{and} \quad |\tilde{\mathbf{a}}_n - t \mathbf{i}| \lesssim \la T\ra^2 B_n.
$$
Consequently, if $F(\mathbf{x})$ is smooth, for any $k\geq 0$
$$
\| F_{\tilde c_n, \tilde{\mathbf{a}}_n} - F_{1,t \mathbf{i}} \|_{L_T^\infty H_{\mathbf{x}}^k} \lesssim_k \la T \ra^2 B_n.
$$
\end{lemma}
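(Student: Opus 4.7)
The plan is to read off the parameter dynamics directly from Lemma \ref{L:geom-decomp} applied to $\tilde u_n$, use the normalization $\tilde c_n(0)=1$, $\tilde{\mathbf{a}}_n(0)=0$, and integrate in time. By Lemma \ref{L:geom-decomp}, the parameters associated to $\tilde u_n$ satisfy
$$
|\tilde c_n^{\,2} \tilde c_n' - \la \tilde \epsilon_n, f\ra| \lesssim b_n(t)^2, \qquad |\tilde c_n(\tilde{\mathbf{a}}_n' - \tilde c_n^{\,-2}\mathbf{i}) - \la \tilde\epsilon_n, \mathbf{g}\ra| \lesssim b_n(t)^2.
$$
Since $f,\mathbf{g}$ are fixed Schwartz-class functions and $\|\tilde \epsilon_n\|_{L_{\mathbf{x}}^2} = b_n(t) \leq B_n$, the inner products are $O(B_n)$ and the quadratic remainders are $O(B_n^2) = O(B_n)$. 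Combined with $|\tilde c_n-1|\lesssim \alpha_n \ll 1$ from orbital stability, this yields the pointwise bounds $|\tilde c_n'(t)| \lesssim B_n$ and $|\tilde{\mathbf{a}}_n'(t) - \tilde c_n(t)^{-2}\mathbf{i}| \lesssim B_n$ uniformly in $t$.

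First I would integrate the scalar ODE for $\tilde c_n$ from $0$ to any $t \in [-T,T]$, using $\tilde c_n(0)=1$, which gives $|\tilde c_n(t) - 1| \lesssim TB_n$ directly. Next I would insert this into the $\tilde{\mathbf{a}}_n$ equation: Taylor expanding gives $\tilde c_n(t)^{-2} = 1 + O(|\tilde c_n(t)-1|) = 1 + O(TB_n)$, so
$$
\tilde{\mathbf{a}}_n'(t) = \mathbf{i} + O(TB_n) + O(B_n) = \mathbf{i} + O(\la T\ra B_n).
$$
Integrating from $0$ with $\tilde{\mathbf{a}}_n(0)=0$ gives $|\tilde{\mathbf{a}}_n(t) - t\mathbf{i}| \lesssim \la T\ra^2 B_n$, completing the first two bounds.

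For the Sobolev estimate on $F_{\tilde c_n, \tilde{\mathbf{a}}_n} - F_{1, t\mathbf{i}}$, I would use the fundamental theorem of calculus along the straight-line path $\gamma(s) = (1-s)(1,t\mathbf{i}) + s(\tilde c_n,\tilde{\mathbf{a}}_n)$ in parameter space, obtaining
$$
F_{\tilde c_n,\tilde{\mathbf{a}}_n} - F_{1, t\mathbf{i}} = (\tilde c_n - 1)\int_0^1 (\partial_c F_{c,\mathbf{a}})\big|_{\gamma(s)} \, ds + (\tilde{\mathbf{a}}_n - t\mathbf{i}) \cdot \int_0^1 (\nabla_{\mathbf{a}} F_{c,\mathbf{a}})\big|_{\gamma(s)} \, ds.
$$
Since $F \in \mathcal{S}$, explicit differentiation of $F_{c,\mathbf{a}}(\mathbf{x}) = c^{-2}F(c^{-1}(\mathbf{x}-\mathbf{a}))$ in $c$ and $\mathbf{a}$ produces smooth Schwartz-class functions whose $H_{\mathbf{x}}^k$ norm is bounded uniformly as long as the scale parameter remains in a fixed compact subinterval of $(0,\infty)$, which is guaranteed by $|\tilde c_n-1| \lesssim TB_n \ll 1$. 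Taking the $H_{\mathbf{x}}^k$ norm and applying the parameter bounds finishes the proof.

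The only mildly delicate step is ensuring the $H^k$-boundedness of the derivatives of $F_{c,\mathbf{a}}$ uniformly along the path, but this is immediate since translations are $H^k$-isometries and the scale parameter stays within a compact range; there is no genuine obstacle, and the lemma reduces to an elementary ODE comparison together with a one-line Taylor expansion in the modulation parameters.
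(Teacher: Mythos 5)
Your proof is correct and follows essentially the same route as the paper, whose proof is simply to invoke the modulation ODE bounds of Lemma \ref{L:ODE-bounds} (equivalently the parameter estimates in Lemma \ref{L:geom-decomp}), integrate from the normalization $\tilde c_n(0)=1$, $\tilde{\mathbf{a}}_n(0)=0$, and conclude the $H^k$ bound by differentiating $F_{c,\mathbf{a}}$ in the parameters. The only cosmetic point is that $\tilde c_n\sim 1$ uniformly in time comes from mass conservation plus the normalization at $t=0$ (or a trivial bootstrap in $t$) rather than from orbital stability alone, and ``$F$ smooth'' should be read, as in context, as smooth with decay so that the $H^k$ norms of $F_{c,\mathbf{a}}$ and its parameter derivatives are finite.
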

\begin{proof}
This follows from Lemma \ref{L:ODE-bounds}.
\end{proof}

By making the formal substitutions
$$\tilde{c}_n \to 1 \,, \quad \tilde{\mathbf{a}}_n \to t \mathbf{i}\,, \quad  \tilde{\zeta}_n \to \zeta_\infty \,, \quad F_{\tilde c_n, \tilde{\mathbf{a}}_n} \to F_{1,t\mathbf{i}}\,, \quad B_n \to 0,$$
where $F$ takes the place of $\Lambda Q$, $\nabla Q$, $Q$, $f$, or $\mathbf{g}$, we obtain that the expected limit $\zeta_\infty(t)$ of $\tilde{\zeta}_n(t)$ should solve
\begin{equation}
\label{E:con1}
\partial_t \zeta_\infty = - \partial_x \Delta \zeta_\infty - 2 \partial_x ( Q_{1,t \,\mathbf{i}} \zeta_\infty) +  \la \zeta_\infty , f_{1,t \,\mathbf{i}} \ra (\Lambda Q)_{1,t \mathbf{i}} + \la \zeta_\infty, \mathbf{g}_{1,t\,\mathbf{i}} \ra \cdot (\nabla Q)_{1,t \,\mathbf{i}} .
\end{equation}

Let $\zeta_\infty$ solve \eqref{E:con1} with initial condition $\zeta_\infty(0)$. [The well-posedness of \eqref{E:con1} can be proved in $C([-T,T];H_{\mathbf{x}}^k)$ using the Ribaud \& Vento estimates.]  We prove that, for each $T>0$ and each $k\geq 0$, 
\begin{equation}
\label{E:con8}
\tilde{\zeta}_n \to \zeta_\infty \text{ in } C([-T,T]; H_{\mathbf{x}}^k)
\end{equation}
as follows.  Let 
$$
\hat \zeta_n \defeq \tilde{\zeta}_n -  \zeta_\infty \quad \mbox{and} \quad \hat F_n = F_{\tilde{c}_n, \tilde{\mathbf{a}}_n} - F_{1,t\mathbf{i}},
$$
where $F$ takes the place of $\Lambda Q$, $\nabla Q$, $Q$, $f$, and $\mathbf{g}$.  In \eqref{E:con6}, for all terms without a $B_n$ coefficient, start by substituting 
$$F_{\tilde{c}_n, \tilde{\mathbf{a}}_n} = \hat F_n + F_{1,t\mathbf{i}}$$
to obtain
\begin{equation}
\label{E:con20}
\partial_t \tilde \zeta_n = - \partial_x \Delta \tilde \zeta_n - 2 \partial_x ( Q_{1,t\mathbf{i}} \tilde \zeta_n) + \la \tilde \zeta_n, f_{1,t\mathbf{i}} \ra (\Lambda Q)_{1,t\mathbf{i}} +  \la \tilde \zeta_n, \mathbf{g}_{1,t\mathbf{i}} \ra \cdot (\nabla Q)_{1, t \mathbf{i}} + G_n,
\end{equation}
where 
\begin{align*}
G_n = &-2\partial_x ( \hat Q_n \tilde \zeta_n) \\
&\hspace{5mm} +( \tilde c_n^{-2}-1) \la \tilde \zeta_n, f_{\tilde c_n, \tilde{\mathbf{a}}_n} \ra (\Lambda Q)_{\tilde c_n, \tilde{\mathbf{a}}_n} + \la \tilde \zeta_n, \hat f_n \ra (\Lambda Q)_{\tilde c_n, \tilde{\mathbf{a}}_n} + \la \tilde \zeta_n, f_{1,t \,\mathbf{i}} \ra \widehat{\Lambda Q}_n \\
&\hspace{5mm} +( \tilde c_n^{-2}-1) \la \tilde \zeta_n, \mathbf{g}_{\tilde c_n, \tilde{\mathbf{a}}_n} \ra \cdot (\nabla Q)_{\tilde c_n, \tilde{\mathbf{a}}_n} + \la \tilde \zeta_n, \hat{\mathbf{g}}_n \ra \cdot (\nabla Q)_{\tilde c_n, \tilde{\mathbf{a}}_n} + \la \tilde \zeta_n, \mathbf{g}_{1,t\,\mathbf{i}} \ra \cdot \widehat{\nabla Q}_n  \\
&\hspace{5mm} - B_n \partial_x \tilde{\zeta}_n^2 + B_n \omega_{\tilde c_n} (\Lambda Q)_{\tilde c_n,\tilde{\mathbf{a}}_n}  + B_n\boldsymbol{\omega}_{\tilde{\mathbf{a}}_n} \cdot (\nabla Q)_{{\tilde c_n},\tilde{\mathbf{a}}_n}.
\end{align*}
Since each term involves either $\tilde c_n -1$, $\hat F_n$, or a $B_n$ coefficient, Lemma \ref{L:param-conv} and \eqref{E:con3} implies
$$\|G_n \|_{H^k} \lesssim_k \la T \ra^2 B_n $$
for all $k\in \mathbb{N}$.    Taking the difference between \eqref{E:con20} and \eqref{E:con1}, we get
\begin{equation}
\label{E:con21}
\partial_t  \hat \zeta_n = - \partial_x \Delta  \hat \zeta_n - 2 \partial_x ( Q_{1,t\mathbf{i}}  \hat \zeta_n) + \la  \hat \zeta_n, f_{1,t\mathbf{i}} \ra (\Lambda Q)_{1,t\mathbf{i}} +  \la  \hat \zeta_n, \mathbf{g}_{1,t\mathbf{i}} \ra \cdot (\nabla Q)_{1, t \mathbf{i}} + G_n.
\end{equation}
We then compute
$$
\partial_t \| \nabla^k \hat \zeta_n \|_{L^2_{\mathbf{x}}}^2,
$$
then simplify with integration by parts, and apply Gronwall's inequality, to obtain
$$
\| \nabla^k \hat \zeta_n \|_{L_{[-T,T]}^\infty L^2_{\mathbf{x}}}^2 \lesssim e^{CT} (\| \nabla^k \hat \zeta_n(0) \|_{L_{[-T,T]}^\infty L^2_{\mathbf{x}}}^2+ B_n).
$$
Consequently, \eqref{E:con8} holds.  By \eqref{E:con3}, it follows that
\begin{equation}
\label{E:con10}
\| \zeta_\infty \|_{L_t^\infty H_{\mathbf{x}}^k} \lesssim_k 1.
\end{equation}
Note that
$$
w_n(\mathbf{x},t) = \frac{\tilde \epsilon_n(\mathbf{x},t)}{B_n}  = \tilde c_n^2 \tilde \zeta_n( \tilde c_n\mathbf{x}+ \tilde{\mathbf{a}}_n, t).
$$
Let
$$w(\mathbf{x},t) \defeq \zeta_\infty(\mathbf{x}+t \mathbf{i},t).$$
Then \eqref{E:con8} implies 
\begin{equation}
\label{E:con9}
w_n \to w \text{ in } C([-T,T]; H_{\mathbf{x}}^k)
\end{equation}
and \eqref{E:con10} implies
\begin{equation}
\label{E:con11}
\|w \|_{L_t^\infty H_{\mathbf{x}}^k} \lesssim_k 1.
\end{equation}
By Lemma \ref{L:ep-decay}, we have
$$
\| w_n \|_{L_{\mathbf{x}}^2(|\mathbf{x}| \geq r)} \lesssim e^{-\delta r}.
$$
By \eqref{E:con9}, we obtain
$$
\| w \|_{L_{\mathbf{x}}^2(|\mathbf{x}| \geq r)} \lesssim e^{-\delta r}.
$$
The equation \eqref{E:con1} converts to the equation for $w$ in the statement of Lemma \ref{L:convergence}.  Moreover, since $\tilde \epsilon_n$ satisfies the orthogonality conditions for each $n$, $w_n$ also satisfies them, and hence, the limit $w$ does as well.  This completes the proof of Lemma \ref{L:convergence}.  

\section{Proof of the linear Liouville lemma assuming the viral estimate}
\label{S:linear-Liouville}

In this section, we prove Lemma \ref{L:linear-Liouville}, the linear Liouville theorem.  We first note that 
\begin{equation}
\label{E:w1}
\partial_t \left[ \la \mathcal{L}w,w \ra + \frac{2}{\la \Lambda Q, Q \ra} \la w, Q \ra^2 \right] = 0,
\end{equation}
which follows from a straightforward computation substituting the equation \eqref{E:w-eq} for $w$ and applying the orthogonality conditions \eqref{E:extra-orth}.    This of course means that the expression $\ds \la \mathcal{L}w, w \ra + \frac{2}{\la \Lambda Q, Q \ra} \la w, Q \ra^2$ is constant in time. 

We observe that from the definition of $\mathcal{L}$ and integration by parts 
\begin{equation}
\label{E:w2}
\int_{t=-\infty}^{+\infty} \left( \la \mathcal{L}w,w \ra + \frac{2}{\la \Lambda Q, Q \ra} \la w, Q \ra^2 \right) \, dt 
\lesssim \| w\|_{L^2_t H^1_{\bf x}}^2 \,.
\end{equation}

Lemma \ref{L:v-virial} (proved in the next section) shows that for the dual problem $v = \mathcal{L}w$ we have the estimate
$$
\| v \|_{L_t^2 H_{\bf x}^1} \lesssim \| \la x \ra^{1/2} v \|_{L_t^\infty L_{\bf x}^2},
$$ 
which by Lemma \ref{L:conversion} implies the following bound for $w$: 
$$
\|  w \|_{L_t^2 H_{\bf x}^1} \leq \|w\|_{L_t^2 H_{\bf x}^3} \lesssim \| \la x \ra^{1/2} w \|_{L_t^\infty H_{\bf x}^2},
$$ 
which is finite by \eqref{E:w-dec}. Thus, the last term in \eqref{E:w2} is bounded, and hence, the integrand in the left-hand side of \eqref{E:w2} given by $\ds \la \mathcal{L}w, w \ra + \frac{2}{\la \Lambda Q, Q \ra} \la w, Q \ra^2$, which is constant in time, must be zero.   
Since $\la \Lambda Q, Q \ra =\frac12 \|Q\|^2_{L^2} > 0$ (subcritical case), the quantity is positive definite, and we conclude that both
$$
\la \mathcal{L}w, w \ra = 0 \quad \mbox{and} \quad \la  w, Q \ra = 0.
$$
By the orthogonality conditions, $\mathcal{L}$ is strictly positive definite, which implies that $w\equiv 0$.

\section{Proof of the viral estimate}
\label{S:virial}

In this section we prove Lemma \ref{L:virial}, which is just a combination of  Lemma \ref{L:conversion} and Lemma \ref{L:v-virial} below.  Lemma \ref{L:conversion} reduces the inequality to a statement about a dual function $v=\mathcal{L}w$, and Lemma \ref{L:v-virial} achieves the inequality for the dual function $v$ by invoking the results from the numerical verification in \S\ref{S:numerics} and by applying an ``angle lemma'' (Lemma \ref{L:angle}).  

We will start with the conversion lemma:

\begin{lemma}[conversion]\label{L:conversion}
Suppose that $w$ satisfies  $\la w, \nabla Q \ra =0$
and $v=\mathcal{L}w$.  If $v$ satisfies the global-in-time estimate
$$
\|v \|_{L_t^2 H_\mathbf{x}^1} \lesssim \| \la x \ra^{1/2} v\|_{L_t^\infty L_\mathbf{x}^2},
$$
then it follows that $w$ satisfies the global-in-time estimate
$$
\| w\|_{L_t^2 H_{\mathbf{x}}^3} \lesssim \| \la x \ra^{1/2} w \|_{L_t^\infty H_\mathbf{x}^2}.
$$
\end{lemma}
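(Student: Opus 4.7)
The plan is to chain three estimates: (i) inversion of $\mathcal{L}$ on the subspace $\{w:\la w,\nabla Q\ra=0\}$ converts $\|w\|_{L^2_tH^3_{\mathbf{x}}}$ into $\|v\|_{L^2_tH^1_{\mathbf{x}}}$; (ii) the hypothesis replaces $\|v\|_{L^2_tH^1_{\mathbf{x}}}$ with $\|\la x\ra^{1/2}v\|_{L^\infty_tL^2_{\mathbf{x}}}$; (iii) boundedness of $\mathcal{L}$ as a weighted-Sobolev map converts this last norm into $\|\la x\ra^{1/2}w\|_{L^\infty_tH^2_{\mathbf{x}}}$. Both (i) and (iii) are pointwise-in-time statements, so the time norms are preserved under the chain.

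For step (i), note that $\mathcal{L} = I - \Delta - 2Q$ is elliptic and self-adjoint, and differentiating the ground-state equation $-\Delta Q + Q - Q^2 = 0$ shows $\mathcal{L}(\partial_j Q)=0$ for $j=1,2,3$. By the standard nondegeneracy of the radial 3D ground state, $\ker_{L^2}\mathcal{L} = \mathrm{span}\{\partial_x Q,\partial_y Q,\partial_z Q\}$. Since $\la w,\nabla Q\ra=0$, $w$ lies in $(\ker\mathcal{L})^\perp$, and by self-adjointness so does $v=\mathcal{L}w$. On this closed subspace $\mathcal{L}$ is an isomorphism whose inverse gains two derivatives, yielding the pointwise-in-$t$ bound $\|w(t)\|_{H^3_{\mathbf{x}}}\lesssim \|v(t)\|_{H^1_{\mathbf{x}}}$. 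Concretely, rewrite $\mathcal{L}w = v$ as $(I-\Delta)w = v + 2Qw$, so that standard elliptic bootstrap gives $\|w\|_{H^{k+2}}\lesssim \|v\|_{H^k}+\|w\|_{H^k}$ for $k\ge 0$ (using that $Q$ is Schwartz); the base case $\|w\|_{L^2}\lesssim \|v\|_{L^2}$ on $(\ker\mathcal{L})^\perp$ comes from the Fredholm alternative together with the nondegeneracy above. Squaring in $t$ and integrating yields (i).

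For step (iii), expand $v = w - \Delta w - 2Qw$. The zeroth-order pieces $\|\la x\ra^{1/2} w\|_{L^2}$ and $2\|Q\|_{L^\infty}\|\la x\ra^{1/2} w\|_{L^2}$ are trivially $\lesssim \|\la x\ra^{1/2}w\|_{H^2}$. For the Laplacian piece, write
\[
\la x\ra^{1/2}\Delta w = \Delta\bigl(\la x\ra^{1/2} w\bigr) - [\Delta,\la x\ra^{1/2}]w.
\]
Since $\la x\ra^{1/2}$ depends only on the first coordinate and a direct computation gives $|\partial_x \la x\ra^{1/2}|,\ |\partial_x^2\la x\ra^{1/2}|\lesssim 1$, the commutator is a first-order differential operator with bounded coefficients, so
\[
\|[\Delta,\la x\ra^{1/2}]w\|_{L^2}\lesssim \|w\|_{H^1}\le \|\la x\ra^{1/2} w\|_{H^1},
\]
while $\|\Delta(\la x\ra^{1/2}w)\|_{L^2}\le \|\la x\ra^{1/2}w\|_{H^2}$. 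Combining gives (iii), and concatenating (i)–(iii) finishes the proof. The only non-routine ingredient is the identification of $\ker_{L^2}\mathcal{L}$ with $\mathrm{span}\{\partial_j Q\}$, used to invert $\mathcal{L}$ on $(\nabla Q)^\perp$; this is a standard input in the soliton-stability literature, so the main obstacle is really a bookkeeping one rather than an analytic one.
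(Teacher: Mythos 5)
Your proposal is correct and follows essentially the same route as the paper: invert $\mathcal{L}$ on the orthocomplement of $\nabla Q$ (the paper phrases the base bound $\|w\|_{L^2}\lesssim\|v\|_{L^2}$ via the spectral gap of $\mathcal{L}^2$, you via nondegeneracy and the Fredholm alternative, which is the same spectral fact), boost to $\|w\|_{H^3}\lesssim\|v\|_{H^1}$ by elliptic regularity, and then check the pointwise-in-time weighted bound $\|\la x\ra^{1/2}v\|_{L^2}\lesssim\|\la x\ra^{1/2}w\|_{H^2}$, which the paper calls straightforward and you verify with a commutator computation.
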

\begin{proof}
Since $\mathcal{L}$ is a self-adjoint Schr\" odinger operator with smooth rapidly decaying potential, its spectrum consists of $[1,+\infty)$ plus a finite number of eigenvalues.  It follows that the spectrum of $\mathcal{L}^2$ is $[1,+\infty)$ plus the square of the eigenvalues of $\mathcal{L}$.   Since $\ker L = \spn \{ \nabla Q \}$, $\ker \mathcal{L}^2 = \spn \{ \nabla Q \}$, and there is a positive gap to the next eigenvalue of $\mathcal{L}^2$.  Consequently, $\mathcal{L}^2$ is strictly positive on the orthocomplement of $\nabla Q$: there exists $\delta>0$ such that 
\begin{equation}
\label{E:conv1}
 \delta \|w \|_{L^2}^2 \leq  \la \mathcal{L}^2 w, w \ra = \| \mathcal{L}w \|_{L^2}^2 = \| v \|_{L^2}^2.
\end{equation}
It is straightforward that, for some $\kappa>0$,
\begin{equation}
\label{E:conv2}
\| w \|_{H^3}^2 \leq  \| \mathcal{L}w \|_{H^1}^2 + \kappa \| w\|_{L^2}^2 =   \| v \|_{H^1}^2 + \kappa \| w\|_{L^2}^2.
\end{equation}
Combining \eqref{E:conv1} and \eqref{E:conv2}, we obtain
$$
\| w \|_{H^3} \lesssim \| v \|_{H^1}.
$$
It is also straightforward that
$$
\|\la x \ra^{1/2} v \|_{L^2} = \| \la x \ra^{1/2} \mathcal{L} w \|_{L^2}  \lesssim \| \la x \ra^{1/2} w \|_{H^2}.
$$
\end{proof}


We provide here a statement of the elementary angle lemma--for proof see \cite{FHRY}.

\begin{lemma}[angle lemma]
\label{L:angle}
Suppose that $A$ is a self-adjoint operator on a Hilbert space $H$ with eigenvalue $\lambda_1$ and corresponding eigenspace spanned by a  function $e_1$ with $\|e_1\|_{L^2}=1$.  Let $P_1f = \la f,e_1\ra e_1$ be the corresponding orthogonal projection.   Assume that $(I-P_1)A$ has spectrum bounded below by $\lambda_\perp$, with $\lambda_\perp>\lambda_1$.  Suppose that $f$ is some other function such that $\|f\|_{L^2}=1$ and $0 \leq \beta \leq \pi$ is defined by $\cos \beta = \la f, e_1\ra$.   Then if $v$ satisfies $\la v, f \ra =0$, we have
$$
\la Av,v \ra \geq (\lambda_\perp - (\lambda_\perp - \lambda_1)\sin^2\beta)\|v\|_{H}^2.
$$
\end{lemma}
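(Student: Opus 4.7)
The plan is to reduce the Hilbert-space estimate to an elementary constrained minimization in two real variables via orthogonal decomposition along $e_1$. Since $\|f\|_H = \|e_1\|_H = 1$ and $\beta \in [0,\pi]$, I can write $f = \cos\beta\, e_1 + \sin\beta\, f_\perp$, where $f_\perp$ is a unit vector orthogonal to $e_1$ (the nonnegativity of $\sin\beta$ on $[0,\pi]$ makes this an honest normalization, with the sign of $f_\perp$ absorbing any phase). Likewise, I would decompose $v = \alpha e_1 + v_\perp$ with $\la v_\perp, e_1\ra = 0$, so that $\|v\|_H^2 = \alpha^2 + \|v_\perp\|_H^2$.

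Next I would use the spectral hypothesis. Because $A e_1 = \lambda_1 e_1$ and $A$ is self-adjoint, the cross term $2\alpha \la A e_1, v_\perp\ra$ vanishes, and the assumption that $(I-P_1)A$ has spectrum bounded below by $\lambda_\perp$ yields
\[
\la Av,v\ra \;\geq\; \lambda_1 \alpha^2 \;+\; \lambda_\perp \|v_\perp\|_H^2.
\]
The orthogonality constraint $\la v, f\ra = 0$, after substituting both decompositions, becomes $\alpha\cos\beta + \sin\beta\, \la v_\perp, f_\perp\ra = 0$, from which Cauchy--Schwarz delivers the key inequality $\alpha^2 \cos^2\beta \leq \sin^2\beta\, \|v_\perp\|_H^2$.

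To close, I would verify that for any $a, b \geq 0$ satisfying $a\cos^2\beta \leq b\sin^2\beta$, one has
\[
\lambda_1 a + \lambda_\perp b \;\geq\; \bigl(\lambda_\perp - (\lambda_\perp - \lambda_1)\sin^2\beta\bigr)(a+b);
\]
this rearranges to $(\lambda_\perp - \lambda_1)(a\cos^2\beta - b\sin^2\beta) \leq 0$, which is exactly the Cauchy--Schwarz constraint scaled by the positive spectral gap. The degenerate case $\cos\beta = 0$ is immediate, since the right side collapses to $\lambda_1(a+b)$, and the degenerate case $\sin\beta = 0$ forces $\alpha = 0$, giving the full bound $\lambda_\perp \|v\|_H^2$. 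No step is expected to pose a real obstacle: this is finite-dimensional linear algebra in disguise, with the only care needed being the choice of $f_\perp$ as a genuine unit vector, which is available because $\sin\beta \geq 0$ on the specified interval.
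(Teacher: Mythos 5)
Your argument is correct. The paper itself does not reprove this lemma (it only cites \cite{FHRY} for the proof), and your two\-/variable reduction is exactly the standard argument one expects there: decompose $v=\alpha e_1+v_\perp$ and $f=\cos\beta\, e_1+\sin\beta\, f_\perp$, use self-adjointness to kill the cross term and the hypothesis to get $\la Av_\perp,v_\perp\ra\geq\lambda_\perp\|v_\perp\|^2$ (note $\la P_1Av_\perp,v_\perp\ra=0$, so the bound on $(I-P_1)A$ applies cleanly on $\{e_1\}^\perp$), then combine $\la Av,v\ra\geq\lambda_1\alpha^2+\lambda_\perp\|v_\perp\|^2$ with the Cauchy--Schwarz consequence $\alpha^2\cos^2\beta\leq\sin^2\beta\|v_\perp\|^2$ of the constraint $\la v,f\ra=0$; the final rearrangement $(\lambda_\perp-\lambda_1)(a\cos^2\beta-b\sin^2\beta)\leq 0$ and your treatment of the degenerate cases $\cos\beta=0$ and $\sin\beta=0$ are both right.
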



We are now ready to prove the virial estimate for $v$.

\begin{lemma}[linearized virial  estimate for $v$]
\label{L:v-virial}
Suppose that $v \in C^0(\mathbb{R}_t; H_{\bf{x}}^1) \cap C^1(\mathbb{R}_t; H_{\bf{x}}^{-2})$  solves
$$
\partial_t v =  \mathcal{L}\partial_x v  -2 \alpha Q
$$
for some time dependent coefficient $\alpha$, and moreover, $v$ satisfies the orthogonality conditions 
$$
\la v, Q \ra =0\quad \mbox{and} \quad \la v, \nabla Q \ra =0. 
$$
Then 
\begin{equation}
\label{E:v-virial-1}
\| v\|_{L_t^2H_{\bf x}^1} \lesssim \| \la x \ra^{1/2} v \|_{L_t^\infty L_{\bf x}^2},
\end{equation}
where $t$ is carried out over all time $-\infty < t< \infty$.
\end{lemma}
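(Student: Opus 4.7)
The strategy is a classical virial computation using the weight $\phi(x)=x$, together with a reduction of the resulting quadratic form to a spectral condition that will be verified numerically. I begin by differentiating $J(t) \defeq \int x\,v^2 \, d\mathbf{x}$ in time, substituting the equation for $v$, and integrating by parts. Since $\mathcal{L}=I-\Delta-2Q$ and the weight depends only on $x$, one finds
$$
-J'(t) = \int (3v_x^2+v_y^2+v_z^2)\,d\mathbf{x} + \int v^2\,d\mathbf{x} - 2\int Q v^2\,d\mathbf{x} - 2\int x\, Q_x v^2\,d\mathbf{x} + 4\alpha(t)\int x v Q\,d\mathbf{x},
$$
which expresses $-J'$ as a quadratic form $\mathcal{Q}(v)$ plus the $\alpha$-correction. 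The unbounded weight $\phi(x)=x$ is allowable here because all boundary terms at infinity are absorbed into the right-hand side of \eqref{E:v-virial-1}: indeed $|J(t)| \leq \|\la x\ra^{1/2}v\|_{L_t^\infty L_{\mathbf{x}}^2}^2$.

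Next I determine $\alpha(t)$ in terms of $v(t)$. Taking the $L^2$ inner product of the equation for $v$ with $Q$, and using $\mathcal{L}Q = -Q^2$ (which follows from $-\Delta Q + Q = Q^2$) together with the orthogonality $\la v, Q\ra =0$, I obtain
$$
\alpha(t) = \frac{\la v(t), \partial_x(Q^2)\ra}{2\|Q\|_{L_{\mathbf{x}}^2}^2},
$$
so the $\alpha$-term is itself bilinear in $v$. Substituting this back, $-J'(t) = \widetilde{\mathcal{Q}}(v(t))$ where $\widetilde{\mathcal{Q}}$ is an explicit symmetric quadratic form in $v$.

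The heart of the argument, and the main obstacle, is to prove that $\widetilde{\mathcal{Q}}$ is coercive:
$$
\widetilde{\mathcal{Q}}(v) \gtrsim \|v\|_{H_{\mathbf{x}}^1}^2
$$
whenever $v$ satisfies $\la v,Q\ra = \la v,\nabla Q\ra = 0$. This reduces to a spectral question about the self-adjoint operator associated to $\widetilde{\mathcal{Q}}$, essentially $-3\partial_x^2 - \partial_y^2 -\partial_z^2 + 1 - 2Q - 2xQ_x$ plus a rank-one correction coming from the $\alpha$-term. I will verify its positivity on the orthogonal complement of $\{Q,\nabla Q\}$ by the numerical method described in \S\ref{S:numerics}. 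Since the operator is invariant under $(y,z)\mapsto(-y,-z)$ rotations and $(y,z)\mapsto -(y,z)$ reflections, the spectral problem can in practice be reduced to a lower-dimensional calculation; the four orthogonality directions ($Q$ and the three components of $\nabla Q$) split according to this symmetry, which allows the angle lemma (Lemma \ref{L:angle}) to be applied component by component to pass from the exact orthogonality verified in the numerical spectral computation to the two orthogonality conditions we actually possess. (The $\partial_yQ$ and $\partial_zQ$ directions are ``for free'' by odd/even parity.)

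With coercivity in hand, the proof concludes quickly. Integrating $-J'(t) = \widetilde{\mathcal{Q}}(v(t)) \gtrsim \|v(t)\|_{H_{\mathbf{x}}^1}^2$ over $[T_-,T_+]$ yields
$$
\int_{T_-}^{T_+} \|v(t)\|_{H_{\mathbf{x}}^1}^2\,dt \lesssim |J(T_-)| + |J(T_+)| \lesssim \|\la x\ra^{1/2} v\|_{L_t^\infty L_{\mathbf{x}}^2}^2,
$$
uniformly in $T_\pm$, and sending $T_\pm \to \pm\infty$ gives \eqref{E:v-virial-1}. The only nontrivial analytical step beyond the computation is ensuring that the integrations by parts are justified at the regularity $v\in C^0_t H^1_{\mathbf{x}} \cap C^1_t H^{-2}_{\mathbf{x}}$; this is handled by a routine truncation/frequency-cutoff approximation, noting that all manipulations use at most two derivatives paired with $Q$ or the weight $x$, both of which distribute harmlessly.
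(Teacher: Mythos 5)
Your proposal is correct and follows essentially the same route as the paper: the virial identity with weight $x$, elimination of $\alpha$ via $\la v,Q\ra=0$ (giving $\alpha=\la v,Q\,Q_x\ra/\|Q\|_{L^2_{\mathbf{x}}}^2$, identical to the paper's), reduction to positivity of the operator $B+P$ under the two orthogonality conditions verified numerically together with the angle lemma, and integration in time with $|J(t)|\lesssim \|\la x\ra^{1/2}v\|_{L_t^\infty L^2_{\mathbf{x}}}^2$. Two cosmetic points only: the self-adjoint operator realizing the $\alpha$-coupling is rank two rather than rank one, and the decisive splitting in the paper is parity in $x$ (odd sector containing $f_1$ and $Q_x$, even sector containing $f_2$ and $Q$) rather than a $(y,z)$ symmetry, with orthogonality to $\partial_yQ$ and $\partial_zQ$ indeed not needed.
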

\begin{proof}
Using the orthogonality condition $\la v,Q \ra =0$, we compute
$$
0= \partial_t \la v, Q \ra = \la \mathcal{L}\partial_x v, Q \ra -2 \alpha \la Q, Q \ra.
$$
This yields 
$$
\alpha = \frac{\la v,  Q \, Q_x \ra}{\la Q, Q \ra}
$$
so that
\begin{equation}\label{E:P}
\partial_t v = \mathcal{L} \partial_x v - 2 \frac{\la v, Q \, Q_x \ra}{\la Q, Q\ra}\, Q.
\end{equation}
Now compute
\begin{equation}
\label{E:v-virial-2}
-\frac12 \partial_t \int x v^2 = \la Bv,v\ra + \la P v,v\ra,
\end{equation}
where
$$
B = \frac12 - \frac32\partial_x^2 - \frac12\partial_y^2 - \frac12\partial_z^2 - (x \,Q)_x
$$
and from \eqref{E:P} $P$ can be taken as the rank $2$ self-adjoint operator
$$
Pv = \frac{ Q \, Q_x}{\la Q, Q\ra} \la v, xQ\ra + \frac{xQ}{\la Q, Q\ra} \la v, Q \, Q_x\ra.
$$

The continuous spectrum of $A=B+P$ is $[\frac12,+\infty)$.  Via a numerical solver we find the eigenvalues and corresponding eigenfunctions below $\frac12$ (the details are given in \S \ref{S:numerics} below).

We obtain two simple eigenvalues below $\frac12$, namely,
$$
\lambda_1=-0.0294 \text{ and } \lambda_2=-0.4688.
$$  
Denoting the corresponding normalized eigenfunctions by $f_1$ and $f_2$,  and $g_1 = \frac{Q}{\|Q\|}$ and $g_2 = \frac{Q_x}{\|Q_x\|}$, we find
$$\la f_1, g_1 \ra =0.9946 \,, \qquad \la f_1,g_2 \ra = 0,$$
$$\la f_2, g_1 \ra = 0  \,, \qquad \la f_2, g_2 \ra = -0.7922.$$

Following the $L^2$ decomposition as in \cite[Lemma 14.2]{FHRY}, we consider the closed subspace $H_o$ of $L^2(\mathbb{R}^3)$ given by functions that are odd in $x$ (no constraint in $y$ or $z$), and the closed subspace $H_e$ of $L^2(\mathbb{R}^3)$ given by functions that are even in $x$ (no constraint in $y$ or $z$). Note that $L^2(\mathbb{R}^3) = H_o \oplus H_e$ is an orthogonal decomposition.  Observe that $f_1$ and $g_2$ belong to $H_o$, while $f_2$ and $g_1$ belong to $H_e$.   Thus, $A\big|_{H_o}$ has spectrum $\{ \lambda_1 \} \cup [\frac12, +\infty)$ with $f_1$ being the eigenfunction corresponding to $\lambda_1$.  
Applying the angle lemma (Lemma \ref{L:angle} or \cite[Lemma 14.3]{FHRY}) with $H=H_o$ and $\lambda_\perp = \frac12$, and noting that
$$
(\lambda_\perp - \lambda_1) \sin^2\beta = (0.5 + 0.0294)*(1-0.9946^2) = 0.0057,
$$
we find that
$$
\la AP_o v, P_o v \ra \geq (0.5000- 0.0057) \la P_ov, P_o v \ra = 0.4943 \, \la P_ov, P_o v \ra.
$$
Also, $A\big|_{H_e}$ has spectrum $\{ \lambda_2 \} \cup [\frac12, +\infty)$ with the eigenfunction $f_2$ corresponding to $\lambda_2$.  
Applying the angle lemma with $H=H_e$ and $\lambda_\perp = \frac12$, we get
$$
(\lambda_\perp - \lambda_2) \sin^2\beta = (0.5000-0.4688)*(1-0.7922^2)=0.0116,
$$ 
and 
$$
\la AP_e v, P_e v \ra \geq (0.5000-0.0116)\la P_e v, P_e v\ra=0.4884\,\la P_e v,P_e v\ra.
$$

Thus $A=B+P$ is positive (assuming $v$ satisfies the two orthogonality conditions). Integrating \eqref{E:v-virial-2} in time and using elliptic regularity, we obtain \eqref{E:v-virial-1}.
\end{proof}

\section{Verification of spectral property} 
\label{S:numerics}


\subsection{Set up}
Here, we discuss how we find the eigenvalues and eigenfunctions of the operator $2(B+P)$ in 3d (for computational convenience, we doubled the operator; thus, the continuous spectrum will start from 1):
\begin{align}
2(B+P) \defeq-3\partial_{xx}-\partial_{yy} -\partial_{zz} + 1 - 2(x\, Q)_x + 2P,
\end{align}
where $P$ is defined as 
\begin{align}
Pv= \frac{Q \, Q_x}{\|Q\|_2^2}\langle v, xQ \rangle + \frac{xQ}{\|Q\|_2^2} \langle v, Q\, Q_x \rangle.
\end{align}

We follow our approach from \cite[Section 16]{FHRY} and investigate the spectrum of the operator $2(B+P)$. Similar to the 2d case we use the collocation method, however, due to the computational limitations in 3d, we can only apply a few collocation points for each axis ($x$, $y$ and $z$). In this computations $N=36$ in each dimension is the maximum number that we could reach, though we show that even with that many points, the results are robust and truthful. 
To arrange the Chebyshev collocation points to be more concentrated at the center we need a specific mapping, we use a similar approach as in the 2d case:
\begin{align}\label{D: x mapping}
x(\xi)=L\frac{e^{a\xi}-e^{-a\xi}}{e^{a}-e^{-a}},
\end{align}
with $\xi \in [-1,1]$ and $a$ is the parameter that we can chose (in our computation we take $a=4$ or $a=5$). By the chain rule, the partial derivatives $\partial_x$, $\partial_{xx}$ are
\begin{align}\label{E:D1}
\partial_x=\frac{1}{x_{\xi}}\partial_{\xi},
\end{align}
and
\begin{align}\label{E:D2}
\partial_{xx}=\frac{1}{x_{\xi}^2}+\left(\partial_{\xi}(\frac{1}{x_{\xi}})\cdot\frac{1}{x_{\xi}}\right)\partial_{\xi}.
\end{align}
We apply similar mapping and calculation to the $y$-direction as well as the $z$-direction.

Now, we need to discretize the operator $2(B+P)$ with the mapped-Chebyshev collocation points.
The discretization of the operator $B$ as well as imposing the homogeneous Dirichlet boundary conditions are quite standard, for example, we follow the same approach as in \cite[Chapters 6, 9, 12]{T2001}. It follows similar steps as we had in the 2D case \cite{FHRY} (and we described  a general formula for discretizing the projection operator), for completeness, we outline the process here. 

First, we consider the 1D case. Then the extension to the cases $d\geq 2$ is done by standard numerical integration technique for multi-dimensions, e.g., see \cite[Chapters 6, 12]{T2001}.
We denote by $f_i$ 
the discretized form of the function $f(x)$ at the point $x_i$, and we write the vector $\vec{f}$ for $\vec{f}=(f_0,f_1,\cdots,f_N)^T$. We denote the operation ``$.*$" to be the pointwise multiplication of the vectors or matrices with the same dimension, i.e., $\vec{a}.*\vec{b}=(a_0b_0,\cdots,a_Nb_N)^{\mathrm{T}}$; the notation ``$*$" stands for the regular vector or matrix multiplication.

Let $w(x)$ to be the weights for a given quadrature. For example, if we consider the composite trapezoid rule with step-size $h$, we have
\begin{align*}
\vec{w}=(w_0,w_1,\cdots ,x_N)^T=\frac{h}{2}(1,2,\cdots,2,1)^T,
\end{align*}
since the composite trapezoid rule can be written as
\begin{align*}
\int_a^b f(x) dx \approx \sum_{i=0}^{N} f_i w_i=\vec{f}^{\,\, T} *\vec{w}.
\end{align*} 
To evaluate a Chebyshev Gauss-Lobatto quadrature, which we need for this work, we write 
$$
\int_{-1}^1 f(x) dx \approx \sum_{i=0}^N w_i f(x_i)=\vec{f}^{\,\, T} * \vec{w},
$$
where $w_i=\frac{\pi}{N}\sqrt{1-x_i^2}$ for $i=1,2,\cdots, N-1$, and $w_0=\frac{\pi}{2N}\sqrt{1-x_0^2}$, $w_N=\frac{\pi}{2N}\sqrt{1-x_N^2}$, are the weights together with the weighted functions.
We have
\begin{align*}
Pu=&\langle u,f \rangle g= (\sum_{i=0}^N w_i \,f_i\, u_i) \, \vec{g} 
= \left[\begin{matrix}
g_0\\
g_1\\
\vdots\\
g_N
\end{matrix}\right]  \big(\sum_{i=0}^N w_i f_i u_i \big) = \left[\begin{matrix}
g_0\\
g_1\\
\vdots\\
g_N
\end{matrix}\right]  (\vec{w}^{\, T}.*\vec{f}^{\,\, T}) * \vec{u} 
:= \mathbf{P} \vec{u},
\end{align*}
with the matrix
\begin{align}\label{E: P term}
\mathbf{P}=\vec{g}*(\vec{w}^{\,\, T}.*\vec{f}^{\,\, T})
\end{align}
to be the discretized approximation form of the projection operator $P$. 
Denote by $\mathbf{D_x^{(2)}}$, $\mathbf{D_y^{(2)}}$, $\mathbf{D_z^{(2)}}$ the second order mapped-Chebyshev differential matrices coming from the equation \eqref{E:D2} (see also \cite{T2001}), the $x$-derivative of $Q$ as $\vec{Q}_x=\mathbf{D_x^{(1)}}\vec{Q}$, and the matrix $\mathbf{M}=2(\mathbf{B}+\mathbf{P})$. Then we obtain
\begin{align}\label{E: M-matrix}
\mathbf{M}=-3\mathbf{{D}_x^{(2)}}-\mathbf{{D}_y^{(2)}}-\mathbf{{D}_z^{(2)}}+diag(\vec{1}-3*\vec{Q^2}-6*\vec{x}.*\vec{Q}.*\vec{Q}_x)+\mathbf{P},
\end{align}
where $\mathbf{P}$ is the matrix form for the projection term discretized from \eqref{E: P term}, and $\vec{1}=(1,\cdots,1)^T$ is the vector with the same size of other variables (such as $\vec{Q}$). Before we proceed with spectral properties, we explain how we obtain the ground state $Q$. 

\subsection{Calculation of the ground state $Q$}
While we can calculate the ground state directly in the 3D space, the computational cost is very expensive. Applying the radial symmetry, we only need to compute the ground state in 1D radial case and interpolate it into the 3D space. The 1D radial equation for the ground state is as follows
\begin{align}\label{E: GS_radial}
-R_{rr}-\frac{2}{r}R_r+R-|R|^{p-1}R=0, \quad R_r(0)=0, \quad R(2 L)=0.
\end{align}
We choose the computational domain to be $r \in [0,2L)$ since $r=\sqrt{x^2+y^2+z^2}$, where each $x,y,z \in [-L,L]$. Therefore, the computational domain for $r$ has to be greater or equal to $\sqrt{3}L$ to avoid the extrapolation in the upcoming interpolation process.

Next, the equation \eqref{E: GS_radial} can be solved by using the renormalization method \cite[Chapter 24]{F2016}. For that we use the shape preserving cubic spline to interpolate the solution into the full three dimensional data. Suppose $\vec{r}=(r_0,r_1,\cdots, r_{N_r})^T$ to be the $N_r$ collocation points we used to compute the equation \eqref{E: GS_radial}, and $\vec{R}$ is the discretized solution of \eqref{E: GS_radial} from $\vec{r}$. Let $\vec{x}=(x_0,x_1,\cdots,x_N)^T $ with $x_0=-L$ and $x_N=L$ be the mapped Chebyshev collocation points we discussed previously. We generate the 3D tensor data by using the matlab command \texttt{meshgrid}
$$
[\mathbf{X},\mathbf{Y},\mathbf{Z}]=\operatorname{meshgrid}(\vec{x}).
$$
Then, the tensor data for $\mathbf{Q}$ (the 3D ground state $Q$), is obtained via the shape-preserving cubic spline interpolation with the matlab function \texttt{interp1} by
\begin{align*}
\mathbf{Q}=\operatorname{interp1}(\vec{r},\vec{R},\sqrt{\mathbf{X}^2 +\mathbf{Y}^2 +\mathbf{Z}^2},'pchip').
\end{align*}

\subsection{Spectrum} 
Let $N$ be the number of collocation points assigned for each dimensions (this will result in a $N^3 \times N^3$ matrix of $\mathbf{M}$). 
Let $M[R]$ be the mass of $Q$ computed from the radial solution $R$ by the composite trapezoid rule, and $M[Q]$ be the mass of $Q$ computed in full 3D 
by evaluating the Chebyshev-Gauss quadrature. We track a possible error generated by the interpolation via $\mathcal{E}=\| M[Q]-M[R] \|_{\infty}$. 

The matlab command ``\texttt{eigs}" produces the eigenvalues, and we consider only those, which are less than $1$. 
Taking a different number of collocation points $N$ for each direction ($x$, $y$ and $z$), and normalizing the $L^2$ norm of the corresponding eigenfunctions to $1$, we obtain the following: 

$\bullet$ \underline{\bf $N=16$}: $\mathcal{E}=0.17778$. 

The eigenvalues are
\begin{align}
\lambda_{1,2}=   -0.04938, \qquad 0.93316.
\end{align}
The angles with the eigenfunctions (and normalized $Q$ and $Q_x$) are 
\begin{align}
\left[ \begin{matrix}
\langle Q,  \phi_1 \rangle & \langle Q, \phi_2 \rangle \\
\langle Q_x, \phi_1 \rangle & \langle Q_x, \phi_2 \rangle \\
\end{matrix} \right] = \left[ \begin{matrix}
  -0.9952 &  -0.0000 \\
  0.0000 &  -0.7940 \\
\end{matrix} \right].
\end{align}

$\bullet$ \underline{\bf $N=21$}: $\mathcal{E}=0.0024339$. 

The eigenvalues are
\begin{align}
\lambda_{1,2}=   -0.052992, \qquad 0.9382.
\end{align}
The angles with the eigenfunctions are
\begin{align}
\left[ \begin{matrix}
\langle Q,  \phi_1 \rangle & \langle Q, \phi_2 \rangle \\
\langle Q_x, \phi_1 \rangle & \langle Q_x, \phi_2 \rangle \\
\end{matrix} \right] = \left[ \begin{matrix}
  0.9947 &  -0.0000 \\
  0.0000 &  -0.7918 \\
\end{matrix} \right].
\end{align}

$\bullet$ \underline{\bf $N=32$}: $\mathcal{E}=6.9879e-06$. 

The eigenvalues are
\begin{align}
\lambda_{1,2}=   -0.058808, \qquad 0.93757.
\end{align}
The angles with the eigenfunctions are
\begin{align}
\left[ \begin{matrix}
\langle Q,  \phi_1 \rangle & \langle Q, \phi_2 \rangle \\
\langle Q_x, \phi_1 \rangle & \langle Q_x, \phi_2 \rangle \\
\end{matrix} \right] = \left[ \begin{matrix}
  0.9946 &  -0.0000 \\
  0.0000 &  -0.7922 \\
\end{matrix} \right].
\end{align}

$\bullet$ \underline{\bf $N=36$}: $\mathcal{E}=1.6117e-06$. 

The eigenvalues are
\begin{align}
ss=   -0.058812, \qquad 0.93757.
\end{align}
The angles with the eigenfunctions are obtained as
\begin{align}
\left[ \begin{matrix}
\langle Q,  \phi_1 \rangle & \langle Q, \phi_2 \rangle \\
\langle Q_x, \phi_1 \rangle & \langle Q_x, \phi_2 \rangle \\
\end{matrix} \right] = \left[ \begin{matrix}
  0.9946 &  -0.0000 \\
  0.0000 &  -0.7922 \\
\end{matrix} \right].
\end{align}

Finally, we conclude that the eigenfunction $\phi_1$, corresponding to 
$\lambda_1$, the negative eigenvalue, is (almost) orthogonal to $Q$, and the second eigenfunction $\phi_2$ is (almost) orthogonal to $Q_x$. 
We also note that while we do not use a large number of points, our numerical findings become consistent with an increasing $N$ (see the consistency for $N=32$ and $N=36$).

\end{document}